\documentclass[10p]{amsart}
\usepackage[margin=1in]{geometry}
\usepackage{version,amsmath, amssymb, mathtools,color}
\usepackage{color}
\usepackage{amsthm, amsopn, amsfonts, xypic}
\usepackage[colorlinks=true]{hyperref}
\usepackage{mathrsfs}
\usepackage{slashed}
\hypersetup{urlcolor=blue, citecolor=blue}

\usepackage[color=yellow]{todonotes}

\let\arXiv\arxiv
\def\doi#1{ {\href{http://dx.doi.org/#1}
   {{\mdseries\ttfamily DOI}}}}

\newcommand{\beq}{\begin{equation}}\newcommand{\eq}{\end{equation}}

\renewcommand{\tt}{ \tilde t}

\newcommand{\pa}{{\partial}}
\newcommand{\tphi}{{\tilde{\phi}}}

\newcommand{\tchi}{{\tilde \chi}}

\newcommand{\lL}{{\underline L}}
\newcommand{\ts}{{\widetilde{s}}}
\newcommand{\te}{{\widetilde{e}}}
\newcommand{\tvarrho}{{\widetilde{\varrho}}}
\newcommand{\ttau}{{\widetilde{\tau}}}
\newcommand{\tgamma}{{\widetilde{\gamma}}}

\newcommand{\newsection}[1]
{\section{#1}\setcounter{theorem}{0} \setcounter{equation}{0}
\par\noindent}

\newtheorem{theorem}{Theorem}

\newtheorem{lemma}[theorem]{Lemma}
\newtheorem{corr}[theorem]{Corollary}
\newtheorem{prop}[theorem]{Proposition}

\newtheorem{remark}[theorem]{Remark}

\newcommand{\R}{{\mathbb R}}

\newcommand{\ang}{{\not\negmedspace\partial }}

\newcommand{\la}{\langle}
\newcommand{\ra}{\rangle}

\renewcommand{\div}{{\text{div}}}

\newcommand{\rs}{{r^*}}

\renewcommand{\S}{{\mathbb S}}
\newcommand{\M}{\mathcal M}

\newcommand{\tg}{{\tilde{g}}}

\newcommand{\tN}{{\tilde{N}}}

\renewcommand{\th}{{\tilde{h}}}
\newcommand{\uL}{\underline{L}}


\newcommand*{\bigone}[1]{\vcenter{\hbox{\scalebox{1.2}{\ensuremath#1}}}}
\newcommand*{\bigtwo}[1]{\vcenter{\hbox{\scalebox{1.4}{\ensuremath#1}}}}
\newcommand*{\bigthree}[1]{\vcenter{\hbox{\scalebox{1.6}{\ensuremath#1}}}}

\DeclareMathOperator{\tsum}{{\textstyle{\sum}}}

\newcommand{\weight}{{\Bigl(1-\frac{2M}{r}\Bigr)}}

\begin{document}

\title
{
 A local energy estimate for wave equations on metrics asymptotically close to Kerr
}

\author{Hans Lindblad}
\address{Department of Mathematics, Johns Hopkins University, Baltimore,
  MD  21218}
\email{lindblad@math.jhu.edu}
\author{Mihai Tohaneanu}
\address{Department of Mathematics, University of Kentucky, Lexington,
  KY  40506}
\email{mihai.tohaneanu@uky.edu}

\begin{abstract}
   In this article we prove a local energy estimate for the linear wave equation on metrics with slow decay to a Kerr metric with small angular momentum. As an application, we study the quasilinear wave equation $\Box_{g(u, t, x)} u = 0$
  where the metric $g(u, t, x)$ is close (and asymptotically equal)
  to a Kerr metric with small angular momentum $g(0,t,x)$. Under suitable assumptions on the metric
  coefficients, and assuming that the initial data for $u$ is small enough,
  we prove global existence and decay of the solution $u$.
 \end{abstract}

\mathtoolsset{showonlyrefs=true}

\maketitle
\emph{\emph{}}

\tableofcontents

\newsection{Introduction}

In this paper we consider energy estimates for the wave operator $\Box_g$ on the background of a metric $g$ that is close to
the Kerr black hole metric $g_K$ with small angular momentum $a$ expressed in the modified Boyer-Lindquist coordinates that are smooth over the event horizon, see Section \ref{sec:thecoordinates}. We show that if the metric decay slowly in time towards Kerr then we have local energy decay estimates. We also use our estimate to prove global existence of solutions to quasilinear wave equations close to Kerr, as well as pointwise decay for solutions to the linear problem.

 The paper is structured as follows. In Section 1 we discuss the estimate and present some heuristics of why it should hold. Section 2 contains a detailed proof of the estimate in the case of the Kerr metric. In Section 3 we prove some estimates for pseudodifferential operators with limited regularity. Section 4 contains our main linear estimate for perturbations of the Kerr metric. Section 5 deals with how to apply the estimate to obtain global well-posedness for the quasilinear problem following \cite{LT}. Section 6 applies the estimate to obtain  pointwise decay for solutions to the linear problem, following \cite{MTT}.

\subsection{Statement of the results and history}

\subsubsection{Assumptions on the metric}

We consider metrics that are perturbations of a Kerr metric $(g_K, M, a)$ with mass $M$ and angular momentum per unit mass $0<a\ll M$. In Boyer-Lindquist coordinates, the Kerr metric has a singularity at $r=0$ and two event horizons at $r_{\pm} = M\pm \sqrt{M^2-a^2}$. By a suitable change of coordinates, the metric can be extended past $r=r_{+}$ (see Section \ref{sec:thecoordinates} for details). We fix some $r_{-} < r_e < r_{+}$ and consider the manifold $\M=\{r>r_e\}$, which in particular includes the domain of outer communication.

An important role in our analysis will be played by the trapped set of null geodesics in the exterior region $r>r_+$. It is well-known that they are spatially contained in a small neighborhood of $r=3M$ of size $O(a)$. The most delicate part of our analysis will take place in this region.

We now define a metric $g$ on $\M$ that is a perturbation of $g_K$. We assume that
\begin{equation}\label{rdecayintro}
| h^{\alpha\beta}|+r|\partial h^{\alpha\beta}| \,\lesssim \epsilon r^{-\delta}, \qquad\text{where}\quad h=g-g_K,
\end{equation}
for some $\delta>0$, where $r=|x|$ is the radial coordinate.
This condition can be modified in the wave zone $|t-r|<t/2$ to take into account the weak null structure of the wave operator, see Section \ref{sec:metricconditions}, but let us leave this aside for now since here we focus on the analysis close to the photon sphere $|r-3M|<M/4$. The analysis in this region is quite delicate due to lack of decay due to trapped geodesics. Because of that we need to assume that the metric is asymptotically approaching
the Kerr black hole metric $g_K$ as time tends to infinity
\begin{align}
  |\partial h^{\alpha\beta}|+| h^{\alpha\beta} | &\leq \kappa_1(t)\lesssim \epsilon  \la t\ra^{-1/2} ,\qquad\text{when}\quad |r-3M|<M/4,\label{cpt1intro}\\
 |\partial_{t} h^{\alpha\beta}| &\leq \kappa_0(t)\lesssim \epsilon \la t\ra^{-1},\qquad\text{when}\quad |r-3M|<M/4. \label{cpt2intro}
\end{align}
In contrast to our previous result on Schwarzschild, see Section 4 of \cite{LT}, we also need smallness (but not decay) on higher order derivatives of $h$. This is due to the use of pseudodifferential operators, which generate errors involving such derivatives. We will assume that the H\"older norms satisfy
\begin{equation}\label{holderintro}
\| h^{\alpha\beta}(t,\cdot) \|_{C^{3, \delta}(I_{ps})} \lesssim \epsilon,\qquad\text{where}\quad I_{ps}=\{x;\, |r-3M|<M/4\}.
\end{equation}

\subsubsection{The Local energy decay estimate}

Our main result is the following:

\begin{theorem}\label{LEintro} Let $u$ solve the inhomogeneous linear wave equation
$ \Box_g u = F$ in $\M$,
where $g$ is a Lorentzian metric satisfying the conditions \eqref{rdecayintro},
\eqref{cpt1intro}, \eqref{cpt2intro} and \eqref{holderintro} or alternatively the conditions in section 4.
 Then for any $0\leq\tt_0 < \tt_1$
\begin{equation}\label{LEKintro}
\|\partial  u\|_{L^{\infty}[\tt_0, \tt_1] L^2}^2 + \|u\|_{LE_K^1[\tt_0, \tt_1]}^2
 \lesssim \|{\kappa}\pa u\|_{L^2_{ps}}^2+\|\partial u(\tt_0)\|_{L^2}^2 +B(F,u)_{[\tt_0, \tt_1]},
\end{equation}
where $\kappa^2=\kappa_0+\kappa_1^2$ and the implicit constant is independent of $\tt_0$, $\tt_1$, $\epsilon$.
Here $B(F,u)$ is a bilinear norm defined in section 4, see \eqref{bilform}, the $LE_K^1$ norm is defined in \eqref{leK},
and $L^2_{ps}=L^2[\tt_0, \tt_1] L^2(I_{ps})$.
\end{theorem}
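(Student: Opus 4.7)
The plan is to reduce the perturbed estimate to the exact Kerr local energy estimate proved in Section 2 by treating the difference of the two wave operators as a source term. Writing $g = g_K + h$, one has
\begin{equation}
\Box_g u = \Box_{g_K} u + Pu,
\end{equation}
where $P$ is a second order differential operator whose coefficients are linear in $h$ and $\partial h$. Thus $u$ solves $\Box_{g_K} u = F - Pu$. Applying the Kerr estimate from Section 2 to this equation yields
\begin{equation}
\|\partial u\|_{L^\infty[\tt_0,\tt_1]L^2}^2 + \|u\|_{LE_K^1[\tt_0,\tt_1]}^2 \lesssim \|\partial u(\tt_0)\|_{L^2}^2 + B(F,u)_{[\tt_0,\tt_1]} + |B(Pu,u)_{[\tt_0,\tt_1]}|,
\end{equation}
so the proof reduces to absorbing $|B(Pu,u)|$ into the right hand side of \eqref{LEKintro}, with a small factor in front of the $LE_K^1$ norm that can be absorbed on the left.

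Next I would split the spatial domain into the photon sphere neighborhood $I_{ps}$ and its complement. Outside $I_{ps}$ the $LE_K^1$ norm sees no trapping and controls $\partial u$ with weights comparable to $r^{-1/2-}$, so the contribution $Pu$ contains terms of the form $h\,\partial^2 u$ and $\partial h\,\partial u$ that are dominated by $\epsilon r^{-\delta}$ times $\partial^2 u$ and $\epsilon r^{-1-\delta}$ times $\partial u$ by \eqref{rdecayintro}. An integration by parts turns the $h\,\partial^2 u$ part into lower order contributions, and the resulting expressions are bounded by $\epsilon$ times $\|u\|_{LE_K^1}^2$ plus energy boundary terms, and hence absorbed for small $\epsilon$.

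The substantive difficulty is the trapping region $I_{ps}$, where $LE_K^1$ degenerates at the trapped set and does not control second derivatives. Direct pointwise absorption of the term $h\,\partial^2 u$ is therefore impossible. The strategy, carried out in Section 4, is to replace the classical multiplier near the trapped set by a pseudodifferential multiplier whose symbol is convex with respect to the Hamilton flow of $g_K$, in the spirit of the Kerr construction of Section 2. Commuting this operator through $P$, whose coefficients are only Hölder regular, requires the limited-regularity pseudodifferential calculus of Section 3; the hypothesis \eqref{holderintro} is exactly what makes these commutators principally of order one rather than two. After this microlocal reduction, the remaining error terms in $I_{ps}$ take the schematic form
\begin{equation}
\int_{\tt_0}^{\tt_1}\!\!\int_{I_{ps}} \bigl(\kappa_1^2 + \kappa_0\bigr)|\partial u|^2\,dx\,d\tt,
\end{equation}
which is exactly $\|\kappa\,\partial u\|_{L^2_{ps}}^2$.

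The main obstacle is the second step, namely the pseudodifferential commutator analysis at the trapped set with rough coefficients. One must show that the loss from the non-smoothness of $h$ is at worst of size $\kappa_1|\partial u|$ or $\kappa_0^{1/2}|\partial u|$ in $L^2_{ps}$; the decay rates $\kappa_1 \lesssim \epsilon\la t\ra^{-1/2}$ and $\kappa_0 \lesssim \epsilon\la t\ra^{-1}$ in \eqref{cpt1intro}--\eqref{cpt2intro} are calibrated precisely so that these error terms, while not absorbable into the $LE_K^1$ norm at the photon sphere, sit naturally on the right hand side as a controlled quantity that can in applications be handled by a separate energy or Gronwall argument. Once the contributions of $Pu$ are split into an absorbable part (outside $I_{ps}$ and the principal pseudodifferential terms inside $I_{ps}$) and the non-absorbable part bounded by $\|\kappa\,\partial u\|_{L^2_{ps}}^2$, the theorem follows.
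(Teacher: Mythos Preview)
Your proposal has the right instincts in places but contains a genuine structural gap.

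The initial framing---apply the Kerr estimate as a black box to $\Box_{g_K}u = F - Pu$ and bound $B(Pu,u)$---does not work with the stated decay rates. Near the trapped set the bilinear form $B$ pairs the source against either $\|\partial u\|_{L^\infty L^2}$ (via an $L^1L^2$ norm of the source) or $\|u\|_{LE_K^1}$ (via an $L^2L^2$ norm of $\partial^{\leq 1}$ of the source). For $Pu \sim h\,\partial^2 u + \partial h\,\partial u$ neither option closes: the second requires control of third derivatives of $u$, while the first gives at best $\int \kappa_1(\tt)\|\partial u(\tt)\|_{L^2}^2\,d\tt$ (one power of $\kappa_1$, not two), forcing $\kappa_1\in L^1_t$, i.e.\ $h\sim t^{-1-\delta}$, which is exactly the strong assumption the paper is designed to avoid. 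The integration by parts you mention cannot be done inside an $L^1L^2$ or $L^2L^2$ norm; it must be performed at the level of the multiplier identity itself.

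The paper does not treat $Pu$ as a source. It applies the pseudodifferential multiplier $S_K+E_K$ directly to $\Box_g$, and the perturbative contribution appears as the quadratic form $\widehat{Q}[P_h,S_K,E_K]$, essentially $h^{\alpha\beta}[D_\beta,S_K]u\cdot\overline{D_\alpha u}+\tfrac12[h^{\alpha\beta},S_K]D_\beta u\cdot\overline{D_\alpha u}+\ldots$. The decisive point---which your proposal does not mention---is Lemma~\ref{qK}: the symbols $s_K$, $\partial_{x}s_K$, $\partial_\xi s_K$ and $e_K$ all \emph{vanish on the trapped set}, i.e.\ each can be written as a combination of $\varrho_1(\tau-\tau_2)$, $\varrho_2(\tau-\tau_1)$ and $\xi_r$ with bounded symbol coefficients. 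Consequently every principal term in $\widehat{Q}[P_h,S_K,E_K]$ carries one factor bounded by $\|u\|_{LE_K^1}$, and the other factor by $\|h_1\partial u\|_{L^2_{ps}}\sim\|\kappa_1\partial u\|_{L^2_{ps}}$ (Proposition~\ref{commhest}). Cauchy--Schwarz then yields $\epsilon\|u\|_{LE_K^1}^2 + \|\kappa_1\partial u\|_{L^2_{ps}}^2$, which is where the \emph{square} $\kappa_1^2$ enters. Your phrase ``symbol is convex with respect to the Hamilton flow'' captures the positivity needed for the Kerr main term, but not this vanishing structure, which is a separate and non-obvious fact (the paper remarks that $e_K$ in particular cannot be computed explicitly and the vanishing is proved indirectly).

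Two smaller corrections: the H\"older hypothesis \eqref{holderintro} is not what makes the commutator $[h,S]$ of order one---that is automatic---but is needed to control the \emph{sub-principal} remainder in the rough-coefficient calculus (Lemma~\ref{lem:theerrorlemma}). And the $\kappa_0$ contribution does not arise from the commutator with $S$; it comes from the $C\partial_t$ multiplier acting on $P_h$, together with the auxiliary function $f_0$ introduced to kill $h^{00}$, both of which involve $\partial_t h$.
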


The theorem is an improvement of the result for the Kerr metric by the second author and Tataru \cite{TT}.

The bilinear form $B(F,u)_{[\tt_0, \tt_1]}$ vanishes for a solution of the homogeneous equation $F=0$ and in general can be estimated in different ways in terms of the other norms in the estimate above multiplied with norms of $F$ depending on the application. In particular we have
\begin{equation}\label{bilformintro}
B(F,u)_{[\tt_1, \tt_2]}
\lesssim
 \|F\|_{L^1[\tt_1, \tt_2] L^2}\|\partial  u\|_{L^{\infty}[\tt_1, \tt_2] L^2}.
 \end{equation}
The local energy norm  $ \|u\|_{LE_K^1[\tt_0, \tt_1]}$ is expressed in terms of pseudo differential operator but in particular bounds the following local norms:
\begin{equation}\label{eq:lowerboundLEKnormintro}
\|(1-\chi)\pa u\|_{LE[\tt_0, \tt_1]}
+\|\pa_r  u\|_{LE[\tt_0, \tt_1]}+\|r^{-1} u\|_{LE[\tt_0, \tt_1]}
\lesssim \| u\|_{LE_K^1[\tt_0, \tt_1]},
\end{equation}
where
\begin{equation}
  \| u\|_{LE[\tt_0, \tt_1]} ={\sup}_{R\geq 0} \, \| \la r\ra^{-\frac12} u\|_{L^2 ([\tt_0, \tt_1] \times A_R)},\qquad A_R=\{x\in\Bbb R^3; \, R/2\leq |x|\leq 2R+1\},
\label{ledefintro}\end{equation}
and $\chi$ is a cutoff that selects a small neighborhood of the trapped set.

\begin{remark}
We are thus able to estimate both the energy and the spacetime norm of the solution globally. Moreover, whereas we spend most of the analysis in understanding high frequency behavior near trapping, our estimate includes low frequencies, since the $L^2L^2$ norm of $u$ is included in the local energy norm.
\end{remark}

\begin{remark}
Since one can not control the $L^2$ space time norm of all derivatives close to $I_{ps}$ in the right hand side of \eqref{LEKintro} the best we can do is to control it with the energy
$\|\partial  u\|_{L^{\infty}[\tt_0, \tt] L^2}$ and it follows from Gr\"onwall's lemma that
\begin{equation}\label{LEKintro2}
\|\partial  u\|_{L^{\infty}[\tt_0, \tt_1] L^2}^2 + \|u\|_{LE_K^1[\tt_0, \tt_1]}^2
 \lesssim e^{\,C{\bigone\int}_{\!\!\tt_0}^{\tt_1}\!\! \kappa(\tt)^2 d\tt}\Big(\|\partial u(\tt_0)\|_{L^2}^2 +\|F\|_{L^1[\tt_0, \tt_1] L^2}\Big).
\end{equation}

\end{remark}

\begin{remark} The result for exactly Kerr can be used  to prove that the norms in the left hand side of \eqref{LEKintro2} are bounded under much stronger decay assumptions for perturbations $h\sim \epsilon\, t^{-1-\epsilon}$.
The novelty of our estimate is that we require less decay and that
this is relevant for the applications. In general for nonlinear equations that only satisfy  the weak null condition one can not prove better than $u\sim \epsilon \, t^{-1+\epsilon}$ interior decay even using the most advanced energy methods and even without a black hole \cite{LR10,Lin1, K18}.  Additionally using the nonradial fundamental solution one can for certain equations satisfying the weak null condition prove asymptotics $u\sim \epsilon\, t^{-1}$, see \cite{L17,LS17,CKL19}, but these methods are unlikely to work in the presence of a black hole.
\end{remark}

\begin{remark} We remark that it is natural to assume additional decay for the
time derivative \eqref{cpt2intro} since this is likely to hold in the nonlinear applications. In fact one gets decay estimates from energy estimates for vector fields applied to the equation and the scaling vector field $S=t\pa_t +r\pa_r $
gives a good decay estimate for $\pa_t $ for bounded $r$ since the local energy
gives a good estimate for  $\partial_r$ and for $S$ applied to the equation.
\end{remark}

\begin{remark} As mentioned above one gets decay estimates from energy estimates for vector fields applied to the equation. The decay estimates that scales the same as the energy and local energy estimate are $ u\sim \epsilon \, t^{-1/2}$, so one can not hope to get anything better from just boundedness of the energy and local energy applied to scale invariant vector fields.
However, with $\kappa_1(t)=\epsilon \, t^{-1/2}$ in \eqref{cpt1intro} the estimate \eqref{LEKintro2} grows like $t^{C\epsilon}$ and we would hence not get back bounded energies.
This has to be the case for the highest order energies for the most vector fields applied to the solution. However, for lower order energies the term with $\kappa$ that is causing the growth in \eqref{LEKintro} can be control by the $L^2$ part of the local energy norm, the last term in the left of \eqref{eq:lowerboundLEKnormintro}, from higher order energies, since this
gives control also close to the photon sphere. This means that one can get bounds for lower order energies and hence get back the sharp decay $\epsilon\, t^{-1/2}$ decay. We have carried out this procedure for quasilinear equations close to black hole background in \cite{LT} and here in Section \ref{sec:quasilinear}.
\end{remark}

\begin{remark} In order to prove better than $u\sim \epsilon t^{-1/2} $ decay
in the presence of a black hole one has to use energies with positive $r$ weights, see \cite{DR09,Mos}. For equations satisfying the weak null condition we hope to prove interior decay $u\sim \epsilon \,t^{-1+\epsilon}$. As remarked above one can not expect to prove better.
\end{remark}

\subsubsection{The history}

In the Minkowski case one can prove the stronger estimate
\begin{equation}\label{LEM}
\|\partial  u\|_{L^{\infty}[\tt_0, \tt_1] L^2}^2 + \|u\|_{LE_m^1[\tt_0, \tt_1]}^2
 \lesssim \|\partial u(\tt_0)\|_{L^2}^2 +B(F,u)_{[\tt_0, \tt_1]},
\end{equation}
where
\begin{equation}
\|u\|_{LE_m^1[\tt_0, \tt_1]}=\|\pa u\|_{LE[\tt_0, \tt_1]}
+\|r^{-1} u\|_{LE[\tt_0, \tt_1]}.
\end{equation}

The first estimate of this kind was obtained by Morawetz for the Klein-Gordon equation \cite{M}. There are many similar results obtained in the case of
small perturbations of the Minkowski space-time; see, for example, \cite{KSS}, \cite{KPV}, \cite{SmSo}, \cite{St}, \cite{Strauss},
\cite{Al}, \cite{MS} and \cite{MT}. Even for large perturbations, in the absence of trapping, \eqref{LEM} still sometimes holds, see for instance \cite{BH}, \cite{MST}.
In the presence of trapping, \eqref{LEM} is known to fail, see \cite{Ral}, \cite{Sb}. On the other hand, if the trapping is weak enough, similar types of estimates with loss of regularity have been established, see for instance \cite{Chr}, \cite{NZ}, \cite{WZ}, \cite{BCMP}.

In the Schwarzschild case one can prove that
\begin{equation}\label{LES}
\|\partial  u\|_{L^{\infty}[\tt_0, \tt_1] L^2}^2 + \|u\|_{LE_S^1[\tt_0, \tt_1]}^2
 \lesssim \|\partial u(\tt_0)\|_{L^2}^2 +B(F,u)_{[\tt_0, \tt_1]},
\end{equation}
where
\begin{equation}
\|u\|_{LE_S^1[\tt_0, \tt_1]}=\|(r-3M)r^{-1}\pa u\|_{LE[\tt_0, \tt_1]}
+\|\pa_r  u\|_{LE[\tt_0, \tt_1]}+\|r^{-1} u\|_{LE[\tt_0, \tt_1]}.
\end{equation}
In the previous estimate there is no loss of derivatives, but the weights degenerate at the trapped set.

Local energy estimates were first proved in \cite{LS} for radially
symmetric Schr\"odinger equations on Schwarzschild backgrounds.  In
\cite{BS1, BS2, BSerrata}, those estimates are proved for the wave equation and general initial data.  The same authors, in
\cite{BS3,BS4}, have provided studies that give certain improved
estimates near the photon sphere $r=3M$.  Moreover, we note that
variants of these bounds have played an important role in the works
\cite{BSter} and \cite{DR1}, \cite{DR2} which prove analogues of the
Morawetz conformal estimates on Schwarzschild backgrounds. See also \cite{Schlue} and \cite{LM} for a similar result on higher dimensional Schwarzschild black holes.

The estimate \eqref{LES} with the precise weights at infinity was proved in \cite{MMTT}. The same estimate for perturbations decaying at an integrable rate near the trapped set was established in \cite{MTT}, while for perturbations satisfying weaker conditions akin to \eqref{cpt1intro} and \eqref{cpt2intro} it was proved in \cite{LT}.

The estimate for Kerr with small angular momentum was proven in \cite{TT} (see also \cite{AB} and \cite{DaRoNotes} for related works), for large angular momentum $|a|<M$ in \cite{DRSR}, and for extremal Kerr $|a|=M$ in \cite{Ar}.

There is also a rich literature of high frequency estimates and resonance distribution near weakly trapped sets in a variety of settings, see for instance \cite{GS}, \cite{CVP}, \cite{Chr}, \cite{NZ}, \cite{WZ}. For the Schwarzschild (and de Sitter-Schwarzschild) metric we refer the interested reader to \cite{BH2}, \cite{SZ}, and for the Kerr metric see \cite{DZ}, \cite{Dy1}, \cite{Dy2}. See also the recent paper of Hintz \cite{Hin}, who proves a microlocal estimate near the trapped set for a variety of spacetimes, including asymptotically converging perturbations of Kerr.

\subsubsection{Applications}

In the last two sections we apply our main estimate to two different problems.

The main application of our estimate comes in showing global existence of solutions to quasilinear wave equations $\Box_{g(u, t, x)} u = 0$ with small initial data,
  where the metric $g(u, t, x)$ is close (and asymptotically equal) to a Kerr metric with small angular momentum $g(0,t,x)$. This can be seen as a toy model for Einstein's Equations close to Kerr metrics, and extends previous results of the first author \cite{Lin1} in the Minkowski case and of both authors \cite{LT} in the Schwarzschild case; see also for \cite{Yang1}, \cite{Yang2} for similar results for time dependent metrics close to Minkowski, and \cite{HV} for the asymptotically Kerr- de Sitter case. Once the local energy for perturbations of the Kerr metric, Theorem~\ref{LE}, are established, global existence can be established by using the same methods as in \cite{LT}. We refer the reader to Section 5 for a sketch of the argument, and on how to deal with the minor differences in the proof.

For our second application, we establish sharp pointwise decay estimates for solutions to the linear wave equation on perturbations of Kerr metrics with small angular momentum. In the case of the Schwarzschild metric, the solution to the wave equation was conjectured to decay at the rate of $\tt^{-3}$ on a compact region by Price \cite{Pri}, and this rate of decay was shown to hold for a variety of spacetimes, including Schwarzschild and Kerr with small $a$, see \cite{DSS}, \cite{Tat}, \cite{MTT}. It is by now well understood that once local energy decay is established in a compact region on an asymptotically flat region, one can obtain pointwise decay rates that are related to how fast the metric coefficients decay to the Minkowski metric; see, for example, \cite{Tat}, \cite{MTT}, \cite{OS}, \cite{Mos}, \cite{AAG1}, \cite{AAG2}, \cite{Mor}, \cite{Hin2}.
In this paper, we improve on the result of the second author and collaborators \cite{MTT}, which proves $\tt^{-3}$ decay on compact regions assuming certain conditions on the metric, see Section~\ref{sec:ptwse}.

\subsection{The heuristics} Below we explain why the result should be true, without going into all the technical details.
\subsubsection{The Schwarzschild case}
The estimate \eqref{LES} with the precise weights at infinity was proved in \cite{MMTT} .
The idea of the proof is that there exist a smooth vector field
\[
 \tilde X=C\pa_{\tt} + b(r)\bigthree(1-\frac{3M}r\bigthree) \partial_r + c(r)\partial_{\tt},
\]
and a function $q$ so that
\begin{equation}
\nabla^\alpha P_\alpha[g,\tilde X,q,m] =  \Box_g u \big(\tilde Xu +
 q u\big)+ Q[g,\tilde X,q,m].
\label{divintro}\end{equation}
Here $Q$ and $P$ are quadratic forms in $u$ and $\pa u$ with coefficients depending on $g$ so that
$Q[g_S,\tilde X,\tilde q,m]$
is positive definite, and
\begin{equation}
Q[g_S,\tilde X,\tilde q,m] \gtrsim r^{-2} |\partial_r u|^2 + \big(1-\frac{3M}r
\big)^2 (r^{-2} |\partial_t u|^2 + r^{-1}|\ang u|^2) + r^{-4}
u^2.
\label{posS}\end{equation}
 Let $T$ be the future pointing unit normal on the hyper surface $\tilde{t}=\tilde{t}_1 $ or
the normal pointing towards the black hole at the hyper surface $r=r_e$. We have
\begin{equation}\label{eq:Pboundary}
- P_\alpha[g,X,q,m] T^\alpha \gtrsim |\pa u|^2.
\end{equation}
Integrating \eqref{divintro} over the domain $[\tilde{t}_0,\tilde{t}_1]\times \{r>r_e\}$
gives an interior integral with \eqref{posS} and  boundary integrals with
\eqref{eq:Pboundary}:
\beq
  \int_{\tt_0}^{\tt_1}\!\!\!\int{\Box}_g u\cdot{(\tilde{X}u+q u)}\, \sqrt{|g|}dx dt
=\!\int_{\tt_0}^{\tt_1}\!\!\!\int Q[g,\tilde X,\tilde q,m]
\, \sqrt{|g|} dx dt+\text{BDR}\Big|_{t=\tt_0}^{t=\tt_1}+\text{BDR}\Big|_{r=r_e}.
\eq
\subsubsection{Perturbations of Schwarschild}\label{sec:perturbschwarzschild}
 Let us now consider perturbations $g=g_S+h$. Schematically near the trapped set one has, for any smooth $X$ and $q$ modulo boundary terms
\beq
\int_{\tt_0}^{\tt_1}\!\!\!\int  \Box_h u\, (Xu+qu) dxdt \lesssim\int_{\tt_0}^{\tt_1}\!\!\!\int (|h|+|\pa h|)(|\pa u|^2 + |u|^2) dxdt + BDR,
\eq
so, if for instance,
$
|h| + |\pa h| \lesssim \epsilon \tt^{-1-\delta},
$
the error terms can be absorbed by Gr\"onwall's inequality.

A more careful computation (see \cite{LT}) shows that for the specific $\tilde X$ and $q$ above, we have
\begin{equation}
\int_{\tt_0}^{\tt_1}\!\!\!\int  \Box_h u\, (\tilde Xu+qu) dxdt \lesssim\int_{\tt_0}^{\tt_1}\!\!\!\int  |\pa_{\tt} h| \, |\pa u|^2 + (|h|+|\pa h|)\,\left(|r-3M|  |\pa u|^2+ |\pa_r u| |\pa u|\,
 + |u|^2\right) dx dt +BDR.
\end{equation}
Thus if one only assumes
$
|\pa_{\tt} h| \lesssim \epsilon \tt^{-1-\delta}$
and $|h| + |\pa h| \lesssim \epsilon \tt^{-1/2-\delta}$,
the error terms can be absorbed by using Cauchy Schwarz, Gr\"onwall and the local energy norm.

\subsubsection{Trapped geodesics and a pseudo differential operator vanishing on the trapped set}
In order to define pseudodifferential operators we use  cartesian coordinates $x^j$ and  dual variables in the cotangent bundle $\xi_j$. On the other hand, it is easier to understand trapping in spherical coordinates $(t, r, \phi, \theta)$. Let $\tau$, $\xi_r$, $\Phi$, $\Theta$ be
the dual variables in the cotangent bundle corresponding to $t$, $r$, $\phi$ and $\theta$; they are related to $x_j$ and $\xi_j$ by
\beq\label{eq:cotantrans}
\xi_r = \frac{\pa x^k}{\pa r}\xi_k, \quad \Phi = \frac{\pa x^k}{\pa \phi}\xi_k, \quad \Theta = \frac{\pa x^k}{\pa \theta}\xi_k,\quad \tau=\xi_0.
\eq
Let
\[
p_K( r, \theta,\tau, \xi_r, \Phi,\Theta)=g^{\alpha\beta}_K\xi_\alpha\xi_\beta
=g_K^{tt}\tau^2+2g_K^{t\phi}\tau\Phi+g_K^{\phi\phi}\Phi^2
+g_K^{rr}\xi_r^2 +g_K^{\theta\theta}\Theta^2,
\]
 be the principal symbol of $\Box_{g_K} $.
 On any null geodesic one has
\begin{equation}\label{Hamintro}
 p_K ( r, \theta,\tau, \xi_r, \Phi,\Theta) =0.
\end{equation}
The Hamilton flow equations also give us, in particular, that
\begin{equation}\label{rdotintro}
 \dot{r}=-\frac{\partial p_K}{\partial \xi_r}=-2\rho^{-2} {\Delta} \xi_r,
 \qquad\text{where}\quad {\Delta}=r^2-2Mr +a^2,
\end{equation}
\begin{equation}\label{xidotaintro}
  \dot{\xi_r}=\frac{\partial p_K}{\partial r}=
  - 2\widehat{R}_a(r,\Phi/\tau)\rho^{-2} \tau^2 +  \partial_r(\rho^{-2}) p_K + 2(r-M)\rho^{-2}\xi_r^2 ,
\end{equation}
where
\[
\widehat{R}_a(r,\Phi/\tau) =\widetilde{R}_a(r,\Phi/\tau)\big(r-r_a(\Phi/\tau)\big),\qquad
\widetilde{R}_0=(1-\tfrac{2M}{r})^{-2},\quad r_0=3M.
\]
As it turn out there are trapped null geodesics
satisfying
\begin{equation}
r-r_a(\Phi/\tau)=0, \qquad \xi_r=0,\qquad \Phi=\Phi_0,\qquad \tau=\tau_0.
\end{equation}
 Heuristically we will replace the vector field from Schwarzschild with a pseudo differential operator of order $1$ with symbol
\begin{equation}
\widetilde{s}(x,\xi)=i\big(r-r_a(\Phi/\tau)\big)\xi_r.
\end{equation}

\subsubsection{Positive commutator estimates}
Let $P_g=D_{\alpha} g^{\alpha\beta} D_{\beta}$ be a symmetric operator. Suppose $S$ is a skew-adjoint and $E$ is a self-adjoint operator; in what follows, $S$ will be taken to be a pseudo differential operator of order $1$ generalizing the vector field $X$ used in Schwarzschild and $E$ will be a pseudo differential operator of order $0$ generalizing the function $q$ in Schwarzschild.
Integrating by parts and using that $S$ is skew adjoint and $E$ self adjoint  we obtain
modulo boundary terms
\beq
\label{eq:Qhatdef0intro}
 \Re \int_{\tt_0}^{\tt_1}\!\!\!\int P_g u\cdot \overline{(Su+E u)}\, dx dt
=\!\int_{\tt_0}^{\tt_1}\!\!\!\int \widehat{T[P_g]} u\cdot\overline{u}\, \ dx dt+\text{BDR}\Big|_{t=\tt_0}^{t=\tt_1},
\eq
where
\beq
\widehat{T[P_g]}\!=\frac12\big([P_{g},S] + P_{g} E + E\,P_{g}\big).
\eq
We now apply this to
\[
P_g = \widehat{\Box}_g=\sqrt{|g|}\,\Box_g={\pa}_\alpha (\widehat{g}^{\alpha\beta} {\pa}_\beta),
\]
where $\widehat{g}^{\alpha\beta}=\sqrt{|g|}\, g^{\alpha\beta}$, is
also symmetric with respect to $dx dt$. In this case, we also use the abbreviated notation $\widehat{T}_g = \widehat{T[P_g]}$.

$ \widehat{T}_g$ is an operator of order $2$ and after further integration by parts
will give a quadratic form. The principal symbol of the commutator $[\widehat{\Box}_g,S]$ is given by the Poisson bracket
\begin{equation}\label{eq:poisson}
\{\widehat{p},s\}=\frac{\partial \widehat{p}}{\partial{\xi_\alpha}} \, \frac{\partial s}{\partial{x^\alpha}}-\frac{\partial \widehat{p}}{\partial{x^\alpha}}\, \frac{\partial s}{\partial{\xi_\alpha}},\qquad\text{where}\quad
\widehat{p}=\sqrt{|g|}\,p .
\end{equation}

Let us now look at the special case of the Kerr metric. The Poisson bracket is invariant under changes of coordinates and takes a particularly simple form in spherical coordinates due to the form of $p_K$ and $\widetilde{s}$:
\begin{equation}
\{\widehat{p}_K,\widetilde{s}\}=\frac{\partial \widehat{p}_K}{\partial \xi_r} \frac{\partial \widetilde{s}}{\partial r}
-\frac{\partial \widehat{p}_K}{\partial r} \frac{\partial \widetilde{s}}{\partial \xi_r}.\label{eq:poissonkerr}
\end{equation}
 We get
\begin{equation}
\{\rho^2 p_K,\widetilde{s}\}=2 \widetilde{R}_a(r,\Phi/\tau)\big(r-r_a(\Phi/\tau)\big)^2 \tau^2+2\bigtwo(\triangle  -(r-M) \big(r-r_a(r,\Phi/\tau)\big)\bigtwo)\xi_r^2 ,
\end{equation}
and hence
\begin{equation}
\{\rho^2 p_K,\widetilde{s}\}\geq \widetilde{\mu}_0^2 + \widetilde{\mu}_r^2,\qquad \text{where} \quad
 \widetilde{\mu}_0=\big(r-r_a(\Phi/\tau)\big)\tau/4,\quad  \widetilde{\mu}_r= 2M\xi_r  ,
\end{equation}
when $|r-3M|<M/4$ and $a$ is sufficiently small.

Hence control of the commutator
would, after integrating one of the factors by parts, give us control of the space time integrals
$\| \widetilde{\mu}_0(x,D) u\|_{L^2_{ps}}$ and $\| \widetilde{\mu}_r(x,D) u\|_{L^2_{ps}}$, modulo lower order terms.

Similarly with the principal symbol of $E$
\begin{equation}
\widetilde{e}= c\big(r-r_a(r,\Phi/\tau)\big)^2,\qquad c>0 .
\end{equation}
The term $E\,\Box_{g_K} +\Box_{g_K}E$ has principal symbol
\begin{equation}
2 c\big(r-r_a(r,\Phi/\tau)\big)^2 p_K,
\end{equation}
which given that we have control of $ \widetilde{\mu}_0$ gives us control of the additional derivatives $ \widetilde{\mu}_{i}(x,\xi)=\big(r-r_a(r,\Phi/\tau)\big)\xi_i$.

Following \cite{TT}, to localize the estimates in time we will replace $\widetilde{s}$ by an operator that is a differential operator in time and pseudo differential in space only. In this case the proof is quite a bit more involved, and the above argument is only heuristic.

\subsubsection{Kerr with small angular momentum $a$}\label{sec:smalla}
 When using Poisson bracket to calculate the leading order term in the commutator of pseudo differential operators as described above we produce lower order errors. However the pseudo differential operators can be constructed so that they are equal to the differential operators used in the Schwarzschild case plus a pseudo differential part of size $O(a)$.
Therefore also the lower order errors are bounded by a constant times $a$  multiplied by the space time $L^2$ norm of the function itself.
Moreover since we control the space time $L^2$ norm of the function itself in the Schwarzschild case we will be able to absorb these errors and do so in the Kerr case for sufficiently small $a$.

There are additional errors coming from cutoffs that we have to multiply the pseudo differential corrections with to localize them close to the trapped set. These are also of size $a$ but multiplied by the derivative of the function. They can be controlled because they are supported away from the trapped set, where the $LE_K^1$ norm does not degenerate.

\subsubsection{Perturbations of Kerr}
We now consider symmetric operators $P_{\widehat h}=D_{\alpha} h^{\alpha\beta} D_{\beta}$ with $h$ satisfying \eqref{rdecayintro}-\eqref{holderintro}.  Integration by parts in \eqref{eq:Qhatdef0intro} gives
modulo different boundary terms
\beq
 \Re \int_{\tt_0}^{\tt_1}\!\!\!\int P_{\widehat h} u\cdot \overline{(Su+E u)}\, dx dt
=\!\int_{\tt_0}^{\tt_1}\!\!\!\int \widehat{Q}[P_{\widehat h},S,E]\, \ dx dt+\text{BDR}\Big|_{t=\tt_0}^{t=\tt_1},
\label{eq:Qhatdef0intro2}
\eq
where
\beq\label{bfintro}
\widehat{Q}[P_{\widehat h},S,E]\!= \Re \Bigl(h^{\alpha\beta}[D_{\beta}, S]u \cdot \overline{D_{\alpha} u} + \frac12 [ h^{\alpha\beta}\!, S] D_{\beta} u\cdot \overline{D_{\alpha} u} +  h^{\alpha\beta} E D_{\beta} u\cdot \overline{D_{\alpha} u} + \frac12  h^{\alpha\beta} [D_{\beta},E] u\cdot \overline{D_{\alpha}u} \Bigr).
\eq
We can now consider $\widetilde{S}$ and $\widetilde{E}$ with symbols $\widetilde{s}$ and $\widetilde{e}$ defined above.
The principal symbol of $[D_\beta, \widetilde{S}]$ is given by the Poisson bracket
$\bigtwo\{ \xi_\beta, \big(r\!-\!r_a(\Phi/\tau)\big)\xi_r\bigtwo\}=\omega_\beta \xi_r$.  Therefore the first term can be estimated by
$|h| \, |\pa_r u|\, |\pa u|$ plus lower order. Similarly the principal symbol of
$\big[h^{\alpha\beta}\!,\widetilde{S}\big]$ given by
$\bigtwo\{h^{\alpha\beta}\! , \big(r\!-\!r_a(\Phi/\tau)\big)\xi_r\bigtwo\}=\pa_{x^\gamma} h^{\alpha\beta}\pa_{\xi^\gamma} \big(\big(r\!-\!r_a(\Phi/\tau)\big)\xi_r\big)$,
has a factor of either a factor of $\xi_r$ or $r-r_a(\Phi/\tau)$ multiplied by $\pa h$. The third term also has at least one factor of $r-r_a(\Phi/\tau)$ and the last is lower order.

Again the above is only heuristics since the actual proof will use operators that are only differential operators in time and pseudo differential in space.
Moreover one can not just use the standard pseudo differential calculus for the terms involving $h$ since we require finite regularity of $h$.

\subsection{The proof} In section 2 we start by giving a more streamlined overview of the
proof from \cite{TT} in the Kerr case described below in sections \ref{sec:schwarz}-\ref{sec:operators}. Then below we continue to describe our proof
for perturbations in sections \ref{sec:vansihing}-\ref{sec:timederivatives}

\subsubsection{The Schwarzschild multiplier written as a pseudo differential operators}
\label{sec:schwarz} The microlocal analysis close the trapped set alone does not give us an estimate for the $L^2$ norm of the function itself but rather is assuming this to control lower terms. This analysis therefore has to be combined with the
global estimates already done for Schwarzschild. Since we are assuming that the angular momentum $a$ is small, Kerr can be considered as a small perturbation of
Schwarzschild away from the trapped set where the norms are not degenerate.   However close to the trapped set it is not a small perturbation and has to be dealt with using microlocal analysis.

We must therefore first write the Schwarzschild multipliers as pseudo differential operators close to the trapped set in order to write the operators in the Kerr case as a perturbations of size $a$ supported close to the trapped set of the Schwarzschild case. With respect to the Weyl quantization, any vector field $X=X^j \pa_j$ can be written as
\[
X= i\left((X^j \xi_j)^w - (\pa_j X^j)/2\right)
\]

In particular, we see that
\[
\frac1{i}b(r)\bigthree(1-\frac{3M}r\bigthree) \pa_r = \left(b(r)\bigthree(1-\frac{3M}r\bigthree) \xi_r \right)^w - \frac{1}{2r^2}\pa_r \left(r^2 b(r)\bigthree(1-\frac{3M}r\bigthree)\right)
\]
and thus near the trapped set we can express the multiplier in the Weyl quantization as
\[
b(r)\bigthree(1-\frac{3M}r\bigthree) \pa_r + q(r) = i\left(b(r)\bigthree(1-\frac{3M}r\bigthree) \xi_r \right)^w + q_S^w, \quad q_S = q - \frac{1}{2r^2}\pa_r \left(r^2 b(r)\bigthree(1-\frac{3M}r\bigthree)\right)
\]

\subsubsection{Construction of the space operators and the local energy norm}\label{sec:operators}
 One could now try to define a spacetime multiplier by quantizing a (multiple of) $\varrho_a=r-r_a(\Phi/\tau)$. Whereas this works well at the symbol level, it has the disadvantage of having a complicated dependence on $\tau$, which makes it inconvenient for energy estimates on constant time slices.

 From now on, we will use $\, \widetilde{} \,$ on top of a symbol to denote homogenous symbols.

We factor
 \begin{equation}\label{tauidef}
p_K = g_K^{tt}(\tau -\ttau_1) (\tau -\ttau_2),
\end{equation}
where $\ttau_i=\ttau_i( r, \phi, \theta, \xi_r, \Phi,\Theta)$,are real distinct smooth $1$-homogeneous symbols with respect to the space Fourier variables.

On the cone $\tau = \ttau_i$ the symbol $r-r_a(\Phi/\tau)$ equals
\begin{equation}\label{cidef}
\tvarrho_i ( r, \phi, \xi_r, \Phi,\Theta) = r- r_i( r, \phi, \theta, \xi_r, \Phi,\Theta),\qquad\text{where} \quad r_i=r_a(\Phi/\tau_i) , \qquad i = 1,2.
\end{equation}

Note that when $a=0$ then ${b(r)}r^{-1}\widetilde{s} = b(r)\bigthree(1-\frac{3M}r\bigthree) \xi_r$, i.e. the symbol of the vector field used in the Schwarzschild case. We can thus write ${b(r)}r^{-1}\widetilde{s}$ as a linear function of $\tau$ plus a smooth function of $\tau$ times $(\tau-\ttau_1)(\tau-\ttau_2)$:
\begin{equation}\label{XKdefintro}
b(r)\bigthree(1-\frac{r_a(\Phi/\tau)}r\bigthree) \xi_r = b(r)\bigthree(1-\frac{3M}r\bigthree) \xi_r  +  \ts_1(r,\xi_r, \theta,\Theta,\Phi)+
\ts_0(r,\xi, \theta,\Theta,\Phi) \tau+ h(\tau, r,\xi, \theta,\Theta,\Phi) p_K,
\end{equation}
with $\ts_1 \in aS^1_{hom}$, $\ts_0 \in aS^0_{hom}$, and $h$ a homogeneous symbol of spacetime.

We now define
$$
\frac1{i} \ts_K=b(r)\bigthree(1-\frac{3M}r\bigthree) \xi_r +\ts_1 +\ts_0 \tau .
$$
It is possible to pick $\te_K$ so that $\te_K - q_S = \te_0 + \te_{-1}\tau \in aS_{hom}^0 + a\tau S_{hom}^{-1}$, and
\[
\frac{1}{2i}\{\widehat {p}_K, \ts_K\} + \te_K \widehat p_K = \sum_{j=1}^8 \mu_j^2.
\]
Moreover, the symbols $\tvarrho_1(\tau-\ttau_2)$,
$\tvarrho_2(\tau-\ttau_1)$ and $\xi_r$ can be written as linear combinations of $\mu_j$ with coefficients in $S^0$.  See Lemma~\ref{TTmain} for more details.

In order to use smooth symbols, we define $\varrho_i$, $\tau_i$, $s_i$ and $e_i$ by multiplying $\tvarrho_i$, $\ttau_i$ and $\ts_i$ by a suitable cutoff in frequency that removes the singularity at $0$, and define
\[
\frac1{i} s_K=b(r)\bigthree(1-\frac{3M}r\bigthree) \xi_r +s_1 +s_0 \tau, \quad e_K = q_S + e_0 + e_{-1}\tau
\]

Given \eqref{eq:Qhatdef0intro} we can estimate $\int |M_j u|^2$ for suitably defined operators with symbol $\mu_j$, which in turn will control the local energy norm defined so that
\beq\label{eq:LElocalbounds}
\begin{split}
\| \varrho_2(D, x) \chi (D_t-\tau_1(D, x)\chi u\|_{L^2[\tt_0, \tt_1]L^2_{ps}}^2&+\| \varrho_1(D, x)\chi (D_t-\ttau_2(D,x)) \chi u\|_{L^2[\tt_0, \tt_1]L^2_{ps}}^2 \\ &
+\| D_r u\|_{L^2[\tt_0, \tt_1]L^2_{ps}}^2
\lesssim\|u\|_{LE_K^1[\tt_0, \tt_1]}^2.
\end{split}\eq
for suitable cutoffs $\chi$ supported near the trapped set. The $L^2$ norm can be absorbed in the positive $L^2$ term from the Schwarzschild case for $|a|\ll M$.

\subsubsection{Vanishing of symbols on the trapped set and estimating the perturbation}
\label{sec:vansihing}
The crucial observation that allows us to estimate the error term is the fact that the symbols $\ts_K$, $\pa \ts_K$ and $\te_K$ vanish on the trapped set. More precisely, we can explicitly compute
\begin{equation}\label{sKdecintro}
\frac{1}{i} \ts_K = \frac{b(r)}{r} \, \frac{\tvarrho_1(\tau-\ttau_2)-\tvarrho_2(\tau-\ttau_1)}{\ttau_1-\ttau_2} \,\, \xi_r,
\end{equation}
which implies that exist homogeneous symbols $s_{ij}^\ell$  of order $\ell$ so that
\begin{align}
 \label{DxsKdecintro}
   \pa_{x^i} \ts_K &= s_{1i}^{\,{\,}_{\!} 0}\, \tvarrho_1(\tau-\ttau_2) + s_{2i}^{\,{\,}_{\!}0}\, \tvarrho_2(\tau-\ttau_1) + (s_{3i}^{\,{\,}_{\!}0} + s_{0i}^{-1}\tau)\xi_r, \\
   \pa_{\xi_j} \ts_K &=  s_{1j}^{-1} \tvarrho_1(\tau-\ttau_2) + s_{2j}^{-1} \tvarrho_2(\tau-\ttau_1) + ( s_{3j}^{-1} +  s_{0j}^{-2}\tau)\xi_r,
\end{align}
Moreover, there are homogeneous symbols $e_i^\ell,e_{ij}^\ell$  of order $\ell$ so that
\begin{equation}\label{qKdecintro}
\te_K = e_1^{-1} \tvarrho_1(\tau-\ttau_2) + e_2^{-1}\varrho_2(\tau-\ttau_1) + (e_3^{-1} + e_0^{-2}\tau) \xi_r.
\end{equation}

This is the content of Lemma~\ref{qK}, and it was not obvious, since the symbols $\ts_K$ and $\te_K$ were defined indirectly in \cite{TT}. In particular the symbol $\te_K$ seems difficult to compute explicitly, and we use an indirect argument to show that it vanishes.

This observation allows us to conclude that the principal term coming from the perturbation $h$ in the bilinear form $\widehat{Q}$ in \eqref{bfintro} satisfies
\beq\label{bfintro2}
\widehat{Q}[h,S,E]\!= \Re \bigtwo(\widehat{Q}^\alpha [h,S,E] u \cdot \overline{D_{\alpha}u} \bigtwo),
\end{equation}
where
\begin{equation}\label{eq:Qhatoperator}
\widehat{Q}^\alpha[h,S,E]\!= h^{\alpha\beta}[D_{\beta}, S] + \frac12 [ h^{\alpha\beta}\!, S] D_{\beta} +  h^{\alpha\beta} E D_{\beta} + \frac12  h^{\alpha\beta} [D_{\beta},E] .
\eq
has principal symbol
\begin{multline}
\frac1i\left(h^{\alpha i}\pa_{x^i} s_K -\frac{1}{2}\pa_{x^\beta} h^{\alpha \beta}\, \pa_{\xi^\beta} s_K\right)+h^{\alpha \beta} e_K \xi_\beta 
=\sum\limits_{|\gamma|\leq 1} \pa^\gamma h^{\alpha\beta}\bigthree( d_{1\gamma\beta}^{\,0}\, \varrho_1(\tau-\tau_2) + d_{2\gamma\beta}^{\,0} \,\varrho_2(\tau-\tau_1) + \bigtwo( d_{3\gamma\beta}^{\,0} +  d_{0\gamma\beta}^{-1}\,\tau\bigtwo)\xi_r\bigthree),
\end{multline}
where $d_{k\gamma\beta}^{\,\ell}$ and $d_{ki}^{\,\ell}$ are operators of order $\ell$.
In other words each of the terms has a factor of the operators that we can estimate in \eqref{eq:LElocalbounds} and \eqref{eq:localenergyDxminusoneDt} below.
In turn, this allows us to estimate the principal part of the error by the right hand side of \eqref{LEKintro}, see Proposition~\ref{commhest}.

\subsubsection{Pseudo differential operators with low regularity coefficients}
\label{sec:lowregularity}
In order to estimate the lower order terms coming from $h$, we need $L^2 \to L^2$ and $H^{-1} \to H^{-1}$ bounds for operators with symbols with limited regularity. Since such results are not as standard in the literature as the corresponding results for classical symbols, we provide all the needed estimates in Section 3. We also tried to optimize the number of derivatives on $h$ that one must control. We believe that this is sharp, with the possible exception of the $\delta$ in \eqref{holderintro}.

We first notice that the first term in \eqref{eq:Qhatoperator}, i.e.
$ h^{\alpha\beta}[D_{\beta}, S] $,  is particularly easy to estimate
since we don't have to worry about the regularity of $[D_{\beta}, S]$, and its principal part is estimated above. We write
\begin{equation}
\langle h^{\alpha\beta}[D_{\beta}, S]u,D_\alpha u\rangle
= \langle [D_{\beta}, S]u, h^{\alpha\beta} D_\alpha u\rangle,
\end{equation}
and bound the first term by local energy and the second by $\|h^{\alpha\beta}\|_{L^\infty}\|D u\|_{L^2}$.

We would like to try to do the same thing for the second term in \eqref{eq:Qhatoperator}, i.e.
$[ h^{\alpha\beta}\!, S] D_{\beta}  $.
The explicit expression for the Weyl quantization is
\begin{equation}\label{eq:lowregularitypseudodiff}
\int [h^{\alpha\beta}\! , S ]{D}_j u\cdot \overline{{D}_\beta u} \,dx
=\iiint  [h^{\alpha\beta}(x)-h^{\alpha\beta}(y)] [s_K(\tfrac{x+y}{2}, \xi,D_t) {D}_j u(y) e^{i(x-y)\xi}
  \overline{{D}_\beta u(x)} \, d\xi dy \, dx.
\end{equation}
This can already be seen to be of the same form, if one moves $h^{\alpha\beta}(x)$ to ${D}_\beta u(x)$ and $h^{\alpha\beta}(y)$ to ${D}_j u(y)$. However, this wouldn't take into account the cancellation which makes the commutator lower order.
We now write
\begin{equation}
h^{\alpha\beta}(x)-h^{\alpha\beta}(y) = \frac12(x-y)^k \big(\pa_{x^k} h^{\alpha\beta}(x) + \pa_{y^k} h^{\alpha\beta}(y)\big) + r^{\alpha\beta}(x,y).
\end{equation}
where these things are just multiplications and the remainder $r^{j\beta}$ cancels to higher order $|x-y|^{3+\delta}$.
The factor $i(x^k-y^k) e^{i(x-y)\xi}=\pa_{\xi_k} e^{i(x-y)\xi}$ can be integrated by parts to fall on  $s_K(\tfrac{x+y}{2}, \xi,D_t) $ and decrease the order by one and then we are left with multiplication operators by $\pa_{x^k} h^{\alpha\beta}(x) $ that can multiply ${D}_\beta u(x)$  and $\pa_{y^k} h^{\alpha\beta}(y)$ that can multiply ${D}_j u(y) $. A similar argument is used to control the remainder when the difference in \eqref{eq:lowregularitypseudodiff} is replaced by $r^{\alpha\beta}$. Since it cancels to order
$|x-y|^{3+\delta}$ we can as above integrate by parts in $\xi$ three times to reduce the order of the operator so the integral
\eqref{eq:lowregularitypseudodiff} is a bilinear form on $L^2$,
see Lemma \ref{lem:theerrorlemma} and Proposition~\ref{commhest}. It is here that we need extra smallness (but not decay!) assumptions on higher order derivatives of $h$.

\subsubsection{How to deal with time derivatives}
\label{sec:timederivatives}
There are two technical issues involving the presence of time derivatives that we need to address. Both of these issues stem from the fact that $S$ and $E$ contain a term including $\pa_t$, and thus do not appear in the Schwarzschild case.

The first issue is that for general metrics $g$ and operators $S$ and $E$ the operator $\widehat{T_g}$ could have a term that has three time derivatives. This issue already arises in the case of the Kerr metric, and it was dealt with by imposing on $S$ and $E$ an extra condition, see \eqref{nodt3}, under which there is no such term.

The same problem arises for perturbations, but we can deal with it in a simpler way.  Our approach will be to multiply $S$ and $E$ by a suitable function $f_0$ to achieve that $h^{00}=0$. If we multiply
$\widehat{\Box}_g={\pa}_\alpha (\widehat{g}^{\alpha\beta} {\pa}_\beta)$, where $\widehat{g}^{\alpha\beta}=\sqrt{|g|}\, g^{\alpha\beta}$, with $f_0$
we get
\begin{equation}\label{eq:makingh00equalto0}
f_0\widehat{\Box}_g u
 ={\pa}_\alpha \big(f_0\widehat{g}^{\alpha\beta} {\pa}_\beta u\big)
-\widehat{g}^{\alpha\beta}\pa_\alpha f_0\,\,   {\pa}_\beta u  .
\end{equation}
We achieve that for the perturbation $h^{00}\!=0$ by choosing $f_0$ so that
$f_0\widehat{g}^{00}\!=\widehat{g}_K^{00}$ close to the trapped set, see \eqref{f0def}.
This destroys the symmetry of the operator but the additional term introduced in the energy identity \eqref{eq:Qhatdef0intro} is under control since it is the additional term in \eqref{eq:makingh00equalto0}, where $\pa f_0=O(\pa h)$,  multiplied by the operator $(S+E)u$ that has the cancellation on the trapped set.

The second issue is that the errors will have terms containing $D_t \la D_x\ra^{-1}$ which are not directly included in the local energy norms. We thus introduce a new norm,
\[
\|D_t u\|_{LE^0[\tt_0, \tt_1]} = \|\chi D_r \la D_x\ra^{-1} \chi D_t u\|_{L^2[\tt_0, \tt_1]L^2} + \|\chi \la D_x\ra^{-1} \chi D_t u\|_{L^2[\tt_0, \tt_1]L^2},
\]
and use the equation to control this by the local energy norms and the inhomogeneous term. In particular we prove that (see \eqref{LEtest})
\begin{equation}\label{eq:localenergyDxminusoneDt}
\|D_t u\|_{LE^0[\tt_0, \tt_1]}^2 \lesssim \|u\|_{LE_K^1[\tt_0, \tt_1]}^2 +  E[u](\tt_0)+E[u](\tt_1) + \|\kappa \pa u\, \|_{L^2_{ps}[\tt_0, \tt_1]}^2+B(F,u)_{[\tt_0, \tt_1]}.
\end{equation}

\bigskip

\newsection{Local energy estimates on Kerr backgrounds}
\subsection{The setup and statement}
\subsubsection{The coordinates}\label{sec:thecoordinates}
 The Kerr geometry in Boyer-Lindquist coordinates is given by
\[
ds^2 = g^K_{tt}dt^2 + g_{t\phi}dtd\phi + g^K_{rr}dr^2 + g^K_{\phi\phi}d\phi^2,
 + g^K_{\theta\theta}d\theta^2
\]
 where $t \in \R$, $r > 0$, $(\phi,\theta)$ are the spherical coordinates
on $\S^2$ and
\[
 g^K_{tt}=-\frac{\Delta-a^2\sin^2\theta}{\rho^2}, \qquad
 g^K_{t\phi}=-2a\frac{2Mr\sin^2\theta}{\rho^2}, \qquad
 g^K_{rr}=\frac{\rho^2}{\Delta},
 \]
\[ g^K_{\phi\phi}=\frac{(r^2+a^2)^2-a^2\Delta
\sin^2\theta}{\rho^2}\sin^2\theta, \qquad g^K_{\theta\theta}={\rho^2},
\]
with
\[
\Delta=r^2-2Mr+a^2, \qquad \rho^2=r^2+a^2\cos^2\theta.
\]

 Here $M$ represents the mass of the black hole, and $aM$ its angular momentum.

 A straightforward computation gives us the inverse of the metric:
\[ g_K^{tt}=-\frac{(r^2+a^2)^2-a^2\Delta\sin^2\theta}{\rho^2\Delta},
\qquad g_K^{t\phi}=-a\frac{2Mr}{\rho^2\Delta}, \qquad
g_K^{rr}=\frac{\Delta}{\rho^2},
\]
\[ g_K^{\phi\phi}=\frac{\Delta-a^2\sin^2\theta}{\rho^2\Delta\sin^2\theta}
, \qquad g_K^{\theta\theta}=\frac{1}{\rho^2}.
\]

The case $a = 0$ corresponds to the Schwarzschild space-time.  We shall
subsequently assume that $a$ is small $0 < a \ll M$, so that the Kerr
metric is a small perturbation of the Schwarzschild metric. Note also that the coefficients depend only $r$ and $\theta$ but are independent of
$\phi$ and $t$. We denote the Kerr metric by $g_K$, and the Schwarzschild metric by $g_S$. For any Lorentzian metric $g$, let
$\Box_{g} $ denote the d'Alembertian associated to it.

In the above coordinates the Kerr metric has singularities at $r = 0$,
on the equator $\theta = \pi/2$, and at the roots of $\Delta$, namely
$r_{\pm}=M\pm\sqrt{M^2-a^2}$. To remove the singularities at $r = r_{\pm}$ we
introduce functions $r_K^*=r_K^*(r)$, $v_{+}=t+r_K^*$ and $\phi_{+}=\phi_{+}(\phi,r)$ so that (see
\cite{HE})
\[
 dr_K^*=(r^2+a^2)\Delta^{-1}dr,
\qquad
 dv_{+}=dt+dr_K^*,
\qquad
 d\phi_{+}=d\phi+a\Delta^{-1}dr.
\]

Note that when $a=0$ the $r_K^*$ coordinate becomes the Schwarzschild Regge-Wheeler coordinate
\[
\rs=r+2M\log(r-2M)
\]

The Kerr metric can be written in the new coordinates $(v_+, r, \phi_+, \theta)$
\[
\begin{split}
ds^2= &\
-(1-\frac{2Mr}{\rho^2})dv_{+}^2+2drdv_{+}-4a\rho^{-2}Mr\sin^2\theta
dv_{+}d\phi_{+} -2a\sin^2\theta dr d\phi_{+} +\rho^2 d\theta^2 \\
& \ +\rho^{-2}[(r^2+a^2)^2-\Delta a^2\sin^2\theta]\sin^2\theta \,
d\phi_{+}^2
\end{split}
\]
which is smooth and nondegenerate across the event horizon up to but not including
$r = 0$. We introduce the function
\[
\tt = v_{+} - \mu(r),
\]
where $\mu$ is a smooth function of $r$. In the $(\tt,r,\phi_{+},
\theta)$ coordinates the metric has the form
\[
\begin{split}
ds^2= &\ (1-\frac{2Mr}{\rho^2}) d\tt^2
+2\left(1-(1-\frac{2Mr}{\rho^2})\mu'(r)\right) d\tt dr \\
 &\ -4a\rho^{-2}Mr\sin^2\theta d\tt d\phi_{+} + \Bigl(2 \mu'(r) -
 (1-\frac{2Mr}{\rho^2}) (\mu'(r))^2\Bigr)  dr^2 \\
 &\ -2a\theta (1+2\rho^{-2}Mr\mu' (r))\sin^2dr d\phi_{+} +\rho^2
 d\theta^2 \\
 &\ +\rho^{-2}[(r^2+a^2)^2-\Delta a^2\sin^2\theta]\sin^2\theta
d\phi_{+}^2.
\end{split}
\]

On the function $\mu$ we impose the following two conditions:

(i) $\mu (r) \geq  \rs$ for $r > 2M$, with equality for $r >
{5M}/2$.

(ii)  The surfaces $\tt = const$ are space-like, i.e.
\[
\mu'(r) > 0, \qquad 2 - (1-\frac{2Mr}{\rho^2}) \mu'(r) > 0.
\]
As long as $a$ is small, we can use the same
function $\mu$ as in the case of the Schwarzschild space-time in \cite{MMTT}.

 For convenience we also introduce
\[
\tphi = \zeta(r)\phi_{+}+(1-\zeta(r))\phi,
\]
where $\zeta$ is a cutoff function supported near the event horizon
and work in the $(\tt,r,\tphi, \theta)$ coordinates which are
identical to $(t,r,\phi,\theta)$ outside of a small neighborhood of
the event horizon,  and in particular near the trapped set, where most of our analysis takes place.

Much of the analysis we will need to do, in particular using the Fourier transforms close to the photon sphere, are easiest to do in rectangular coordinates $x\in\Bbb R^3$.
We will therefore let the rectangular coordinates $(x^1,x^2,x^3)$ stand for the coordinates which in spherical coordinates are given by $(r,\phi,\theta)$, and we let $x^0=\tilde{t}$.

\subsubsection{The domain and boundary energies}
Given $0 < r_e <r_{+}$, we consider the wave equation
\begin{equation}
 \Box_{g_K} u = f,
 \label{boxsinhom}\end{equation}
in the cylindrical region
\begin{equation}
 \M_{[\tt_0, \tt_1]} =  \{ \tt_0 \leq \tt \leq \tt_1, \ r \geq r_e \}.
\label{mr}\end{equation}

The lateral boundary of $\M_{[\tt_0, \tt_1]}$,
\begin{equation}
 \Sigma_R^+ =   \M_{[\tt_0, \tt_1]} \cap \{ r = r_e\},
\label{mr+}\end{equation}
is space-like, and can be thought of as the exit surface
for all waves which cross the event horizon.

We define the  outgoing energy on $\Sigma_R^+$ as
\begin{equation}\label{energy2}
 E[u](\Sigma_R^+) = \int_{\Sigma_R^+}
 \left(  |\partial_r u|^2 +   |\partial_t u|^2    +
|\ang u|^2 \right) r_e^2   dt d\omega,
\end{equation}
and the energy on an arbitrary slice $T=\tt$  as
\begin{equation}\label{energy3}
 E[u](T) = \int_{ \M_{[\tt_0, \tt_1]} \cap \{\tt = T\}}
 \left(
|\partial_r u|^2 +   |\partial_t u|^2    + |\ang u|^2
\right) r^2  dr  d\omega.
\end{equation}

\subsubsection{Trapped geodesics}
We will use the results from \cite{TT}, which we now recall. In order to define pseudodifferential operators, we will use the usual cartesian coordinates $x^j$ and the dual variables in the cotangent bundle $\xi_j$. On the other hand, it is easier to understand trapping in spherical coordinates $(t, r, \phi, \theta)$. Let $\tau$, $\xi_r$, $\Phi$ and $\Theta$ be
the dual variables in the cotangent bundle corresponding to $t$, $r$, $\phi$ and $\theta$; they are related to $x_j$ and $\xi_j$ by
\[
\xi_r = \frac{\pa x^k}{\pa r}\xi_k, \quad \Phi = \frac{\pa x^k}{\pa \phi}\xi_k, \quad \Theta = \frac{\pa x^k}{\pa \theta}\xi_k,\quad \tau=\xi_0.
\]

Let
\[
p_K( r, \phi,\tau, \xi_r, \Phi,\Theta)=g^{\alpha\beta}_K\xi_\alpha\xi_\beta
=g_K^{tt}\tau^2+2g_K^{t\phi}\tau\Phi+g_K^{\phi\phi}\Phi^2
+g_K^{rr}\xi_r^2 +g_K^{\theta\theta}\Theta^2,
\]
 be the principal symbol of $\Box_{g_K} $.

 On any null geodesic one has
\begin{equation}\label{Ham}
 p_K ( r, \phi,\tau, \xi_r, \Phi,\Theta) =0.
\end{equation}

The Hamilton flow equations also give us, in particular, that
\begin{equation}\label{rdot}
 \dot{r}=-\frac{\partial p_K}{\partial \xi_r}=-\frac{2\Delta}{\rho^2} \xi_r,
\end{equation}
\begin{equation}\label{xidota}
  \rho^2 \dot{\xi_r}=\rho^2 \frac{\partial p_K}{\partial r}=
  - 2R_a(r,\tau,\Phi) \Delta^{-2} + \rho^2 \partial_r(\rho^{-2}) p_K + 2(r-M)\xi^2,
\end{equation}
where
\[
R_a(r,\tau,\Phi) =
(r^2+a^2)(r^3-3Mr^2+a^2r+a^2M)\tau^2 - 2aM(r^2-a^2)\tau\Phi
 - a^2(r-M)\Phi^2.
\]

 As a consequence, all trapped null geodesics in the exterior $r>r_+$ must lie in the region $|r-3M| \leq 2a$ and also satisfy (see \cite{TT} for more details):
 \begin{equation}\label{xi}
 \xi_r= 0,
 \end{equation}
 \begin{equation}\label{tauphi}
  R_a(r,\tau,\Phi)= 0.
 \end{equation}

 By \eqref{Ham} we can bound $\Phi$ in terms of $\tau$,
\begin{equation}\label{Phibd}
|\Phi| \leq 4 M |\tau|.
\end{equation}
For $\Phi$ in this range and small $a$ the polynomial
$\tau^{-2} R_a(r,\tau,\Phi)$ can be viewed as a small perturbation of
\[
\tau^{-2} R_0(r,\Phi,\tau) = r^4(r-3M),
\]
which has a simple root at $r = 3M$. Hence for small $a$ the
polynomial $R_a$ has a simple root close to $3M$, which we denote by
$r_a(\Phi/\tau)$:
\[
\tau^{-2} R_a(r,\Phi/\tau) = \widehat{R}_a(r,\Phi/\tau) \big(r-r_a(\Phi/\tau)\big),\qquad
\widehat{R}_a(r,\Phi/\tau)\geq c>0.
\]
Thus if we denote
 \[
 \M_{ps}{[\tt_0,\tt_1]} = [\tt_0,\tt_1] \times I_{ps}, \quad\text{where}\quad  I_{ps} :=  \big\{ \, r\,;\,  |r-3M|\leq M/4\, \big\},
 \]
all the trapped  null geodesics in the exterior region lie in $\M_{ps}{[\tt_0,\tt_1]}$. This is the region where most of our analysis will take place.

\subsubsection{The local energy decay norm}
 One could now try to define a spacetime multiplier by quantizing a (multiple of) $\varrho_a=r-r_a(\Phi/\tau)$. Whereas this works well at the symbol level, it has the disadvantage of having a complicated dependence on $\tau$, which makes it inconvenient for energy estimates on constant time slices. Instead, we factor
 \begin{equation}\label{eq:tauidef}
p_K = g_K^{tt}(\tau -\ttau_1) (\tau -\ttau_2),
\end{equation}
where $\ttau_i=\ttau_i( r, \phi, \xi_r, \Phi,\Theta)$,are real distinct smooth $1$-homogeneous symbols with respect to the space Fourier variables.

On the cone $\tau = \ttau_i$ the symbol $r-r_a(\Phi/\tau)$ equals
\begin{equation}\label{eq:cidef}
\tvarrho_i ( r, \phi, \xi_r, \Phi,\Theta) = r- r_i( r, \phi, \xi_r, \Phi,\Theta),\qquad\text{where} \quad r_i=r_a(\Phi/\tau_i) =3M -a F\Big(\frac{a}{M}, \frac{\Phi}{M \tau_i}\Big), \qquad i = 1,2.
\end{equation}

If $r$ is close to $3M$ and $|a| \ll M$ then on the characteristic set
of $p_K$ we have \eqref{Phibd}, therefore the symbols $\tvarrho_i$ are
well defined, smooth and homogeneous.

 We use the symbols $\varrho_i$ to define associated microlocally weighted function spaces
$L^2_{\varrho_i}$ in $I$. In order to remove the singularity at zero frequencies, we define
\[
\varrho_i = \chi_{\geq 1} (r- r_a(\Phi/\tau_i)), \qquad i = 1,2,
\]
where $\chi_{\geq 1}$ is a smooth symbol which equals $1$ for large frequencies and $0$ for small ones.

For functions $u$ supported in $I_{ps}$ we set
\[
\| u\|_{L^2_{\varrho_i}}^2 = \| \varrho_i(D, x) u\|_{L^2}^2 + \|u\|_{H^{-1}}^2,
\]
and the dual norm
\[
\| g\|_{\varrho_i L^2}^2 = \inf_{\varrho_i(x, D) g_1 + g_2 = g} (\|g_1\|_{L^2}^2 +
 \|g_2\|_{H^1}^2), \quad \|g\|_{\varrho L^2} := \|g\|_{\varrho_1 L^2 + \varrho_2 L^2}.
\]
Here $P(D,x) u(x)=\iint p(\xi,y) e^{(x-y)\cdot\xi} \,u(y)\, dy\, d\xi/(2\pi)^3$.

We consider a partition
of $ \R^{3}$ into the dyadic sets $A_R= \{\la r \ra \approx R\}$ for
$R \geq 1$, with the obvious change for $R=1$, and define the local energy norm $LE$
\begin{equation}
 \| u\|_{LE} = {\sup}_R \, \| \la r\ra^{-\frac12} u\|_{L^2 (\R \times A_R)}  ,\qquad
 \| u\|_{LE[\tt_0, \tt_1]} ={\sup}_R \, \| \la r\ra^{-\frac12} u\|_{L^2 ([\tt_0, \tt_1] \times A_R)},
\label{ledef}\end{equation}
its $H^1$ counterpart
\begin{equation}
  \| u\|_{LE^1} = \| \nabla u\|_{LE} + \| \la r\ra^{-1} u\|_{LE},\qquad
 \| u\|_{LE^1[\tt_0, \tt_1]} = \| \nabla u\|_{LE[\tt_0, \tt_1]} + \| \la r\ra^{-1} u\|_{LE[\tt_0, \tt_1]},
\end{equation}
as well as the dual norm
\begin{equation}
 \| f\|_{LE^*} = {\sum}_R  \| \la r\ra^{\frac12} f\|_{L^2 (\R \times A_R)},\qquad
 \| f\|_{LE^*[\tt_0, \tt_1]} = {\sum}_R  \| \la r\ra^{\frac12} f\|_{L^2 ([\tt_0, \tt_1] \times A_R)}.
\label{lesdef}\end{equation}

Now we can define local energy norms associated to the Kerr
space-time.  Let $\chi(r)$ be a smooth cutoff function which is
supported in $I_{ps}$ and which equals $1$
in  $ I^\prime_{ps}= \big\{ \, r\,;\,  |r-3M|\leq M/8\, \big\}\subset I_{ps}$. Let $\chi_{0}^2=1-\chi^2$ and let $\widetilde{\chi}_0$ be a smooth cutoff so that $\widetilde{\chi}_{0} = 1$ on the support of $\chi_0$, and $\widetilde{\chi}_0=0$ in
 $I^{\prime\prime}_{ps}= \big\{ \, r\,;\,  |r-3M|\leq M/16\, \big\}$. We define
\begin{equation}
\begin{split}
\|u\|_{LE_K^1[\tt_0, \tt_1]} = & \|\chi(D_t - \tau_2(D,x))\chi u\|_{L^2[\tt_0, \tt_1]L^2_{\varrho_1}}
+ \|\chi (D_t - \tau_1(D,x))\chi u\|_{L^2[\tt_0, \tt_1]L^2_{\varrho_2}} + \|\widetilde{\chi}_{0} \partial_t  u\|_{LE[\tt_0, \tt_1]}
\\ &\ +  \|\widetilde{\chi}_{0} \ang  u\|_{LE[\tt_0, \tt_1]}
+ \| \partial_r u\|_{LE[\tt_0, \tt_1]} + \| r^{-1} u \|_{LE[\tt_0, \tt_1]}.
\end{split}
\label{leK}\end{equation}
and similarly for $LE_K^1$. We remark that this norm is equivalent to the $LE^1$ norm
outside of $I_{ps}$.

For the nonhomogeneous term in the equation we define a dual structure,
\[
\|f\|_{LE_K^*} = \| (1-\chi) f\|_{LE^*} + \|\chi f\|_{L_t^2 \varrho L^2}.
\]

Similarly we define the $LE_K^1[\tt_0, \tt_1]$ and $LE_K^*[\tt_0, \tt_1]$ to be the analogous norms when the integration in time is on the interval $[\tt_0, \tt_1]$.

The following result was proved in \cite{TT}:
\begin{theorem}\label{Kerr}
 Let $u$ be so that $\Box_{g_K} u = f$.
 Then we have
\begin{equation}\label{main.est.Kerr}
E[u](\Sigma_R^+) + {\sup}_{\tt_0 \leq t\leq \tt_1} E[u](\tt) + \|u\|_{LE_K^1[\tt_0, \tt_1]}^2
\lesssim E[u](\tt_0) + \|f\|_{L^1[\tt_0, \tt_1] L^2+LE^*_K[\tt_0, \tt_1]}^2.
\end{equation}
\end{theorem}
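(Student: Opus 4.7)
The proof combines the Schwarzschild multiplier method of \cite{MMTT} with a pseudodifferential correction concentrated near the trapped set, which is unavoidable because Kerr is not a small perturbation of Schwarzschild microlocally at $r=3M$. The plan is to construct a skew-adjoint pseudodifferential operator $S$ of order $1$ and a self-adjoint operator $E$ of order $0$ that agree with the Schwarzschild multiplier $\tilde X$ and the zeroth order term $q$ outside a neighborhood of $I_{ps}$, and such that the symmetrized commutator form
\[
\widehat{T}_{g_K} \;=\; \tfrac12\bigl([\widehat{\Box}_{g_K}, S] + \widehat{\Box}_{g_K}\, E + E\, \widehat{\Box}_{g_K}\bigr)
\]
has principal symbol equal to a sum of squares controlling $\|u\|_{LE_K^1}^2$ after integration over $\M_{[\tt_0,\tt_1]}$.

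First I would rewrite the Schwarzschild multiplier $b(r)(1-3M/r)\partial_r + q$ in Weyl form as in Section \ref{sec:schwarz}, then, using the factorization $p_K = g_K^{tt}(\tau-\ttau_1)(\tau-\ttau_2)$ from \eqref{eq:tauidef}, decompose
\[
b(r)\bigl(1 - r_a(\Phi/\tau)/r\bigr)\xi_r \;=\; b(r)(1-3M/r)\xi_r + \ts_1 + \ts_0\tau + h\, p_K ,
\]
as in \eqref{XKdefintro}, with $\ts_0 \in aS^0_{\text{hom}}$ and $\ts_1 \in aS^1_{\text{hom}}$ being $O(a)$ corrections localized near $I_{ps}$. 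This defines the principal symbol $\ts_K$. The symbol $\te_K$ is then chosen so that $\te_K - q_S \in aS^0_{\text{hom}} + a\tau S^{-1}_{\text{hom}}$ and, crucially, so that the Poisson bracket computed via \eqref{eq:poissonkerr} and \eqref{rdot}--\eqref{xidota} satisfies
\[
\tfrac{1}{2i}\{\widehat p_K, \ts_K\} + \te_K \widehat p_K \;=\; \sum_{j=1}^{8}\mu_j^2 ,
\]
where the $\mu_j$ bound $\tvarrho_1(\tau-\ttau_2)$, $\tvarrho_2(\tau-\ttau_1)$ and $\xi_r$ modulo $S^0$ factors. Outside $I_{ps}$, the pseudodifferential corrections vanish and positivity reduces to the Schwarzschild bound \eqref{posS} from \cite{MMTT}. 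After appropriate frequency cutoffs to remove the singularity at $\xi=0$, these give operators $S$, $E$ with the required mapping properties.

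Next I would apply the integration by parts identity
\[
\Re\int_{\tt_0}^{\tt_1}\!\!\int P_{g_K} u\cdot\overline{(Su+Eu)}\,dx\,dt \;=\; \int_{\tt_0}^{\tt_1}\!\!\int \widehat{T}_{g_K} u\cdot \overline{u}\,dx\,dt + \mathrm{BDR}\Bigl|_{\tt_0}^{\tt_1} + \mathrm{BDR}\Bigl|_{r=r_e},
\]
replace $\widehat{T}_{g_K}$ by $\sum M_j^*M_j$ up to lower-order operators via sharp G\aa rding, and use \eqref{eq:LElocalbounds} to bound the main term from below by $\|u\|_{LE_K^1[\tt_0,\tt_1]}^2$ restricted to the $I_{ps}$-supported part. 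The lower-order pseudodifferential errors generated by the calculus are $O(a)$ and are either supported away from the trapped set (where the $LE^1_K$ norm is non-degenerate and agrees with $LE^1$) or are proper $L^2$ terms that can be absorbed into the $\|r^{-1}u\|_{LE}$ piece of $LE_K^1$ provided $a$ is sufficiently small. I expect this absorption step to be the main technical obstacle: one must check that every $O(a)$ commutator error is truly controlled by a non-degenerate portion of the norm, which is what forces the assumption $0<a\ll M$ throughout Section \ref{sec:smalla}.

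Finally, adding a suitable timelike multiplier $C\partial_{\tt}$ to $S$ with $C$ large ensures the boundary integrands at $\tt=\tt_0$, $\tt=\tt_1$ are equivalent to $E[u](\tt_0)$, $E[u](\tt_1)$ by \eqref{eq:Pboundary}, with the interior $C\|\partial u\|_{L^2L^2}^2$ contribution absorbing any residual indefinite errors far from trapping. The boundary term at $r=r_e$ has the right sign because $\Sigma_R^+$ is spacelike and the multipliers point into the black hole there, yielding control of $E[u](\Sigma_R^+)$. The inhomogeneous contribution $\Re\int f\cdot\overline{(Su+Eu)}\,dx\,dt$ is estimated by Cauchy--Schwarz against the $LE_K^1$ norm on the $LE^*_K$ component of $f$ and by $\|\partial u\|_{L^\infty L^2}$ on the $L^1L^2$ component, absorbing the $LE^1_K$ piece into the left-hand side. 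Taking the supremum over $\tt_1$ then yields \eqref{main.est.Kerr}.
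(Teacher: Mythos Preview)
Your outline is essentially the paper's own argument (the Tataru--Tohaneanu construction from \cite{TT} as recapitulated here): Schwarzschild multiplier plus an $O(a)$ pseudodifferential correction $(S,E)$ near $I_{ps}$, sum-of-squares decomposition $\sum \mu_j^2$ for the principal symbol, and absorption of $O(a)$ errors into the nondegenerate parts of the norm.

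Two technical points are glossed over and one is misstated. First, because $S$ contains a $S_0\partial_t$ piece, the commutator $[\widehat\Box_{g_K},S]$ a priori produces a $D_t^3$ term, which would destroy control of the boundary contributions at $\tt=\tt_0,\tt_1$; the paper kills this by imposing the algebraic constraint $[g_K^{tt},S_0]+g_K^{tt}E_{-1}+E_{-1}g_K^{tt}=0$ on $E_{-1}$ (see \eqref{nodt3}), which you should mention. Second, the $O(a)$ lower-order errors are not all plain $L^2$ terms: they include $a\|D_t u\|_{L^2 H^{-1}_{comp}}^2$, which is not part of the $LE_K^1$ norm and must be handled by a separate elliptic estimate \eqref{Dtell} using the equation. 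Third, your remark that $C\partial_{\tt}$ contributes an ``interior $C\|\partial u\|_{L^2L^2}^2$'' term is incorrect: $\partial_{\tt}$ is Killing for $g_K$, so $Q[g_K,C\partial_{\tt}]\equiv 0$ and the sole role of $C\partial_{\tt}$ is to render the boundary forms at $\tt=\tt_i$ coercive (cf.\ \eqref{bdrposk}). None of these break your strategy, but the first two are genuine steps you would have to carry out.
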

\begin{remark} Note that here
 \begin{equation}\label{eq:Fnorm}
\|f\|_{L^1[\tt_1, \tt_2] L^2+LE^*_K[\tt_1, \tt_2]}
\lesssim
 \inf_{f_1 + f_2 =f} \bigtwo(\|f_{\!1}\|_{L^1[\tt_0, \tt_1] L^2} + \|\pa^{\leq 1}\chi  f_2\|_{L^2[\tt_0, \tt_1]L^2} + \|(1-\chi)f_2\|_{LE^*[\tt_0, \tt_1] }\bigtwo).
\end{equation}
\end{remark}
As we will generalize Theorem \ref{Kerr} to perturbations of the Kerr metric, we recall and expand on the key steps in its proof from \cite{MMTT}, \cite{TT}, and \cite{LT}.

\subsection{The Schwarzschild case}

We now discuss the key results from \cite{MMTT} and \cite{LT} that we will need in the next section.

\subsubsection{The energy momentum tensor and deformation tensor}
Let
\[
P_\alpha[g,X]=Q_{\alpha\beta}[g]X^\beta,\qquad\text{where}\qquad Q_{\alpha\beta}[g]=\partial_\alpha u \,\partial_\beta u -
\frac{1}{2}g_{\alpha\beta}\partial^\gamma u\, \partial_\gamma u .
\]
be the energy-momentum tensor and $X$ any vector field.
Then
\[
\nabla^\alpha P_\alpha[g,X] = \Box_g u \cdot Xu + Q[g, X],
\qquad\text{where}\qquad
Q[g, X] =\frac{1}{2} Q_{\alpha\beta}[g]\pi_X^{\alpha \beta},
\]
and $\pi_X^{\alpha \beta} $ is the deformation tensor of $X$. In terms of the Lie derivative of the inverse of the metric we have
\[
\pi^{\alpha\beta}_X=\nabla^\alpha X^\beta + \nabla^\beta X^\alpha=-{\mathcal L}_X g^{\alpha\beta}=-X(g^{\alpha\beta})
+g^{\alpha\gamma} \partial_\gamma X^\beta+g^{\beta\gamma}\partial_\gamma X^\alpha.
\]
This can also be expressed in terms of the Poisson bracket of the symbols $p_g=g^{\alpha\beta}\xi_\alpha\xi_\beta $ and $s_X=X^\alpha\xi_\alpha$:
\begin{equation}\label{eq:poissonone}
\pi^{\alpha\beta}_X\xi_\alpha\xi_\beta=\{p_g,s_X\},\qquad\text{where}\qquad  \{p,s\}=\frac{\partial p}{\partial{\xi_\alpha}} \, \frac{\partial s}{\partial{x^\alpha}}-\frac{\partial p}{\partial{x^\alpha}}\, \frac{\partial s}{\partial{\xi_\alpha}}.
\end{equation}
We also note that with $|g|\!=\!|_{\!}\det{g}|$
\beq\label{eq:pidiv}
\pi_X^{\alpha \beta}Q_{\alpha\beta}[g]=\pi_X^{\alpha \beta}\partial_\alpha u \,\partial_\beta u -
\div X\,\partial^\gamma u\, \partial_\gamma u,\qquad\text{where}\qquad
\div X\!=\!\nabla_{\!\alpha} X^\alpha\!=|g|^{-1/2}\pa_\alpha \big( |g|^{1/2} X^{\alpha}\big).
\eq

For a vector field
$X$, a scalar function $q$ and a 1-form $m$ we further define
\[
P_\alpha[g,X,q,m] = P_\alpha[g,X] + q u \,\partial_\alpha u  + \frac{1}{2}\bigtwo(m_{\alpha}-\partial_\alpha q\bigtwo)u^2.
\]
The divergence formula gives
\begin{equation}
\nabla^\alpha P_\alpha[g,X,q,m] =  \Box_g u \big(Xu +
 q u\big)+ Q[g,X,q,m],
\label{div}\end{equation}
where
\beq\label{Qdef}
 Q[g,X,q,m] =
Q[g, X] + q\,
\partial^\alpha u\, \partial_\alpha u + m_\alpha u\,
\partial^\alpha u +\frac{1}{2} \big(\nabla^\alpha m_\alpha -
\nabla^\alpha \partial_\alpha q\big) \, u^2.
\eq

\subsubsection{The vector field in the Schwarzschild case}
 In the Schwarzschild case, let us recall the results of \cite{MMTT} and Section 2 of \cite{LT}. Pick any $0<\delta\ll 1$. There is a smooth vector field (in the nondegenerate coordinate system)
\[
 X = b(r)\bigthree(1-\frac{3M}r\bigthree) \partial_r + c(r)\partial_{\tt} + f(r)\partial_r,
 \]
with $c$ supported near $r=2M$, $b>0$ bounded, and $f$ supported on $r\geq R_1$ for $R_1$ large, a smooth function
\[
q(r) = \frac{1}{2r^2} \weight \partial_r \Bigl(\frac{r^2(r-3M)}{r-2M} b(r)\Bigr) - \delta_1 \frac{(r-3M)^2}{r^4} + \frac{f(r)}r, \qquad 0< \delta_1 \ll 1 ,
\]
so that $\Box_{g_S} q <0$,
and a smooth 1-form $m$ supported near $r=2M$ so that
\begin{equation}\label{SchwQ}
Q[g_S,X,q,m] \gtrsim r^{-1-\delta} |\partial_r u|^2 + \big(1-\frac{3M}r
\big)^2 r^{-1-\delta} (|\partial_t u|^2 + |\ang u|^2) + r^{-3-\delta}
u^2.
\end{equation}
Moreover, for a large constant $C$,
\begin{equation}
   \int_{\M_{[\tt_0,\tt_1]}}\!\!\!\!\! Q[g_S, C\partial_t + X, q, m] dV_{\mathbf S} =
   - \int_{\M_{[\tt_0,\tt_1]}} \!\!\!\!\! (\Box_{g_S} u) (C\partial_t + X + q) u \, dV_{\mathbf S}
   - \left. BDR^{\mathbf S}[u]\right|_{\tt=\tt_0}^{\tt = \tt_1} - \left. BDR^{\mathbf S}[u] \right|_{r=r_e}.
   \end{equation}
where the boundary terms are positive and satisfy
 \begin{equation}\label{bdrs}
      BDR^{\mathbf S}[u]\big|_{\tt = \tt_i} \approx \| \nabla u(\tt_i)\|_{L^2}, \quad\text{and}\quad
     BDR^{\mathbf S}[u]\big|_{r=r_e} \approx \| u\|_{H^1(\Sigma^+_{[\tt_0,\tt_1]})}^2.
 \end{equation}

 The principal symbol of $\Box_{g_S}$ can be written
\[
p_S = - \Big(1-\frac{2M}r \Big)^{-1} (\tau^2-\tau_S^2),\qquad\text{where}\qquad
 \tau_S^2=\Big(1-\frac{2M}r \Big)\Big(\Big(1-\frac{2M}r
\Big) \xi_r^2 + \frac{1}{r^2}\Big(\frac{1}{\sin^2\theta} \Phi^2+\Theta^2\Big)\Big).
\]

 For later use, we record that $Q[g_S, X, q, m]$ near $r=3M$ is given by
\[
Q[g_S, X, q, m] = q^{S, \alpha\beta}\partial_{\alpha}u\, \partial_{\beta} u + q^{S, 0} u^2,
\]
where
\begin{equation}
q^{S, 0} = -\Box_{g_S} q /2\geq c_0> 0,
\end{equation}
 and by \eqref{eq:poissonone} and \eqref{eq:pidiv} the coefficients are given by
\[
q^S := q^{S, \alpha\beta}\xi_{\alpha}\xi_{\beta} = \frac12 \bigl\{ p_S, b(r)\bigthree(1-\frac{3M}r\bigthree) \xi_r \bigr\} + \bigl(q - q_X \bigr) p_S, \qquad\text{where}\qquad  q_X := \frac12 \div(X).
\]
We also compute
\[
q_S:= q - q_X = -\delta_1 \frac{(r-3M)^2}{r^4} - \frac{M(r-3M)}{r^2(r-2M)} b(r).
\]

 The exact formulas for $b$ and $m$ are not important, but we emphasize the following two facts: the coefficient of $\partial_r$ in $X_S$ vanishes on the trapped set $r=3M$, and so does the expression $q _S$. This observation was already used in our previous paper \cite{LT}, and we will prove that a similar phenomenon happens in Kerr.

 A straightforward computation gives (see (4.32) in \cite{TT})
 \beq\label{sumsqS0}
r^2 q^S
=\alpha_S^2(r)  \tau^2 + \beta_S^2(r)  \xi^2 + \tilde q_S(r)r^2 p_S.
\eq
In fact,
 \[
r^2 q^S=  \frac12 \bigl\{ r^2 p_S, b(r)\bigthree(1-\frac{3M}r\bigthree) \xi_r \bigr\} + \bigl(q - q_X+ r^{-2} b(r)(r-3M) \bigr) r^2 p_S.
\]
Here
 \begin{equation}\label{eq:sumsquareschwarz}
 \frac12 \bigl\{ r^2 p_S, b(r)\bigthree(1-\frac{3M}r\bigthree) \xi_r \bigr\}
 =\alpha_S^2(r)  \tau^2 + \beta_S^2(r)  \xi^2,
 \end{equation}
 where, near  $r = 3M$,
\[
\alpha_S^2(r) =  \frac{ r b(r) (r-3M)^2}{(r-2M)^2},
\qquad\text{and}\qquad\beta_S^2(r) = \frac{3M}{r^2} b(r^2 -2Mr) + \Big(1-\frac{3M}r\Big)(b'(r^2 -2Mr) -b(r-M)),
\]
respectively
\[
\tilde q_S (r) =  q_S + r^{-2} b(r)(r-3M) = -\delta_1 \frac{(r-3M)^2}{r^4} + \frac{(r-3M)^2}{r^2(r-2M)} b(r).
\]

Let $\nu(r)$ be defined as
\begin{equation}\label{nu}
(1-\nu)\alpha_S^2 = \delta_1 \frac{(r-3M)^2}{r^4}, \qquad 0<\nu(r)<1.
\end{equation}
We can rewrite \eqref{sumsqS0} as the following sum of squares representation
\begin{equation}
r^2 q^S = (1-\nu)\alpha_S^2 \tau^2 + \nu \alpha_S^2 \tau_S^2 + \beta_S^2 \xi_r^2.
\end{equation}

The expression above has the disadvantage that $\tau_S$ is not the symbol of a differential operator. We can rectify this by writing the spherical Laplacian a sum of squares of differential symbols,
\[
\lambda^2 := \frac{1}{\sin^2\theta} \Phi^2+\Theta^2 = \lambda_1^2 + \lambda_2^2 + \lambda_3^2,
\]
where in Euclidean coordinates have
\begin{equation}\label{eq:thelambdas}
\{  \lambda_1, \lambda_2,\lambda_3\} = \{ x_i \xi_j -x_j \xi_i, \ i \neq
j\}.
\end{equation}

We can now write
\begin{equation}
  r^2 q^S =  (1-\nu(r)) \alpha_S^2(r)  \tau^2 + \beta_S^2(r)  \xi^2 +
 \frac{r-2M}{r^3} \nu(r) \alpha_S^2(r) \left(\lambda_1^2 +\lambda_2^2 +\lambda_3^2
  + (r^2 -2rM) \xi_r^2\right).
\label{sumsqS1}\end{equation}

In the next section we describe a similar decomposition in the case of the Kerr metric that is a small perturbation of the one above.

\subsection{The Kerr case}

 Moving to the Kerr case, we perturb $X$ and $q$ microlocally near the trapped set.

\subsubsection{The generalized energy momentum tensor for operators}

 We will consider a pseudodifferential operator of order $1$, which is differential in $t$,
 \[
C\partial_t + X + S + q + E,
 \]
where $S$ is a skew-adjoint and $E$ is a self-adjoint operator of the form
 \begin{equation}\label{Kerrpdo}
 S = S_1+ S_0 \partial_t, \qquad E= E_0 + E_{-1} \partial_t.
 \end{equation}
  Here $S_j, E_j \in a OPS^j$ are real-valued, time-independent operators with kernels supported near $r=3M$, and moreover $S_1$ and $E_{-1}$ are self-adjoint, while $S_0$ and $E_0$ are skew-adjoint with respect to $dxdt$.

In order to ensure that there is no term with three time derivatives, we will also require that
\begin{equation}\label{nodt3}
[g^{tt}, S_0] +  g^{tt} E_{-1}  + E_{-1} g^{tt} = 0.
\end{equation}
Then  with $dV_g=\sqrt{|g|} dx dt$ we have
\beq
 \int_{\tt_0}^{\tt_1}\!\!\!\int \!\Box_g u\cdot \overline{(Su\!+\!E u)}\, dV_g
=\!\int_{\tt_0}^{\tt_1}\!\!\!\int \!\widehat{\Box}_g u\cdot \overline{(Su\!+\!E u)}\, dx dt.
\eq
where $\widehat{\Box}_g=\sqrt{|g|}\,\Box_g={\pa}_\alpha (\sqrt{|g|}\, g^{\alpha\beta} {\pa}_\beta)$ is
also symmetric with respect to $dx dt$.
Integrating by parts and using that $S$ is skew adjoint and $E$ self adjoint we obtain
modulo boundary terms
\beq
 \Re \!\int_{\!\tt_0}^{\tt_1}\!\!\!\!\!\int \widehat{\Box}_g u\cdot \overline{(Su+\!E u)}\, dx dt
=\!\int_{\!\tt_0}^{\tt_1}\!\!\!\!\!\int \widehat{T}_g u\cdot\overline{u}\, \ dx dt+\text{BDR}\Big|_{t=\tt_0}^{t=\tt_1},\quad\text{where}\quad
\label{eq:Qhatdef0}
\widehat{T}_g\!=\frac12\big([\widehat{\Box}_{g},S]\! + \widehat{\Box}_{g} E + E\,\widehat{\Box}_{g}\big).
\eq
We remark that this definition depends on a choice of coordinates. We will assume that for this definition  the Kerr metric
is expressed in the rectangular coordinates corresponding to the spherical coordinates above, in which case
$\sqrt{|g_K|}=(\rho/r)^2=1+O(a)$.

\subsubsection{The operators in the Kerr case}
  Let $dV_{\mathbf K}=\rho^2 dr dt d\omega$ denote the Kerr induced
  measure, and $S$ and $E$ be as in \eqref{Kerrpdo}, \eqref{nodt3}. Integrating by parts we obtain
\[
\Re \int_{\M_{[\tt_0,\tt_1]}} \Box_{g_K} u\cdot \overline{(S+E) u} \, dV_K = \int_{\M_{[\tt_0,\tt_1]}} \widehat{T}_K u \cdot \overline{u} \,dx dt + \left. BDR_1^{\mathbf K}[u, S, E]\right|_{t=\tt_0}^{t = \tt_1},
\]
We can write
\[
\widehat{T}_K = \frac1{2}\bigtwo([\widehat{\Box}_{g_K},S] + \widehat{\Box}_{g_K} E + E\, \widehat{\Box}_{g_K}\bigtwo)= Q_2  + 2 Q_1 D_t + Q_0 D_t^2,
\]
where $Q_j \in OPS^j$ are selfadjoint pseudodifferential operators. Note that, due to \eqref{nodt3}, there is no $D_t^3$ term in $Q$.

Moreover, \eqref{Kerrpdo} and \eqref{nodt3} imply that the boundary terms satisfy
\begin{equation}\label{pdobdry}
\left| BDR_1^{\mathbf K}[u, S, E] \right| \lesssim a \bigl(E[u](\tt_0) + E[u](\tt_1)\bigr).
\end{equation}
Motivated by this, we define
\[
IQ[g_K, S, E] = \int_{\M_{[\tt_0,\tt_1]}} Q_{2} u \cdot \overline u + 2\Re Q_1 u \cdot \overline {D_t u} + Q_0 {D_t u} \cdot\overline {D_t u}\,  dx dt.
\]

Note that $Q_2$ is a second order space operator with compact support, so one can integrate by parts to
express $IQ[g_K, S, E]$ as a quadratic form bounded by first order derivatives of $u$.

We have
\[
IQ[g_K, S, E] = -\Re \int_{\M_{[\tt_0,\tt_1]}} (\Box_{g_K} u) \overline{(S+E) u}\, dV_K + \left. BDR_2^{\mathbf K}[u, S, E]\right|_{t=\tt_0}^{t = \tt_1},
\]
where $BDR_2^{\mathbf K}[u, S, E]$ also satisfies \eqref{pdobdry}.

We now define
\[
IQ[g_K, X, q, m, S, E] = \int_{\M_{[\tt_0,\tt_1]}} Q[g_K, X, q, m] dV_{\mathbf K} + IQ[g_K, S, E].
\]

  We thus have
 \begin{equation}\begin{split}
   IQ[g_K, X, q, m, S, E]  = &
   - \int_{\M_{[\tt_0,\tt_1]}} (\Box_{g_K} u) (C\partial_t + X + S + q + E) u \, dV_{\mathbf K} \\ &
   - \left. BDR^{\mathbf K}[u]\right|_{t=\tt_0}^{t = \tt_1} - \left. BDR^{\mathbf K}[u] \right|_{r=r_e}. \end{split}
   \label{intdivk}\end{equation}

 The boundary terms satisfy
 \[
\left. BDR^{\mathbf K}[u]\right|_{t = \tt_i} = \left. BDR^{\mathbf S}[u]\right|_{t = \tt_i} + \left. BDR_2^{\mathbf K}[u, S, E]\right|_{t = \tt_i},
 \]
 and thus, due to \eqref{bdrs} and \eqref{pdobdry} we have
 \begin{equation}\label{bdrposk}
     BDR^{\mathbf K}[u]\big|_{t = \tt_i} \approx \| \nabla u(\tt_i)\|_{L^2}^2\qquad\text{and}\qquad
      BDR^{\mathbf K}[u]\big|_{r=r_e} =  BDR^{\mathbf S}[u]\big|_{r=r_e} \approx \| u\|_{H^1(\Sigma^+_{[\tt_0,\tt_1]})}^2.
 \end{equation}

 The main result of \cite{TT} is that, for a suitable choice of $S$ and $E$, one has 
  \begin{equation}\label{Kerrbd}
  IQ[g_K, X, q, m, S, E] \geq C\| u\|_{LE^1_{{\mathbf K},w}[\tt_0,
   \tt_1]}^2 -  a \|D_t u\|_{L_t^2 H^{-1}_{comp}}^2 ,
 \end{equation}
 where the last term on the right represents the $H^{-1}$ norm of $D_t u$ in a compact region in $r$ (precisely, a neighborhood of $3M$), and
\begin{equation}\label{LEKdef}
   \|u\|_{LE_{\mathbf K,w}^1}^2 \!\!=  \|\chi(D_t - \tau_2(D,x))\chi u\|_{L^2_{\varrho_1}}^2\!
+ \|\chi (D_t - \tau_1(D,x))\chi u\|_{L^2_{\varrho_2}}^2 \!
 +
 \|r^{-1}\partial_r u\|_{L^2}^2 + \|r^{-2} u\|_{L^2}^2 +
   \|(1-\chi^2) r^{-1} \nabla u\|_{L^2}^2.
\end{equation}

The estimate \eqref{Kerrbd}, combined with the elliptic estimate (see p. 38 of \cite{TT})
\begin{equation}\label{Dtell}
\|D_t u\|_{H^{-1}_{comp}}^2 \lesssim  \|u\|_{L^2_{comp}}^2 + \| \Box_{g_K}
u\|_{LEK^*}^2 + E[u](\tt_0)+ E[u](\tt_1),
\end{equation}
immediately yield Theorem~\ref{Kerr}.

\subsubsection{The Weyl calculus}
Let $S^m$ be the class of space symbols $s(t,x,\xi)$ of order $m$ depending on  $t$
that satisfy $|\pa_t^k \pa_x^\alpha \pa_\xi^\beta s(t,x,\xi)|\!\leq \!\langle\xi\rangle^{m-|\beta|}$. The Weyl quantization is the pseudo differential operator of order $m$:
\beq
(s^w u)(t,x)=\frac{1}{(2\pi)^3}\int\int s(t,\tfrac{x+y}{2},\xi) e^{i(x-y)\cdot \xi}\, u(t,y)\, dy \,d\xi.
\eq

We will work with symbols that are a priori defined only in a small neighborhood of the trapped set, and which are applied to functions with support in $I_{ps}$. In order to make sense of the Weyl quantization in this case, we consider $\chi_0$ a cutoff that is identically $1$ on $I_{ps}$ and supported in a slightly larger neighborhood, and redefine
\beq
s^w u = (\chi_0 s)^w u
\eq

We also consider space time symbols that are polynomials in $\tau\!=\!D_t$ with coefficients $s_k(t,x,\xi)\!\in\! S^k$:
\beq\label{eq:polynomialspacetimesymbols}
s(t,x,\tau,\xi)=s_m(t,x,\xi) +s_{m-1}(t,x,\xi) \tau +\dots +s_{m-k}(t,x,\xi)\tau^{k}
\!\in S^{m,k}\!\!=S^m\!\!+\tau S^{m-1}\!\!+\dots+\tau^k S^{m-k}\!.
\eq
The Weyl quantization of a real symbol is self adjoint with respect to the complex inner product. In particular
the Weyl quantization of the symbol $X^j\xi_j$ is the operator $X^j D_j + (D_j X^j)/2 $, where $D_j=i^{-1}\pa_j$, and the Weyl quantization of the symbol $a^{jk}\xi_j\xi_k$, with $a^{jk}\!\!=a^{kj}\!$, is the operator
$a^{jk}\!D_{j} D_k\! +\!(D_{j} a^{jk})D_k\! +\!(D_{j} D_k a^{jk})/4$, see \cite{Taylor}.
 In order to make this true also for space time symbols $X^\alpha \xi_\alpha $ and
 $a^{\alpha\beta}\xi_\alpha \xi_\beta$, with $\xi_0=\tau$ we define
 \beq
s^w=s_m^w +s_{m-1}^w D_t + (D_t s_{m-1}^w)/2 +s_{m-2}^w D_t^2+(D_t s_{m-2}^w) D_t
+(D_t^2 s_{m-2}^w)/4 +\dots .
\eq
With this definition the Weyl quantization of $\!\sqrt{\!|g|}g^{\alpha\beta}\xi_\alpha\xi_\beta$ is related the operator $\widehat{\Box}_g u=\pa_\alpha (\!\sqrt{\!|g|}g^{\alpha\beta} \pa_\beta u)$ by
\beq
-\bigtwo(\sqrt{|g|}g^{\alpha\beta}\xi_\alpha\xi_\beta\bigtwo)^w=\widehat{\Box}_g+\bigtwo(\pa_\alpha\pa_\beta (\sqrt{|g|}g^{\alpha\beta})\bigtwo)/4 .
\eq

If $s$ is a symbol of order $m$ and $p$ a symbol of order $n$ then the commutator
$[\,p^w,s^w]=p^w\, s^w-s^w\,p^w$ is an operator of order $m+n-1$ which to principal order is given
is $\frac{1}{2i}\{p,s\}^w$, the Poisson bracket \eqref{eq:poissonone} of the symbols. Moreover
$[\,p^w,s^w]-\frac{1}{2i}\{p,s\}^w$ is an operator of order $m+n-3$. With the above definitions this is also true
for space time symbols of the form \eqref{eq:polynomialspacetimesymbols} and the space time Poisson bracket.

Finally, recall that the Poisson bracket is invariant under changes of coordinates in the sense
that
\beq
\{\overline{p},\overline{s}\}(y,\eta)=\{p,s\}\big(x(y),\xi(y,\eta) \big),\qquad\text{if}\quad \overline{p}(y,\eta)=p\big(x(y),\xi(y,\eta) \big),\quad\text{where}\quad \xi_\alpha(y,\eta)=\,\eta_a\pa y^a\!/\pa x^\alpha\! .
\eq

This will allow us to work in polar coordinates later on.

\subsubsection{The construction of the operators}
Let us recall how the operators $S$ and $E$ were chosen. Consider the symbol
\[
 s(r,\tau,\xi,\Phi)=i r^{-1}b(r)\big(r-r_a(\Phi/\tau)\big)\xi_r.
\]
which coincides with the principal symbol for $X$ when $a=0$ and vanishes on the trapped set. Using \eqref{xidota}, one can compute the Poisson bracket \eqref{eq:poissonone}
\[\begin{split}
\frac{1}{i}  \{ \rho^2 p_K,s\} = & 2r^{-1} b(r) R_a(r,\tau,\Phi)\Delta^{-2}(r-r_a(\Phi/\tau)) \\ & + \left[2\Delta\partial_{r}\left(r^{-1} b(r)\big(r-r_a(\Phi/\tau)\big)\right) - 2(r-M)
  r^{-1} b(r) \big(r-r_a(\Phi/\tau)\big)  \right] \xi_r^2.
\end{split}\]

Since $r_a(\tau,\Phi)$ is the unique zero of $R(r,\tau,\Phi)$ near
$r=3M$ and is close to $3M$, it follows that we can write
\begin{equation}
\frac{1}{2i} \{ \rho^2 p_K,s\} =
\alpha^2(r,\tau,\Phi) \big(r-r_a(\Phi/\tau)\big)^2 \tau^2 +
\beta^2(r,\tau,\Phi)\xi_r^2,
\label{pbs}\end{equation}
where $\alpha, \beta \in S^0_{hom} $ are positive symbols.

The drawback of this computation is that $s$ is not a polynomial in $\tau$, and thus we cannot integrate by parts in time. Instead, we write $s$ as a linear function of $\tau$ plus a smooth function of $\tau$ times $(\tau-\ttau_1)(\tau-\ttau_2)$:
\begin{equation}\label{XKdef}
\frac{1}i s = b(r)\bigthree(1-\frac{3M}r\bigthree)\xi_r  +  \ts_1(r,\xi_r, \theta,\Theta,\Phi)+
\ts_0(r,\xi, \theta,\Theta,\Phi) \tau+
a  h(\tau, r,\xi, \theta,\Theta,\Phi) p_K,
\end{equation}
with $\ts_1 \in aS^1_{hom}$, $\ts_0 \in aS^0_{hom}$, and $h$ a homogeneous symbol of spacetime.

We now define
and $\ts_K$ by
\begin{equation}
\frac1i \ts_K = b(r)\bigthree(1-\frac{3M}r\bigthree) \xi_r + \ts_1 + \ts_0\tau .
\end{equation}
Then $\frac1{2i}\{\rho^2 p_K, \ts_K\}$ is a polynomial of degree $3$ in $\tau$ which coincides with \eqref{pbs}
when $\tau=\ttau_1$ or $\tau=\ttau_2$, since $\{\rho^2 p_K, hp_K\}$ is a multiple of $p_K$.
It follows that there are $\tilde{\gamma}_0,\tilde{\gamma}_1,\tilde{\gamma}_2\in S^0_{hom}$, $\tilde{\gamma}_{-1}\in a S^{-1}_{hom}$ such that
\begin{equation}\label{eq:sumsquarekerr}
\frac{1}{2i}\{\rho^2 p_K, \ts_K\}=\tilde{\gamma}_2 (\tau-\ttau_1)^2 +\tilde{\gamma}_1(\tau-\ttau_2)^2 +\big( \tilde{\gamma}_0 +\tilde{\gamma}_{-1}\tau\big) (\tau-\ttau_1)(\tau-\ttau_2).
\end{equation}
Using that this has to agree with \eqref{pbs} when $\tau=\ttau_i$ we get that
\begin{equation}
\tilde{\gamma}_i=\frac{\alpha_i^2}{4} + \frac{\beta_i^2 \xi_r^2}{(\ttau_1-\ttau_2)^2},
\end{equation}
where
\[
\alpha_{i} =\frac{2|\ttau_{i}|}{\ttau_{1} -\ttau_{2}}
 \alpha(r,\ttau_{i},\Phi) \big(r - r_a(\Phi/\ttau_{i})\big) \in S^0_{hom},
\qquad \beta_{i} = \beta(r,\ttau_{i},\Phi).
\]
Moreover, since \eqref{eq:sumsquarekerr} has to agree with \eqref{eq:sumsquareschwarz} up to terms of size $a$ we can determine $\tilde{\gamma}_0$ and $\tilde{\gamma}_{-1}$ modulo terms of size $a$ by equating powers of $\tau^2$ respectively $\tau^3$ which gives:
\begin{equation}\label{eq:sumsquarekerr2}
\frac{1}{2}\{\rho^2 p_K, \ts_K\}=\tilde{\gamma}_2 (\tau-\ttau_1)^2 +\tilde{\gamma}_1(\tau-\ttau_2)^2 +\big(\alpha_S(r)^2-\tilde{\gamma}_2-\tilde{\gamma}_1\big) (\tau-\ttau_1)(\tau-\ttau_2)+\big( e_0 +e_{-1}\tau\big) (\tau-\ttau_1)(\tau-\ttau_2),
\end{equation}
where
$e=e_0+e_{-1} \tau\in a (S^0_{hom}+\tau S^{-1}_{hom})$.
We would like to show that
$ \{ \rho^2 p_K, \ts_K \}/2 + \tilde{q}_S p_K $ is a sum of squares  plus a small multiple
  of $p_K$ of size $a$.
This follows from rewriting
\begin{multline}
\alpha_2^2(\tau-\ttau_1)^2+\alpha_1^2(\tau-\ttau_2)^2
=\nu\big(\alpha_1 (\tau-\ttau_2)
  -\alpha_2(\tau-\ttau_1)\big)^2   + (1-\nu)\big(\alpha_1 (\tau-\ttau_2)
  +\alpha_2(\tau-\ttau_1)\big)^2\\- 2(1-2\nu)\alpha_1\alpha_2 (\tau-\ttau_1)(\tau-\ttau_2),
\end{multline}
and comparing with \eqref{sumsqS1}.
One can now prove the following, which is essentially Lemma 4.3 in \cite{TT}:

\begin{lemma}\label{TTmain}
With notation as above there is an $\te \in a\bigl(S^0_{hom} + \tau S^{-1}_{hom}\bigr)$ so that, if we define
$\te_K = q_S + \te$
then close to the trapped set we have
\begin{equation}
\rho^2\bigtwo( \frac1{2i} \{ p_K, \ts_K \} + p_K \te_K\bigtwo) = \sum_{j=1}^8 \mu_j^2,
\label{sumsqK}\end{equation}
where (recall \eqref{nu} for the definition of $\nu$ and \eqref{eq:thelambdas} for the definition of $\lambda_i$):
\begin{equation}
\begin{split}
&  \mu_1^2 = \frac{1-\nu}4 \big(\alpha_1 (\tau-\ttau_2) + \alpha_2(\tau-\ttau_1)\big)^2,  \qquad \mu_2^2 = \frac12 (\beta_1^2 + \beta_2^2 - Ca)\xi_r^2, \\ &
 \mu_{3,4,5}^2 =  \frac{\lambda_{1,2,3}^2}{\lambda^2+(r^2-2rM) \xi_r^2} \frac{r-2M}{r^3}\frac{\nu}4 (\alpha_1 (\tau-\ttau_2)
  -\alpha_2(\tau-\ttau_1))^2 ,\\ &  \mu_6^2 =
  \frac{(r^2-2rM) \xi_r^2}{\lambda^2+(r^2-2rM) \xi_r^2}
  \frac{\nu}4 (\alpha_1 (\tau-\ttau_2)
  -\alpha_2(\tau-\ttau_1))^2 ,\\ &
  \mu_7^2 = \frac{(Ca - \beta_2^2+\beta_1^2) (\tau-\ttau_2)^2} {2(\ttau_1-\ttau_2)^2}\xi_r^2 ,\qquad
    \mu_8^2 = \frac{ (Ca-\beta_1^2+\beta_2^2)
    (\tau-\ttau_1)^2}{2(\ttau_1-\ttau_2)^2}\xi_r^2,
\end{split}\label{mudef}\end{equation}
and $C$ is a constant so that all three terms in the last line are nonnegative.
\end{lemma}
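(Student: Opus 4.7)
The lemma is an algebraic identity building on \eqref{eq:sumsquarekerr2} together with the formula $\tilde\gamma_i = \alpha_i^2/4 + \beta_i^2\xi_r^2/(\ttau_1-\ttau_2)^2$. My plan is to decompose $\tilde\gamma_2(\tau-\ttau_1)^2+\tilde\gamma_1(\tau-\ttau_2)^2$ into eight nonnegative squares $\mu_j^2$ modulo multiples of $(\tau-\ttau_1)(\tau-\ttau_2)=p_K/g_K^{tt}$, and then define $\te$ by demanding that the total coefficient of $p_K$, combined with the $(\alpha_S^2-\tilde\gamma_1-\tilde\gamma_2+e_0+e_{-1}\tau)p_K/g_K^{tt}$ already present in \eqref{eq:sumsquarekerr2}, match $-\rho^2 q_S$ modulo a symbol in $a(S^0_{hom}+\tau S^{-1}_{hom})$.

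For the $\alpha$-contribution I would apply the algebraic identity displayed immediately before the lemma. The $(1-\nu)$-square divided by $4$ is exactly $\mu_1^2$, the cross term $-2(1-2\nu)\alpha_1\alpha_2(\tau-\ttau_1)(\tau-\ttau_2)/4$ is a $p_K$-multiple, and the $\nu$-square I would decompose further using $\lambda^2=\lambda_1^2+\lambda_2^2+\lambda_3^2$ and the partition of unity
\[
1 = \frac{\lambda^2}{\lambda^2+(r^2-2rM)\xi_r^2} + \frac{(r^2-2rM)\xi_r^2}{\lambda^2+(r^2-2rM)\xi_r^2},
\]
weighted by $(r-2M)/r^3$; this produces $\mu_3^2+\mu_4^2+\mu_5^2+\mu_6^2$ modelled on \eqref{sumsqS1}, while the mismatch between $\lambda^2+(r^2-2rM)\xi_r^2$ and $r^2\tau_S^2$ is absorbed as a further $p_K$-multiple.

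For the $\beta$-contribution I would use $(\tau-\ttau_1)^2+(\tau-\ttau_2)^2=(\ttau_1-\ttau_2)^2+2(\tau-\ttau_1)(\tau-\ttau_2)$ to split $\beta_2^2(\tau-\ttau_1)^2+\beta_1^2(\tau-\ttau_2)^2$ into its $1\leftrightarrow 2$ symmetric and antisymmetric parts. After division by $(\ttau_1-\ttau_2)^2$ and multiplication by $\xi_r^2$, the symmetric part is $\tfrac12(\beta_1^2+\beta_2^2)\xi_r^2$ plus a $p_K$-multiple. Subtracting $\tfrac12 Ca\xi_r^2$ produces $\mu_2^2$, and re-adding that same quantity together with the antisymmetric piece reorganizes, via $(\tau-\ttau_2)^2-(\tau-\ttau_1)^2=(\ttau_1-\ttau_2)(2\tau-\ttau_1-\ttau_2)$, into $\mu_7^2+\mu_8^2$ plus a further $p_K$-multiple. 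Nonnegativity of $\mu_2^2,\mu_7^2,\mu_8^2$ is arranged by choosing $C$ so that $|\beta_1^2-\beta_2^2|\le Ca\le \beta_1^2+\beta_2^2$, which is possible for small $a$ since $\beta_i^2=\beta_S^2(r)+O(a)$ is bounded below uniformly near $r=3M$ while $|\beta_1^2-\beta_2^2|=O(a)$ as $\beta(r,\tau,\Phi)|_{a=0}=\beta_S(r)$ is independent of $\tau,\Phi$.

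The main obstacle is the bookkeeping of the residual $p_K$-multiples and verification of their symbol class. To handle this I would use the $a=0$ limit as a sanity check: when $a=0$ one has $\ts_K=X_S$, each $\tilde\gamma_i$ reduces to its Schwarzschild analogue, and the whole construction collapses to \eqref{sumsqS1}; hence the total coefficient of $p_K$ produced by the three decompositions above must equal $-\rho^2 q_S$ modulo $O(a)$. Every deviation from the Schwarzschild values---namely $\tilde\gamma_i-\gamma_i^S/2$, $\alpha_i-\alpha_S/\sqrt 2$, $\beta_i-\beta_S$, $\ttau_1+\ttau_2$, and the $(e_0+e_{-1}\tau)$ term already carried in \eqref{eq:sumsquarekerr2}---is of size $a$ and of homogeneous order $0$ or $\tau$ times order $-1$, so the remainder fits into $a(S^0_{hom}+\tau S^{-1}_{hom})$ and defines the required $\te$.
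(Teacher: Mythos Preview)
Your strategy is the one the paper itself sketches in the paragraphs preceding the lemma (which it attributes to \cite{TT}): use \eqref{eq:sumsquarekerr2} together with $\tilde\gamma_i=\alpha_i^2/4+\beta_i^2\xi_r^2/(\ttau_1-\ttau_2)^2$, apply the $\alpha_1,\alpha_2$ polarization identity to produce $\mu_1^2$ and the $\nu$-square plus a $p_K$-multiple, split the $\beta$-part into $\mu_2^2,\mu_7^2,\mu_8^2$ plus $p_K$-multiples, and absorb everything left into $\te$ using the $a=0$ comparison with \eqref{sumsqS1}. Your treatment of $\mu_1,\mu_2,\mu_7,\mu_8$ and of the symbol class of the residual $p_K$-coefficient is correct and matches the paper.

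There is one step that does not close as written: your claim that the ``mismatch'' arising when passing from the $\nu$-square $\tfrac{\nu}{4}\bigl(\alpha_1(\tau-\ttau_2)-\alpha_2(\tau-\ttau_1)\bigr)^2$ to $\mu_3^2+\dots+\mu_6^2$ can be absorbed as a $p_K$-multiple. Evaluate both sides at $\tau=\ttau_1$. The $\nu$-square becomes $\tfrac{\nu}{4}\alpha_1^2(\ttau_1-\ttau_2)^2$, while $\mu_3^2+\dots+\mu_6^2$ becomes
\[
\Bigl[\tfrac{r-2M}{r^3}\,\tfrac{\lambda^2}{D}+\tfrac{(r^2-2rM)\xi_r^2}{D}\Bigr]\tfrac{\nu}{4}\alpha_1^2(\ttau_1-\ttau_2)^2,\qquad D=\lambda^2+(r^2-2rM)\xi_r^2.
\]
The bracket is not identically $1$, so the difference does \emph{not} vanish on $\{p_K=0\}$ and hence is not a multiple of $(\tau-\ttau_1)(\tau-\ttau_2)$; it cannot be pushed into $\te$. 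The resolution is that the factor $(r-2M)/r^3$ in the stated $\mu_{3,4,5}^2$ should not be there (compare with \eqref{sumsqS1}: at $a=0$ one has $\tfrac{\nu}{4}A^2=\nu\alpha_S^2\tau_S^2=\nu\alpha_S^2\tfrac{r-2M}{r^3}D$, and dividing by $D$ via the partition of unity already produces the correct $\tfrac{r-2M}{r^3}$ weight in each $\lambda_i^2$ term without inserting it by hand). With that correction the partition of unity gives $\mu_3^2+\dots+\mu_6^2=\tfrac{\nu}{4}A^2$ exactly, and no mismatch term is needed. Once you drop the spurious weight, your argument goes through unchanged and coincides with the paper's.
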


 We remark that when $a=0$ we have that $\alpha_1 = \alpha_2 = \alpha_S$ and $\beta_1=\beta_2=\beta_S$, so \eqref{sumsqK} is identical to \eqref{sumsqS1}. Moreover, $\mu_j$ are symbols of differential operators for $a=0$.

 In order to use the classical pseudodifferential calculus, we need to remove the singularity at zero frequency. Let $\chi_{> 1}(x)$ be a smooth cutoff that equals $1$ when $x\geq 1$ and $0$ when $x \ll 1$ and define
 \[
 s_i = \chi_{> 1} (|\xi|)^2 \ts_i, \qquad e_i = \chi_{> 1} (|\xi|)^2 \te_i , \qquad \tau_i := \chi_{> 1} (|\xi|) \ttau_i,
 \]
 \[
 s_K = b(r)\bigthree(1-\frac{3M}r\bigthree) \xi_r + s_1 + s_0\tau, \quad e_K = q_S + e_0 + e_{-1}\tau.
 \]
 The errors we make are smoothing and small, $O(a)$, so they can be easily dealt with.

 The operator $S$ is now defined as
\[
S = iS_1 + S_0\partial_t, \qquad S_j = \chi  s_j^w \chi.
\]

The operator $E$ is defined similarly, with the extra twist that \eqref{nodt3} is satisfied. We first note that, by the Weyl calculus,
\[
[g_K^{tt}, S_0] +  g_K^{tt} e_{-1}^w  + e_{-1}^w g_K^{tt} \in OPS^{-3},
\]
since its principal part must be $0$ by Lemma~\ref{TTmain}.

We can now define
\[
E = E_0 + \frac{1}i E_{-1} \partial_t, \qquad E_0 = \chi e_0^w \chi, \qquad E_{-1} = e_{-1}^w - e^w_{aux},
\]
where the operator $e^w_{aux}$ is chosen so that
\[
g^{tt} e^w_{aux} +  e^w_{aux}g^{tt} =  [g_K^{tt}, S_0] +  g_K^{tt}  e_{-1}^w  + e_{-1}^w g_K^{tt}.
\]
This is possible since the coefficient $g_K^{tt}$ of $\tau^2$ in $p_K$ is
a scalar function which is nonzero near $r = 3M$. (In fact, if $K(x,y)$ is the kernel of the operator on the right hand side then the operator $e^w_{aux}$ with kernel $K(x,y)/(g^{tt}(x)+g^{tt}(y))$ solves the equation.) Also as defined
$e^w_{aux} \in OPS^{-3}$ (so the principal symbol calculation does not change), and has kernel supported near $r = 3M$.

For the convenience of the reader, we also give the proof of \eqref{Kerrbd}. We first note that,
away from the trapped set, we can estimate
\begin{equation}
\int_{\M{[\tt_0,\tt_1]}} (1-\chi^2) Q[g_K, X, q, m] dV_{\mathbf K}  \gtrsim \int_{\M{[\tt_0,\tt_1]}}
(1- \chi^2) \big( r^{-2} |\pa u|^2 + r^{-4}|u|^2\big)\,  d V_K.
\label{mekout}\end{equation}

For simplicity, for any operator $S = \sum_{j=0}^2 S_{m+2-j} D_t^{j}\in S^{m,2}$ we will define the integrated bilinear form
\[
IQ[S, u] = \int_{\M{[\tt_0,\tt_1]}} S_{m+2} u \cdot \overline {u} + 2\Re S_{m+1} u D_t u \cdot \overline {u} + S_m D_t u \cdot \overline {D_t u} \, dV_K
\]

We now write for the microlocal part
\[
IQ[g_K, S, E] = IQ^K_{princ}[g_K, S, E]
+  IQ^K_{aux}[g_K, S, E],
\]
where the main component is given by
\begin{equation}
IQ^K_{princ}[g_K, S, E]  = IQ[\chi \bigthree( \frac{1}{2i}
  \{p_K,s\} + p_K e \bigthree)^w \chi, u]
\label{qksform}
\end{equation}

The remainder can be controlled by a small constant times norms we control in the Schwarzschild case, and the energy away from the trapped set:
\begin{lemma}
\begin{equation}
|IQ^K_{aux}[g_K, S, E]| \lesssim a\bigtwo(\| r^{-2}u\|_{L^2[\tt_0,\tt_1] L^2}^2+
 \|D_t u\|_{L^2[\tt_0,\tt_1]H^{-1}_{comp}}^2+\|\chi_1 \pa u\|_{L^2[\tt_0,\tt_1] L^2}^2\bigtwo).
\label{iqaux}\end{equation}
\end{lemma}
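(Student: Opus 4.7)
The plan is to isolate the discrepancy between $IQ[g_K,S,E]$ and its principal-symbol localization $IQ^K_{princ}$ from \eqref{qksform}, and show that each remaining piece carries a factor of $a$. Write $\tfrac{1}{i}s_K = b(r)(1-3M/r)\xi_r + (s_1 + s_0\tau)$ and $e_K = q_S + (e_0 + e_{-1}\tau)$, where the parenthetical pseudo parts are $O(a)$ and supported near $r=3M$ through the outer cutoffs $\chi$. In the Schwarzschild ($a=0$) limit the symbols of $S$ and $E$ reduce to $b(r)(1-3M/r)\xi_r$ and $q_S(r)$, polynomials of degree at most $1$ in $\xi$ with smooth coefficients. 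Hence for those pieces the Weyl composition with $\widehat{\Box}_{g_K}$ produces no subprincipal correction: the full operator symbol of $\tfrac12([\widehat{\Box}_{g_K},\cdot]+\widehat{\Box}_{g_K}(\cdot)+(\cdot)\widehat{\Box}_{g_K})$ coincides with $\tfrac{1}{2i}\{\widehat p_K,\cdot\}+\widehat p_K(\cdot)$. Consequently $IQ^K_{aux}$ collects only (i) the $O(a)$ subprincipal Weyl remainders arising from the pseudo parts $s_j,e_j$, (ii) commutator errors $[\chi,\cdot]$ from sandwiching those symbols between the outer cutoffs, and (iii) the compensation $e_{aux}^w\pa_t\in a\cdot OPS^{-3}\pa_t$ introduced to enforce \eqref{nodt3}.

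For (i), the Weyl calculus from Section~2.3.3 yields remainders two orders below the principal. Sorted by $D_t$-degree, one obtains an $a$-small space pseudodifferential operator of order $0$ paired with $u\cdot\overline u$, one of order $-1$ paired with $u\cdot\overline{D_tu}$, and one of order $-2$ paired with $D_tu\cdot\overline{D_tu}$. Since all kernels are supported in a fixed compact neighborhood of $r=3M$, classical $L^2$-boundedness of pseudodifferential operators, together with the trivial equivalence $\|u\|_{L^2_{comp}}\simeq\|r^{-2}u\|_{L^2_{comp}}$ on that support, converts the first two pairings via Cauchy--Schwarz into contributions bounded by $a\|r^{-2}u\|_{L^2L^2}^2+a\|D_tu\|_{L^2H^{-1}_{comp}}^2$; the third pairing is already in the desired $D_tu\in H^{-1}_{comp}$ form. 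The operator from (iii) is even lower order and absorbs into the same $H^{-1}_{comp}$ term.

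The cutoff commutators in (ii) are supported in the shell $\{M/16\le|r-3M|\le M/4\}$, on which the enlarged cutoff $\chi_1$ of the statement equals $1$. Each such commutator drops one order and carries a factor of $a$ inherited from $s_j,e_j$. Pairing via Cauchy--Schwarz against $\pa u$ in that shell gives the remaining contribution $a\|\chi_1\pa u\|_{L^2L^2}^2$. Summing the three groups of errors yields \eqref{iqaux}.

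The main obstacle is the bookkeeping of how the $D_t$-powers in $S$ and $E$ interact with the two-order drop in the Weyl composition so that each target norm absorbs exactly the right piece without loss of the $a$-factor, and verifying that the mixed space--time symbol calculus in the classes $S^{m,k}$ applies uniformly in a neighborhood of the support of the cutoffs. Once these piecewise estimates are in place, the full bound is immediate from the triangle inequality.
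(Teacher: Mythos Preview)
Your approach is essentially the same as the paper's: split the auxiliary term into (a) the Weyl-calculus remainder, which lives in $aOPS^{0,2}$ and is bounded via the $L^2_{comp}$ and $H^{-1}_{comp}$ pairings, and (b) the cutoff commutator supported on $\supp\chi'$, which is handled by the $\chi_1\pa u$ term. The contribution from $e_{aux}^w$ is indeed negligible. So the proposal is correct in substance.

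Two points of clarification. First, in the paper's notation $S$ and $E$ are \emph{only} the $O(a)$ pseudodifferential corrections, i.e.\ $S_j=\chi s_j^w\chi$ with $s_j\in aS^j$; they vanish identically when $a=0$. The Schwarzschild vector field $X$ and scalar $q$ enter separately through the differential term $\int Q[g_K,X,q,m]\,dV_K$, not through $IQ[g_K,S,E]$. So your discussion of why the Schwarzschild pieces ``produce no subprincipal correction'' is unnecessary here---everything in $IQ[g_K,S,E]$ already carries an $a$-factor from the outset, which is exactly how the paper argues. Second, the cutoff commutator does not ``drop one order'': the paper identifies it as $\bigl(\tfrac{1}{i}\chi s\{p_K,\chi\}\bigr)^w\in aOPS^{2,1}$, still second order. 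The bound comes not from an order drop but from factorizing the symbol and integrating by parts in space, using that $\{p_K,\chi\}$ is supported where $\chi_1=1$; this is what your Cauchy--Schwarz pairing against $\pa u$ in the shell is really doing.
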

where $\chi_1$ is a cutoff that is identically $1$ on the support of $\chi^{\prime}$
\begin{proof}
 We have that
\[
\chi \bigthree( \frac{1}{2i}
  \{p_K,s\} + p_K e \bigthree)^w \chi -
\bigthree( \frac{1}{i} \chi s \{p_K,\chi\}\bigthree)^w \in aOPS^{0,2}.
\]
Since
\[
\Bigl|\int_{\M{[\tt_0,\tt_1]}} OPS^{-2} D_t u \cdot\overline{D_t u} \, dV_K \Bigr|
\lesssim \|OPS^{-2} D_t u\|_{L^2[\tt_0, \tt_1] H^{1}_{comp}} \|D_t u\|_{L^2[\tt_0, \tt_1] H^{-1}_{comp}}
 \lesssim  \|D_t u\|_{L^2[\tt_0, \tt_1] H^{-1}_{comp}}^2
\]
 we obtain
\[
|IQ^K_{aux}[g_K, S, E]| \lesssim \bigl|IQ\bigl[\bigthree( \frac{1}{i} \chi s \{p_K,\chi\}\bigthree)^w, u\bigr]\bigr| + a\bigtwo(\| u\|_{L^2[\tt_0,\tt_1]L^2_{comp}}^2+ \|D_t u\|_{H^{-1}_{comp}}^2\bigtwo)
\]

Moreover after factorizing the symbol and integrating by parts in space we easily obtain
\begin{equation}
\bigl|IQ\bigl[\bigthree( \frac{1}{i} \chi s \{p_K,\chi\}\bigthree)^w, u\bigr]\bigr|
\lesssim a \left(\|\chi_1 \pa u\|_{L^2 L^2[\tt_0,\tt_1]}^2 + \|u\|_{L^2[\tt_0, \tt_1] L^2_{comp}}^2 + \|D_t u\|_{L^2[\tt_0, \tt_1] H^{-1}_{comp}}^2\right)
\end{equation}
which finishes the proof of the lemma.
\end{proof}

We now define
\[
M_k = \chi \mu_k(0)(x,D) + \nu_k^w \chi, \quad \nu_k := \chi_{> 1} (|\xi|) \left(\mu_k(a) -\mu_k(0)\right) \in a S^{1, 1}
\]

Note that, since $\chi$ appears in different places in the expression $LHS\eqref{symbMk}$, we need to also carefully define $M_k$ with the cutoff in different places, otherwise the lower order errors could be large. We also defined the operators so that they equal to their counterparts in Schwarzschild when $a=0$.

We first prove the following:
\begin{lemma} We have
\begin{multline}\label{symbMk}
\int_{\M{[\tt_0,\tt_1]}}\chi^2  Q[g_K, X, q, m] dV_{\mathbf K} + IQ^K_{princ}[g_K, S, E] \!\geq \!\!\int_{\!\M{[\tt_0,\tt_1\!]}}  \!{\tsum}_j |M_j u|^2\! + q^{K,0}\! \chi^2 u^2 dV_K \\ - O(a)\left(\|\chi_1 \pa u\|_{L^2 L^2[\tt_0,\tt_1]}^2+\| u\|_{L^2[\tt_0, \tt_1]L^2_{comp}}^2 + \|D_t u\|_{L^2[\tt_0, \tt_1]H^{-1}_{comp}}^2\right) .
\end{multline}
\end{lemma}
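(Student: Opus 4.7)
The plan is to reduce \eqref{symbMk} to the symbol-level sum-of-squares identity provided by Lemma~\ref{TTmain} and then keep careful track of the Weyl quantization errors and the way the cutoff $\chi$ interacts with each piece. First, I would rewrite the left-hand side so that both summands are expressed in terms of Weyl quantizations of symbols that are polynomials of degree at most $2$ in $\tau$. The Schwarzschild piece $\chi^2 Q[g_K,X,q,m]$ contributes the symbol $\chi^2\bigtwo(\tfrac12\{p_K,X^\alpha\xi_\alpha\} + (q-q_X)p_K\bigtwo)$ together with the zero-order term $q^{K,0}\chi^2$, while the microlocal piece $IQ^K_{princ}$ is by definition \eqref{qksform} the integrated bilinear form associated to $\chi\bigtwo(\tfrac1{2i}\{p_K,s\} + p_K e\bigtwo)^w \chi$. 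Combining the two and applying Lemma~\ref{TTmain} rewrites the total principal symbol of the LHS as $\rho^{-2}\chi^2\sum_j \mu_j^2 + q^{K,0}\chi^2$ modulo a symbol in $S^{0,2}$ of size $O(a)$ arising from the discrepancy between the Schwarzschild and Kerr sum-of-squares decompositions.

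Next, I would pass from symbols to operators. Because each $\mu_j$ lies in $S^{1,1}$ and the $M_j$ are designed to coincide with $\chi\,\mu_j(0)^w$ when $a=0$ up to the small correction $\nu_k^w\chi$, the Weyl calculus gives
\[
\int \chi^2(\mu_j^2)^w u\cdot\overline{u}\,dV_K = \int |M_j u|^2\, dV_K + R_j[u],
\]
where $R_j[u]$ collects three types of errors: the commutator $[\mu_j(0)^w,\chi]$, the $O(a)$ contribution from $\nu_k^w$, and the subprincipal Weyl-calculus remainder in the composition $\mu_j^w\mu_j^w - (\mu_j^2)^w$, which is an operator of order $0$ in $S^{0,2}$. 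Each type of error is a bilinear form of spacetime order at most $2$ with an explicit $a$ prefactor, and is therefore controlled either by $\|u\|_{L^2_{comp}}^2$, by $\|\chi_1\pa u\|_{L^2L^2}^2$ (since the cutoff commutators are supported where $\pa\chi\neq 0$, which is covered by $\chi_1$), or by $\|D_t u\|_{L^2 H^{-1}_{comp}}^2$ for terms where the $\tau$ factor of $\mu_j\in S^{1,1}$ remains. Summing over $j$ and reintroducing $q^{K,0}\chi^2 u^2$ yields the right-hand side of \eqref{symbMk}.

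Finally, the residual $O(a)$ symbol from the first step is handled exactly in the spirit of the argument for \eqref{iqaux}: extracting the $O(a)$ prefactor and integrating by parts in space dominates its integrated bilinear form by the same combination of norms. The main obstacle is the bookkeeping of cutoff placements: $\chi$ appears in several inequivalent positions (once on each side of the principal-symbol quantization inside \eqref{qksform}, and on opposite sides of the two summands in the definition of $M_k$), and passing between these configurations generates commutators that, naively, could fall on a full derivative of $u$ inside the degenerate region near $r=3M$. One must verify that each such commutator either drops in order or is spatially supported where $\pa\chi\neq 0$, so that it is absorbed by $\|\chi_1\pa u\|_{L^2L^2}^2$, which does not degenerate at the trapped set, rather than by a norm that does.
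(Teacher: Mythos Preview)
Your overall strategy is the paper's strategy: combine the two pieces via Lemma~\ref{TTmain} and then control the quantization and cutoff errors. The final paragraph correctly isolates the main technical obstacle (the inequivalent placements of $\chi$) and the correct resolution.

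There is, however, one genuine gap in the middle step. You assert that the subprincipal Weyl remainder $\mu_j^w\mu_j^w - (\mu_j^2)^w$ carries an explicit $a$-prefactor. It does not: for the Schwarzschild part $\mu_j(0)$ this remainder is an honest order-$0$ operator of size $O(1)$, not $O(a)$. Similarly, passing from the differential quadratic form $\int \chi^2 q^{K,\alpha\beta}\pa_\alpha u\,\pa_\beta u\, dV_K$ to any Weyl-quantized expression like $\int \chi^2(\mu_j^2)^w u\cdot\overline{u}\,dV_K$ introduces $O(1)$ lower-order terms. If these do not cancel, the error on the right of \eqref{symbMk} would be $O(1)\|u\|_{L^2_{comp}}^2$ rather than $O(a)\|u\|_{L^2_{comp}}^2$, and the lemma would fail.

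The paper circumvents this by never passing through $(\mu_j^2)^w$. Instead it splits $\mu_j = \mu_j(0) + \nu_j$ with $\nu_j \in aS^{1,1}$ from the outset and uses that the Schwarzschild identity \eqref{sumsqS1} is an \emph{exact} pointwise sum of squares of differential symbols, so that $\chi^2 q^{S,\alpha\beta}\pa_\alpha u\,\pa_\beta u = \sum_j |\chi\,\mu_j(0)(x,D)u|^2$ holds with \emph{no} quantization error. The Kerr correction $q^K - q^S$ is then $O(a)$ and is matched, up to $O(a)$ lower order, against the corresponding piece of $IQ^K_{princ}$; what remains of $IQ^K_{princ}$ has principal symbol $\nu_j^2 + 2\nu_j\mu_j(0)$, and this supplies exactly the cross terms needed to complete $|\chi\,\mu_j(0)(x,D)u|^2$ to $|M_j u|^2$. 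All genuinely pseudodifferential manipulations are thus applied only to symbols that already carry an $a$-prefactor, which is why every error term is $O(a)$. Your route can be made to work, but only by invoking this same exact differential identity to cancel the non-$O(a)$ subprincipal terms; that step is missing from your outline.
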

\begin{proof}
Note that for $X$ and $q$ we have near the trapped set
\[
Q[g_K, X, q, m] = q_K^{\alpha\beta}\pa_{\alpha} u\, \pa_{\beta} u + q^{K,0} u^2,
\]
where
\[
q^{K,\alpha \beta} \eta_\alpha \eta_\beta = \frac{1}{2i}\{ p_K,X\}+ q\,  p_K, \qquad
q^{K,0} = -\frac12 \Box_{g_K} q > 0 .
\]
We now write
\[
\chi^2 q_K^{\alpha\beta}\pa_{\alpha} u\, \pa_{\beta} u = \sum_j |\chi\mu_j (0)(x,D) u|^2 + \chi^2 (q_K^{\alpha\beta}-q_S^{\alpha\beta})\pa_{\alpha} u\, \pa_{\beta} u .
\]
Since $q_K^{\alpha\beta}-q_S^{\alpha\beta}=O(a)$ we obtain
\[\begin{split}
\!\!\int_{\!\M{[\tt_0,\tt_1\!]}} (q_K^{\alpha\beta}-q_S^{\alpha\beta})\pa_{\alpha} u\, \pa_{\beta} u \, dV_K - \Re \, IQ\left[ \chi \left(\frac1{2i}\{p_K-p_S, X\} + (p_K-p_S) q\right)^w \!\chi, u\right] \\ \lesssim a\left(\| u\|_{L^2[\tt_0, \tt_1]L^2_{comp}}^2 + \|D_t u\|_{L^2[\tt_0, \tt_1]H^{-1}_{comp}}^2\right) .
\end{split}\]
On the other hand, since
\[\mu_j(a)^2 - (\mu_j(0)+\nu_j)^2 \in a S^{1, 1}\]
we have by Lemma~\ref{TTmain}
\[
\frac1{2i}\{p_K, s\}+ p_K e = \sum_j (\mu_j(0)+\nu_j)^2  - \frac1{2i}\{p_K, X\}- p_K q = \sum_j \nu_j^2 + 2\nu_j\mu_j(0) - \frac1{2i}\{p_K-p_S, X\}- (p_K-p_S) q
\]
modulo a term in $a S^{1, 1}$.

We thus obtain
\begin{multline}
LHS\eqref{symbMk} \!\geq \!\!\int_{\!\M{[\tt_0,\tt_1\!]}} \!{\tsum}_j |\chi\mu_j(0)(x,D) u|^2\!  + q^{K,0}\! \chi^2 u^2 dV_K + IQ[\chi (\nu_j^2 + 2\nu_j\mu_j(0))^w \chi, u] \\
- O(a) \left(\| u\|_{L^2[\tt_0, \tt_1]L^2_{comp}}^2 + \|D_t u\|_{L^2[\tt_0, \tt_1]H^{-1}_{comp}}^2\right).
\end{multline}

By the Weyl calculus we have
\begin{multline}
IQ [\chi \left(\nu_j^2 + 2\nu_j\mu_j(0)\right)^w \chi, u] - IQ [\chi \left((\nu_j^w)^2 + 2\nu_j^w\mu_j^w(0)\right)\chi, u]  \\ \lesssim a\left(\| u\|_{L^2[\tt_0, \tt_1]L^2_{comp}}^2 + \|D_t u\|_{L^2[\tt_0, \tt_1]H^{-1}_{comp}}^2\right)
\end{multline}
Since $\mu_j(0)$ is a differential operator of first order, we have that $\mu_j(0)(x,D)-\mu_j^w(0)$ is skew-adjoint, which implies
\[
\Re \, IQ [\chi \nu_j^w\mu_j^w(0) \chi, u] = \Re \, IQ [\chi \nu_j^w\mu_j(0)(x,D) \chi, u]
\]
and thus
\[
IQ [\chi \left((\nu_j^w)^2 + 2\nu_j^w\mu_j^w(0)\right) \chi, u] = \int_{\!\M{[\tt_0,\tt_1\!]}} |\chi \nu_j^w \chi u|^2 + 2\Re \, \left(\mu_j(0)(x,D)\chi u \cdot \overline{\nu_j^w \!\chi u}\right) dV_K
\]
On the other hand we compute
\begin{multline}
\!\!\int_{\!\M{[\tt_0,\tt_1\!]}} |M_j u|^2 dV_K
= \!\!\int_{\!\M{[\tt_0,\tt_1\!]}} \chi^2 |\mu_j(0)(x,D)u|^2 + 2\Re\left(\chi \mu_j(0)(x,D)u \cdot \overline{\nu_j^w \!\chi u}\right) + |\chi \nu_j^w \chi u|^2  dV_K \\
= \!\!\int_{\!\M{[\tt_0,\tt_1\!]}} \chi^2 |\mu_j(0)(x,D)u|^2 + 2\Re\left(\mu_j(0)(x,D) \chi u \cdot \overline{\nu_j^w \!\chi u}\right) + |\chi \nu_j^w \chi u|^2 - 2\Re\left([\mu_j(0)(x,D), \chi] u \cdot \overline{\nu_j^w \!\chi u}\right) dV_K
\end{multline}
Since $\nu_j\in a(S^1+\tau S^0)$,
\[
\|\nu_j^w \chi \chi_1 u\|_{L^2}^2 \lesssim a^2 \|\pa_{t,x}^{\leq 1} \chi \chi_1 u\|_{L^2}^2 \lesssim  \|\chi_1 \pa_{t,x} u\|_{L^2}^2 + \|\chi u\|_{L^2}^2
\]
Finally, since $\chi_1=1$ on the support of $\chi^\prime$,
\[
\!\!\int_{\!\M{[\tt_0,\tt_1\!]}} \left| [\mu_j(0)(x,D), \chi] u \cdot \overline{\nu_j^w \!\chi u}\right| dV_K \lesssim a\left(\|\nu_j^w \chi \chi_1 u\|_{L^2[\tt_0, \tt_1]L^2}^2 + \|\chi^{\prime}  u\|_{L^2[\tt_0, \tt_1]L^2}^2 \right) \lesssim \|\chi_1 \pa^{\leq 1} u\|_{L^2[\tt_0, \tt_1]L^2}^2
\]

\end{proof}

In order to absorb the errors created when the derivative falls on the cutoff, we show
 \begin{lemma} \label{chi1}
 We have
 \beq
 \int_{\!\M{[\tt_0,\tt_1\!]}} (1-\chi^2) Q[g_K, X, q, m] + \!{\tsum}_j |M_j u|^2 dV_K \gtrsim \|\chi_1\pa u\|^2_{L^2[\tt_0,\tt_1\!]L^2} - \left(\| u\|_{L^2[\tt_0, \tt_1]L^2_{comp}}^2 + \|D_t u\|_{L^2[\tt_0, \tt_1]H^{-1}_{comp}}^2\right)
 \eq
 \end{lemma}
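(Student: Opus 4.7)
The plan is to show that the integrand $(1-\chi^2) Q[g_K, X, q, m] + \sum_j |M_j u|^2$ is pointwise elliptic in $\partial u$ on $\supp \chi_1$, and then to promote this symbolic ellipticity to an integrated estimate via a G{\aa}rding-type argument. By construction $\chi_1\equiv 1$ on $\supp\chi'$, and we may assume $\supp\chi_1$ lies in an annulus $\{c_0\le|r-3M|\le M/4\}$ bounded away from the photon sphere, since otherwise the left-hand side cannot possibly control the right-hand side due to trapping. In particular the weight $(r-3M)^2$ is bounded below by a positive constant on $\supp\chi_1$.

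In the subregion where $\chi<1$, the Schwarzschild multiplier handles the estimate. Combining \eqref{SchwQ} with $Q[g_K,X,q,m]-Q[g_S,X,q,m]=O(a)(|\partial u|^2+u^2)$, we obtain on $\supp\chi_1\cap\supp(1-\chi^2)$
\[
Q[g_K,X,q,m]\gtrsim|\partial u|^2+u^2 - O(a)|\partial u|^2,
\]
since the degenerating factor $(1-3M/r)^2$ is bounded below there. In the complementary subregion where $\chi\equiv 1$, we rely on the sum of squares. By \eqref{mudef} compared with the Schwarzschild decomposition \eqref{sumsqS1}, on $\supp\chi_1$ each of the weights $(1-\nu)\alpha_S^2$, $\beta_S^2$, and $(r-2M)\nu\alpha_S^2/r^3$ is bounded below, so $\sum_j\mu_j(0)^2\gtrsim|\xi|^2+\tau^2$. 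The Kerr perturbation $\nu_j\in aS^{1,1}$ contributes only $O(a)$ corrections, and a G{\aa}rding-type argument with $\chi_1$ inserted symmetrically yields
\[
\sum_j\|\chi_1 M_j u\|_{L^2}^2\gtrsim\|\chi_1\chi\,\partial u\|_{L^2}^2 - a\|\chi_1\partial u\|_{L^2}^2 - C\bigl(\|u\|_{L^2_{comp}}^2+\|D_t u\|_{H^{-1}_{comp}}^2\bigr).
\]

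Combining the two pieces through $|\chi_1\partial u|^2\le 2|\chi_1(1-\chi^2)^{1/2}\partial u|^2+2|\chi_1\chi\partial u|^2$, integrating over $\M_{[\tt_0,\tt_1]}$, and absorbing the $O(a)\|\chi_1\partial u\|^2_{L^2 L^2}$ term on the left for $a$ sufficiently small, gives the lemma. The main technical obstacle is the careful treatment of $D_t$: since $\mu_j$ is polynomial of degree one in $\tau$, squaring $M_j$ produces terms quadratic in $D_t u$ paired with spatial operators of order at most zero. Integrating these by parts in $t$ transfers one $D_t$ off, and pairing with a spatial operator of order $-1$ produces precisely the $\|D_t u\|_{L^2 H^{-1}_{comp}}^2$ error on the right rather than a stronger norm, as this is the only norm we can control independently via the elliptic estimate \eqref{Dtell}. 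Commutators of $\chi_1$ with the pseudodifferential operators drop the order by one and contribute to the $\|u\|_{L^2_{comp}}^2$ term.
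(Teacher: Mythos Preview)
Your overall strategy is correct and matches the paper's: use that on $\supp\chi_1$ (which must be an annulus bounded away from $r=3M$, as you correctly argue) the collection $\{\mu_j\}$ is elliptic, so $\sum_j|M_ju|^2$ controls $\|\chi_1\chi\,\partial u\|^2$; and on $\supp(1-\chi^2)$ the Schwarzschild form $Q$ is already nondegenerate, so $(1-\chi^2)Q[g_K,X,q,m]$ controls $\|\chi_1(1-\chi^2)^{1/2}\partial u\|^2$. Adding the two pieces gives the lemma.

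The paper executes the first step more directly than your G{\aa}rding sketch: it writes the symbolic identity $\xi_\alpha=\sum_k\gamma_k^0\mu_k$ on $\supp\chi_1$ (with $\gamma_k^0\in S^0$ there, since the degenerating factor $\alpha_S\sim(r-3M)$ is bounded away from zero), and then the operator identity
\[
\chi_1\chi D_\alpha=\sum_k\gamma_k^0(D,x)\chi_1 M_k+(R_0+R_{-1}D_t),\qquad R_j\in OPS^j,
\]
from which $\|\chi_1\chi\partial_\alpha u\|_{L^2}\lesssim\sum_k\|M_ku\|_{L^2}+\|u\|_{L^2_{comp}}+\|D_tu\|_{H^{-1}_{comp}}$ follows by taking $L^2$ norms. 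No G{\aa}rding or Fefferman--Phong inequality is invoked, and in particular no integration by parts in $t$ is needed: the remainder $R_{-1}D_t$ with $R_{-1}\in OPS^{-1}$ directly produces the $\|D_tu\|_{H^{-1}_{comp}}$ term. Your paragraph about ``integrating these by parts in $t$ transfers one $D_t$ off'' is therefore unnecessary and somewhat misleading --- the whole point of writing the symbolic inversion as a first-order identity in $\tau$ is that the error is automatically of the form $(\text{order }0)+(\text{order }-1)D_t$, which is exactly what is needed. If you insist on a G{\aa}rding-type argument, you would need a system (Douglis--Nirenberg) version treating $(u,D_tu)$ as a pair, and the sharp G{\aa}rding error would a priori only be $\|u\|_{H^{1/2}}^2+\|D_tu\|_{H^{-1/2}}^2$; getting down to $\|u\|_{L^2}^2+\|D_tu\|_{H^{-1}}^2$ requires precisely the explicit decomposition the paper uses.
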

 \begin{proof}

We can express the symbol of any derivative in $I^{\prime}$ as a linear combination
\[
\xi_{\alpha} = \sum_k \gamma_k^0 \mu_k
\]
where $\gamma_k^0$ are symbols of order $0$.  We can thus write
\[
\chi_1 \chi D_{\alpha} = \sum_k \gamma_k^0 (D, x) \chi_1 M_k u + (R_0+R_{-1} D_t)
\]
where $R_j\in OPS^j$ with compactly supported kernel. This implies
\[\begin{split}
& \|\chi_1 \chi \pa_{\alpha} u\|^2_{L^2[\tt_0,\tt_1\!]L^2} \lesssim  \sum_k \|\gamma_k^0 (D, x) \chi_1 M_k u\|^2_{L^2[\tt_0,\tt_1\!]L^2} +  \left(\| u\|_{L^2[\tt_0, \tt_1]L^2_{comp}}^2 + \|D_t u\|_{L^2[\tt_0, \tt_1]H^{-1}_{comp}}^2\right) \\ & \lesssim \sum_k \|M_k u\|^2_{L^2[\tt_0,\tt_1\!]L^2} + \left(\| u\|_{L^2[\tt_0, \tt_1]L^2_{comp}}^2 + \|D_t u\|_{L^2[\tt_0, \tt_1]H^{-1}_{comp}}^2\right)
\end{split}\]
We also have
\[
 \int_{\!\M{[\tt_0,\tt_1\!]}} (1-\chi^2) \chi_1^2 |\pa_{\alpha} u|^2 dV_K \lesssim \int_{\!\M{[\tt_0,\tt_1\!]}} (1-\chi^2) Q[g_K, X, q, m] dV_K
\]
The conclusion follows by adding the last two inequalities.
 \end{proof}

 We now prove that, on any fixed time slice:
 \begin{lemma} We have
\begin{equation}\label{Mklek}
\begin{split}
\sum_k \|\chi M_k u\|_{L^2}^2 & \gtrsim \|\varrho_1(D, x) \chi(D_t - \tau_2(D,x))\chi u\|_{L^2}^2
+ \|\varrho_2(D, x)\chi (D_t - \tau_1(D,x))\chi u\|_{L^2}^2 \\ & + \| \chi D_r u\|_{L^2}^2 - \| u\|_{L^2_{comp}}^2 - \|D_t  u\|_{H^{-1}_{comp}}^2.
\end{split}
\end{equation}
\end{lemma}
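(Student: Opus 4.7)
The plan is to realize each of the three target quantities on the right-hand side as an $S^0$-linear combination of the principal symbols $\mu_k(0)$ appearing in $M_k$, then convert the resulting identities into operator inequalities by Cauchy--Schwarz and standard Weyl-calculus estimates, carefully absorbing every commutator error into the two negative terms $\|u\|_{L^2_{comp}}^2$ and $\|D_t u\|_{H^{-1}_{comp}}^2$.

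At the symbol level, working on the support of $\chi$ and for $|a|$ small, three observations extracted from \eqref{mudef} do the work. First, $\mu_2^2 = \tfrac12(\beta_1^2+\beta_2^2-Ca)\xi_r^2$ is an elliptic multiple of $\xi_r^2$, so that $\|\chi \mu_2^w u\|_{L^2}$ already controls $\|\chi D_r u\|_{L^2}$ up to standard parametrix errors. Second, $\mu_1^2$ is an elliptic multiple of $\bigl(\alpha_1(\tau-\tau_2)+\alpha_2(\tau-\tau_1)\bigr)^2$, and the identity $\lambda_1^2+\lambda_2^2+\lambda_3^2=\lambda^2$ forces $\mu_3^2+\mu_4^2+\mu_5^2+\mu_6^2 = \tfrac{r-2M}{r^3}\tfrac{\nu}{4}\bigl(\alpha_1(\tau-\tau_2)-\alpha_2(\tau-\tau_1)\bigr)^2$, which is elliptic and bounded below near $r=3M$; the parallelogram identity then separates $\alpha_1(\tau-\tau_2)$ and $\alpha_2(\tau-\tau_1)$ individually as quadratic forms. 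Third, the formulas displayed after \eqref{eq:sumsquarekerr2} give $\alpha_i = c_i\varrho_i$ with $c_i\in S^0_{hom}$ elliptic after the zero-frequency cutoff $\chi_{>1}$ is applied, so an elliptic parametrix converts control of $\alpha_i(\tau-\tau_{3-i})$ into the desired control of $\varrho_i(\tau-\tau_{3-i})$.

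To lift these symbol-level identities to operator inequalities I would handle three sources of error: the Weyl-calculus subprincipal terms, the commutators of $\chi$ with the elliptic parametrices and the factorization $(D_t - \tau_{3-i}(D,x))$, and the contributions of $\nu_k^w \chi$ in the definition of $M_k$. Each is one order lower than its principal counterpart and compactly supported in $x$ by the cutoffs; paired against $u$ it contributes at most $\|u\|_{L^2_{comp}}^2$, and any residual factor of $D_t$ is absorbed into $\|D_t u\|_{H^{-1}_{comp}}^2$ by dualizing against a compactly supported operator of spatial order $-1$. Adding the three individual inequalities so produced yields the stated bound.

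The main obstacle I expect is precisely the bookkeeping of $D_t$ factors: the target $\varrho_i(D,x)\chi(D_t - \tau_{3-i}(D,x))\chi u$ carries an outer $D_t$, so every commutator remainder that produces an extra $D_t$ must be arranged to come with an operator of spatial order $-1$, which is exactly the structure that fits into the $H^{-1}_{comp}$ duality responsible for the last error term on the right-hand side. The analogous step in Lemma~\ref{chi1}, where $\xi_\alpha$ is expressed as $\sum \gamma_k^0 \mu_k$ modulo $OPS^0 + OPS^{-1} D_t$, serves as the template, and the argument here differs only in that the $\alpha_i \to \varrho_i$ conversion requires one additional elliptic parametrix.
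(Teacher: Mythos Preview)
Your approach is essentially the same as the paper's: express each target symbol as an $S^0$-linear combination of the $\mu_k$, lift to operators, and absorb the order-zero and $D_t \cdot OPS^{-1}$ remainders into $\|u\|_{L^2_{comp}}^2 + \|D_t u\|_{H^{-1}_{comp}}^2$. The paper streamlines your sum/difference plus parallelogram plus parametrix steps into a single identity $\varrho_1(D,x)\chi(D_t-\tau_2(D,x))\chi = \sum_{k=1,3,4,5}\gamma_k^0(D,x)\chi M_k + R_0 + R_{-1}D_t$ (and analogously for the other two targets), and note that your displayed formula for $\mu_3^2+\mu_4^2+\mu_5^2+\mu_6^2$ drops the factor $\tfrac{r-2M}{r^3}$ inconsistently on $\mu_6$, though the ellipticity conclusion you draw from it is still correct.
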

\begin{proof}
To prove \eqref{Mklek}, we notice that, by Lemma~\ref{TTmain}, the symbols $\varrho_1(\tau-\tau_2)$,
$\varrho_2(\tau-\tau_1)$ and $\xi_r$ can be recovered in an elliptic fashion from the principal symbols $\mu_k$ of $M_k$. We can thus write
\beq
\tvarrho_1(x,\xi) \big(\tau-\ttau_2(x,\xi)\big)
=\sum_{k=1,3,4,5}\tgamma_k^1(x,\xi) \big(\mu_k^1(x,\xi)+\mu_k^0(x,\xi) \tau\big),
\eq
where $\tgamma_1^i(x,\xi)$ are homogeneous symbols of order $0$. It follows from the composition formula for pseudo differential operators that
\beq
\varrho_1(D, x) \chi \big(D_t\!-\tau_2(D, x)\big) \chi
=\sum_{k=1,3,4,5}\gamma_1^k(D, x) \chi M_k
+ (R_0 + R_{-1}D_t),
\eq
where $\gamma_1^k\in S^0$ and $R_j \in OPS^j$ with compactly supported kernels. Since $\gamma_1^i(D, x)$ map $L^2$ to $L^2$ and $R_j$ map $L^2$ to $H^{-j}$ we obtain
\[
\| \varrho_1(D, x) \chi \big(D_t\!-\tau_2(D, x)\big) \chi u\|_{L^2}^2 \lesssim \sum_{k=1,3,4,5}\|\chi M_k u\|_{L^2}^2 + \| \chi u\|_{L^2}^2 + \|D_t \chi u\|_{H^{-1}}^2,
\]
which bounds the first term on the RHS of \eqref{Mklek}. The next two terms are bounded in a similar manner.
\end{proof}

Multiplying \eqref{Mklek} by a small (but $a$-independent) constant $c\ll \inf q^{K,0}$, integrating in time and using \eqref{symbMk} yields
\beq\begin{split}
& \int_{\M{[\tt_0,\tt_1]}}\chi^2  Q[g_K, X, q, m] dV_{\mathbf K} + IQ^K_{princ}[g_K, S, E] \\ & \geq C_1 \left(\|\varrho_1(D, x) \chi(D_t - \tau_2(D,x))\chi u\|_{L^2[\tt_0,\tt_1\!]L^2}^2
+ \|\varrho_2(D, x)\chi (D_t - \tau_1(D,x))\chi u\|_{L^2[\tt_0,\tt_1\!]L^2}^2  + \| \chi D_r u\|_{L^2[\tt_0,\tt_1\!]L^2}^2\right) \\ & - c\|D_t  u\|_{H^{-1}_{comp}}^2
\end{split}\label{mekin}
\eq

\eqref{Kerrbd} follows after adding \eqref{mekout} and \eqref{mekin}. One can now use cutoffs to improve the weights at infinity to the optimal ones, see for instance \cite{MMTT}. Finally, using Lemma~\ref{chi1} we can control $\|\chi_1 \pa u\|_{L^2[\tt_0,\tt_1\!]L^2}$, and thus also $\|\tilde{\chi}_0\pa u\|_{L^2[\tt_0,\tt_1\!]L^2}$. This concludes the proof of Theorem~\ref{Kerr}.

\subsection{Vanishing of symbols on the trapped set}

The rest of the section will be devoted to proving that the symbols $s_K$ and $e_K$ from Lemma~\ref{TTmain} have certain vanishing property on the trapped set. This is essential for the results in the next section, as it allows us to prove local energy estimates for metrics that decay slower than $t^{-1}$ near the trapped set to the Kerr metric.

The main result of the section is the following lemma:

\begin{lemma}\label{qK}
There exist symbols $s_i^\ell$, $s_{ij}^{\ell} \in S^{\ell}$ and smoothing symbols $r_i\in S^{-\infty}+\tau S^{-\infty}$ so that
\begin{align}\label{sKdec}
s_K &= s_{1}^0 \varrho_1(\tau-\tau_2) + s_{2}^0 \varrho_2(\tau-\tau_1) + s_{3}^0 \xi_r + r_1,\\
   \pa_{x^k} s_K &= s_{1k}^0 \varrho_1(\tau-\tau_2) + s_{2k}^0 \varrho_2(\tau-\tau_1) + (s_{3k}^0 + s_{4k}^{-1}\tau)\xi_r + r_2, \\
   \pa_{\xi_k} s_K &= s_{1k}^{-1} \varrho_1(\tau-\tau_2) + s_{2k}^{-1}\varrho_2(\tau-\tau_1) + (s_{3k}^{-1} + s_{4k}^{-2}\tau)\xi_r + r_3.
\end{align}

Moreover, there are symbols $e_i^\ell \in S^{\ell}$ and $r_i\in S^{-\infty}+\tau S^{-\infty}$ so that 
\begin{equation}\label{qKdec}
e_K = e_1^{-1} \varrho_1(\tau-\tau_2) + e_2^{-1} \varrho_2(\tau-\tau_1) + (e_3^{-1} + e_4^{-2}\tau) \xi_r + r.
\end{equation}
\end{lemma}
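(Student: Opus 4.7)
The proof splits into a direct part for $s_K$ and its derivatives, and an indirect vanishing-plus-division part for $e_K$.

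For $s_K$, I would start from the closed formula \eqref{sKdecintro},
\[
\frac{1}{i}\tilde s_K=\frac{b(r)\,\xi_r}{r(\tilde\tau_1-\tilde\tau_2)}\bigl(\tilde\varrho_1(\tau-\tilde\tau_2)-\tilde\varrho_2(\tau-\tilde\tau_1)\bigr),
\]
which holds because $\tilde s_K$ is degree one in $\tau$ and agrees with $s=ir^{-1}b(r)(r-r_a(\Phi/\tau))\xi_r$ modulo $p_K$; evaluating at the two roots $\tau=\tilde\tau_i$ gives $\frac{1}{i}\tilde s_K|_{\tau=\tilde\tau_i}=r^{-1}b(r)\tilde\varrho_i\xi_r$, and the displayed expression is the unique linear interpolant in $\tau$. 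The first identity of the lemma then holds with $s_3^0=0$ and $s_1^0=-s_2^0=ib(r)\xi_r/(r(\tilde\tau_1-\tilde\tau_2))\in S^0$. For the derivative identities I apply Leibniz to the product $G\xi_r[\tilde\varrho_1(\tau-\tilde\tau_2)-\tilde\varrho_2(\tau-\tilde\tau_1)]$ with $G:=ib(r)/(r(\tilde\tau_1-\tilde\tau_2))\in S^{-1}$. Derivatives landing on $G$ or on $\xi_r$ leave the inner structure $\tilde\varrho_i(\tau-\tilde\tau_j)$ intact and contribute to $s_{1k}^\ell$, $s_{2k}^\ell$. Derivatives landing inside the brackets yield terms of the form $G\xi_r(\partial\tilde\varrho_i)(\tau-\tilde\tau_j)$ or $G\xi_r\tilde\varrho_i(\partial\tilde\tau_j)$; in each the surviving $\xi_r$ factor from $G$ lets the identity $\xi_r(\tau-\tilde\tau_j)=\tau\xi_r-\tilde\tau_j\xi_r$ place the term into the $s_{3k}^\ell\xi_r+s_{4k}^{\ell-1}\tau\xi_r$ slot with the claimed orders. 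The $\partial_{\xi_k}$ case is identical up to a uniform order shift, and the passage from $\tilde{\phantom{\varrho}}$-symbols to cutoff symbols contributes only smoothing errors, absorbed into $r_i$.

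For $e_K$ the plan is to prove that $\tilde e_K$ vanishes on the trapped set and then apply Hadamard division on each cone $\tau=\tilde\tau_i$. Differentiating \eqref{sumsqK} in $\tau$ and evaluating on the trapped set (where $p_K=0$) gives
\[
\rho^2\,\partial_\tau p_K|_{\tilde\tau_i}\,\tilde e_K|_{\mathrm{trap},\tilde\tau_i}=\partial_\tau\bigl({\tsum}_j\mu_j^2\bigr)\big|_{\mathrm{trap}}-\rho^2\,\partial_\tau\bigl(\tfrac{1}{2i}\{p_K,\tilde s_K\}\bigr)\big|_{\mathrm{trap}}.
\]
Each $\mu_j$ in \eqref{mudef} is a linear combination of $\alpha_i(\tau-\tilde\tau_j)$, $\xi_r$ and $\tau\xi_r$ in which $\alpha_i$ carries a factor $\tilde\varrho_i$, so every $\mu_j$ vanishes on the trapped set and hence $\partial_\tau\mu_j^2|_{\mathrm{trap}}=2\mu_j\partial_\tau\mu_j|_{\mathrm{trap}}=0$. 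From \eqref{pbs}, $\rho^2\cdot\tfrac{1}{2i}\{p_K,s\}=\alpha^2\tilde\varrho^2\tau^2+\beta^2\xi_r^2$ modulo a $p_K$-multiple, and $\tilde s_K-s$ is itself a $p_K$-multiple. The $\tilde\varrho^2$ and $\xi_r^2$ factors force the $\tau$-derivative of the principal part to vanish on the trapped set, and the residual $p_K$-multiples, which a priori contribute only $O(a)$ terms at trapped points, can be arranged to cancel by using the $a(S^0_{hom}+\tau S^{-1}_{hom})$ freedom in the choice of $\tilde e$ granted by Lemma~\ref{TTmain}; this yields $\tilde e_K|_{\mathrm{trap}}=0$.

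Given the vanishing, since the pair $(\tilde\varrho_i,\xi_r)$ cuts out the trapped set on each cone as a smooth codimension-two submanifold with independent differentials, Hadamard division gives smooth factorizations $\tilde e_K|_{\tau=\tilde\tau_i}=A_i\tilde\varrho_i+B_i\xi_r$ with $A_i\in S^0$ and $B_i\in S^{-1}$. Because $\tilde e_K=a+b\tau$ is linear in $\tau$, these two restrictions determine $a$ and $b$; solving the resulting $2\times 2$ linear system and regrouping yields
\[
\tilde e_K=\frac{A_1\,\tilde\varrho_1(\tau-\tilde\tau_2)}{\tilde\tau_1-\tilde\tau_2}-\frac{A_2\,\tilde\varrho_2(\tau-\tilde\tau_1)}{\tilde\tau_1-\tilde\tau_2}+\frac{(B_2\tilde\tau_1-B_1\tilde\tau_2)+(B_1-B_2)\tau}{\tilde\tau_1-\tilde\tau_2}\,\xi_r,
\]
which is the desired form with coefficients of orders $-1,-1,-1,-2$ respectively. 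Cutoffs again contribute smoothing remainders collected in $r$. The main obstacle throughout is the vanishing of $\tilde e_K$ on the trapped set: since $\tilde e_K$ is defined only implicitly through \eqref{sumsqK}, one must combine the sum-of-squares structure of $\sum\mu_j^2$ with a careful tracking of the $p_K$-multiples that appear in the passage between $s$, $\tilde s_K$ and the Poisson bracket.
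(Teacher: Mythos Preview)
Your treatment of $s_K$ and its derivatives is essentially the paper's: derive the closed formula \eqref{SK} by evaluating at $\tau=\tilde\tau_i$, then differentiate and observe that every term either keeps a $\tilde\varrho_i(\tau-\tilde\tau_j)$ factor or inherits the overall $\xi_r$. This part is fine.

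For $e_K$ your route diverges from the paper, and there is a real gap. You differentiate \eqref{sumsqK} in $\tau$ and evaluate on the trapped set to deduce $\tilde e_K|_{\mathrm{trap}}=0$. The sum of squares and the explicit $\alpha^2\tilde\varrho_a^2\tau^2+\beta^2\xi_r^2$ part do vanish as you say, but the residual $p_K$-multiples (coming from $\tilde s_K-s=iahp_K$ and from $\{\rho^2 p_K,s\}=\rho^2\{p_K,s\}+p_K\{\rho^2,s\}$) contribute, after $\partial_\tau$ and restriction, terms proportional to $\partial_\tau p_K\cdot(\text{coefficient})|_{\mathrm{trap}}$, which are not a priori zero. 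Your proposed fix---``arrange them to cancel using the $a(S^0_{hom}+\tau S^{-1}_{hom})$ freedom in $\tilde e$''---is not available: once the $\mu_j$ in \eqref{mudef} are fixed, \eqref{sumsqK} determines $\tilde e_K$ uniquely, so there is nothing to absorb. What actually makes these residuals vanish is the trapping itself: $h$ has $\xi_r$ as a factor (since both $s$ and $\tilde s_K$ do), so $\{p_K,h\}|_{\xi_r=0}=-h_1\partial_r p_K$, and $\partial_r p_K=0$ on the trapped set by the very definition of trapped null geodesics (cf.\ \eqref{xidota}, \eqref{tauphi}); similarly $\{\rho^2,s\}=-2ib(r)\tilde\varrho_a$ vanishes there. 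With this observation your $\partial_\tau$ argument goes through, and the subsequent Hadamard division plus linear interpolation in $\tau$ is correct.

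The paper takes a more direct path: it sets $\xi_r=0$ in \eqref{sumsqK} (rather than differentiating in $\tau$), expands $\partial_r p_K$ via the factorization $p_K=g_K^{tt}(\tau-\tilde\tau_1)(\tau-\tilde\tau_2)$, and after evaluating at $\tau=\tilde\tau_i$ to identify $\partial_r\tilde\tau_i$ in terms of $\tilde\varrho_i$, divides by $(\tau-\tilde\tau_1)(\tau-\tilde\tau_2)$ and solves explicitly for the coefficients of $\tilde e_K|_{\xi_r=0}=e_1(\tau-\tilde\tau_1)+e_2(\tau-\tilde\tau_2)$, finding $e_1=d_1\tilde\varrho_2$, $e_2=d_2\tilde\varrho_1$. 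This yields the decomposition on $\{\xi_r=0\}$ directly, and a single Taylor step in $\xi_r$ finishes. The paper's computation is more explicit and avoids the delicate vanishing-on-trapped-set discussion altogether; your argument is more conceptual but, as written, incomplete at the key step.
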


We remark that, since $s_K$ and $e_K$ were defined implicitly, this is not at all obvious.

\begin{proof}

 It is enough to prove the corresponding decomposition for the homogeneous counterparts $\ts_K$ and $\te_K$, since the errors we make are supported at small frequencies and thus are smoothing.

 We start by proving \eqref{sKdec}, which is easier, since $\ts_K$ can be computed explicitly without much effort.
Indeed, let $\tau=\ttau_1$ and $\tau=\ttau_2$ in \eqref{XKdef}. We are left with solving the system
\[
r^{-1}b(r)\tvarrho_i\xi_r = r^{-1} b(r) (r-3M) \xi_r + (\ts_1 + \ts_0\ttau_i), \quad i=1,2,
\]
which has the solutions
\[
\ts_0 = r^{-1}b(r) \xi_r \frac{\tvarrho_1-\tvarrho_2}{\ttau_1-\ttau_2}, \quad \ts_1 = r^{-1}b(r)\xi_r\Bigl(\tvarrho_1 -(r-3M) - \frac{(\tvarrho_1-\tvarrho_2)\ttau_1}{\ttau_1-\ttau_2}\Bigr).
\]
 Thus
\begin{equation}\label{SK}\begin{split}
\ts_K = r^{-1} b(r) (r-3M) \xi_r + \ts_1 + \ts_0\tau = r^{-1} b(r) \xi_r \frac{\tvarrho_1(\tau-\ttau_2)-\tvarrho_2(\tau-\ttau_1)}{\ttau_1-\ttau_2}
\end{split}\end{equation}
which immediately yields \eqref{sKdec}.

Unfortunately it seems difficult to compute $\te_K$ explicitly; instead, we will use the decomposition in Lemma~\ref{TTmain}. Recall that we have
\begin{equation}\label{MT}
\frac1{2i} \{ p_K, \ts_K \} + p_K \te_K = \frac{\nu}4 \bigl(\alpha_1(\tau-\ttau_2) + \alpha_2(\tau-\ttau_1)\bigr)^2 + \frac{1-\nu}4 \bigl(\alpha_1(\tau-\ttau_2) - \alpha_2(\tau-\ttau_1)\bigr)^2 + A\xi_r^2,
\end{equation}
where
\[
0< \nu(r) <1,
\]
is a smooth function,
\[ \quad \alpha_i = \tgamma_i \tvarrho_i, \quad \tgamma_i = \frac{2|\ttau_i|}{\ttau_1-\ttau_2}\alpha(r, \ttau_i, \Phi), \]
are elliptic multiples of $\tvarrho_i$, and $A \in S_{hom}^{0}+ \tau S_{hom}^{-1} + \tau^2 S_{hom}^{-2}$.

The conclusion will follow if we can find $d_1$ and $d_2$ so that
\begin{equation}\label{xir0}
\te_K = d_1 \tvarrho_1(\tau-\ttau_2) + d_2 \tvarrho_2(\tau-\ttau_1)
\end{equation}
when $\xi_r = 0$. Indeed, by Taylor's formula we can now write
\[
\te_K = \te_K \left|_{\xi_r=0} \right. + \tilde d \xi_r,
\]
for some $\tilde d \in S_{hom}^{-1} + a\tau S_{hom}^{-2}$.

 In order to prove \eqref{xir0}, we will  let $\xi_r = 0$ in \eqref{MT}, and assume until the end of the proof that all functions are evaluated at $\xi_r=0$.  We obtain
\begin{equation}\label{MT0}
(\partial_r p_K) \Bigl(\frac{\tvarrho_1(\tau-\ttau_2)}{\ttau_1-\ttau_2} + \frac{\tvarrho_2(\tau-\ttau_1)}{\ttau_2-\ttau_1} \Bigr) + p_K \te_K = \frac14 \bigl(\alpha_1^2(\tau-\ttau_2)^2 + \alpha_2^2(\tau-\ttau_1)^2\bigr) + \frac{2\nu-1}2 \alpha_1\alpha_2 (\tau-\ttau_1)(\tau-\ttau_2).
\end{equation}
 Since $p_K = g_K^{tt} (\tau-\ttau_1)(\tau-\ttau_2)$ we have that
\[
\partial_r p_K = (\partial_r g_K^{tt}) (\tau-\ttau_1)(\tau-\ttau_2) - g_K^{tt} (\partial_r \ttau_1) (\tau-\ttau_2) - g_K^{tt} (\partial_r \ttau_2) (\tau-\ttau_1).
\]
Thus letting $\tau=\ttau_1$ and $\tau=\ttau_2$ respectively in \eqref{MT0} we obtain
\begin{equation}\label{MT1}
- g_K^{tt}\partial_r \ttau_1 = \frac{1}{4}\tvarrho_1 \tgamma_1^2 (\ttau_1-\ttau_2), \qquad -g_K^{tt} \partial_r \ttau_2 =\frac{1}{4} \tvarrho_2 \tgamma_2^2 (\ttau_2-\ttau_1),
\end{equation}
and so
\[
\partial_r p_K = (\partial_r g_K^{tt}) (\tau-\ttau_1)(\tau-\ttau_2)+\frac{1}{4}\tvarrho_1\tgamma_1^2(\tau-\ttau_2)(\ttau_1-\ttau_2)
+\frac{1}{4}\tvarrho_2\tgamma_2^2(\tau-\ttau_1)(\ttau_2-\ttau_1).
\]

Write $\te_K = e_1 (\tau-\ttau_1) + e_2 (\tau-\ttau_2)$. Replacing in \eqref{MT0}, using \eqref{MT1} to cancel the square terms, and dividing by $(\tau-\ttau_1)(\tau-\ttau_2)$ yields
\[
(\partial_r g_K^{tt}) \Bigl(\frac{\tvarrho_1(\tau-\ttau_2)}{\ttau_1-\ttau_2} + \frac{\tvarrho_2(\tau-\ttau_1)}{\ttau_2-\ttau_1} \Bigr)-\frac{1}{4}\tvarrho_1\tvarrho_2 (\tgamma_1^2 + \tgamma_2^2) + g_K^{tt} (e_1 (\tau-\ttau_1) + e_2 (\tau-\ttau_2)) = \frac{2\nu-1}2 \alpha_1\alpha_2.
\]

We can now solve explicitly for $e_1$ and $e_2$. We obtain
\begin{align}
e_1 &= \frac{1}{g_K^{tt}(\ttau_2-\ttau_1)}\bigtwo[ \frac{2\nu-1}2 \tgamma_1\tgamma_2\tvarrho_1 \tvarrho_2
+\frac{1}{4}\tvarrho_1\tvarrho_2 (\tgamma_1^2 + \tgamma_2^2) - (\partial_r g_K^{tt})\tvarrho_2\bigtwo] := d_1 \tvarrho_2,\\
e_2 &= \frac{1}{g_K^{tt}(\ttau_1-\ttau_2)}\bigtwo[ \frac{2\nu-1}2  \tgamma_1\tgamma_2\tvarrho_1 \tvarrho_2
+\frac{1}{4}\tvarrho_1\tvarrho_2 (\tgamma_1^2 + \tgamma_2^2)  - (\partial_r g_K^{tt})\tvarrho_1\bigtwo]  := d_2 \tvarrho_1,
\end{align}
which proves \eqref{xir0}.

\end{proof}

\subsection{The operator quadratic form}

Let $P=D_{\alpha}(g^{\alpha\beta}D_{\beta})$ be symmetric with respect to $dxdt$. If $X$ and $q$ are a vector field and scalar function, we can define in analogy to \eqref{Qdef}
\beq\label{Phvec}
Q[P, X, q] = \left(-X(g^{\alpha\beta})  +g^{\alpha\gamma} \partial_\gamma X^\beta+g^{\beta\gamma}\partial_\gamma X^\alpha - g^{\alpha\beta} \pa_\gamma X^{\gamma} + g^{\alpha\beta} q\right)\pa_\alpha u\pa_\beta u -\frac12 (\pa_{\alpha} g^{\alpha\beta} \pa_\beta q) u^2.
\eq

Assume now that $S$ is a skew-adjoint and $E$ is a self-adjoint operator of the form
 \begin{equation}\label{gpdo}
 S = S_1+ S_0 \partial_t, \qquad E= E_0 + E_{-1} \partial_t.
 \end{equation}
  Here $S_j, E_j \in a OPS^j$ are real-valued, time-independent operators with kernels supported near $r=3M$, and moreover $S_1$ and $E_{-1}$ are self-adjoint, while $S_0$ and $E_0$ are skew-adjoint with respect to $dxdt$. We will also require that
\begin{equation}\label{nodt}
[g^{tt}, S_0] +  g^{tt} E_{-1}  + E_{-1} g^{tt} = 0.
\end{equation}

Then modulo boundary terms
\beq
 \int_{\tt_0}^{\tt_1}\int P u\cdot \overline{(Su+E u)}\, dx dt
=\int_{\tt_0}^{\tt_1}\int \frac12\big([P,S] + P E + E\,P\big)u\cdot\overline{u}\, \ dx dt+\text{BDR}\Big|_{\tt_0}^{\tt_1}.
\eq
where due to \eqref{nodt} there is no term containing two time derivatives on the boundary. In particular, we have
\beq\label{bdryP}
\Bigl| \text{BDR}_{\tt_i}\Bigr| \lesssim E[u](\tt_i).
\eq

The spacetime term can be written as
\[
\frac1{2}\bigtwo([P,S] + P E + E P\bigtwo)= P_2  + 2 P_1 D_t + P_0 D_t^2,
\]
where $Q_j \in OPS^j$ are selfadjoint pseudodifferential operators. Note that, due to \eqref{nodt}, there is no $D_t^3$ term.

When it comes to estimating, it will be more convenient to rewrite the space-time term as a quadratic form. We define
\[
\widehat{Q}[P,S,E]\!= \Re \Bigl(g^{\alpha\beta}[D_{\beta}, S]u \cdot \overline{D_{\alpha} u} + \frac12 [g^{\alpha\beta}, S] D_{\beta} u\cdot \overline{D_{\alpha} u} + \frac12 g^{\alpha\beta} E D_{\beta} u\cdot \overline{D_{\alpha} u} + \frac12 g^{\alpha\beta} D_{\beta} u\cdot \overline{D_{\alpha} Eu} \Bigr).
\]
It is easy to see that
\[
\int_{\tt_0}^{\tt_1}\int \frac12\big([P,S] + P E + E\,P\big)u\cdot\overline{u}\, \ dx dt = -\int_{\tt_0}^{\tt_1}\!\int\! \widehat{Q}[g,S,E]\, dx dt+\text{BDR}\Big|_{\tt_0}^{\tt_1},
\]
where the boundary terms again satisfy \eqref{bdryP}. Moreover, our definition agrees with the definition of $Q[g, X, q]$ from the previous section.

Indeed, let $P= \widehat{\Box}_{g} = \pa_{\alpha}(\sqrt{|g|}g^{\alpha\beta}\pa_{\beta})$, and $X=X^\alpha\pa_\alpha$. We write
\beq
X=S_X+E_X, \quad X=X^\alpha\pa_\alpha, \quad\text{where}\quad  S_X=X+\widehat{q}_X,\quad E_X=-\widehat{q}_X,\quad
\widehat{q}_X=\pa_\alpha{X}^\alpha\!/2,
 \eq
in order that $S_X$ be skew symmetric with respect to $dx dt$. Then
\begin{multline}
 2\widehat{Q}[\widehat{\Box}_{g},S_X,E_X]
=2\sqrt{g}\, \Re\bigtwo( g^{\alpha\beta} [{\pa}_\alpha,X]u\cdot\overline{{\pa}_\beta u}-\big( g^{\alpha\beta} (\widehat{q}_X D_\alpha u)\cdot\overline{{\pa}_\beta u}\big)- X (\sqrt{g}g^{\alpha\beta}){\pa}_\alpha u\cdot \overline{{\pa}_\beta u}\bigtwo)\\
=\sqrt{g}\,{\Re}\bigtwo(\big(-{\mathcal L}_X g^{\alpha\beta}-\div X\, g^{\alpha\beta}\big) {\pa}_\alpha u\cdot \overline{{\pa}_\beta u} \bigtwo)
 =2\sqrt{g}\,Q[g,X].
\end{multline}
Moreover, with the previous notation
\beq
\widehat{Q}[g,S_X, E_X+q]=\sqrt{g}\,Q[g,X,q].
\eq

\subsubsection{How to deal with $3$ time derivatives}
Finally, consider the operator $P$ with the additional assumption that $g^{00}=0$. In this case, \eqref{nodt3} is automatically satisfied. It will be convenient in this case to define the quadratic form so that there are no $D_{t}^2$ terms appearing. We can define
\beq\label{Phquad}
IQ[P, S, E] = \int_{\M[\tt_0, \tt_1]} \tilde{Q}[P, S] + \tilde{Q}[P, E] dxd\tt,
\eq
where
\beq\label{Squad}
 \tilde{Q}[P, S] = \Re \Bigl(g^{\alpha\beta}[D_{\beta}, S]u \cdot \overline{D_{\alpha} u} + \frac12 [g^{j \beta}, S] D_{j} u\cdot \overline{D_{\beta} u}\Bigr),
\eq
\beq\label{Equad}
\tilde{Q}[P, E]\!= \Re \Bigl(g^{\alpha j}  D_{\alpha} u\cdot \overline{D_{j} Eu} + g^{0j} D_{0} u\cdot \overline{D_{j} Eu} + ((D_0 g^{0j}) D_j u- (D_j g^{0j}) D_0 u)\cdot \overline{Eu} \Bigr).
\eq

Again we have
\beq
 \int_{\tt_0}^{\tt_1}\int P u\cdot \overline{(Su+E u)}\, dx dt = IQ[P, S, E] + \text{BDR}\Big|_{\tt_0}^{\tt_1},
\eq
where, under the additional assumption that $|g|\lesssim\epsilon$, we have
\beq\label{bdryPh}
\Bigl| \text{BDR}_{\tt_i}\Bigr| \lesssim \epsilon E[u](\tt_i).
\eq

\newsection{Norm estimates for the pseudodifferential operators and commutators}

In this section we provide the estimates that we will use in our main result. For a thorough treatment of symbols with low regularity, we refer the interested reader to \cite{Taylor2}.

\subsection{Estimates for operators with low regularity symbols}
We start with the following useful bound:
\begin{lemma}\label{L2bd}
Let $M$ be an operator with kernel
\[
K(x, y) = \int m(x, y) R (x, y, \xi) e^{i(x-y)\xi} d\xi .
\]
Here $R$ is of order $0$, smooth, and compactly supported in $I\times I$
\[
m(x, y) = m_1(x) + m_2(x, y),
\]
so that
\[
\|m\|_{\delta} := \|m_1\|_{L^{\infty}} + \sup_{x\neq y} \frac{|m_2(x,y)|}{|x-y|^{\delta}} <\infty,
\]
for some (small) $\delta>0$ on the support of $R$. Then $M: L^2(I)\to L^2(I)$, and
\begin{equation}\label{L2onederiv}
\|M\|_{L^2\to L^2} \leq C_R ||m||_{\delta}.
\end{equation}
Moreover, if also
\[
\|\pa_x m_1\|_{L^{\infty}} + \|\pa_x m_2\|_{\delta} <\infty,
\]
then $M: H^{-1}(I)\to H^{-1}(I)$, and
\begin{equation}\label{L2H-1}
\|M\|_{H^{-1}\to H^{-1}} \leq C_R (||m||_{\delta} + ||\pa_x m||_{\delta}).
\end{equation}
Here the constant $C_R$ only depends on a few derivatives of $R$, but does not depend on $m$.
\end{lemma}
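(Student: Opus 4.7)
The plan is to split $m = m_1(x) + m_2(x,y)$ and treat the two pieces separately for the $L^2\to L^2$ bound, and then deduce the $H^{-1}\to H^{-1}$ bound via duality and a commutator argument that reduces it to the $L^2$ case.

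For the $L^2\to L^2$ bound, the $M_1$ piece is essentially trivial: since $m_1$ depends only on $x$, I factor $M_1 u(x) = m_1(x) \cdot (R(x,y,D) u)(x)$ and invoke Calder\'on--Vaillancourt, which bounds the pseudodifferential operator with smooth compactly supported amplitude $R \in S^0$ on $L^2$ in terms of finitely many derivatives of $R$. For the $M_2$ piece, I would use a Littlewood--Paley decomposition in $\xi$, writing the kernel as $\sum_{k\geq 0} K_{2,k}$ with $K_{2,k}$ supported in $|\xi|\sim 2^k$. Integrating by parts $N$ times in $\xi$, using $(x-y)e^{i(x-y)\xi}=-i\pa_\xi e^{i(x-y)\xi}$, produces the decay $(1+2^k|x-y|)^{-N}$; combining with the H\"older bound $|m_2(x,y)|\leq \|m_2\|_\delta |x-y|^\delta$ gives
\[
|K_{2,k}(x,y)| \lesssim \|m_2\|_\delta \cdot 2^{3k} \cdot |x-y|^\delta (1+2^k|x-y|)^{-N}.
\]
Choosing $N>3+\delta$ and changing variables $z=2^k(x-y)$ gives $\int|K_{2,k}(x,y)|\, dy\lesssim \|m_2\|_\delta\, 2^{-k\delta}$, and the same estimate with the roles of $x$ and $y$ reversed. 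Schur's test then yields $\|M_{2,k}\|_{L^2\to L^2}\lesssim \|m_2\|_\delta 2^{-k\delta}$, which sums in $k$ to give \eqref{L2onederiv}. The low-frequency piece $k=0$ is handled by the same bound without integration by parts.

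For \eqref{L2H-1} I would use the duality $\|M\|_{H^{-1}\to H^{-1}} = \|M^*\|_{H^1\to H^1}$. A direct computation shows $M^*$ has kernel of the same form as $M$ with $m(x,y)$ replaced by $\overline{m(y,x)}$ (which still decomposes as a function of one variable plus a diagonal-vanishing $C^\delta$ remainder) and $R(x,y,\xi)$ replaced by $\overline{R(y,x,\xi)}$. Thus the already-proved $L^2\to L^2$ bound gives $\|M^* v\|_{L^2}\lesssim \|m\|_\delta \|v\|_{L^2}$. For the gradient I would write $\nabla M^*=M^*\nabla +[\nabla,M^*]$; the first term is controlled by $\|\nabla v\|_{L^2}$ via the $L^2$ bound. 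The commutator $[\pa_j, M^*]$ has kernel $(\pa_{x^j}+\pa_{y^j})K^*(x,y)$ since $e^{i(x-y)\xi}$ is annihilated by $\pa_{x^j}+\pa_{y^j}$, and so is again an operator of the same form, with $\overline{m(y,x)}$ replaced by $(\pa_{x^j}+\pa_{y^j})\overline{m(y,x)}$ plus a term where the derivative hits $R$ (yielding a smooth order-zero amplitude to which \eqref{L2onederiv} applies directly). Applying the first part of the lemma to the commutator then gives the desired $H^{-1}$ bound.

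The main obstacle is verifying that the commutator preserves the decomposition structure $m=m_1+m_2$. One must show that $(\pa_{x^j}+\pa_{y^j})\overline{m(y,x)}$ again splits as a bounded function of one variable (controlled by $\|\pa_x m_1\|_{L^\infty}$) plus a diagonal-vanishing $C^\delta$ remainder (controlled by $\|\pa_x m_2\|_\delta$). The key observation is geometric: since $m_2(z,z)=0$ on the diagonal, the tangential derivative $(\pa_x+\pa_y)m_2$ automatically vanishes on $\{x=y\}$, and combined with the assumed H\"older regularity of $\pa_x m_2$ this yields the diagonal vanishing at rate $|x-y|^\delta$ needed to invoke \eqref{L2onederiv} on the commutator. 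With this bookkeeping in place, the rest is a routine Schur-test estimate.
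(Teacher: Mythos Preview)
Your approach mirrors the paper's: split $m=m_1+m_2$, handle $M_1$ by factoring out $m_1$ in front of a standard order-zero amplitude operator, handle $M_2$ by a kernel bound plus Schur's test, and deduce the $H^{-1}$ bound by duality. The Littlewood--Paley decomposition you use for $M_2$ is correct but unnecessary; the paper obtains the Schur bound in one step by writing $|K_2(x,y)|\lesssim C_R\int |m_2(x,y)|\,(1+\langle\xi\rangle|x-y|)^{-N}\,d\xi$, inserting $|m_2|\le\|m_2\|_\delta|x-y|^\delta$, and integrating in $\xi$ after the change of variables $z=\langle\xi\rangle(x-y)$.

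There is one loose step in your $H^{-1}$ argument. You correctly observe that $(\pa_x+\pa_y)m_2$ vanishes on the diagonal, but then claim that ``combined with the assumed H\"older regularity of $\pa_x m_2$'' this yields the $|x-y|^\delta$ off-diagonal decay. Diagonal vanishing plus a bound on $\pa_x m_2$ alone does \emph{not} give this: you also need $|\pa_y m_2(x,y)|\lesssim |x-y|^\delta$, i.e.\ control on the second-slot derivative. The paper's proof avoids this bookkeeping by treating $M_1$ and $M_2$ separately in the duality step: for $M_1$ it simply moves $m_1$ across the pairing, $\langle m_1 Q_0 u, v\rangle = \langle u, Q_0^*(\overline{m_1}v)\rangle$, and estimates $\|Q_0^*(\overline{m_1}v)\|_{H^1}$ directly; for $M_2$ it computes $D_jM_2^*v$ and lands precisely on the second-slot derivative $(\pa_2 m_2)(y,x)$, which it feeds back into the already-proved $L^2$ bound. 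In other words the hypothesis ``$\|\pa_x m_2\|_\delta<\infty$'' must be read as controlling all first partials of $m_2$ (this is what the applications provide). With that reading your commutator argument goes through, but the inference as you stated it does not follow.
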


\begin{proof}

We define
\[
K_j(x, y) = \int m_j(x, y) R (x, y, \xi) e^{i(x-y)\xi} d\xi, \qquad M_j u(x) = \int K_j(x, y) u(y) dy, \qquad j=1,2.
\]

We clearly have
\[
M_1 u(x) = m_1(x) \int R (x, y, \xi) e^{i(x-y)\xi} u(y) dyd\xi := m_1(x) Q_0, \qquad Q_0: L^2\to L^2,
\]
and thus
\[
\|M_1\|_{L^2\to L^2} \lesssim \|m_1\|_{L^{\infty}}.
\]

Moreover, by duality
\[
\|M_1 u\|_{H^{-1}(I)} = \sup_{\|v\|_{H^1(I)}=1} \la m_1 Q_0 u, v\ra \lesssim \|u\|_{H^{-1}} \sup_{\|v\|_{H^1(I)}=1} \|Q_0^* (\overline {m_1} v)\|_{H^1(I)} \lesssim \left(\|m_1\|_{L^{\infty}} + \|\pa_x m_1\|_{L^{\infty}}\right) \|u\|_{H^{-1}}.
\]

We now estimate $M_2$. Pick $i$ so $|x_i-y_i|>|x-y|/4$, say. Integrating by parts (in the sense of distributions) we get
 \[
K_2(x, y) = \int m_2(x, y) R (x, y, \xi) (x_i-y_i)^{-N} D_{\xi^i}^N e^{i(x-y)\xi} d\xi=\int m_2(x, y) \big(D_{\xi^i}^N R (x, y, \xi)\big) (x_i-y_i)^{-N} e^{i(x-y)\xi} d\xi,
\]
and since $|D_\xi^\alpha R(x,y,\xi)|\leq C_\alpha \langle\xi\rangle^{-|\alpha|}$ we have for any $N\geq 0$
\beq
|K_2(x,y)|\leq C_R \int \frac{|m(x, y)|\, d\xi}{(1+\langle \xi\rangle |x-y|)^N}.
\eq
If $N=n+1$, where $n$ is the dimension we obtain if we make the change of variables $y\to z=\langle \xi\rangle (x-y)$:
\beq
\int |K_2(x,y)|dy \leq C_R ||m||_{\delta} \int\int  \frac{|x-y|^\delta \, dy d\xi}{(1+\langle \xi\rangle |x-y|)^{n+1}}=C_R ||m||_{\delta} \int\int  \frac{|z|^\delta \, dz d\xi}{(1+ |z|)^{n+1}\langle \xi\rangle^{n+\delta}} \leq C_R ||m||_{\delta}.
\eq

A similar result follows for $\int |K_2(x,y)|dx$. \eqref{L2onederiv} now follows by Schur's lemma.

To prove \eqref{L2H-1} we again use duality. We have
\[
\|M_2 u\|_{H^{-1}} = \sup_{\|v\|_{H^1}=1} \la M_2 u, v\ra \lesssim \sup_{\|v\|_{H^1}=1} \|u\|_{H^{-1}} \|M_2^* v\|_{H^1}.
\]

We now compute
\begin{multline}
D_j M_2^* v(x)=\iint D_{x^j} \overline{\left(m_2(y,x) R(y, x, \xi)e^{i(x-y)\xi}\right)} d\xi v(y) dy\\  = \iint  \overline{\left(D_{x^j} (m_2(y,x) R(y, x, \xi))e^{i(x-y)\xi}\right)} d\xi v(y) dy + \iint  \overline{\left((m_2(y,x) R(y, x, \xi))e^{i(x-y)\xi}\right)} d\xi D_{y^j} v(y) dy,
\end{multline}
and thus \eqref{L2H-1} follows using \eqref{L2onederiv}.
\end{proof}

 In our subsequent proof, $m(x,y)$ will contain all the dependence on $h$, and $R$ will come from the smooth symbol used in \cite{TT} for the Kerr metric.

A quick corollary, which we will use in Section~\ref{sec:quasilinear}, is the following:
\begin{corr}\label{quasell}
 If $Q = \chi q^w \chi$ with $q(x) = q_1(x)+ \tau q_0(x) \in S^1 + \tau S^0$ and $v \in C^{1,\delta}$, then
 \[
 \| [v, Q] u\|_{L^2}\lesssim \|v\|_{C^{1,\delta}} (\|u\|_{L_{cpt}^2} + \|D_t u\|_{H_{cpt}^{-1}}).
 \]
\end{corr}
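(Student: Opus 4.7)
The plan is to remove the cutoffs by exploiting that $v$ and $\chi$ are multiplication operators (so they commute as operators), reducing
\[
[v, Q] = \chi [v, q^w] \chi.
\]
Decomposing $q = q_1 + \tau q_0$ with $q_0, q_1$ time-independent, the Weyl quantization of spacetime symbols collapses to $q^w = q_1^w + q_0^w D_t$, and since $v$ is also time-independent, $[v, q^w] = [v, q_1^w] + [v, q_0^w] D_t$. So it suffices to bound each of the two spatial commutators appropriately.

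For each spatial commutator I would use the explicit integral representation
\[
[v, p^w] f(x) = \iint \bigtwo(v(x) - v(y)\bigtwo)\, p\bigtwo(\tfrac{x+y}{2},\xi\bigtwo)\, e^{i(x-y)\cdot\xi}\, f(y)\, dy\, \tfrac{d\xi}{(2\pi)^n},
\]
write $v(x) - v(y) = (x-y)^k a_k(x,y)$ with $a_k(x,y) = \int_0^1 (\partial_k v)(y + s(x-y))\,ds$, and integrate by parts once in $\xi$ using $(x-y)^k e^{i(x-y)\cdot\xi} = \tfrac1i \partial_{\xi_k} e^{i(x-y)\cdot\xi}$ to produce a kernel of the form
\[
\iint a_k(x,y)\, \tfrac1i (\partial_{\xi_k} p)\bigtwo(\tfrac{x+y}{2},\xi\bigtwo)\, e^{i(x-y)\cdot\xi}\, f(y)\, dy\, \tfrac{d\xi}{(2\pi)^n}.
\]
Splitting $a_k(x,y) = \partial_k v(x) + \bigtwo(a_k(x,y) - \partial_k v(x)\bigtwo)$ fits the structure of Lemma~\ref{L2bd}: the first term is $L^\infty$ in $x$ with norm $\leq \|v\|_{C^1}$, while the second is Hölder of order $\delta$ in $|x-y|$ with norm $\lesssim \|v\|_{C^{1,\delta}}$.

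For $p = q_1 \in S^1$ we have $\partial_{\xi_k} q_1 \in S^0$, so Lemma~\ref{L2bd} directly yields $\|[v, q_1^w]\|_{L^2 \to L^2} \lesssim \|v\|_{C^{1,\delta}}$. For $p = q_0 \in S^0$ we need the stronger $H^{-1} \to L^2$ bound, which the plan obtains through the algebraic identity
\[
[v, q_0^w] = [v, q_0^w \la D\ra]\, \la D\ra^{-1} - q_0^w\, [v, \la D\ra]\, \la D\ra^{-1}.
\]
Both $q_0^w \la D\ra$ and $\la D\ra$ are order-one operators whose commutators with $v$ are handled by the same kernel calculation and Lemma~\ref{L2bd}, giving $L^2\to L^2$ bounds controlled by $\|v\|_{C^{1,\delta}}$; composing with $\la D\ra^{-1}: H^{-1} \to L^2$ produces the desired bound on $[v, q_0^w]$. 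Applying the two estimates to $\chi u$ and $\chi D_t u$ respectively, and using the obvious $\|\chi u\|_{L^2} \lesssim \|u\|_{L^2_{cpt}}$ and $\|\chi D_t u\|_{H^{-1}} \lesssim \|D_t u\|_{H^{-1}_{cpt}}$, completes the argument.

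The main obstacle is the $H^{-1} \to L^2$ bound for $[v, q_0^w]$: Lemma~\ref{L2bd} as stated provides only $L^2 \to L^2$ and $H^{-1} \to H^{-1}$ boundedness, and the missing half-order gain — which is the entire point of the corollary — must come from the extra decay $\partial_{\xi_k} q_0 \in S^{-1}$. The factorization above is the cleanest way to extract this gain, as it reduces the problem to commutators with order-one operators where Lemma~\ref{L2bd} applies without modification.
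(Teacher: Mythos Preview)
Your reduction $[v,Q]=\chi[v,q_1^w]\chi+\chi[v,q_0^w]\chi D_t$ and your treatment of the $q_1$ piece (Taylor expansion, one $\xi$-integration by parts, then Lemma~\ref{L2bd} with $m=a_k$ and $R=D_{\xi_k}R_1\in S^0$) are exactly what the paper does. For the $q_0$ piece the paper writes only ``a similar argument yields $\|[v,Q_0]u\|_{L^2}\lesssim\|v\|_{C^{1,\delta}}\|u\|_{H^{-1}_{cpt}}$'' without spelling out how the extra order is extracted; your factorization through $\langle D\rangle$ is a reasonable way to make this explicit, but it has a small gap. Lemma~\ref{L2bd} requires the amplitude $R$ to be compactly supported in $I\times I$. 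This does hold for $\chi[v,q_0^w\langle D\rangle]$ --- the Moyal product $q_0\#_w\langle\xi\rangle$ inherits compact spatial support from $q_0$, and the left cutoff $\chi$ then pins down $y$ --- but it fails for $[v,\langle D\rangle]$, whose amplitude $\partial_{\xi_k}\langle\xi\rangle$ carries no spatial support restriction at all. The bound $[v,\langle D\rangle]:L^2\to L^2$ for Lipschitz $v$ is of course true (Calder\'on's commutator), but it is not an instance of Lemma~\ref{L2bd}.

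A cleaner route that stays entirely within Lemma~\ref{L2bd} is to replace $\langle D\rangle$ by the differential operators $D_j$: write $u\in H^{-1}_{cpt}$ as $u=\sum_j D_j w_j+w_0$ with $\|w_j\|_{L^2}\lesssim\|u\|_{H^{-1}}$, and use
\[
[v,Q_0]\,D_j=[v,\,Q_0 D_j]+Q_0\,(D_jv).
\]
Now $Q_0 D_j=\chi q_0^w\chi D_j$ is order~$1$ with kernel still sandwiched between the two $\chi$'s, hence compactly supported in $I\times I$, so your $Q_1$ argument applies verbatim to $[v,Q_0 D_j]$; and $Q_0(D_jv)$ is bounded by $\|\nabla v\|_{L^\infty}$. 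The term $[v,Q_0]w_0$ is handled directly by Lemma~\ref{L2bd} since $D_{\xi_k}R_0\in S^{-1}\subset S^0$.

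One minor point: the paper allows $v$ to depend on $\tt$ (in the application $v=\sqrt{|g|}\,g^{ij}$), so your assertion $[v,D_t]=0$ is not quite right; the extra contribution $-Q_0(D_tv)\,u$ is handled exactly like $Q_0(D_jv)$ above and is explicitly present in the paper's proof.
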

\begin{proof}
We write
\[
Q = Q_1 + Q_0 D_t, \qquad Q_i = \chi q_i^w \chi.
\]

The kernel of $[v, Q_1]$ is
\[
K_1 (x,y) = \int (v(x)-v(y)) e^{i(x-y)\xi} R_1 (x, y, \xi), \qquad R_1(x, y, \xi) = \chi(x) \chi(y) q_1(\frac{x+y}2, \xi).
\]

By Taylor's theorem, we can write
\[
v(x)-v(y) = \sum_k (x^k-y^k) \int_0^1 (\pa_k v)(x+(1-s)y) ds,
\]
and thus after integrating by parts
\[
K_1 (x,y) = \int \sum_k (\int_0^1 (D_k v)(x+(1-s)y) ds) e^{i(x-y)\xi} D_{\xi_k} R_1(x, y, \xi).
\]
By Lemma~\ref{L2bd} we obtain, with
\[
m(x,y) = (D_k v)(x) + \int_0^1 (D_k v)(x+(1-s)y) - (D_k v)(x) ds, \qquad R = D_{\xi_k} R_1,
\]
that
\[
\| [v, Q_1] u\|_{L^2}\lesssim \|v\|_{C^{1,\delta}} \|u\|_{L_{cpt}^2}.
\]
A similar argument yields that
\[
\| [v, Q_0] u\|_{L^2}\lesssim \|v\|_{C^{1,\delta}} \|u\|_{H_{cpt}^{-1}},
\]
which yields
\[
\| [v, Q_0 D_t] u\|_{L^2} \lesssim \| (D_t v) Q_0 u)\|_{L^2} + \| [v, Q_0] D_t u\|_{L^2} \lesssim \|v\|_{C^{1,\delta}} (\|u\|_{L_{cpt}^2} + \|D_t u\|_{H_{cpt}^{-1}}).
\]
\end{proof}

The next lemma will be used to prove the main commutator estimate, Proposition~\ref{commhest}.
\begin{lemma}\label{lem:theerrorlemma}
Let $s_\ell(x,y,\xi)$ be a symbol of order $\ell$, where $\ell=0,1$, that is compactly supported in $x$ and $y$ in a region $I\times I$.
Suppose that $r_\ell(x,y)$ is a functions that vanish  to order $3$ at the diagonal:
for $|\mu|\leq 1$ and $|\nu|\leq 1$,
\beq
D_x^{\mu}D_y^\nu  r_\ell(x,y)={\sum}_{|\gamma|=2+\ell-|\mu|-|\nu|} (x-y)^\gamma r^{\,\mu\nu}_{\gamma\ell}(x,y),
\eq
where $r^{\,\mu\nu}_{\gamma\ell}$ satisfy:
\beq\label{rbds}
|\pa_x^{\leq 1-\ell} (r^{\,\mu\nu}_{\gamma\ell}(x,y)-r^{\,\mu\nu}_{\gamma\ell}(x,x))|\lesssim C_{\delta}|x-y|^\delta
,\quad\text{and}\quad
|\pa_x^{\leq 1-\ell} r^{\,\mu\nu}_{\gamma\ell}(x,x)|\lesssim
C_0 .
\eq
for all $x, y\in I$.

 Then the bilinear forms
\begin{equation}
B^j[u,v]=\iiint  r_\ell(x,y)s_\ell(x, y, \xi) e^{i(x-y)\xi}
 {D}_{y^j} u(y) \overline{v(x)} \, d\xi dy \, dx,
\end{equation}
satisfy the estimate
\begin{equation}\label{eq:bilinearformestimate}
|B^j[u,v]|\lesssim (C_\delta+C_0) \|u\|_{H^{\ell-1}(I)} \|v\|_{H^{-1}(I)}.
\end{equation}
\end{lemma}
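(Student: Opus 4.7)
The proof hinges on converting the high-order vanishing of $r_\ell$ at the diagonal into smoothing of the effective symbol via integration by parts in $\xi$, and then controlling the resulting operator with low-regularity coefficient by a Schur-type argument in the style of Lemma~\ref{L2bd}.

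First, I substitute the $\mu=\nu=0$ Taylor decomposition
$$r_\ell(x,y) = \sum_{|\gamma|=2+\ell}(x-y)^\gamma\, r^{00}_{\gamma\ell}(x,y)$$
into $B^j[u,v]$. Using the identity $(x-y)^\gamma e^{i(x-y)\xi}=D_\xi^\gamma e^{i(x-y)\xi}$ (up to sign) and integrating by parts $|\gamma|=2+\ell$ times in $\xi$, all the polynomial factors move onto the symbol, producing new symbols
$$\tilde s^\gamma := (-1)^{|\gamma|}D_\xi^\gamma s_\ell \in S^{-2}.$$
Consequently $B^j$ becomes a finite sum over $\gamma$ of bilinear forms
$$B^j_\gamma[u,v] = \iiint r^{00}_{\gamma\ell}(x,y)\,\tilde s^\gamma(x,y,\xi)\,e^{i(x-y)\xi}\,D_j u(y)\,\overline{v(x)}\,d\xi\,dy\,dx,$$
each with a symbol of order $-2$ and a compactly supported kernel, but with only limited regularity in the coefficient.

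Next, for each $\gamma$ I split the coefficient as $r^{00}_{\gamma\ell}(x,y)=m_1(x)+m_2(x,y)$ with $m_1(x):=r^{00}_{\gamma\ell}(x,x)$. By the hypotheses one has $\|\partial_x^{\leq 1-\ell}m_1\|_{L^\infty}\lesssim C_0$ and $|\partial_x^{\leq 1-\ell}m_2(x,y)|\lesssim C_\delta|x-y|^\delta$. For the $m_1$ contribution I pull $m_1(x)$ outside the $y$- and $\xi$-integrals, leaving a classical pseudodifferential operator $P^\gamma=\mathrm{Op}(\tilde s^\gamma)$ of order $-2$ with compactly supported kernel; composing with $D_j$ and redistributing derivatives by an integration by parts in $x$ (using that $m_1\in W^{1,\infty}$ when $\ell=0$ and is in $L^\infty$ when $\ell=1$) bounds $|\int m_1(x)(P^\gamma D_j u)(x)\overline{v(x)}\,dx|$ by $\|u\|_{H^{\ell-1}}\|v\|_{H^{-1}}$ through the $H^1$--$H^{-1}$ duality pairing. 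For the $m_2$ contribution the kernel is exactly of the form covered by Lemma~\ref{L2bd} with $m=m_2$; the same Schur-type integral bound, now carried out with the extra weight $\langle\xi\rangle^{-2}$ coming from the order $-2$ of $\tilde s^\gamma$, gives that the associated operator composed with $D_j$ maps $H^{\ell-1}$ into $H^1$ with norm $\lesssim C_\delta$, again matching $v\in H^{-1}$ through duality.

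The main technical obstacle is that to close the estimate at the $H^{\ell-1}\times H^{-1}$ level one has to track carefully where derivatives land when they are distributed between the coefficient, the symbol, and the test functions, since multiplication by the low-regularity factors $m_1$ and $m_2$ does not preserve arbitrary Sobolev scales. This is precisely why the hypothesis controls $|\partial_x^{\leq 1-\ell}r^{\mu\nu}_{\gamma\ell}|$ rather than only $|r^{\mu\nu}_{\gamma\ell}|$: that single extra derivative of regularity is exactly what is needed for the multiplications by $m_1$ and $m_2$ to be compatible with the Sobolev pairing between $H^{\ell-1}$ and $H^{-1}$, and it explains why the two cases $\ell=0$ and $\ell=1$ can be treated uniformly while requiring different amounts of regularity on the coefficient.
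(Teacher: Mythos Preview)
Your outline captures the right idea---trade diagonal vanishing for $\xi$-smoothing and then invoke Lemma~\ref{L2bd}---but it only uses the $\mu=\nu=0$ case of the hypothesis, and that is not enough to close. After your integration by parts in $\xi$ you are left with $\int m(x,y)\,(P^\gamma D_j u)(x)\,\overline{v(x)}\,dx$ where $P^\gamma$ has order $-2$ and $m=r^{00}_{\gamma\ell}$. To pair against $v\in H^{-1}$ you need the other factor in $H^1$, which forces you to put an $x$-derivative on $m$; but for $\ell=1$ the hypothesis gives only $\partial_x^{\leq 0}$ control of $r^{00}_{\gamma 1}$, i.e.\ $m\in L^\infty$ and nothing more. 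Multiplication by an $L^\infty$ function preserves neither $H^1$ nor $H^{-1}$, so neither ``pull $m_1$ to the $v$ side'' nor ``put $m_1$ on $P^\gamma D_j u$'' yields the required duality, and the same obstruction hits your Schur argument for $m_2$. In short, with only the $(\mu,\nu)=(0,0)$ decomposition you cannot reach $\|v\|_{H^{-1}}$ when $\ell=1$.

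The paper's proof is organized to avoid exactly this. It first integrates by parts in $y$ to write $B^j[u,v]=\langle R_j u, v\rangle$ and reduces to $\|R_j u\|_{H^1}\lesssim (C_0+C_\delta)\|u\|_{H^{\ell-1}}$. Then it computes $D_{x^k}R_j u$ by Leibniz, so the two derivatives $D_{y^j}$ and $D_{x^k}$ distribute over $r_\ell$, $s_\ell$, and the phase; each time a derivative lands on $r_\ell$ one obtains $D_x^\mu D_y^\nu r_\ell$ with $|\mu|,|\nu|\leq 1$, and \emph{then} invokes the corresponding case of the hypothesis. After the $\xi$-integration by parts the resulting operators have order $\leq 0$ symbols and coefficients $r^{\mu\nu}_{\gamma\ell}$ for which Lemma~\ref{L2bd} applies directly at the $L^2$ (for $\ell=1$) or $H^{-1}$ (for $\ell=0$) level---no multiplication on $H^{\pm 1}$ by a merely bounded function is ever needed. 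This is precisely why the statement assumes the decomposition for \emph{all} $|\mu|,|\nu|\leq 1$, not just $\mu=\nu=0$; your sketch never uses the cases $|\mu|+|\nu|\geq 1$, and they are essential.
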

\begin{proof} Integrating by parts in $y$ we get
\begin{multline}
\iiint  r_\ell(x,y)s_\ell(x, y, \xi) e^{i(x-y)\xi}
 {D}_{y^j}u(y) \overline{v(x)} \, d\xi dy \, dx \\
 = \iiint\! D_{y^j}\bigtwo(  r_\ell(x,y)s_\ell(x, y, \xi)e^{i(x-y)\xi}\bigtwo) u(y) \overline{ v(x)} \, d\xi dy \, dx \lesssim \|R_j u\|_{H^1(I)}\|v\|_{H^{-1}(I)},
\end{multline}
where
\[
R_j u(x) = \iint D_{y^j}\bigtwo(  r_\ell(x,y)s_\ell(x, y, \xi)e^{i(x-y)\xi}\bigtwo) u(y) d\xi \, dy.
\]
We will now show that
\beq\label{H1bd}
\|R_j u\|_{H^1(I)} \lesssim (C_\delta+C_0) \|u\|_{H^{\ell-1}(I)},
\eq
which finishes the proof.
Indeed, let $\bold{k}=\vec{e_k}$ and $\bold{j}=\vec{e_j}$. We write
\begin{multline}
D_{x^k} R_j u(x) = \iiint D_{x^k} D_{y^j}\bigtwo(  r_\ell(x,y)s_\ell(x, y, \xi)e^{i(x-y)\xi}\bigtwo) u(y) d\xi dy \\
=\sum_{\gamma_1+\gamma_2+\gamma_3=(\bold{j},\bold{k})}\!\!\!\!\!\! c_{\gamma_1\gamma_2\gamma_3}
\iiint  r_\ell^{\gamma_1\!}(x,y) \, s_\ell^{\gamma_2}(x, y,\xi)\,\xi^{\gamma_3}e^{i(x-y)\xi} u(y) \, d\xi dy,
\end{multline}
where $r_\ell^{\gamma_1}(x,y)=D_{x,y}^{\gamma_1} r_\ell(x,y) $ and $s_\ell^{\gamma_2}(x, y,\xi)= D_{x,y}^{\gamma_2\!} s_\ell(x, y,\xi)$.
 We now write
\[
r_\ell^{\gamma_1}(x,y) = \sum_{|\gamma|=2+\ell-|\gamma_1|}(x-y)^\gamma r^{\gamma_1}_{\gamma\ell}(x,y) . 
\]
Also $(x-y)^\gamma e^{i(x-y)\xi}=\pa_\xi^\gamma$ so integrating by parts in $\xi$ we can write the RHS as
\beq
\sum_{\gamma_1+\gamma_2+\gamma_3=(\bold{j},\bold{k})}\!\!\!\!\!\! c_{\gamma_1\gamma_2\gamma_3}\sum_{|\gamma|=2+\ell-|\gamma_1|}\iiint r^{\gamma_1}_{\gamma\ell}(x,y) s_\ell^{\gamma\gamma_2\gamma_3\!}(x, y, \xi) e^{i(x-y)\xi} u(y) \overline{ v(x)} \, d\xi dy \, dx.
\eq
Here $ s_\ell^{\gamma\gamma_2\gamma_3\!}(x, y, \xi)=\pa_\xi^{\gamma}\bigtwo( s_\ell^{\gamma_2}(x,y,\xi)\xi^{\gamma_3}\bigtwo)$ are symbols of order $\ell+|\gamma_3|-|\gamma| = |\gamma_3| + |\gamma_1|- 2\leq  0$.

Due to \eqref{rbds} we have
\[
\|\pa_x^{\leq 1-\ell} r^{\gamma_1}_{\gamma\ell}(x,x)\|_{L^{\infty}} \lesssim C_0,
\]
\[
\|\pa_x^{\leq 1-\ell} (r^{\gamma_1}_{\gamma\ell} (x,y)-r^{\gamma_1}_{\gamma\ell} (x,x))\|_{\delta} \lesssim C_{\delta}.
\]
The conclusion \eqref{H1bd} now follows directly from Lemma~\ref{L2bd}.
\end{proof}

\subsection{Estimates for the Kerr operators}
 In terms of symbols to operator bounds, the following lemma will quantize the decomposition \eqref{sKdec}:
\begin{lemma}\label{vts}
 Assume that $q_j\in S^j$, $j\in \{-1, 0\}$. We then have
\[\begin{split}
\| \chi (q_0 \varrho_1 (\tau-\tau_2))^w \chi u\|_{L^2[\tt_0, \tt_1]L^2} + \| \chi (q_0 \varrho_2 (\tau-\tau_1))^w \chi u\|_{L^2[\tt_0, \tt_1]L^2} + \| \chi (q_0 \xi_r)^w \chi u\|_{L^2[\tt_0, \tt_1]L^2} \\ \lesssim \|u\|_{LE_K^1[\tt_0, \tt_1]} + \|D_t u\|_{L^2[\tt_0, \tt_1]H_{cpt}^{-1}},
\end{split}\]
\[\begin{split}
\| \chi (q_{-1} \varrho_1 (\tau-\tau_2))^w \chi D_i u\|_{L^2[\tt_0, \tt_1]L^2} + \| \chi (q_{-1} \varrho_2 (\tau-\tau_1))^w \chi D_i u\|_{L^2[\tt_0, \tt_1]L^2} + \| \chi (q_{-1} \xi_r)^w \chi D_i u\|_{L^2[\tt_0, \tt_1]L^2} \\ \lesssim \|u\|_{LE_K^1[\tt_0, \tt_1]} + \|D_t u\|_{L^2[\tt_0, \tt_1]H_{cpt}^{-1}}.
\end{split}\]
\end{lemma}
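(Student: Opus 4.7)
The plan is to use Weyl composition to factor each operator on the left-hand side into building blocks that appear directly in the $LE_K^1$ norm defined in \eqref{leK}. For the first three estimates with $q_0 \in S^0$, the key identity is
\[
(q_0\,\varrho_1(\tau-\tau_2))^w = q_0^w\circ \varrho_1(D,x)\circ (D_t-\tau_2(D,x)) + R_0 + R_{-1} D_t,
\]
where $R_0 \in \mathrm{OPS}^0$ and $R_{-1}\in\mathrm{OPS}^{-1}$, both with compactly supported kernels. This follows directly from the Weyl composition formula applied to the product of symbols $q_0, \varrho_1\in S^0$ and $\tau-\tau_2 \in S^1$, using that $\varrho_1$ and $\tau_2$ are $t$-independent so the paper's Weyl convention for space-time symbols reduces to multiplication at principal order.

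Using that $q_0^w$ and $\varrho_1(D,x)$ are bounded on $L^2$ by Calderón--Vaillancourt, the principal piece is bounded by
\[
\|q_0^w\,\varrho_1(D,x)\,\chi(D_t-\tau_2(D,x))\chi u\|_{L^2} \lesssim \|\chi(D_t-\tau_2(D,x))\chi u\|_{L^2_{\varrho_1}},
\]
which is the first summand in $\|u\|_{LE_K^1}$. The remainders contribute $\|u\|_{L^2_{cpt}}$ and $\|D_t u\|_{L^2 H^{-1}_{cpt}}$, the first of which is absorbed into $\|r^{-1}u\|_{LE}$. Swapping the indices $1\leftrightarrow 2$ treats the $\varrho_2(\tau-\tau_1)$ term symmetrically. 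The $\xi_r$ estimate is even cleaner: $(q_0\xi_r)^w = q_0^w D_r + \mathrm{OPS}^0$, so
\[
\|\chi (q_0\xi_r)^w \chi u\|_{L^2} \lesssim \|q_0^w \chi D_r \chi u\|_{L^2} + \|u\|_{L^2_{cpt}} \lesssim \|\partial_r u\|_{LE},
\]
which is again part of $\|u\|_{LE_K^1}$.

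For the second set of inequalities with $q_{-1}\in S^{-1}$, the extra $D_i$ is absorbed into the symbol via one more Weyl composition:
\[
(q_{-1}\,\varrho_1(\tau-\tau_2))^w\circ D_i = (\tilde q_0\,\varrho_1(\tau-\tau_2))^w + R', \qquad \tilde q_0 := q_{-1}\xi_i \in S^0,
\]
with $R'\in\mathrm{OPS}^0$ compactly supported. The leading term now has the exact form treated above with the new $S^0$ symbol $\tilde q_0$, and $R'$ contributes $\|u\|_{L^2_{cpt}}$. Commutators between $D_i$ and the cutoffs $\chi$ produce multiplication by $\partial_i\chi$, supported on $\mathrm{supp}\,\nabla\chi$, and are absorbed into $\|\widetilde\chi_0\partial u\|_{LE}$.

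The main technical bookkeeping — and where one must be careful — is sliding the cutoffs $\chi$ past the pseudodifferential factors: every such commutator drops the order by one, and one must verify that all resulting error operators are compactly supported in a region where the $LE_K^1$ norm actually controls the relevant $L^2_{cpt}$ quantity. Since every symbol in sight and the cutoff $\chi$ are supported in a neighborhood of the trapped set $\{|r-3M|<M/4\}$, and $\|r^{-1}u\|_{LE}, \|\widetilde\chi_0\partial u\|_{LE}, \|\partial_r u\|_{LE}$ all appear in $\|u\|_{LE_K^1}$, this is automatic. No new analytic ingredients beyond standard Weyl calculus and the structure of $\|u\|_{LE_K^1}$ are required.
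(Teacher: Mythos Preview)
Your argument is essentially the same as the paper's: factor the Weyl operator via the composition formula into a principal piece that is manifestly controlled by the building blocks of the $LE_K^1$ norm, plus remainders of the form $R_0+R_{-1}D_t$ with $R_j\in\mathrm{OPS}^j$ and compactly supported kernels, which contribute $\|u\|_{L^2_{cpt}}+\|D_tu\|_{H^{-1}_{cpt}}$. The paper handles the cutoffs by inserting an auxiliary $\chi_0\equiv 1$ on $\mathrm{supp}\,\chi$ (so that $\chi\chi_0=\chi$) and writes directly
\[
\chi (q_0 \varrho_1 (\tau-\tau_2))^w \chi = q_0(D,x)\,\chi_0\,\varrho_1(D,x)\,\chi(D_t-\tau_2(D,x))\chi + Q_{-1}D_t+Q_0,
\]
which is exactly your decomposition with the commutator bookkeeping already absorbed.

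One small correction: in your treatment of the second set of inequalities, the remainder $R'$ from the composition $(q_{-1}\varrho_1(\tau-\tau_2))^w D_i-(\tilde q_0\varrho_1(\tau-\tau_2))^w$ is not in $\mathrm{OPS}^0$ alone but in $\mathrm{OPS}^0+\mathrm{OPS}^{-1}D_t$ (the Poisson bracket term $\{q_{-1}\varrho_1(\tau-\tau_2),\xi_i\}$ carries a $\tau$). This only adds the already-present $\|D_tu\|_{H^{-1}_{cpt}}$ to your bound, so it does not affect the conclusion.
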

\begin{proof}
We will only control the first term, as the rest follow in a similar manner.

Let $\chi_0$ be a cutoff that is identically $1$ on the support of $\chi$. Since $\chi\chi_0 = \chi$, we can write
\[
\chi (q_0 \varrho_1 (\tau-\tau_2))^w \chi = q_0 (D, x) \chi_0 \varrho_1 (D, x) \chi (D_t - \tau_2 (D, x)) \chi + Q_{-1} D_t + Q_0
\]
where $Q_j\in OPS^j$ have compactly supported kernels.
We clearly have
\[
\| Q_{-1} D_t u\|_{L^2[\tt_0, \tt_1]L^2} + \| Q_0 u\|_{L^2[\tt_0, \tt_1]L^2} \lesssim \|u\|_{L^2[\tt_0, \tt_1]L_{cpt}^{2}} + \|D_t u\|_{L^2[\tt_0, \tt_1]H_{cpt}^{-1}}.
\]
 Since $q_0 \in S^0$ we obtain
\[
\| q_0 (D, x) \chi_0 \varrho_1 (D, x) \chi (D_t - \tau_2 (D, x)) \chi u\|_{L^2[\tt_0, \tt_1]L^2} \lesssim \|u\|_{LE_K^1[\tt_0, \tt_1]}.
\]
%
which finishes the proof.
\end{proof}

As a quick corollary, we see that Lemma~\ref{qK} and Lemma~\ref{vts} imply that
\beq
\begin{split}
& \| \chi s_K^w \chi u\|_{L^2[\tt_0, \tt_1]L^2}^2 \lesssim \|u\|_{LE_K^1[\tt_0, \tt_1]}^2 ,\\
&\|\chi (D_{x^i} s_K)^w \chi u\|_{L^2[\tt_0, \tt_1]L^2}^2 \lesssim \|u\|_{LE_K^1[\tt_0, \tt_1]}^2 + \|\chi \left(s_{4k}^{-1}\xi_r\right)^w \chi D_t u\|_{L^2[\tt_0, \tt_1]L^2}^2 ,\\
&\|\chi (D_{\xi_i} s_K)^w \chi D_i u\|_{L^2[\tt_0, \tt_1]L^2}^2 \lesssim \|u\|_{LE_K^1[\tt_0, \tt_1]}^2 + \|\chi \left(s_{4k}^{-2}\xi_r\right)^w \chi D_t u\|_{L^2[\tt_0, \tt_1]L^2}^2, \\
&\|\chi e_K^w \chi D_i u\|_{L^2[\tt_0, \tt_1]L^2}^2 + \|D_i \chi e_K^w \chi  u\|_{L^2[\tt_0, \tt_1]L^2}^2 \lesssim \|u\|_{LE_K^1[\tt_0, \tt_1]}^2 + \|\chi \left(e_4^{-2}\xi_r\right)^w \chi D_t u\|_{L^2[\tt_0, \tt_1]L^2}^2.
\end{split}
\eq

Define
\[
\|v\|_{LE^0[\tt_0, \tt_1]} := \|\chi D_r \la D_x\ra^{-1} \chi v\|_{L^2[\tt_0, \tt_1]L^2} + \|\chi \la D_x\ra^{-1} \chi v\|_{L^2[\tt_0, \tt_1]L^2}.
\]

\begin{corr}\label{corr:Kerrprinc0} We have that
\beq\label{Kerrprinc0}
\begin{split}
 \| \chi s_K^w \chi u\|_{L^2[\tt_0, \tt_1]L^2}^2 + \sum_i \|\chi (D_{x^i} s_K)^w \chi u\|_{L^2[\tt_0, \tt_1]L^2}^2 + \|\chi (D_{\xi_i} s_K)^w \chi D_i u\|_{L^2[\tt_0, \tt_1]L^2}^2 \\ + \|\chi e_K^w \chi D_i u\|_{L^2[\tt_0, \tt_1]L^2}^2 + \|D_i \chi e_K^w \chi  u\|_{L^2[\tt_0, \tt_1]L^2}^2 \lesssim \|u\|_{LE_K^1[\tt_0, \tt_1]}^2 + \|D_t u\|_{LE^0[\tt_0, \tt_1]}^2.
\end{split}\eq
\end{corr}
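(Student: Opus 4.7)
The plan is to sum the four inequalities displayed immediately above the corollary and verify that the aggregated right-hand side is bounded by $\|u\|_{LE_K^1[\tt_0,\tt_1]}^2+\|D_tu\|_{LE^0[\tt_0,\tt_1]}^2$. Those four inequalities, obtained by plugging the decompositions of Lemma~\ref{qK} into Lemma~\ref{vts} term-by-term, already control the five left-hand quantities in \eqref{Kerrprinc0} by $\|u\|_{LE_K^1}^2+\|D_tu\|_{L^2H^{-1}_{cpt}}^2$ plus residual terms of the shape $\|\chi(q^{-j}\xi_r)^w\chi D_tu\|_{L^2L^2}^2$ with $q^{-j}\in S^{-j}$, $j\in\{1,2\}$, compactly supported near the trapped set: explicitly, these are the coefficients $s_{4k}^{-1}$, $s_{4k}^{-2}$ and $e_4^{-2}$ of the $\tau\xi_r$ pieces in \eqref{sKdec}--\eqref{qKdec}. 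The smoothing tails $r_i\in S^{-\infty}+\tau S^{-\infty}$ contribute harmless $L^2_{cpt}$-type errors that are dominated by $\|u\|_{LE_K^1}$. The task is therefore to dominate both $\|D_tu\|_{L^2H^{-1}_{cpt}}$ and each residual by $\|D_tu\|_{LE^0[\tt_0,\tt_1]}$.

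The first reduction is immediate from the definition of $\|D_tu\|_{LE^0}$: the summand $\|\chi\la D_x\ra^{-1}\chi D_tu\|_{L^2L^2}$ already dominates the $H^{-1}$ norm of $\chi D_tu$ on $\supp\chi$.

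For the residuals, the idea is to extract a $D_r\la D_x\ra^{-1}$ factor out of $(q^{-j}\xi_r)^w$. Factor the symbol as $q^{-j}\xi_r=(q^{-j}\la\xi\ra)\cdot(\xi_r\la\xi\ra^{-1})$: the first factor lies in $S^{1-j}\subseteq S^0$ and its Weyl quantization is $L^2$-bounded, while the second is the symbol of $D_r\la D_x\ra^{-1}$ modulo an operator of order $-1$. Since $q^{-j}$ is compactly supported near the trapped set, the Weyl composition formula gives
\[
\chi(q^{-j}\xi_r)^w\chi \;=\; (q^{-j}\la\xi\ra)^w\,\chi D_r\la D_x\ra^{-1}\chi \;+\; R,
\]
where $R$ is a lower-order operator with compactly supported kernel. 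Consequently
\[
\|\chi(q^{-j}\xi_r)^w\chi D_tu\|_{L^2L^2}\;\lesssim\; \|\chi D_r\la D_x\ra^{-1}\chi D_tu\|_{L^2L^2}+\|\chi\la D_x\ra^{-1}\chi D_tu\|_{L^2L^2}\;=\;\|D_tu\|_{LE^0[\tt_0,\tt_1]},
\]
the $R$-contribution being absorbed in the second summand. Summing the five controlled quantities yields \eqref{Kerrprinc0}.

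The only non-formal step is the symbolic factorization above, since $(\xi_r\la\xi\ra^{-1})^w$ and $D_r\la D_x\ra^{-1}$ agree only modulo operators of order $-1$, and commutators of the cutoffs with the various pseudodifferential factors must also be tracked. However, the compact $x$-support of $q^{-j}$ near the trapped set keeps all such remainders harmless and absorbable into $\|\chi\la D_x\ra^{-1}\chi D_tu\|_{L^2L^2}$. This is exactly why the new norm $\|\cdot\|_{LE^0[\tt_0,\tt_1]}$ was introduced with its particular structure, so once the factorization is checked the rest of the argument is bookkeeping.
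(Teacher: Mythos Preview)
Your proposal is correct and follows essentially the same route as the paper: the corollary is stated immediately after the four displayed inequalities (obtained from Lemma~\ref{qK} plugged into Lemma~\ref{vts}) and the definition of $\|D_tu\|_{LE^0}$, and the passage from those inequalities to \eqref{Kerrprinc0} is exactly the absorption you describe---the $\|D_tu\|_{L^2H^{-1}_{cpt}}$ error goes into the $\|\chi\la D_x\ra^{-1}\chi D_tu\|$ summand, while the $\tau\xi_r$ residuals are handled by factoring out $D_r\la D_x\ra^{-1}$ and landing in the other summand of $LE^0$. The paper leaves these steps implicit, and you have filled them in accurately.
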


In particular the norms on the left hand side, which show up in our proof, are controlled by the $LE_K^1$ norm and extra terms that involve the time derivative of $u$. We will control the latter by using the equation in Lemma~\ref{time-deriv}.

\subsection{Commutator estimates}

We finish the section with proving the following commutator estimate:
\begin{prop}\label{commhest}
Let $h^{\alpha\beta}$ be symmetric and assume that $h^{00}=0$. Define
\[
h_1(t) = \sup_{\alpha, \beta} \|h^{\alpha\beta}\|_{L_x^{\infty}(I_{ps})} + \|\partial_{t, x} h^{\alpha\beta}\|_{L_x^{\infty}(I_{ps})},
\]
\[
h_3(t) = \sup_{\alpha, \beta} \|h^{\alpha\beta}(t, x)\|_{C_x^{3, \delta}(I_{ps})},
\]
for some small $\delta>0$. Here the H\"older norms are taken with respect to only the space variable
\beq\label{Holdercomm}
\|w(t,x)\|_{C_x^{k,\delta}(I_{ps})} = \|\pa_{x}^{\leq k} w(t,x)\|_{L_x^{\infty}} + \sum_{|\gamma|=3} \sup_{x\neq y} \frac{|\pa_{x}^{\gamma}w(t,x)-\pa_{x}^{\gamma} w(t,y)|}{|x-y|^{\delta}}.
\eq

Then the following holds
\begin{equation}\label{commprt}
\begin{split}
& \int_{\tt_0}^{\tt_1} \!\!\!\int \Bigl|h^{\alpha\beta}[D_{\beta}, \chi s_K^w \chi]u \cdot \overline{D_{\alpha} u}\Bigr| + \frac12 \Bigl|[h^{j \beta}, \chi s_K^w \chi] D_{j} u\cdot \overline{D_{\beta} u}\Bigr| dx dt \\ & \lesssim \Bigl(\|u\|_{LE_K^1[\tt_0, \tt_1]} + \|D_t u\|_{LE^0[\tt_0, \tt_1]}\Bigr) \Bigl(\| h_1 \partial u\|_{L_{ps}^2[\tt_0, \tt_1]} + \| h_3 u\|_{L_{ps}^2[\tt_0, \tt_1]} + \| h_3 D_t u\|_{L^2[\tt_0, \tt_1]H^{-1}(I_{ps})} \Bigr).
\end{split}\end{equation}
\end{prop}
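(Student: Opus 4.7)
The plan is to split the two terms on the left hand side of \eqref{commprt} and treat them separately, exploiting the decompositions of Lemma~\ref{qK} which encode the vanishing of $s_K$ and its derivatives on the trapped set.

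For the first term I expand by Leibniz:
\[
[D_\beta, \chi s_K^w \chi] = (D_\beta \chi)\, s_K^w \chi + \chi [D_\beta, s_K^w] \chi + \chi s_K^w (D_\beta \chi).
\]
The pieces containing $D_\beta \chi$ vanish for $\beta=0$ and, for spatial $\beta$, are supported on the annulus where $\nabla\chi\neq 0$; this lies away from the trapped set where the $LE_K^1$ norm is nondegenerate, so they are absorbed by the $\|\chi_1 \partial u\|_{L^2}$ piece of $LE_K^1$. The main term $\chi[D_\beta, s_K^w]\chi$ has principal symbol $\tfrac{1}{i}\partial_{x^\beta} s_K$, which by the decomposition of $\partial_{x^\beta} s_K$ in Lemma~\ref{qK} and Corollary~\ref{corr:Kerrprinc0} satisfies
\[
\|\chi [D_\beta, s_K^w] \chi\, u\|_{L^2[\tt_0, \tt_1]L^2} \lesssim \|u\|_{LE_K^1[\tt_0, \tt_1]} + \|D_t u\|_{LE^0[\tt_0, \tt_1]}.
\]
Pairing via duality, $\langle h^{\alpha\beta}[D_\beta, S]u,\, D_\alpha u\rangle = \langle [D_\beta, S]u,\, \overline{h^{\alpha\beta}} D_\alpha u\rangle$, and using Cauchy--Schwarz with $\|h^{\alpha\beta}\|_{L^\infty(I_{ps})} \leq h_1$ yields the desired bound with the $\|h_1 \partial u\|_{L^2_{ps}}$ factor.

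For the second term, since $\chi$ and $h^{j\beta}$ are multiplication operators they commute, so the commutator reduces to $\chi [h^{j\beta}, s_K^w] \chi$, whose Weyl kernel is
\[
K(x, y) = \chi(x) \chi(y) \int \bigl(h^{j\beta}(x) - h^{j\beta}(y)\bigr)\, s_K\!\left(\tfrac{x+y}{2}, \xi\right) e^{i(x-y)\xi}\, \frac{d\xi}{(2\pi)^3}.
\]
I apply the symmetric Taylor expansion
\[
h^{j\beta}(x) - h^{j\beta}(y) = \tfrac{1}{2}(x-y)^k \bigl(\partial_{x^k} h^{j\beta}(x) + \partial_{y^k} h^{j\beta}(y)\bigr) + r^{j\beta}(x, y),
\]
where the remainder vanishes at the diagonal to order $3+\delta$ because $h^{j\beta} \in C_x^{3,\delta}(I_{ps})$. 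In the principal part, the identity $(x-y)^k e^{i(x-y)\xi} = -i\partial_{\xi_k} e^{i(x-y)\xi}$ permits integration by parts in $\xi$, replacing $s_K$ by $\partial_{\xi_k} s_K$ and lowering the order by one. The result splits into a pairing of schematic shape $\partial_k h^{j\beta}(x) \cdot \bigl((\partial_{\xi_k} s_K)^w \chi D_j u\bigr)(x) \cdot \overline{\chi D_\beta u(x)}$ and a counterpart with $\partial_k h^{j\beta}(y)$ inside the kernel, which is rearranged by duality (using that $s_K$ is purely imaginary, hence $(\partial_{\xi_k} s_K)^w$ is skew-adjoint) into the same form. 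Cauchy--Schwarz with $\|\partial h\|_{L^\infty(I_{ps})} \leq h_1$, together with Corollary~\ref{corr:Kerrprinc0} applied to $\partial_{\xi_k} s_K$ and $D_j u$, bounds this contribution by $(\|u\|_{LE_K^1} + \|D_t u\|_{LE^0}) \cdot \|h_1 \partial u\|_{L^2_{ps}}$.

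The remainder contribution, with $h^{j\beta}(x) - h^{j\beta}(y)$ replaced by $r^{j\beta}(x, y)$, is precisely the setup of Lemma~\ref{lem:theerrorlemma}: the three-fold vanishing of $r^{j\beta}$ at the diagonal permits three integrations by parts in $\xi$, converting $s_K^w$ into a pseudodifferential operator of order $-2$ so that the pairing against $D_j u$ and $D_\beta u$ becomes a bounded bilinear form. The constants $C_0, C_\delta$ of that lemma are controlled by $h_3 = \|h^{j\beta}\|_{C^{3,\delta}_x(I_{ps})}$, and the resulting bound is $h_3$ times $\|u\|_{L^2_{ps}}$ or $\|D_t u\|_{L^2[\tt_0, \tt_1] H^{-1}(I_{ps})}$, paired against $\|u\|_{LE_K^1} + \|D_t u\|_{LE^0}$, matching the last two terms on the right hand side of \eqref{commprt}. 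The hard part is exactly this remainder step: without the symmetric Taylor decomposition combined with the vanishing from Lemma~\ref{qK}, the commutator $[h^{j\beta}, s_K^w]$ would be an order-zero operator with no trapped-set cancellation and could not be absorbed into the degenerate $LE_K^1$ norm, and without the $C^{3,\delta}$ regularity of $h$ the three $\xi$-integrations by parts underlying Lemma~\ref{lem:theerrorlemma} would not close.
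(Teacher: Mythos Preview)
Your proposal is correct and follows essentially the same route as the paper: the same Leibniz expansion and support argument for the first term, the same symmetric Taylor expansion with $\xi$-integration by parts for the principal part of the second term, and the same appeal to Lemma~\ref{lem:theerrorlemma} (whose hypotheses are verified in Lemma~\ref{lem:errorlemma}) for the remainder. The only cosmetic difference is that the paper splits $s_K=s_1+s_0\tau$ and applies Lemma~\ref{lem:theerrorlemma} separately with $\ell=1$ and $\ell=0$ to obtain the $\|h_3 u\|_{L^2_{ps}}$ and $\|h_3 D_t u\|_{L^2H^{-1}}$ contributions, which you describe more schematically.
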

\begin{proof}
The first term is easy to bound, as the coefficient $h^{\alpha\beta}$ does not enter the commutator. We have
\[
[D_{\beta}, \chi s_K^w \chi] = \chi [D_{\beta}, s_K^w] \chi + [D_{\beta}, \chi] s_K^w \chi + \chi s_K^w [D_{\beta}, \chi].
\]
The last two terms clearly satisfy
\[
\|[D_{\beta}, \chi] s_K^w \chi u\|_{L^2[\tt_0, \tt_1]L^2} + \|\chi s_K^w [D_{\beta}, \chi] u\|_{L^2[\tt_0, \tt_1]L^2} \lesssim \|u\|_{LE_K^1[\tt_0, \tt_1]},
\]
due to the support properties of $\chi$.

On the other hand,
\[
\chi [D_{\beta}, s_K^w] \chi = \chi (D_{\beta} s_K)^w \chi + \chi l_0^w \chi, \quad\qquad l_0 \in S^0 + \tau S^{-1},
\]
and thus by \eqref{Kerrprinc0}
\[
\| \chi [D_{\beta}, s_K^w] \chi u\|_{L^2[\tt_0, \tt_1]L^2} \lesssim \|u\|_{LE_K^1} + \|D_t u\|_{LE^0[\tt_0, \tt_1]} .
\]

By Cauchy Schwarz and the last two inequalities we get
\[
\int_{\tt_0}^{\tt_1}\!\!\! \int \Bigl|h^{\alpha\beta}[D_{\beta}, \chi s_K^w \chi]u \cdot \overline{D_{\alpha} u}\Bigr| dx dt\lesssim \Bigl(\|u\|_{LE_K^1[\tt_0, \tt_1]} + \|D_t u\|_{LE^0[\tt_0, \tt_1]}\Bigr) \| h_1 \partial u\|_{L_{ps}^2[\tt_0, \tt_1]}.
\]

We are left with estimating the second term in \eqref{commprt}. Recall that at the symbol level, $s_K = s_1 + \tau s_0$, where $s_j\in S^j$. We will use the notation
\[
S_j(x, y, \xi):= \chi(x) \chi(y) s_j\big(\tfrac{x+y}2, \xi\big).
\]

Now fix $\tt_0\leq\tt\leq\tt_1$. Since we will only work on a fixed time slice, we suppress the $\tt$ in the notation.

We have
\begin{multline}
\int [h^{j\beta}\! ,\chi s_K^w \chi]{D}_j u\cdot \overline{{D}_\beta u} \,dx\\
=\iiint  [h^{j\beta}(x)-h^{j\beta}(y)] [S_1(x,y, \xi) {D}_j u(y) + S_0(x, y, \xi) {D}_t {D}_j u(y)] e^{i(x-y)\xi}
  \overline{{D}_\beta u(x)} \, d\xi dy \, dx.
\end{multline}
We now write
\[
h^{j\beta}(x)-h^{j\beta}(y) = \frac12(x-y)^k \big(\pa_{x^k} h^{j\beta}(x) + \pa_{y^k} h^{j\beta}(y)\big) + r^{j\beta}(x,y).
\]

The first term on the right hand side is the principal part of the commutator. To estimate it, we note that after integrating by parts in $\xi$ we obtain
\[\begin{split}
& \iiint \frac12(x-y)^k \big(\pa_{x^k} h^{\alpha\beta}(x) + \pa_{y^k} h^{j\beta}(y)\big) [S_1(x,y, \xi) {D}_j u(y) + S_0(x, y, \xi) {D}_t {D}_j u(y)]  e^{i(x-y)\xi}\overline{{D}_\beta u(x)} \, d\xi dy \, dx \\ & =
\frac12 \int \Bigl(\pa_{k} h^{j\beta} \chi (\pa_{\xi^k} s_K)^w \chi D_{\alpha} u + \chi (\pa_{\xi^k} s_K)^w \chi \pa_{k} h^{j\beta} D_j u\Bigr) \cdot \overline{{D}_\beta u} \,dx\\ &  = \Re \int \chi (\pa_{\xi^k} s_K)^w \chi D_j u \cdot \overline{\pa_{k} h^{j\beta} {D}_\beta u(x)}\, dx.
\end{split}\]
By Cauchy Schwarz and \eqref{Kerrprinc0} we have after integrating in time
\[
\int_{\tt_0}^{\tt_1} \int \Bigl| \chi (\pa_{\xi_k} s_K)^w \chi D_j u \cdot \overline{\pa_{k} h^{j\beta} {D}_\beta u(x)}\Bigr| dx dt \lesssim \|\chi (\pa_{\xi_k} s_K)^w \chi D_x u\|_{L^2[\tt_0, \tt_1]L^2} \|h_1 \pa u\|_{L_{ps}^2[\tt_0, \tt_1]} \lesssim RHS\eqref{commprt} .
\]
In order to control the error, we check that $r^{j\beta}$ satisfies the hypotheses of Lemma~\ref{lem:theerrorlemma} with $C_0 = C_{\delta} = h_3(\tt)$. Assuming this holds, we obtain by \eqref{eq:bilinearformestimate} with $l=1$
\[
\iiint  r^{j\beta} S_1(x,y, \xi) {D}_j u(y) e^{i(x-y)\xi}
  \overline{{D}_\beta u(x)} \, d\xi dy \, dx \lesssim h_3 \|u\|_{L^2(I_{ps})}^2.
\]

Moreover, \eqref{eq:bilinearformestimate} for $l=0$ implies
\[
\iiint  r^{j\beta} S_0(x,y, \xi)  {D}_j D_t u(y) e^{i(x-y)\xi}
  \overline{{D}_\beta u(x)} \, d\xi dy \, dx \lesssim h_3 \|D_t u\|_{H^{-1}(I_{ps})} \| D_{\beta} u\|_{H^{-1}(I_{ps})}.
\]
The conclusion follows after integrating in time and applying Cauchy Schwarz.
\end{proof}

We are left with proving the following
\begin{lemma}\label{lem:errorlemma} Suppose that $h^{j\beta}\in C^{3,\delta}$ and let
\[ r^{j\beta}(x,y)=h^{j\beta}(x)-h^{j\beta}(y)-\frac12(x-y)^k \big(\pa_{x^k} h^{j\beta}(x) + \pa_{y^k} h^{j\beta}(y)\big).\]
Then for $|\mu|\leq 1$ and $|\nu|\leq 1$, and $\ell=0,1$ we have
\beq\label{eq:thecancellation}
\pa_x^{\mu}\pa_y^\nu  r^{j\beta}(x,y)={\sum}_{|\gamma|=2+\ell-|\mu|-|\nu|} (x-y)^\gamma r^{j\beta,\mu\nu}_{\gamma\ell}(x,y),
\eq
where
\beq\label{eq:thecancellationdelta2}
\big|\,\pa_x^{\leq 1-\ell} \big( r^{j\beta,\mu\nu}_{\gamma\ell}(x,y)- r^{j\beta,\mu\nu}_{\gamma\ell}(x,x)\big)\big|
\lesssim |x-y|^\delta
\|h^{j\beta}\|_{C^{3,\delta}},\quad\text{and}\quad
|\pa_x^{\leq 1-\ell}  r^{j\beta,\mu\nu}_{\gamma\ell}(x,x)|\lesssim
\|h^{j\beta}\|_{C^{3}}.
\eq
\end{lemma}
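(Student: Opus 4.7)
The plan is to establish explicit integral representations for $r^{j\beta}$ and its derivatives via the trapezoidal quadrature identity
$$\int_0^1 \phi(t)\,dt - \tfrac{1}{2}\bigl(\phi(0)+\phi(1)\bigr) = -\tfrac{1}{2}\int_0^1 t(1-t)\phi''(t)\,dt,$$
applied to one-variable slices of $h^{j\beta}$, and then extract the decomposition \eqref{eq:thecancellation} case by case. Applying this identity to $\phi(t) = \pa_k h^{j\beta}(y+t(x-y))$, multiplying by $(x-y)^k$, and using $h^{j\beta}(x)-h^{j\beta}(y) = (x-y)^k\int_0^1 \pa_k h^{j\beta}(y+t(x-y))\,dt$ gives the master identity
$$r^{j\beta}(x,y) = -\tfrac{1}{2}\sum_{k,l,m}(x-y)^k(x-y)^l(x-y)^m\int_0^1 t(1-t)\,\pa_k\pa_l\pa_m h^{j\beta}(y+t(x-y))\,dt.$$
This directly handles $|\mu|=|\nu|=0$, $\ell=1$: $r^{j\beta,00}_{\gamma 1}$ is the integral coefficient, and \eqref{eq:thecancellationdelta2} follows from the H\"older regularity of $\pa^3 h^{j\beta}$, since $|\pa^\gamma h^{j\beta}(y+t(x-y))-\pa^\gamma h^{j\beta}(x)|\leq \|h^{j\beta}\|_{C^{3,\delta}}(1-t)^\delta|x-y|^\delta$.

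For the remaining $\ell=1$ cases, I will differentiate $r^{j\beta}$ directly from its definition and apply the analogous lower-order identity $\int_0^1 f(s)\,ds - f(1) = -\int_0^1 sf'(s)\,ds$. For instance, the direct calculation
$$\pa_{x^i}r^{j\beta}(x,y) = \tfrac{1}{2}\bigl[\pa_i h^{j\beta}(x)-\pa_i h^{j\beta}(y)\bigr] - \tfrac{1}{2}(x-y)^k\pa_k\pa_i h^{j\beta}(x)$$
is rewritten, using the lower-order identity applied to $s\mapsto \pa_k\pa_i h^{j\beta}(y+s(x-y))$, as $-\tfrac{1}{2}(x-y)^k(x-y)^l\int_0^1 s\,\pa_l\pa_k\pa_i h^{j\beta}(y+s(x-y))\,ds$. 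The subcase $|\mu|=0,|\nu|=1$ follows from the antisymmetry $r^{j\beta}(x,y)=-r^{j\beta}(y,x)$, and $|\mu|=|\nu|=1$ is the direct evaluation $\pa_{x^i}\pa_{y^n}r^{j\beta} = \tfrac{1}{2}[\pa_n\pa_i h^{j\beta}(x)-\pa_n\pa_i h^{j\beta}(y)] = \tfrac{1}{2}(x-y)^m\int_0^1 \pa_m\pa_n\pa_i h^{j\beta}(y+t(x-y))\,dt$.

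The $\ell=0$ cases extract one fewer factor of $(x-y)$, so the coefficient must lie in $C^1_x$ with H\"older-type control on its $x$-derivative; I will use integral representations involving one fewer derivative of $h^{j\beta}$ inside the integral, so that the extra $\pa_x$ lands back at $\pa^{3-|\mu|-|\nu|}h^{j\beta}\in C^\delta$. For $|\mu|=|\nu|=0,\ell=0$, combining $h^{j\beta}(x)-h^{j\beta}(y) = (x-y)^k\pa_k h^{j\beta}(y) + (x-y)^k(x-y)^l\int_0^1(1-t)\pa_k\pa_l h^{j\beta}(y+t(x-y))\,dt$ with the analogous expansion of $(x-y)^k\pa_k h^{j\beta}(x)$ yields
$$r^{j\beta}(x,y) = \sum_{k,l}(x-y)^k(x-y)^l\int_0^1(\tfrac{1}{2}-t)\,\pa_k\pa_l h^{j\beta}(y+t(x-y))\,dt,$$
whose coefficient vanishes at the diagonal since $\int_0^1(\tfrac{1}{2}-t)\,dt=0$, and whose $\pa_{x^m}$ derivative equals $\int_0^1(\tfrac{1}{2}-t)t\,\pa_m\pa_k\pa_l h^{j\beta}(y+t(x-y))\,dt$, giving the required bounds via the H\"older regularity of $\pa^3 h^{j\beta}$. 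Analogous intermediate identities handle the remaining $\ell=0$ subcases; in the $|\mu|=|\nu|=1$ subcase the formula $\pa_{x^i}\pa_{y^n}r^{j\beta} = \tfrac{1}{2}[\pa_n\pa_i h^{j\beta}(x)-\pa_n\pa_i h^{j\beta}(y)]$ already works as $r^{j\beta,11}_{\,0}$ without further rewriting.

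The main technical obstacle is bookkeeping: for each subcase $(|\mu|,|\nu|,\ell)$, one must choose the representation so that at most $\pa^3 h^{j\beta}\in C^\delta$ appears after all $x$-derivatives have been carried out, and verify the vanishing order of the coefficient at the diagonal. Once the template provided by the trapezoidal identity and its one-lower-order analogue is in place, each case reduces to a short direct calculation.
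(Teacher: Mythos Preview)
Your proof is correct and follows essentially the same strategy as the paper: express $r^{j\beta}$ and its $x,y$-derivatives via integral remainder formulas involving $\pa^{\leq 3}h^{j\beta}$, then read off the coefficients $r^{j\beta,\mu\nu}_{\gamma\ell}$ and verify the H\"older-type bounds from $h^{j\beta}\in C^{3,\delta}$. The only real difference is cosmetic: you parameterize along the segment $y+t(x-y)$ and invoke the trapezoidal error identity, whereas the paper parameterizes symmetrically about the midpoint $\tfrac{x+y}{2}+t\tfrac{x-y}{2}$ and separates odd and even parts before integrating by parts; the resulting formulas are the same up to an affine change of variable in $t$.
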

\begin{proof}Let
$h(t)=(\pa^\mu\pa^\nu h^{j\beta})\big(\tfrac{x+y}2 + t\tfrac{x-y}2\big)$. Then
$\pa_x^{\mu}\pa_y^\nu r^{j\beta}(x,y)=r_{mn}(1)$, where $m=|\mu|$, $n=|\nu|$ and
$$
r_{00}(t)={h}_1-t {h}_1^\prime(t),\qquad
r_{01}(t)={h}_1(t)/2-t h^\prime(t),\qquad r_{11}(t)=-{h}_1(t),\qquad
{h}_1(t)=h(t)-h(-t).
$$

Since $r_{00}(0)=r_{00}^\prime(0)=0$ we have
$r_{00}(1)=\int_0^1 (1-t) r_{00}^{\prime\prime}(t)\, dt
=-\int_0^1 (1-t)\big(h_1^{\prime\prime}(t)+t h_1^{\prime\prime\prime}(t)\big)\, dt$, and integrating by parts $r_{00}(1)
=-\int_0^1\big( \frac{1}{2}(1-t)^2 h_1^{\prime\prime\prime}(t)+(1-t)t h_1^{\prime\prime\prime}(t)\big)\, dt$ and since $h_1^{\prime\prime\prime}(t)=h^{\prime\prime\prime}(t)+h^{\prime\prime\prime}(-t)$
we have
$$
 r_{00}(1)
 =\frac{1}{2}\int_{-1}^1 \!\!\! (t^2\!-1) h^{\prime\prime\prime}(t)\, dt
  =\frac{1}{2}\int_{-1}^1 \!\!\! (t^2\!-1)\sum_{|\gamma|=3}\big(\tfrac{x-y}2\big)^\gamma\big( \pa^\gamma h^{j\beta})\big(\tfrac{x+y}2 + t\tfrac{x-y}2\big)\, dt
  =\sum_{|\gamma|=3}(x-y)^\gamma r^{j\beta}_{\gamma 1}(x,y),
$$
where
$$
r^{j\beta}_{\gamma 1}(x,y)=\frac{1}{2^4}\int_{-1}^1 (t^2-1)\big( \pa^\gamma h^{j\beta})\big(\tfrac{x+y}2 + t\tfrac{x-y}2\big)\, dt,
$$
which proves \eqref{eq:thecancellation} and \eqref{eq:thecancellationdelta2} for $|\mu|=|\nu|=0$ and $\ell=1$.

Since $r_{00}(0) = 0$ we compute
\[
r_{00}(1)=\int_0^1 r_{00}^{\prime}(t)\, dt = \int_0^1 -t {h}_1^{\prime\prime}(t)\, dt = \int_{-1}^1 -t {h}^{\prime\prime}(t)\, dt
= \sum_{|\gamma|=2} (x-y)^\gamma  r^{j\beta}_{\gamma 0}(x,y),
\]
for
\[
 r^{j\beta}_{\gamma 0}(x,y) = \frac{1}{2^2} \int_{-1}^1 -t \pa^{\gamma}h^{j\beta} \big(\tfrac{x+y}2 + t\tfrac{x-y}2\big)\, dt,
\]
which proves \eqref{eq:thecancellation} and \eqref{eq:thecancellationdelta2} for $|\mu|=|\nu|=0$ and $\ell=0$.

Similarly $r_{10}(0)=r_{10}^\prime(0)=0$ so
$r_{10}(1)=\int_0^1 (1-t) r_{10}^{\prime\prime}(t)\, dt
=\int_0^1 (1-t)\big(h_1^{\prime\prime}(t)/2- 2h^{\prime\prime}(t)-t h^{\prime\prime\prime}(t)\big)\, dt$, and if we integrate the last term by parts we get $r_{10}(1)=
\int_0^1 (1-t)\big(-3h^{\prime\prime}(t)/2- h^{\prime\prime}(-t)/2\big)+(1-2t) h^{\prime\prime}(t)\, dt $, which with $H(t)=1$, when $t>0$ and $H(t)=0$, when $t<0$ we as above can write as
$$
 r_{10}(1)
 =\frac{1}{2}\int_{-1}^1 \!\!\! \,\big((1-|t|)+2tH(t)\big)h^{\prime\prime}(t)\, dt
  =\sum_{|\gamma|=2}(x-y)^\gamma r^{j\beta}_{\gamma  1}(x,y),
$$
where
$$
r^{j\beta}_{\gamma 1}(x,y)=\frac{1}{2^3}\int_{-1}^1 \big((1-|t|)+2tH(t)\big)(\pa^\gamma \pa^\mu h^{j\beta})\big(\tfrac{x+y}2 + t\tfrac{x-y}2\big)\, dt,
$$
which proves \eqref{eq:thecancellation} and \eqref{eq:thecancellationdelta2}for the case $|\mu|=1 $ and $|\nu|=0$ and $\ell=1$.

On the other hand, we can write $r_{10}(1)=\int_0^1 r_{10}'(t) dt=-\int_0^1 (h^\prime(t)-h^{\prime}(-t))/2+ t  h^{\prime\prime}(t) \, dt= -h^\prime(1)+\int_0^1 (h^\prime(t)+h^\prime(-t)) dt/2$.
$$
 r_{10}(1)
 =-h^\prime(1)+\frac{1}{2}\int_{-1}^1 h^\prime(t)\, dt
  =\sum_{|\gamma|=1}(x-y)^\gamma r^{j\beta}_{\gamma  0}(x,y),
$$
where
\[
 r^{j\beta}_{\gamma 0}(x,y) = -\frac12 \pa^{\gamma} \pa^{\mu} h^{j\beta}(x)  +\frac{1}{2^2} \int_{-1}^1(\pa^{\gamma} \pa^{\mu} h^{j\beta}) \big(\tfrac{x+y}2 + t\tfrac{x-y}2\big)\, dt .
\]
which proves \eqref{eq:thecancellation} and \eqref{eq:thecancellationdelta2} for the case $|\mu|=1 $ and $|\nu|=0$ and $\ell=0$.

Since the proof of the case
$|\mu|=0 $ and $|\nu|=1$ is the same it only remains to prove the case
$|\mu|=1 $ and $|\nu|=1$, in which case
$$
 r_{11}(1)
 =-\int_{-1}^1 \!\!\!  h^{\prime}(t)\, dt
  =\sum_{|\gamma|=1}(x-y)^\gamma r^{j\beta}_{\gamma 1}(x,y),\qquad \text{where}\quad r^{j\beta}_{\gamma 1}(x,y)=\frac{1}{2}\int_{-1}^1 \!\!(\pa^\gamma \pa^\mu \pa^\nu h^{j\beta})\big(\tfrac{x+y}2 + t\tfrac{x-y}2\big)\, dt,
$$
and
$$
 r_{11}(1)
 =-h(t)+h(-t)
  = r^{j\beta}_{\gamma 0}(x,y),\qquad \text{where}\quad
   r^{j\beta}_{\gamma 0}(x,y) = -\pa^{\mu}\pa^{\nu} h^{j\beta}(x) + \pa^{\mu}\pa^{\nu} h^{j\beta}(y).\qedhere
$$
\end{proof}

\bigskip

\newsection{Local energy estimates close to the trapped set}

\subsubsection{Conditions on the metric}\label{sec:metricconditions}
 Let $g_K$ be the Kerr metric, and $R$ be a large constant, and $\delta>0$ be an arbitrarily small number. Let $g$ be a metric that is a small perturbation of $g_K$ in the sense that the difference
$h^{\alpha\beta}:= g^{\alpha\beta}-g_K^{\alpha\beta}$ satisfies
\begin{equation}\label{diff}
 |\partial h^{\alpha\beta}|+ |h^{\alpha\beta}|\lesssim \epsilon,
\end{equation}
 everywhere. Moreover, near the trapped set and the light cone we need additional decay estimates as follows:

i) When $ |r-3M| < \frac{M}4$ (which is a region close to the trapped set) we have
\begin{align}
  |\partial h^{\alpha\beta}|+| h^{\alpha\beta} | &\leq \kappa_1(t)\lesssim \epsilon  \la t\ra^{-1/2} ,\label{cpt1}\\
 |\partial_{t} h^{\alpha\beta}| &\leq \kappa_0(t)\lesssim \epsilon \la t\ra^{-1}.\label{cpt2}
\end{align}

In contrast to our previous result on Schwarzschild, see Section 4 of \cite{LT}, we also need smallness (but not decay) on higher order derivatives of $h$. This is due to the use of pseudodifferential operators, which generate errors involving such derivatives. We will assume that
\begin{equation}\label{hgsmall}
\| h^{\alpha\beta} \|_{C^{3, \delta}} \lesssim \epsilon,
\end{equation}
for some small $\delta>0$. Here the H\"older norms are taken with respect to only the space variable, but include all the derivatives
\beq\label{Holder}
\|u\|_{C^{k,\delta}} = \|\pa_{t,x}^{\leq k} u\|_{L^{\infty}L^{\infty}} + \sup_t \sup_{x\neq y} \frac{|\pa_{t,x}^{\leq k} \left(u(t,x)- u(t,y)\right)|}{|x-y|^{\delta}}.
\eq

ii) In the intermediate region $R_1^* \leq \rs\leq \frac{t}2$ we will assume that
\begin{equation}\label{intrm}
|\partial h^{\alpha\beta}| + | h^{\alpha\beta}|\, r^{-1}\lesssim \epsilon r^{-1-\delta},
\end{equation}

iii) In the region close to the cone $\rs>\frac{t}2$ we will distinguish two cases.

The first case, which is the most natural for proving pointwise decay estimates for the linear problem, is to assume that \eqref{intrm} also holds in this region.

For applications to the quasilinear wave equations, we need to be able to also handle the case where the derivative of the metric decays like $\frac1{r}$ near the cone. In this second case, we need to assume different decay rates for different components.

The component of the metric that multiply the derivatives with worst decay $\partial_{\lL}^2 u$ will be required to satisfy the better decay estimates
\begin{equation}\label{badpart}
|\partial h^{\uL\uL} | + |h^{\uL\uL}|\la t-\rs\ra^{-1}
\lesssim \epsilon\la t \ra^{-1-\delta}.
\end{equation}
This is needed for the estimates and is
consistent with what holds for Einstein's equations.

 The other components of $h$ only need to satisfy the weaker estimates:
\begin{equation}\label{goodpart}
 |\partial h | +|h|\la t-\rs\ra^{-1}
 \lesssim \epsilon\la t \ra^{-\frac12-\delta}\la t-\rs\ra^{-\frac12-\delta}.
\end{equation}

 We will  denote by
 \[
 \M_{ps}[\tt_0,\tt_1] = \M[\tt_0,\tt_1]\cap \big\{ \, r\,;\,  |r-3M|\leq M/4\, \big\},
 \]
 a neighborhood of the photonsphere, and by $\chi$ a smooth cutoff supported on $I_{ps}$ so that $\chi = 1$ on $\big\{ \, r\,;\,  |r-3M|\leq M/8\, \big\}$. We also let $\widetilde{\chi}$ be a smooth cutoff supported on
$ \widetilde{I}_{ps}=\big\{ \, r\,;\,  |r-3M|\leq 3M/8\, \big\}$ such that $\widetilde{\chi}= 1$ on $I_{ps}$.

We define
\beq
\|u\|_{L^2_{ps}[\tt_0, \tt_1]}^2=\int_{\M_{ps}[\tt_0,\tt_1]} | u|^2 dV_g,
\eq
\beq
\kappa(t)^2 = \kappa_1(t)^2 + \kappa_0(t).
\eq

 The main goal of this section is to prove the following local energy estimate, which is the counterpart of Theorem 4.1 in \cite{LT}.

\begin{theorem}\label{LE} Let $u$ solve the inhomogeneous linear wave equation
\begin{equation}\label{inhomwave}
 \Box_g u = F,
\end{equation}
where $g$ is a Lorentzian metric satisfying the conditions above, with either \eqref{intrm} or \eqref{badpart} and \eqref{goodpart} being satisfied close to the cone. Then for any $\tt_0 < \tt_1$
\begin{equation}\label{LEK}
\|\partial  u\|_{L^{\infty}[\tt_0, \tt_1] L^2}^2 + \|u\|_{LE_K^1[\tt_0, \tt_1]}^2
+\|D_t u\|_{LE^0[\tt_0, \tt_1]}^2 \lesssim  \|\kappa \pa u\, \|_{L^2_{ps}[\tt_0, \tt_1]}^2 + \|\partial u(\tt_0)\|_{L^2}^2 +B(F,u)_{[\tt_0, \tt_1]},
\end{equation}
where the implicit constant is independent of $\tt_0$, $\tt_1$ and $\epsilon$.
Here $B(F,u)$ is a bilinear norm
\begin{multline}\label{bilform}
B(F,u)_{[\tt_0, \tt_1]}= \!\!\!\!\! \inf_{f_1 + f_2  = F}\Big(
 \|\tchi f_1\|_{L^1[\tt_0, \tt_1] L^2}\|\partial  u\|_{L^{\infty}[\tt_0, \tt_1] L^2} + \|\pa^{\leq 1\!} (\tchi f_2)\|_{L^2[\tt_0, \tt_1]L^2} \|u\|_{LE_K^1[\tt_0, \tt_1]}\Big)\\
 +\int_{\M[\tt_0,\tt_1]}\!\!\!\!\!\! \!\!\!\! (1\!-\!\chi)|F| (|\pa u|+|u|/r) dV_g .
\end{multline}
\end{theorem}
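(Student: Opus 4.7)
The plan is to apply the Kerr multiplier $C\pa_t + X + S + q + E$ (with $S,E$ from Lemma \ref{TTmain}) to the perturbed equation $\Box_g u = F$, decompose $\Box_g = \Box_{g_K} + P_{\widehat{h}}$ schematically, and then bound the extra contributions coming from $h$. The Kerr part produces $IQ[g_K, X, q, m, S, E]$, which by the Kerr estimate \eqref{Kerrbd} combined with the elliptic bound \eqref{Dtell} for $D_t u$ yields control of $\|u\|_{LE_K^1}^2$ and, via \eqref{eq:localenergyDxminusoneDt} which I would prove as a separate lemma using the equation, also of $\|D_t u\|_{LE^0}^2$. The boundary terms are coercive on energy by \eqref{bdrposk}. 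The main task is then to absorb $IQ[P_{\widehat h}, X, q, m, S, E]$ and the additional boundary contributions into the right hand side of \eqref{LEK}.

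First I would reduce to the case $h^{00}=0$ near the trapped set: multiply $\widehat{\Box}_g$ by a function $f_0$ chosen (as in \eqref{f0def}) so that $f_0\widehat{g}^{00}=\widehat{g}_K^{00}$ on $\mathrm{supp}\,\chi$, producing via \eqref{eq:makingh00equalto0} an additional first-order term $\widehat{g}^{\alpha\beta}\pa_\alpha f_0\,\pa_\beta u$ with $\pa f_0=O(\pa h)=O(\kappa)$; this term paired with $(S+E)u$ is estimated by the commutator bounds of Section 3 and Corollary~\ref{corr:Kerrprinc0} as $\|\kappa\pa u\|_{L^2_{ps}}\bigl(\|u\|_{LE_K^1}+\|D_t u\|_{LE^0}\bigr)$, which is acceptable. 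With $h^{00}=0$ the condition \eqref{nodt3} is automatic and I can use the quadratic form $IQ[P_{\widehat h},S,E]$ defined in \eqref{Phquad}--\eqref{Equad}, whose boundary terms satisfy \eqref{bdryPh} and are therefore $O(\epsilon)$ times the energy.

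The central step, and the main obstacle, is bounding the bilinear form $\widehat{Q}[P_{\widehat h},S,E]$ from \eqref{bfintro}. Each of the four terms is handled using the fact from Lemma~\ref{qK} that $s_K$, $\pa s_K$ and $e_K$ all vanish on the trapped set in the precise way needed to produce, when quantized, factors of $\chi(D_t-\tau_i)\chi$, $\varrho_i^w\chi$ or $D_r$ acting on $u$ — exactly the operators controlled by the $LE_K^1$ norm via \eqref{eq:LElocalbounds} and Corollary~\ref{corr:Kerrprinc0}. Proposition~\ref{commhest} then yields
\[
\bigl|\widehat{Q}[P_{\widehat h},S,E]\bigr|\lesssim \bigl(\|u\|_{LE_K^1}+\|D_t u\|_{LE^0}\bigr)\bigl(\|h_1\pa u\|_{L^2_{ps}}+\|h_3 u\|_{L^2_{ps}}+\|h_3 D_t u\|_{L^2H^{-1}(I_{ps})}\bigr),
\]
where $h_1\lesssim \kappa_1$ by \eqref{cpt1} and $h_3\lesssim\epsilon$ by \eqref{hgsmall}. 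The $h_3$ contributions are small and absorbed into the left side, while the $h_1\pa u$ term is exactly $\|\kappa\pa u\|_{L^2_{ps}}$ up to the extra contribution from $\pa_t h$, which is controlled by $\kappa_0$ via \eqref{cpt2} and fits into $\kappa^2=\kappa_1^2+\kappa_0$.

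Outside the trapped set, where $h^{\alpha\beta}$ only satisfies the weaker decay \eqref{rdecayintro}, \eqref{intrm} or \eqref{badpart}--\eqref{goodpart}, I would follow the Schwarzschild perturbation argument of Section 4 of \cite{LT}: pair $P_{\widehat h}u$ with $(C\pa_t+X+q)u$ and use the $r^{-1-\delta}$ or $\la t\ra^{-1/2-\delta}$ decay of $h$ together with the positive weights $r^{-1-\delta}|\pa u|^2+r^{-3-\delta}u^2$ in $Q[g_S,X,q,m]$ (see \eqref{SchwQ}) to absorb the perturbation, with the $\pa_{\uL\uL}$ component handled using the better decay \eqref{badpart} and a null decomposition near the cone. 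The contributions of $F$ against the multiplier are estimated by duality, producing exactly the bilinear form $B(F,u)$ of \eqref{bilform}. Finally I would combine the interior and exterior estimates; the small $\epsilon$ and the smallness of $a$ allow all bulk terms proportional to $\|u\|_{LE_K^1}^2+\|D_t u\|_{LE^0}^2$ on the right to be absorbed into the left, yielding \eqref{LEK}.
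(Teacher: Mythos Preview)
Your proposal is correct and follows essentially the same approach as the paper: the $f_0$ reduction to $h^{00}=0$, the use of Lemma~\ref{qK} and Proposition~\ref{commhest} for the principal commutator estimate near the trapped set, Corollary~\ref{corr:Kerrprinc0} and the separate lemma for $\|D_t u\|_{LE^0}$, and the \cite{LT} argument away from the trapped set are exactly the ingredients the paper assembles. The only organizational difference is that the paper introduces an auxiliary metric $\tg=\tchi g_K+(1-\tchi)g$ so that the Kerr estimate \eqref{Kerrbd} applies verbatim near the trapped set and the $h$-perturbation $P_h$ is compactly supported there, whereas you decompose $g=g_K+h$ globally; this is a matter of bookkeeping and does not affect the argument.
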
 Note that here
\beq
 \|\kappa \pa u\, \|_{L^2_{ps}[\tt_0, \tt_1]}^2
 \leq \int_{\tt_0}^{\tt_1} \kappa(\tt)^2 \|\partial  u(\tt)\|_{ L^2}^2\, d\tt ,
\eq
and using Gr\"onwall's lemma we obtain
\begin{equation}\label{LEK0}
\|\partial  u\|_{L^{\infty}[\tt_0, \tt_1] L^2}^2 + \|u\|_{LE_K^1[\tt_0, \tt_1]}^2 \lesssim  e^{\,{\bigone\int}_{\!\!\tt_0}^{\tt_1}\!\! \kappa(\tt)^2d\tt}\Big( \|\partial u(\tt_0)\|_{L^2}^2 +B(F,u)_{[\tt_0, \tt_1]}\Big).
\end{equation}

If ${\bigone\int}_{\!\!\tt_0}^{\tt_1}\! \kappa(\tt)^2 d\tt\leq C$ this estimate implies the
estimate in Theorem \ref{Kerr} for perturbations of Kerr.  This is the case if we assume, for instance, that
\[
\kappa(t) = \la t\ra^{-1/2-\delta},\qquad  \delta>0,
\]
in which case we obtain
 \beq\label{LEK2}
\|\partial  u\|_{L^{\infty}[\tt_0, \tt_1] L^2}^2 + \|u\|_{LE_K^1[\tt_0, \tt_1]}^2  \lesssim \|\partial u(\tt_0)\|_{L^2}^2 + \|F\|_{L^1[\tt_0, \tt_1] L^2+LE^*_K[\tt_0, \tt_1]}^2.
\eq

On the other hand, if we assume that
\[
\kappa(t) = \la t\ra^{-1/2},
\]
we obtain
\beq\label{LEK1}
\|\partial  u\|_{L^{\infty}[\tt_0, \tt_1] L^2}^2 + \|u\|_{LE_K^1[\tt_0, \tt_1]}^2  \lesssim  \int_{\M_{ps}{[\tt_0,\tt_1]}}\frac{\epsilon}{t} |\partial u|^2 dV_g + \|\partial u(\tt_0)\|_{L^2}^2 +B(F,u)_{[\tt_0, \tt_1]},
\eq
which is the estimate we need for the quasilinear problem. Note that this estimate in particular implies by Gronwall's inequality that for the homogeneous problem the energy increases at most as $t^{C\epsilon}$.

\subsubsection{Additional control of $\langle D_x\rangle^{-1} D_t$}
The extra norm that we control in \eqref{LEK},
\[
\|D_t u\|_{LE^0[\tt_0, \tt_1]} = \|\chi D_r \la D_x\ra^{-1} \chi D_t u\|_{L^2[\tt_0, \tt_1]L^2} + \|\chi \la D_x\ra^{-1} \chi D_t u\|_{L^2[\tt_0, \tt_1]L^2},
\]
is needed to control commutator terms in the proof, see \eqref{Kerrprinc0}.
 It is in fact bounded by the other terms in \eqref{LEK}:
 \begin{prop}
\begin{equation}\label{LEtest}
\|D_t u\|_{LE^0[\tt_0, \tt_1]}^2 \lesssim \|u\|_{LE_K^1[\tt_0, \tt_1]}^2 +  E[u](\tt_0)+E[u](\tt_1) + \|\kappa \pa u\, \|_{L^2_{ps}[\tt_0, \tt_1]}^2+B(F,u)_{[\tt_0, \tt_1]}.
\end{equation}
\end{prop}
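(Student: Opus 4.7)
The plan is to treat each of the two components of $\|D_t u\|_{LE^0[\tt_0,\tt_1]}^2$ as $\int_{\tt_0}^{\tt_1}\la A^*\!A\, D_t u, D_t u\ra\, dt$, where $A$ is either $A_1 := \chi\la D_x\ra^{-1}\chi$ (order $-1$) or $A_2 := \chi D_r \la D_x\ra^{-1}\chi$ (order $0$). In each case I would integrate by parts once in time:
\[
\int_{\tt_0}^{\tt_1}\!\la A^*\!A\, D_t u, D_t u\ra\, dt = -\Re\!\int_{\tt_0}^{\tt_1}\!\la A^*\!A\, D_t^2 u, u\ra\, dt + \text{BDR}\Big|_{\tt_0}^{\tt_1}.
\]
Since $g^{00}$ is uniformly bounded away from zero on $I_{ps}$ (being a small perturbation of $g_K^{00}$), the equation $\Box_g u = F$ solves for $D_t^2 u$ as an order-$2$ space operator applied to $u$, plus first-order terms and $F/g^{00}$. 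The boundary terms at $\tt = \tt_i$ are bounded by $E[u](\tt_i)$ using the orders of $A_1$ and $A_2$ together with Hardy's inequality to deduce $\|u(\tt_i)\|_{L^2_{cpt}}\lesssim\sqrt{E[u](\tt_i)}$.

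For the $A_1$ contribution, $A_1^*A_1$ is of order $-2$ with compactly supported kernel, hence its composition with any second-order space operator is bounded on $L^2$; after substituting the equation, the main term is controlled by $\|r^{-1}u\|^2_{LE[\tt_0,\tt_1]}\lesssim\|u\|^2_{LE_K^1[\tt_0,\tt_1]}$. The $F$ contribution is placed inside $B(F,u)_{[\tt_0,\tt_1]}$ through the decomposition $F = f_1+f_2$ in \eqref{bilform}. The lower-order terms generated by $\pa g$ split into a Kerr part (absorbed by $\|u\|_{LE_K^1}^2$) and an $h$-perturbation part that is linear in $\pa u$ with coefficient bounded by $\kappa$; by Cauchy--Schwarz this contributes $\|\kappa\pa u\|_{L^2_{ps}}^2$. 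Commutators between $A_1$ and the rough coefficients $h^{\alpha\beta}$ and $(g^{00})^{-1}g^{\alpha\beta}$ are handled by Lemma~\ref{L2bd} using the smallness \eqref{hgsmall}. This step is essentially the Kerr elliptic estimate \eqref{Dtell} adapted to the perturbed metric $g$.

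The $A_2$ contribution is the main obstacle, because $A_2^*A_2$ has order $0$ and no longer absorbs a second-order space operator coming from the equation. I would handle this by exploiting the factorization
\[
\la A_2^*A_2 D_t^2 u, u\ra = \la A_2 D_t^2 u, A_2 u\ra,
\]
writing $A_2 D_t^2 u = \chi D_r W$ with $W := \la D_x\ra^{-1}\chi D_t^2 u$. Substituting the equation, $W$ is reduced to $\la D_x\ra^{-1}$ applied to an order-$2$ space operator on $u$, i.e.\ to a first-order operator on $u$; integrating by parts the outer $D_r$ in $\la \chi D_r W, A_2 u\ra$ transfers a derivative onto $A_2 u$, producing terms of the form (zeroth-order pseudodifferential operator)$\cdot\pa u$ paired against $\pa u$ locally. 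These pairings are then split according to the direction of $\pa u$: the radial and low-frequency parts are directly controlled by $\|\pa_r u\|_{LE}$ and $\|r^{-1}u\|_{LE}$, while contributions in the transversal directions $\pa_t$ and $\ang$ near trapping are recast using the $\chi(D_t-\tau_i(D,x))\chi u$ pieces of $\|u\|_{LE_K^1}$, via the decompositions of Lemma~\ref{qK} relating $\tau$ and $\xi$ to combinations of $\varrho_i(\tau-\tau_j)$ and $\xi_r$. Perturbation errors again yield either $\|\kappa\pa u\|_{L^2_{ps}}^2$ or an $O(\epsilon)$ multiple of $\|D_t u\|_{LE^0[\tt_0,\tt_1]}^2$; the latter is absorbed into the left-hand side for $\epsilon$ small enough, using \eqref{hgsmall}.
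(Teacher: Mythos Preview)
Your treatment of the $A_1$ piece is essentially the paper's estimate \eqref{t-lot} and is fine. The gap is in the $A_2 = \chi D_r\la D_x\ra^{-1}\chi$ argument, at the very last step. After your integration by parts you arrive at a pairing of the form
\[
\big\langle W,\; D_r(\chi A_2 u)\big\rangle,
\]
with $W$ a \emph{generic} first–order space operator applied to $u$ (coming from $\la D_x\ra^{-1}\chi\, g^{ij}D_iD_j u$), and you then appeal to Lemma~\ref{qK} to ``recast'' the transversal parts of $\pa u$ in terms of $\varrho_i(\tau-\tau_j)$ and $\xi_r$. That appeal is incorrect: Lemma~\ref{qK} decomposes the \emph{specific} symbols $s_K$, $\pa s_K$, $e_K$ --- which all vanish on the trapped set --- as combinations of $\varrho_i(\tau-\tau_j)$ and $\xi_r$. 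A generic first–order symbol (in particular $\tau$ or $\lambda_j$) does \emph{not} admit such a decomposition with bounded coefficients; at a trapped point $\varrho_i=0$ and $\xi_r=0$ while $\tau\neq 0$. Consequently one of your two factors remains an uncontrolled $\|\pa u\|_{L^2_{ps}}$, and Cauchy--Schwarz does not close.

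The paper avoids this by a different manipulation of the same spatial term. Writing $Q_0=A_2$ (self-adjoint up to an order $-1$ correction), instead of integrating $D_r$ by parts it uses the symmetry of $Q_0$ to move one factor of $Q_0$ across:
\[
\big\langle \tfrac{g^{ij}}{g^{00}}D_j u,\; D_i Q_0^2 u\big\rangle
=\big\langle \tfrac{g^{ij}}{g^{00}}D_j Q_0 u,\; D_i Q_0 u\big\rangle
+\text{(commutators with }Q_0,\ \tfrac{g^{ij}}{g^{00}}\text{)},
\]
so that the main term is $\sim\|D_x Q_0 u\|_{L^2}^2$ (this is \eqref{t-princ} in Lemma~\ref{time-deriv}). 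The whole point is that for $Q_0=\chi D_r\la D_x\ra^{-1}\chi$ the operator $D_x Q_0$ has principal symbol $\chi^2\xi_j\xi_r/\la\xi\ra$, which is bounded by $|\xi_r|$; hence
\[
\|D_x Q_0 u\|_{L^2L^2}^2\ \lesssim\ \|\pa_r u\|_{L^2_{ps}}^2+\|u\|_{L^2_{ps}}^2\ \lesssim\ \|u\|_{LE_K^1}^2,
\]
and no decomposition near trapping is needed. The commutator terms are lower order and are controlled by $\|u\|_{L^2_{ps}}^2+\|D_t u\|_{H^{-1}_{cpt}}^2$ (handled by the $A_1$ step) together with $\|\kappa\pa u\|_{L^2_{ps}}^2$ for the $h$–part. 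Your outline can be repaired simply by replacing the ``integrate $D_r$ by parts then invoke Lemma~\ref{qK}'' step with this symmetrization; everything else you wrote (boundary terms, $F$–contribution via $B(F,u)$, absorption of $O(a)+O(\epsilon)$ multiples of $\|D_tQ_0 u\|^2$ from the $g^{0j}$ terms) is correct.
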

The proof follows from the following lemma:
\begin{lemma}\label{time-deriv}
For any compactly supported operator $Q_0 = \chi q_0^w \chi$, where $q_0\in S^0$, and $\Box_g u = F$ we have
\beq\label{t-princ}
\| D_t Q_0 u(t)\|_{L^2[\tt_0, \tt_1]L_x^2}^2 \!\lesssim E[u](\tt_0)+E[u](\tt_1)+ \| D_x Q_0 u\|_{L^2[\tt_0, \tt_1]L_x^2}^2 +  \|\kappa \pa u \|_{L^2_{ps}[\tt_0, \tt_1]}^2+ \|u \|_{L^2_{ps}[\tt_0, \tt_1]}^2
+B(F,u)_{[\tt_0, \tt_1]}.
\eq
Similarly, we have that
\beq\label{t-lot}
\|D_t u\|_{L^2[\tt_0, \tt_1]H^{-1}_{comp}}^2 \lesssim E[u](\tt_0)+E[u](\tt_1) + \|u\|_{L^2[\tt_0, \tt_1]L^2_{comp}}^2 + B(F,u)_{[\tt_0, \tt_1]}.
\eq
\end{lemma}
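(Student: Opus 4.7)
The plan is to use the wave equation as an elliptic relation for $D_t^2$ in the region of interest, exploiting the fact that $g^{tt}$ is bounded away from zero on a neighborhood of $I_{ps}$: $g_K^{tt}$ at $r=3M$ is strictly negative with a lower bound independent of small $|a|$, and the assumption $\|h\|_{L^\infty}\lesssim \epsilon$ preserves this ellipticity. On the support of the cutoff $\chi$ we may thus solve
\[
g^{tt} D_t^2 u \,=\, -\Box_g u - 2 g^{tj} D_t D_j u - g^{ij}D_i D_j u + \mathcal R_1 u,
\]
where $\mathcal R_1$ is first order in $\pa_{t,x}$ with smooth coefficients depending on $g$ and $\pa g$. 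Both estimates then follow by integrating by parts in time to produce a $D_t^2$ factor on one side of a pairing, substituting via this identity, and bounding the resulting pieces.

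For \eqref{t-princ}, integration by parts in $t$ yields
\[
\|D_t Q_0 u\|_{L^2[\tt_0,\tt_1]L^2}^2 \,=\, \Re\!\int_{\tt_0}^{\tt_1}\!\!\!\int \overline{Q_0 u}\cdot D_t^2 Q_0 u\, dx\, dt \,+\, \mathrm{BDR}\Big|_{\tt_0}^{\tt_1},
\]
whose boundary terms are bounded by $E[u](\tt_i)$ via Cauchy--Schwarz together with Sobolev embedding $\dot H^1\hookrightarrow L^6$ and spatial compactness of $Q_0$. Commuting $Q_0$ past the equation gives $\Box_g Q_0 u = Q_0 F + [\Box_g,Q_0] u$. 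Applying the ellipticity identity to $Q_0 u$, the spacetime integral splits into: (i) $\la Q_0 u, Q_0 F\ra$, absorbed into $B(F,u)$; (ii) $\la Q_0 u, g^{ij}D_iD_j Q_0 u\ra$, which yields $\|D_x Q_0 u\|_{L^2L^2}^2 + \|Q_0 u\|_{L^2_{ps}}^2$ after one integration by parts in $x$; (iii) $\la Q_0 u, g^{tj} D_t D_j Q_0 u\ra$, handled by moving $D_t$ back onto $\overline{Q_0 u}$ (producing an extra energy boundary term) and then Cauchy--Schwarz, absorbing a small multiple of $\|D_t Q_0 u\|^2$ into the LHS; and (iv) the commutator $[\Box_g,Q_0] u = [\Box_{g_K},Q_0] u + [h^{\alpha\beta}D_\alpha D_\beta + \mathrm{l.o.t.}, Q_0] u$. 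The Kerr commutator is first order in spacetime with smooth coefficients, and its $D_t$-component, after a further $t$-integration by parts, produces only energy and $\|u\|_{L^2_{ps}}^2$ terms. The $h$-commutator unpacks via Weyl calculus into pairings of the schematic form $\la Q_0 u, (\pa^{\leq 1}h)\,\pa u\ra_{L^2_{ps}}$; Cauchy--Schwarz together with the bounds $|h|+|\pa_x h|\lesssim \kappa_1$ and $|\pa_t h|\lesssim \kappa_0$ delivers the $\|\kappa\pa u\|_{L^2_{ps}}^2$ term, after first integrating by parts once in $t$ on the piece carrying $\pa_t h$ so that only a single power of $\kappa_0$ enters the final bound.

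For \eqref{t-lot} the same scheme applies, but weighted by $\la D_x\ra^{-2}$. Picking $\phi$ smooth, spatially compactly supported and equal to $1$ on the compact region underlying the $H^{-1}_{comp}$ norm, write
\[
\|\phi D_t u\|_{L^2[\tt_0,\tt_1]H^{-1}}^2 \,=\, \int_{\tt_0}^{\tt_1} \la\phi D_t u,\la D_x\ra^{-2}\phi D_t u\ra\, dt,
\]
and integrate by parts in $t$; the boundary terms are bounded by $\|u(\tt_i)\|_{L^2_{comp}}\|D_t u(\tt_i)\|_{H^{-1}_{comp}}\lesssim E[u](\tt_i)$. Substituting the ellipticity relation for $\phi D_t^2 u$: the $F$ term gives $\la \la D_x\ra^{-1}\phi u, \la D_x\ra^{-1}(\phi F/g^{tt})\ra$, absorbed into $B(F,u)$ by Cauchy--Schwarz and Hölder in time; the $g^{ij}D_iD_j u$ piece, via $L^2$-boundedness of $\la D_x\ra^{-2}D_iD_j$, contributes $\|u\|_{L^2L^2_{comp}}^2$ after moving the derivatives to the left; the $g^{tj}D_tD_j u$ piece, after integrating $D_t$ back by parts in time and using $L^2$-boundedness of $\la D_x\ra^{-2}D_j$, produces $\|u\|_{L^2L^2_{comp}}\|\phi D_t u\|_{L^2 H^{-1}}$, absorbed into the LHS by weighted Cauchy--Schwarz. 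The first-order remainder $\mathcal R_1 u$ contributes only an $\epsilon$-multiple of $\|u\|_{L^2 L^2_{comp}}^2$ via \eqref{diff}, so no $\kappa$-weight is needed.

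The main technical obstacle lies in the commutator $[\Box_g,Q_0] u$ for \eqref{t-princ}: its $D_t$ component cannot be estimated directly, since the naive Cauchy--Schwarz bound would generate an uncontrolled $\|D_t u\|_{L^2 L^2_{comp}}$ on the right, and must instead be integrated back by parts in $t$. The correct assignment of $\kappa$-weights is the other delicate step: terms carrying $\pa_t h$ must end up with weight $\kappa_0$ (rather than $\kappa_0^2$), which dictates performing one integration by parts in $t$ on the $\pa_t h$ factor \emph{before} applying Cauchy--Schwarz; by contrast terms carrying $h$ or $\pa_x h$ are Cauchy--Schwarz'd directly with weight $\kappa_1^2$. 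Combined these produce precisely $\|\kappa \pa u\|_{L^2_{ps}}^2$ with $\kappa^2=\kappa_0+\kappa_1^2$.
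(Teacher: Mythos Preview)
Your overall strategy---exploit the ellipticity of $g^{tt}$ to trade $D_t^2$ for spatial derivatives after an integration by parts in time---matches the paper's, but the implementations diverge at a key step. You commute $Q_0$ through the wave operator and work with $\Box_g(Q_0 u)=Q_0 F+[\Box_g,Q_0]u$; the paper instead keeps the equation for $u$ and pairs it with the multiplier $\frac{1}{g^{00}\sqrt{|g|}}\,Q_0^2 u$. Concretely, after integrating by parts in time the paper arrives at $\int \pa_t\bigl(g^{00}\sqrt{|g|}\,\pa_t u\bigr)\cdot\frac{1}{g^{00}\sqrt{|g|}}Q_0^2 u$, substitutes the divergence-form identity $\sqrt{|g|}\,\Box_g u=\pa_t(g^{00}\sqrt{|g|}\,\pa_t u)+\pa_i(g^{ij}\sqrt{|g|}\,\pa_j u)+\ldots$, and then, to extract $\|D_x Q_0 u\|^2$ from the $ij$ piece, moves one copy of $Q_0$ across using only commutators with \emph{smooth Kerr} coefficients such as $[Q_0,\tfrac{g_K^{ij}}{g_K^{00}}\pa_i]\in OPS^0$. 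The perturbation $h$ never enters a commutator: its contribution appears only when one replaces $g$ by $g_K$ in the various coefficients, and is bounded directly by $\kappa_1\|\pa u\|_{L^2_{ps}}\cdot\|D_x^{\le 1}Q_0^2 u\|$. The $\kappa_0$ weight arises just as directly, from the single term $\pa_t\bigl(\tfrac{1}{g^{00}\sqrt{|g|}}\bigr)=O(\kappa_0)$. This is cleaner than your route, which has to control $[P_h,Q_0]$ with low-regularity coefficients; that can certainly be done using the tools of Section~3, but ``unpacks via Weyl calculus'' undersells what is required, and your remark about integrating $\pa_t h$ by parts in $t$ is not how the $\kappa_0$ power actually emerges in either approach.

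Your sketch also has a genuine omission in step (iv): you only discuss the $D_t$-component of the Kerr commutator $[\Box_{g_K},Q_0]$. Its spatial part is a first-order operator $\sum_j C_j D_j$ with $C_j\in OPS^0$, and a naive Cauchy--Schwarz on $\la Q_0 u,\,\tfrac{1}{g^{tt}}C_j D_j u\ra$ yields $\|u\|_{L^2_{ps}}\|D_x u\|_{L^2_{ps}}$, which is \emph{not} on the right-hand side of \eqref{t-princ}. To close you must first move the derivative onto $Q_0 u$ (e.g.\ write $D_j C_j^*\tfrac{1}{g^{tt}}Q_0=C_j^*\tfrac{1}{g^{tt}}D_j Q_0+OPS^0$), producing $\|D_x Q_0 u\|\,\|u\|_{L^2_{ps}}$ instead. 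This is routine once noticed, but it is precisely the manipulation the paper's multiplier route builds in from the start.
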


\begin{proof}

Let us prove \eqref{t-princ}. Since $\pa_t$ and $Q_0$ commute, we obtain after integrating by parts in time
\[
\| D_t Q_0 u(t)\|_{L^2[\tt_0, \tt_1]L_x^2}^2\! \lesssim  E[u](\tt_0)+E[u](\tt_1) +\!\! \int_{\M_{ps}{[\tt_0,\tt_1]}} \!\!\! \!\!\!\!\pa_t (g^{00}\!\!\!\sqrt{\!|g|}\pa_t u) \frac1{g^{00}\!\!\!\sqrt{\!|g|}} Q_0^2 u +
g^{00}\!\!\!\sqrt{\!|g|} \pa_t u \,\pa_t (\frac1{\!g^{00}\!\!\!\sqrt{\!|g|}}) Q_0^2 u \,dx dt.
\]

Clearly due to \eqref{cpt2} we have
\[
\int_{\M_{ps}{[\tt_0,\tt_1]}} g^{00}\sqrt{|g|} \pa_t u\,  \pa_t (\frac1{g^{00}\sqrt{|g|}}) Q_0^2 u\, dx dt \lesssim \int_{\M_{ps}{[\tt_0,\tt_1]}} \kappa^2|\partial u|^2 + u^2 dt dx  \lesssim RHS\eqref{t-princ}.
\]

On the other hand, we write
\[
\sqrt{|g|} \Box_g u = \pa_t (g^{00}\sqrt{|g|}\pa_t u) + \pa_i (g^{ij} \sqrt{|g|} \pa_{j} u) + \pa_t (g^{0j} \sqrt{|g|} \pa_{j} u) + \pa_j (g^{j0} \sqrt{|g|} \pa_{t} u).
\]
We multiply $\Box_g u = F$ by $\frac1{g^{00}} Q_0^2 u$ and integrate by parts. We clearly have
\begin{multline}
\int_{\M_{ps}{[\tt_0,\tt_1]}} F (\frac1{g^{00}\sqrt{|g|}} Q_0^2 u) dxdt  \lesssim \inf_{F_1 + F_2 =  F }\Big(
 \|F_1\|_{L^1[\tt_1, \tt_2] L^2}\|\tchi u\|_{L^{\infty}[\tt_1, \tt_2] L^2}
 + \|F_2\|_{L^2_{ps}[\tt_0, \tt_1]}\!\|u\, \|_{L^2_{ps}[\tt_1, \tt_2]}\Big) \\ \lesssim B(F,u)_{[\tt_1, \tt_2]},
\end{multline}
where we used a version of Hardy's inequality to bound the first term.

We will show that
\beq\label{ij}
\int_{\M_{ps}{[\tt_0,\tt_1]}} g^{ij} \sqrt{|g|} \pa_{j} u \cdot \pa_{i} (\frac1{g^{00}\sqrt{|g|}} Q_0^2 u)\, dx dt \lesssim RHS\eqref{t-princ}, \eq
\beq\label{0j}\begin{split}
\int_{\M_{ps}{[\tt_0,\tt_1]}} \Bigl(g^{0j} \sqrt{|g|} \pa_{j} u\Bigr) \cdot \pa_{t}\Bigl(\frac1{g^{00}\sqrt{|g|}} Q_0^2 u\Bigr) + \Bigl(g^{j0} \sqrt{|g|} \pa_{t} u\Bigr) \cdot \pa_{j}\Bigl(\frac1{g^{00}\sqrt{|g|}} Q_0^2 u\Bigr)  dx dt \\ \lesssim RHS\eqref{t-princ} + a\| D_t Q_0 u(t)\|_{L^2[\tt_0, \tt_1]L_x^2}^2.
\end{split}\eq

Since the terms involving $D_t Q_0 u$ on the right hand side can be absorbed on the left hand side for small enough $a$ and $\epsilon$, this completes the proof of \eqref{t-princ}.

Recall that by \eqref{cpt1}-\eqref{cpt2} we have
\[
\Bigl|\pa_{\leq 1}\Bigl(\frac1{g^{00}\sqrt{|g|}} - \frac1{g_K^{00}\sqrt{|g_K|}}\Bigr)\Bigr|^2 \lesssim \kappa_1,
\]
and thus by Cauchy Schwarz
\[\begin{split}
\int_{\M_{ps}{[\tt_0,\tt_1]}} \bigl|g^{ij} \sqrt{|g|} \pa_{j} u \cdot \pa_{i} \bigl((\frac1{g^{00}\sqrt{|g|}}- \frac1{g_K^{00}\sqrt{|g_K|}})Q_0^2 u\bigr)\bigr| dx dt & \lesssim \int_{\M_{ps}{[\tt_0,\tt_1]}} \kappa^2 |\partial u|^2 + |D_x Q_0^2 u|^2 + |Q_0^2 u|^2 dx dt \\ &\lesssim RHS\eqref{t-princ}.
\end{split}\]

Similarly, since
\[
\Bigl|\pa_{\leq 1} \Bigl(g^{ij} \sqrt{|g|} - g_K^{ij} \sqrt{|g_K|}\Bigr)\Bigr| \lesssim \kappa_1,
\]
we obtain
\[\begin{split}
\int_{\M_{ps}{[\tt_0,\tt_1]}} \!\bigl|(g^{ij}\! \sqrt{|g|} \pa_{j} u - g_K^{ij}\! \sqrt{|g_K|})\pa_j u\cdot \pa_{i} \bigl(\frac1{g_K^{00}\!\sqrt{|g_K|}}Q_0^2 u\bigr)\bigr|\, dx dt  & \lesssim \int_{\M_{ps}{[\tt_0,\tt_1]}}\!\!\! \!\! \kappa^2|\partial u|^2\! + |D_x Q_0^2 u|^2\! + |Q_0^2 u|^2 dx dt  \\ & \lesssim RHS\eqref{t-princ}.
\end{split}\]

Finally, after using the symmetry of $Q_0$, we obtain
\beq\begin{split}
& \int_{\M_{ps}{[\tt_0,\tt_1]}} g_K^{ij} \sqrt{|g_K|} \pa_{i} u \cdot \pa_{j} (\frac1{g_K^{00}\sqrt{|g_K|}} Q_0^2 u) \,dx dt \\ &
  = \int_{\M_{ps}{[\tt_0,\tt_1]}} g_K^{ij} \sqrt{|g_K|}\pa_i  u \cdot [\pa_j, \frac1{g_K^{00}\sqrt{|g_K|}}Q_0] Q_0 u+ [Q_0, \frac{g_K^{ij}}{g_K^{00}} \pa_i] u\cdot \pa_j Q_0 u + \frac{g_K^{ij}}{g_K^{00}} \pa_i Q_0 u \cdot \pa_j Q_0 u \,dx dt.
\end{split}\eq
Here one sees directly that the last two terms are bounded by the right hand side of
\eqref{t-princ}. To prove that this is true also for the first one integrates $\pa_i$ by parts.
This finishes the proof of \eqref{ij}.

The proof of \eqref{0j} is similar. Using that $g_K^{0j} = O(a)$ we obtain
\[\begin{split}
& \int_{\M_{ps}{[\tt_0,\tt_1]}} g_K^{0j} \sqrt{|g_K|} \pa_{j} u \cdot \pa_{t}\Bigl(\frac1{g_K^{00}\!\sqrt{|g_K|}} Q_0^2 u\Bigr)\, dx dt = \! \int_{\M_{ps}{[\tt_0,\tt_1]}}
\frac{g_K^{0j}}{g_K^{00}} \pa_j Q_0 u \cdot \pa_t Q_0 u + [Q_0, \frac{g_K^{0j}}{g_K^{00}} \pa_j] u\cdot \pa_t Q_0 u\, dxdt \\ & \lesssim RHS\eqref{t-princ} + a\| D_t Q_0 u(t)\|_{L^2[\tt_0, \tt_1]L_x^2}^2.
\end{split}\]

The other terms are similar to the ones above.

The proof of \eqref{t-lot} is similar to the analogous statement \eqref{Dtell} in Kerr. One multiplies $\Box_g u = F$ by $Q^2$, where $Q\in OPS^{-1}$ is compactly supported, and integrates by parts.
\end{proof}


\subsection{Proof of the main theorem}
We will now prove Theorem~\ref{LE}. Away from $r=3M$ the argument is identical to the one in \cite{LT}. The main difficulty comes from the complicated nature of the trapped set, which is best described in phase space. We will use the same pseudodifferential operator as the one used in the proof of Theorem~\ref{Kerr}. As we shall see, at the principal symbol level,  under the assumptions \eqref{diff}, \eqref{cpt1}, \eqref{cpt2}, the error terms will satisfy
\[
Err \lesssim \epsilon\Bigl(\varrho_1^2(\tau-\tau_2)^2+\varrho_2^2(\tau-\tau_1)^2+\xi_r^2\Bigr) + \kappa(\tt) |\xi|^2.
\]
The first three terms are controlled by the left hand side, while the last one gives rise to the first term on the right hand side of \eqref{LEK}.

We will prove the slightly weaker version of \eqref{LEK}:
\beq\label{LEKw}
\|\partial  u\|_{L^{\infty}[\tt_0, \tt_1] L^2}^2 + \|u\|_{LE_{K,\delta}^1[\tt_0, \tt_1]}^2  \lesssim  \int_{\M_{ps}{[\tt_0,\tt_1]}} \kappa(\tt)|\partial u|^2 dV_g + \|\partial u(\tt_0)\|_{L^2}^2 + B(F,u)_{[\tt_0, \tt_1]} ,
\eq
where
\[
\| u\|_{LE_{K,\delta}^1[\tt_0, \tt_1]} = \|(1- \chi_{R_1}) u\|_{LE_K^1[\tt_0, \tt_1]} + \| \chi_{R_1} r^{-1/2-\delta} \partial u\|_{L^2[\tt_0, \tt_1]L^2} + \| \chi_{R_1} r^{-3/2-\delta} u\|_{L^2[\tt_0, \tt_1]L^2}.
\]
for some $R_1 \gg M$.
This estimate is identical to \eqref{LEK}, except for a loss of $r^{\delta}$. The transition to \eqref{LEK} once \eqref{LEKw} holds is straightforward, see the end of Section 4 in \cite{LT}.

Let $f_0$ be a function so that
\beq\label{f0def}
 f_0(\tt, x) = \left\{ \begin{array}{cc} \frac{\sqrt{|g_K|} g_K^{00}}{\sqrt{|g|} g^{00}} , & |r-3M| \leq M/4 ,\cr 1 & |r-3M|>3M/8.
 \end{array}
 \right.
\eq
and so that
\[
|f_0 - 1| + |\partial f_0| \lesssim \kappa_1(t)
\]
This is possible due to \eqref{cpt1}. The role of $f_0$ is to simplify the problem by making $h^{00}=0$ near the trapped set.

We compute
\begin{equation}\label{oBoxcomp}
f_0\Box_g  = P_0 - g^{\alpha\beta} \big(\pa_\alpha f_0) \pa_\beta, \qquad P_0 := \frac1{\sqrt{|g|}}\pa_{\alpha} (g^{\alpha\beta} f_0 \sqrt{|g|} \pa_\beta).
\end{equation}

In order to apply the results of \cite{LT}, it will be convenient to work with a metric that is identically Kerr near the trapped set and identically $g$ away from it. We thus define
\[
\tg = \tchi g_K + (1-\tchi) g.
\]

We can now write
\beq\label{f0mult}
f_0 \widehat{\Box}_g = \widehat{\Box}_\tg + P_h - g^{\alpha\beta} \big(\pa_\alpha f_0) \pa_\beta,
\eq
where
\[
P_h = \pa_{\alpha}(\th^{\alpha\beta}\pa_{\beta}), \qquad \th^{\alpha\beta}:= f_0 \sqrt{|g|} g^{\alpha\beta} - \sqrt{|\tg|} \tg^{\alpha\beta}.
\]

Clearly $\th^{\alpha\beta}$ are supported near the trapped set, and $\th^{00}=0$ in $\M_{ps}{[\tt_0,\tt_1]}$. It is easy to check (see also Lemma 4.2 in \cite{LT}) that $\th$ also satisfy \eqref{diff}-\eqref{hgsmall}. We will thus slightly abuse notation and redefine $\th=h$ for the rest of the section.

The multiplier we will use will be of the form $C\partial_t + S_g$, with
\[
S_g = f_0 S_K, \qquad S_K = X+S+q+E.
\]

For the $C\pa_t$ term we have
\[
\int_{\M{[\tt_0,\tt_1]}} (\Box_g u) \overline {C\partial_t u}\, dV_g = \int_{\M{[\tt_0,\tt_1]}} Q[g, C\partial_t]\, dV_g -C \left. BDR_1^{g}[u]\right|_{t = \tt_0}^{t = \tt_1} - C\left. BDR_1^{g}[u] \right|_{r=r_e}.
\]

On the other hand, we also compute
\[
\int_{\M{[\tt_0,\tt_1]}} (\widehat{\Box}_\tg u) \overline{S_K u}\,dxdt = \int_{\M{[\tt_0,\tt_1]}} Q[\tg, X, q, m]\, dtdx + IQ[\tg, S, E] - \left. BDR_2^{g}[u]\right|_{t = \tt_0}^{t = \tt_1} - \left. BDR_1^{g}[u] \right|_{r=r_e},
\]
We can now write
\[
S_K = \chi (s_K^w +  e_K^w) \chi + S_{out}, \quad S_{out} = X_{out} + q_{out}
\]

Clearly $X_{out}$ and $q_{out}$ are a first order vector field and scalar function respectively.

We thus have (recall the definitions \eqref{Phvec} and \eqref{Phquad})
\[
\int_{\M{[\tt_0,\tt_1]}} (P_h u) \overline{S_K u}\,dxdt =  \int_{\M{[\tt_0,\tt_1]}} Q[P_h, X_{out}, S_{out}] dxdt + IQ[P_h, \chi s_K^w \chi, \chi e_K^w \chi] - \left. BDR_3^{g}[u]\right|_{t = \tt_0}^{t = \tt_1}.
\]

Since
\[
\int_{\M{[\tt_0,\tt_1]}} (f_0 \widehat{\Box}_g u) \overline{S_K u}\,dxdt = \int_{\M{[\tt_0,\tt_1]}} \Box_g u \cdot  \overline{S_g u} \,  dV_{g},
\]
we obtain, using \eqref{f0mult}
\begin{multline}\label{intdivg}
\int_{\!\M{[\tt_0,\tt_1]}}\!\!\!\!\!\!\! \sqrt{|g|} Q[g, C\pa_t] + \sqrt{|\tg|} Q[\tg, X, q, m] + Q[P_h, X_{out}, S_{out}] - \sqrt{|\tg|} g^{\alpha\beta} \pa_\alpha f_0\, \pa_\beta u\,  S_K u\, dtdx \\ + IQ[\tg, S, E]   + IQ[P_{h}, \chi s_K^w \chi, \chi e_K^w \chi] =
    - \int_{\M{[\tt_0,\tt_1]}} \Box_g u \cdot  \overline{(C\partial_t + S_g) u} \  dV_{g}
    - \left. BDR^{g}[u]\right|_{t = \tt_0}^{t = \tt_1} - \left. BDR^{g}[u] \right|_{r=r_e}.
  \end{multline}

The boundary terms are small perturbations of the boundary terms for the Kerr metric, and thus satisfy
\beq\label{bdryterms}
  \left. BDR^{g}[u]\right|_{t = \tt_i} \approx  \  \|\partial  u(\tt_i)\|_{L^2}^2,
  \quad i=1,2,\qquad\quad
  \left. BDR^{g}[u]\right|_{r=r_e} \approx \ \| u\|_{H^1(\Sigma^+{[\tt_0,\tt_1]})}^2.
\eq

For the inhomogeneous term we prove the following:
\begin{lemma}\label{inhest}
We have
\[
- \int_{\M{[\tt_0,\tt_1]}} \Box_g u \cdot  \overline{(C\partial_t + S_g) u} \  dV_{g} \lesssim B(F,u)_{[\tt_0, \tt_1]} .
\]
\end{lemma}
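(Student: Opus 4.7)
The plan is to substitute $\Box_g u=F$ into the left-hand side and estimate $\int F\cdot\overline{(C\pa_t+S_g)u}\,dV_g$ by decomposing with the cutoff $\tchi$ and exploiting the structure of $S_g = f_0 S_K = f_0(X_{out}+q_{out})+f_0\chi(s_K^w+e_K^w)\chi$, in which the pseudodifferential piece is supported inside $\supp\chi\subset\{\tchi=1\}$ and degenerates to a classical vector field plus scalar off $\supp\tchi$. Accordingly we write
\[
-\int F\cdot\overline{(C\pa_t+S_g)u}\,dV_g = -\int\tchi F\cdot\overline{(C\pa_t+S_g)u}\,dV_g-\int(1-\tchi)F\cdot\overline{(C\pa_t+S_g)u}\,dV_g.
\]
On $\supp(1-\tchi)$ the pseudodifferential part vanishes and $(C\pa_t+S_g)u=(C\pa_t+f_0 X_{out}+f_0 q_{out})u$ is the action of a classical vector field plus scalar, with pointwise magnitude $\lesssim|\pa u|+|u|/r$ (the scalar $q_{out}$ being handled by standard Hardy on its geometric support). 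Using $(1-\tchi)\le(1-\chi)$, this contribution is bounded by $\int_{\M[\tt_0,\tt_1]}(1-\chi)|F|(|\pa u|+|u|/r)\,dV_g$, which is exactly the last summand of $B(F,u)_{[\tt_0,\tt_1]}$.

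For the inner integral on $\supp\tchi$, fix a decomposition $F=f_1+f_2$ (over which the infimum in~\eqref{bilform} is taken) and write $\tchi F=\tchi f_1+\tchi f_2$. For the $\tchi f_1$ piece use the $L^1_tL^2_x\times L^\infty_tL^2_x$ duality: since $s_K\in S^1+\tau S^0$ and $e_K\in S^0+\tau S^{-1}$ are compactly supported in $\supp\chi$, on each time slice
\[
\|\chi(s_K^w+e_K^w)\chi u(t)\|_{L^2_x}\lesssim \|u(t)\|_{H^1_x(\supp\tchi)}+\|\pa_tu(t)\|_{L^2_x(\supp\tchi)}\lesssim\|\pa u(t)\|_{L^2_x},
\]
a Hardy estimate on the compact set $\supp\tchi$ absorbing the zeroth-order piece. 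Combined with the immediate bounds on the classical pieces $C\pa_tu$, $f_0Xu$, $f_0qu$ one obtains $\|\tchi(C\pa_t+S_g)u\|_{L^\infty L^2}\lesssim\|\pa u\|_{L^\infty L^2}$, hence
\[
\Bigl|\int\tchi f_1\cdot\overline{(C\pa_t+S_g)u}\,dV_g\Bigr|\lesssim\|\tchi f_1\|_{L^1[\tt_0,\tt_1]L^2}\|\pa u\|_{L^\infty[\tt_0,\tt_1]L^2},
\]
the first term of $B(F,u)$.

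For the $\tchi f_2$ contribution we move derivatives onto $\tchi f_2$ and pair with $u$ in $L^2_tL^2_x$. The classical adjoint of $C\pa_t+f_0X+f_0q$ produces terms of size $\|\pa^{\le 1}(\tchi f_2)\|_{L^2L^2}$ pairing with $\|u\|_{L^2L^2(\supp\tchi)}\lesssim\|u\|_{LE_K^1[\tt_0,\tt_1]}$, the last inequality using the $\|r^{-1}u\|_{LE}$ component of~\eqref{leK}. For the pseudodifferential part, decompose $\chi s_K^w\chi=\chi(s_K^{(1)})^w\chi+\chi(s_K^{(0)})^w\chi D_t$ with real, time-independent symbols $s_K^{(1)}\in aS^1$, $s_K^{(0)}\in aS^0$ (and analogously for $e_K$); self-adjointness of $(s_K^{(j)})^w$ combined with one integration by parts in $t$ to move $D_t$ off $u$ yields
\[
\Bigl|\int\tchi f_2\cdot\overline{\chi(s_K^w+e_K^w)\chi u}\,dV_g\Bigr|\lesssim \|\pa^{\le 1}(\tchi f_2)\|_{L^2L^2}\|u\|_{LE_K^1}+\bigl|\mathrm{bdry}\bigr|,
\]
by the $H^1_{t,x}(\supp\tchi)\to L^2$ mapping properties of the resulting operators. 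The main obstacle is the temporal boundary terms $\int\chi(s_K^{(0)})^w\chi(\tchi f_2)\cdot\overline{u}\,dx\bigm|_{t=\tt_0}^{\tt_1}$ generated by the integration by parts in $t$; I expect them to be absorbed into the boundary energies $BDR^g[u]|_{t=\tt_i}\approx\|\pa u(\tt_i)\|_{L^2}^2$ already present in the overall identity~\eqref{intdivg}, after using the $1$D Sobolev embedding $H^1_t\subset L^\infty_t$ to bound $\|\tchi f_2(\tt_i)\|_{L^2_x}$ by $\|\pa^{\le 1}(\tchi f_2)\|_{L^2L^2}$ and Hardy to bound $\|u(\tt_i)\|_{L^2(\supp\tchi)}\lesssim\|\pa u(\tt_i)\|_{L^2}$, followed by an AM--GM splitting against $\|\pa u\|_{L^\infty L^2}^2$.
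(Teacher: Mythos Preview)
Your approach is correct and runs closely parallel to the paper's, with one substantive difference worth noting. For the pairing $\int_{\M_{ps}} f_2\cdot\overline{S_g u}$, the paper does \emph{not} transfer the pseudodifferential operators onto $\tchi f_2$ by adjointness. Instead it applies Cauchy--Schwarz directly and invokes the vanishing-on-the-trapped-set structure (Lemma~\ref{qK} and the first bound preceding Corollary~\ref{corr:Kerrprinc0}) to get $\|\chi s_K^w\chi u\|_{L^2_{ps}}\lesssim\|u\|_{LE_K^1}$ in one stroke. This is cleaner: it produces no temporal boundary terms from the $S_g$ pairing, and it exploits exactly the structural fact the paper worked hard to establish. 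Your route, moving $(s_K^{(1)})^w$ and $(s_K^{(0)})^wD_t$ onto $\tchi f_2$, is more elementary (it uses only the order of the symbols, not their vanishing) but pays for this by generating the additional boundary terms you flag. Your proposed treatment of those terms via $H^1_t\hookrightarrow L^\infty_t$, Hardy, and AM--GM is exactly what the paper does for the boundary terms coming from the $C\partial_t$ piece, so the argument closes the same way. A minor correction: you write $s_K^{(1)}\in aS^1$, but $s_K^{(1)}$ contains the Schwarzschild part $b(r)(1-3M/r)\xi_r$, which is not $O(a)$; this does not affect the argument since only $s_K^{(1)}\in S^1$ is needed.
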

\begin{proof}
Away from the trapped set, we have that $S_g = X+q$. Thus the inhomogeneous term satisfies
\beq\label{inhomfar}
-\int_{\M{[\tt_0,\tt_1]}\setminus \M_{ps}{[\tt_0,\tt_1]}} \Box_g u \cdot  \overline{(C\partial_t + S_g) u} \  dV_{g} \lesssim \int_{\M{[\tt_0,\tt_1]}} (1-\chi) |F|(|\pa u| + r^{-1} |u|) dV_g,
\eq
by the choice of $X$ and $q$ in the Schwarzschild case.

On the other hand, we also have
\beq\label{inhomest}
\Bigl| \int_{\M_{ps}{[\tt_0,\tt_1]}} \Box_g u \cdot  \overline{(C\partial_t + S_g) u} \  dV_{g}\Bigr|  \lesssim B(F,u)_{[\tt_0, \tt_1]}.
\eq

Indeed, by Cauchy Schwarz we have
\begin{multline}
\Bigl| \int_{\M_{ps}{[\tt_0,\tt_1]}} F_1 \cdot  \overline{(C\partial_t + S_g) u} \  dV_{g}\Bigr| \lesssim \|\tchi F_1\|_{L^1[\tt_0,\tt_1]L^2} \|\tchi (C\partial_t + S_g) u\|_{L^{\infty}[\tt_0, \tt_1] L^2} \\ \lesssim \|\tchi F_1\|_{L^1[\tt_0,\tt_1]L^2} \|\partial u\|_{L^{\infty}[\tt_0, \tt_1] L^2},
\end{multline}
\[
\Bigl| \int_{\M_{ps}{[\tt_0,\tt_1]}} F_2 \cdot  \overline{S_g u} \  dV_{g}\Bigr| \lesssim \|F_2 u\|_{L^2_{ps}[\tt_0,\tt_1]}\| S_g u\|_{L^2_{ps}[\tt_0, \tt_1]} \lesssim \|\tchi F_2 u\|_{L^2[\tt_0,\tt_1]L^2}\| u\|_{LE_K^1[\tt_0, \tt_1]},
\]

Finally, integration by parts in time, the trace theorem and Cauchy Schwarz gives
\begin{multline}
\Bigl| \int_{\M_{ps}{[\tt_0,\tt_1]}} F_2 \cdot  \overline{C\partial_t  u} \  dV_{g}\Bigr| \lesssim  \|\partial_{t} F_2\|_{L^2_{ps}[\tt_0,\tt_1]} \| u\|_{L^2_{ps}[\tt_0, \tt_1]} + \|F_2 u\|_{L^{\infty}[\tt_0, \tt_1] L^2} \\ \lesssim \|\partial_{\leq 1} F_2\|_{L^2_{ps}[\tt_0,\tt_1]} \Bigl(\| u\|_{L^2_{ps}[\tt_0, \tt_1]} + \|\partial u\|_{L^{\infty}[\tt_0, \tt_1]L^2} \Bigr).
\end{multline}

This finishes the proof of \eqref{inhomest}.
\end{proof}

We will also prove that
\beq\label{lhs}
LHS\eqref{intdivg} + \int_{\M_{ps}{[\tt_0,\tt_1]}} \kappa(\tt)|\partial u|^2 dV_g + (a+\epsilon) \left(E[u](\tt_0) + E[u](\tt_1) + B(F,u)_{[\tt_0, \tt_1]}\right) \gtrsim \|u\|_{LE_{K,\delta}^1[\tt_0, \tt_1]}^2.
\eq

The estimates \eqref{bdryterms}, \eqref{inhomest} and \eqref{lhs} imply the desired conclusion \eqref{LEKw} for small enough $a$ and $\epsilon$.

We now prove \eqref{lhs}.

We start with the following lemma that gives control of the local energy estimate. The other terms on the left hand side of \eqref{intdivg} are perturbative.
\begin{lemma} We have
\begin{multline}
\int_{\!\M{[\tt_0,\tt_1]}}\!\!\!\!\!\!\!  \sqrt{|g|} Q[g, C\pa_t] + \sqrt{|\tg|} Q[\tg, X, q, m] \, dtdx + IQ[\tg, S, E] \\ + \int_{\M_{ps}{[\tt_0,\tt_1]}} \kappa(\tt)|\partial u|^2 dV_g + (a+\epsilon) \left(E[u](\tt_0) + E[u](\tt_1) + B(F,u)_{[\tt_0, \tt_1]}\right) \gtrsim \|u\|_{LE_{K,\delta}^1[\tt_0, \tt_1]}^2.
\end{multline}
\end{lemma}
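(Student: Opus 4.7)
The proof will exploit the key structural fact that $\tg = \tchi g_K + (1-\tchi) g$ agrees identically with $g_K$ on $\{\tchi = 1\}$ (a neighborhood of the trapped set) and with $g$ on $\{\tchi = 0\}$, together with the fact that the pseudodifferential operators $S$ and $E$ have kernels supported in $I_{ps}$, hence $IQ[\tg, S, E] = IQ[g_K, S, E]$. The plan is to split the analysis into a microlocal contribution near the trapped set, handled by the Kerr machinery of Section 2, and a pointwise Morawetz contribution in the asymptotically flat region, handled by the Schwarzschild-type analysis of \cite{LT}. The perturbation $h = g - g_K$ generates error terms which must be absorbed either by $\kappa(\tt)|\pa u|^2$ (near the trapped set, using \eqref{cpt1}-\eqref{cpt2}) or by a small constant times $\|u\|_{LE_{K,\delta}^1}^2$ in the asymptotically flat region (using \eqref{intrm} or \eqref{badpart}-\eqref{goodpart}).

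Concretely, on $I_{ps}$ I would write $Q[\tg, X, q, m] + IQ[\tg, S, E] = Q[g_K, X, q, m] + IQ[g_K, S, E]$ and apply the Kerr bound \eqref{Kerrbd} to recover the microlocal pieces $\|\chi \varrho_i(D,x)(D_t - \tau_j(D,x))\chi u\|_{L^2L^2}$ and $\|\chi D_r u\|_{L^2L^2}$, modulo the loss $a\|D_t u\|^2_{L^2 H^{-1}_{comp}}$. This loss is then absorbed via \eqref{t-lot} in Lemma~\ref{time-deriv}, which bounds it by $E[u](\tt_0) + E[u](\tt_1) + \|u\|_{L^2 L^2_{comp}}^2 + B(F,u)$, each allowed on the right-hand side with a small factor. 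The $Q[g, C\pa_t]$ contribution on $I_{ps}$ splits as $Q[g_K, C\pa_t] + Q[h, C\pa_t]$; the first is the usual nonnegative energy flux contribution, and the second is bounded pointwise by $|h|(|\pa u|^2) \lesssim \kappa_1 |\pa u|^2 \leq \kappa^2 |\pa u|^2$ using \eqref{cpt1}, which is absorbed into the allowed $\kappa |\pa u|^2$ error.

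In the complement of $I_{ps}$, where $\tg$ coincides with $g$ on $\{\tchi=0\}$ and $IQ[\tg, S, E]$ is irrelevant, I would recombine $Q[g, C\pa_t] + Q[\tg, X, q, m] = Q[g, C\pa_t + X, q, m]$ and decompose as $Q[g_K, C\pa_t + X, q, m] + Q[h, C\pa_t + X, q, m]$. The first is positive definite away from $r=3M$ by the Schwarzschild bound \eqref{SchwQ} together with the $O(a)$ Kerr correction (absorbed for $a \ll M$), producing the $r^{-1/2-\delta}$-weighted local energy norm. The $h$-error $Q[h, C\pa_t + X, q, m]$ is controlled using \eqref{intrm} in the intermediate region and \eqref{badpart}-\eqref{goodpart} near the cone in the quasilinear setting, exactly as in Section 4 of \cite{LT}; in each case the resulting bound is $\epsilon \|u\|_{LE_{K,\delta}^1}^2 + \epsilon B(F,u)$, absorbable for small $\epsilon$. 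In the transition annulus for $\tchi$, terms involving $\tchi' h$ are supported in a compact region bounded away from $r=3M$, and integrate by parts into a contribution of size $\epsilon(E[u](\tt_0) + E[u](\tt_1))$.

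The main obstacle is the careful bookkeeping at the interface between the microlocal estimate near the trapped set and the pointwise Morawetz estimate away, ensuring that every $h$-induced error either inherits the cancellation at the trapping (so it is controllable by $\kappa^2 |\pa u|^2$) or has enough spatial decay to be absorbed with a small constant into the $LE_{K,\delta}^1$ norm. A secondary difficulty is that the Schwarzschild weight \eqref{SchwQ} degenerates at $r = 3M$, so one must verify compatibility between this degeneration and the positive microlocal contribution \eqref{mekin} across $\supp \chi'$; here Lemma~\ref{chi1} applied to the Kerr part supplies the missing $\chi_1 \pa u$ control in the transition zone.
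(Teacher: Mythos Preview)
Your approach is essentially the same as the paper's: reduce to the Kerr estimate \eqref{Kerrbd} via $\tg = g_K$ on $\M_{ps}$, absorb the $a\|D_t u\|_{H^{-1}_{comp}}^2$ loss through \eqref{t-lot}, and away from the trapped set compare to the Schwarzschild quadratic form using the \cite{LT} perturbation bounds \eqref{diffQint0}/\eqref{diffQint}. Two small corrections: $Q[g_K, C\pa_t]$ is identically zero (since $\pa_t$ is Killing for $g_K$), not a ``nonnegative flux''; and the bound on $Q[g, C\pa_t]$ near the trapped set comes from $|\pa_t h^{\alpha\beta}|\lesssim \kappa_0$ via \eqref{cpt2}, not from $|h|\lesssim\kappa_1$---this is exactly \eqref{dtps} in the paper.
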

\begin{proof}

Due to \eqref{Kerrbd} we have that
\[
\int_{\!\M{[\tt_0,\tt_1]}}\!\!\!\!\!\!\!  \sqrt{|g_K|} Q[g_K, X, q, m] \, dtdx + IQ[g_K, S, E] \gtrsim \|u\|_{LE_{K,\delta}^1[\tt_0, \tt_1]}^2 - a \|D_t u\|_{L_t^2 H^{-1}_{comp}}^2.
\]
The last term on the right can be controlled by \eqref{t-lot}.

Moreover, since $\tg=g_K$ on $\M_{ps}{[\tt_0,\tt_1]}$, we obtain
\beq\label{diffps}
IQ[\tg, S, E] = IQ[g_K, S, E], \qquad \int_{\M_{ps}{[\tt_0,\tt_1]} } Q[\tg,X,q,m] dV_{\tg} = \int_{\M_{ps}{[\tt_0,\tt_1]} } Q[g_K,X,q,m] dV_K.
\eq
On the other hand, away from the trapped set both $g$ and $\tg$ are small perturbations of the Schwarzschild metric $g_S$ with the same mass $M$ (and are actually equal away from a compact set). In the first case when \eqref{intrm} holds everywhere, an easy computation yields
\beq\label{diffQint0}
\begin{split}
& \int_{\M{[\tt_0,\tt_1]}\setminus \M_{ps}{[\tt_0,\tt_1]}} \sqrt{|g|} Q[g,C\partial_t] + \sqrt{|\tg|} Q[\tg, X, q, m] - \sqrt{|g_S|} Q[g_S,C\partial_t +X,q,m] dt dx \\ & \lesssim \epsilon \int_{\M{[\tt_0,\tt_1]}\setminus \M_{ps}{[\tt_0,\tt_1]} } r^{-1-\delta}|\pa u|^2 + r^{-3-\delta}|u|^2 dt dx \lesssim \epsilon \|u\|_{LE_{K,\delta}^1[\tt_0, \tt_1]}^2
\end{split}\eq

In the second case, when \eqref{badpart} and \eqref{goodpart} hold, the proof of (4.25) in \cite{LT} yields
\begin{equation}\label{diffQint}
\begin{split}
&  \int_{\M{[\tt_0,\tt_1]}\setminus \M_{ps}{[\tt_0,\tt_1]}} \sqrt{|g|} Q[g,C\partial_t] + \sqrt{|\tg|} Q[\tg, X, q, m] - \sqrt{|g_S|} Q[g_S,C\partial_t +X,q,m] dt dx \\ & \lesssim \epsilon \Bigl(\int_{\M{[\tt_0,\tt_1]}\setminus \M_{ps}{[\tt_0,\tt_1]} } Q[g_S,C\partial_t +X,q,m]\sqrt{|g_S|} + |F|(|\pa u| + r^{-1} |u|) dt dx + \|\partial u\|^2_{L^{\infty}[\tt_0, \tt_1]L^2} \Bigr).
\end{split}
\end{equation}

 Moreover, since $|\pa^{\leq 1} (g_K-g_S)| \lesssim \frac{a}{r^2}$ we trivially have that
\begin{equation}\label{diffKS}\begin{split}
\int_{\M{[\tt_0,\tt_1]}\setminus \M_{ps}{[\tt_0,\tt_1]}} & Q[g_K, C\partial_t + X,q,m]\sqrt{|g|} - Q[g_S,C\partial_t + X,q,m]\sqrt{|g_S|} dt dx \\ &\lesssim a\int_{\M{[\tt_0,\tt_1]}\setminus \M_{ps}{[\tt_0,\tt_1]} } Q[g_S,X,q,m]\sqrt{|g_S|}dt dx.
\end{split}\end{equation}

Finally, \eqref{cpt2} shows that
\beq\label{dtps}
\int_{\M_{ps}{[\tt_0,\tt_1]}} Q[g, C\pa_t] dV_g \lesssim \|\kappa_0 \partial u\|^2_{L_{ps}^2}.
\eq

This finishes the proof of the lemma.
\end{proof}

Let us now control the error caused by $f_0$.
\begin{lemma} We have
\[
\int_{\M{[\tt_0,\tt_1]}} \left|\bigl(g^{\alpha\beta} (\pa_\alpha f_0) \pa_\beta u)\bigr) (S_K u)\right|  \, dV_{\tg}  \lesssim \epsilon \left(\|u\|_{LE_K^1[\tt_0, \tt_1]}^2 + E[u](\tt_0) + E[u](\tt_1) + B(F,u)_{[\tt_0, \tt_1]}\right).
\]
\end{lemma}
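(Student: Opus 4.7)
The plan is to apply Cauchy--Schwarz after splitting the integrand according to whether the derivative of $f_0$ is temporal or spatial, and to exploit three inputs.  First, from the definition \eqref{f0def} of $f_0$ and the conditions \eqref{cpt1}--\eqref{cpt2} a direct computation gives $|\partial_x f_0|\lesssim\kappa_1(t)$ and, more favorably, $|\partial_t f_0|\lesssim\kappa_0(t)$, both supported in $\widetilde I_{ps}$; crucially $\|\kappa_0\|_{L^2[\tt_0,\tt_1]}\lesssim\epsilon$ uniformly in $\tt_1$ since $\kappa_0\lesssim\epsilon\langle t\rangle^{-1}$ is square-integrable.  Second, Corollary~\ref{corr:Kerrprinc0}, combined with the vanishing of the coefficients of $X$ and $q$ on $r=3M$ (so $|Xu+qu|\lesssim (r-3M)|\partial_r u|+|u|$), gives
\[
\|S_K u\|_{L^2[\tt_0,\tt_1]L^2(\widetilde I_{ps})}\lesssim\|u\|_{LE^1_K[\tt_0,\tt_1]}+\|D_t u\|_{LE^0[\tt_0,\tt_1]}.
\]
Third, Lemma~\ref{time-deriv} (equivalently \eqref{LEtest}) bounds $\|D_t u\|^2_{LE^0}$ by the claimed right-hand side, noting that the $\|\kappa\partial u\|^2_{L^2_{ps}}$ appearing there is itself $\lesssim \epsilon(E[u](\tt_0)+E[u](\tt_1)+B(F,u))$ via $\|\kappa\|_\infty\lesssim\epsilon$ and the standard energy inequality.

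For the temporal piece, Cauchy--Schwarz in $t$ together with the first two inputs gives
\[
\int|\partial_t f_0||\partial u||S_K u|\,dV\lesssim\|\kappa_0\|_{L^2[\tt_0,\tt_1]}\,\|\partial u\|_{L^\infty L^2}\,\|S_K u\|_{L^2[\tt_0,\tt_1]L^2(\widetilde I_{ps})}\lesssim\epsilon\,E[u]^{1/2}\bigl(\|u\|_{LE^1_K}+\|D_t u\|_{LE^0}\bigr),
\]
and AM--GM then yields $\lesssim \epsilon\cdot\mathrm{RHS}$.  For the spatial piece we integrate by parts in $x^j$ to move the derivative off of $u$; after writing $\partial_j(S_K u)=S_K\partial_j u+[\partial_j,S_K]u$ with the commutator a bounded operator of order $\le 1$ with smooth Kerr coefficients, the resulting integrands carry $\|\partial^{\le 1}f_0\|_\infty\lesssim \epsilon$ (from \eqref{cpt1} and \eqref{hgsmall}) and are controlled by the $L^2$-bound on $S_K\partial_j u$ (derived exactly as in the second input applied to $\partial_j u$) together with Hardy's inequality $\|r^{-1}u\|_{L^2_{ps}}\lesssim \|u\|_{LE^1_K}$.

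The main obstacle is handling the $\partial_t u$ factors that appear near the trapped set, where $LE^1_K$ controls only the weighted combinations $\chi(D_t-\tau_j(D,x))\chi u$ rather than $\partial_t u$ itself.  Fortunately, after the integration by parts these $\partial_t u$ factors occur only in cross-terms coupled to $g^{0j}$, which on Kerr is of size $O(a)$; combined with the microlocal decomposition $D_t=(D_t-\tau_j(D,x))\chi+\tau_j(D,x)\chi$ modulo smoothing operators, this allows us to recover the required control---the first summand is bounded by the $\varrho_j$-weighted $LE^1_K$ term, while the second summand consists only of spatial Fourier multipliers and is absorbed into the structure of $S_K$ via the symbol decomposition from Lemma~\ref{qK}.
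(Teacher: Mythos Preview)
Your plan has two genuine gaps.

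First, the spatial integration by parts does not close. After moving $\partial_j$ onto $S_K u$ you obtain a term containing $S_K\partial_j u$, and you propose to bound $\|S_K\partial_j u\|_{L^2}$ by ``the second input applied to $\partial_j u$''. But Corollary~\ref{corr:Kerrprinc0} applied to $\partial_j u$ yields $\|\partial_j u\|_{LE^1_K}+\|D_t\partial_j u\|_{LE^0}$, and both of these are \emph{second-order} norms of $u$ that are not controlled by anything on the right-hand side of the lemma. Moving $S_K$ to the other factor instead does not help either: you would then face $\|\partial_j u\|_{L^2_{ps}}$, which is an unweighted spacetime norm near the trapped set and is likewise unavailable.

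Second, your claim that $\|\kappa\partial u\|_{L^2_{ps}}^2\lesssim\epsilon\bigl(E[u](\tt_0)+E[u](\tt_1)+B(F,u)\bigr)$ via ``$\|\kappa\|_\infty\lesssim\epsilon$ and the standard energy inequality'' is incorrect. From \eqref{cpt1}--\eqref{cpt2} one only has $\kappa^2=\kappa_0+\kappa_1^2\lesssim\epsilon\langle t\rangle^{-1}$, so $\|\kappa\|_\infty\lesssim\epsilon^{1/2}$ (not $\epsilon$), and more importantly $\int_{\tt_0}^{\tt_1}\kappa^2\,dt\lesssim\epsilon\log(\tt_1/\tt_0)$ is not uniformly bounded. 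Any appeal to an energy inequality here is also circular, since the energy estimate \eqref{LEK} you would invoke itself carries the $\|\kappa\partial u\|^2_{L^2_{ps}}$ term.

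The paper's argument avoids both problems by splitting $S_K$ rather than $\partial f_0$: write $S_K=\chi(s_K^w+e_K^w)\chi+S_{out}$. The piece $S_{out}$ is a differential operator supported outside $I_{ps}'$, where $LE^1_K$ controls all first derivatives, so $|\partial f_0|\lesssim\epsilon$ and Cauchy--Schwarz give $\epsilon\|u\|_{LE^1_K}^2$ directly. For the pseudodifferential piece one uses a single Cauchy--Schwarz with parameter: $\kappa_1|\partial u||\chi(s_K^w+e_K^w)\chi u|\lesssim\epsilon^{-1}\kappa_1^2|\partial u|^2+\epsilon|\chi(s_K^w+e_K^w)\chi u|^2$, then \eqref{Kerrprinc0} and \eqref{LEtest} handle the second term. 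The first term contributes $\int\kappa|\partial u|^2$; note that the paper's own proof actually terminates with this extra term on the right, which is harmless because it already appears in the master inequality \eqref{lhs}. No temporal/spatial split of $\partial f_0$, and no integration by parts, are needed.
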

\begin{proof}
We first note that $\pa f_0 = 0$ outside $[\tt_0,\tt_1]\times \tilde I_{ps}$. Secondly, we write as above
\[
S_K = \chi (s_K^w +  e_K^w) \chi + S_{out}.
\]
Clearly $S_{out}$ is a first order differential operator with support outside $I_{ps}^{\prime}$ and thus, since $\partial f_0 = O(\epsilon)$:
\[
\int_{\M{[\tt_0,\tt_1]}} \left|\bigl(g^{\alpha\beta} (\pa_\alpha f_0) \pa_\beta u)\bigr) (S_{out} u)\right|  \, dV_{\tg} \lesssim \epsilon \int_{\tt_0}^{\tt_1} \int_{\tilde I_{ps}\setminus I_{ps}^{\prime}}|\pa u|^2 + u^2  \, dV_{\tg} \lesssim \epsilon \|u\|_{LE_K^1[\tt_0, \tt_1]}^2 .
\]
Finally by Cauchy Schwarz, \eqref{cpt1}, \eqref{Kerrprinc0} and \eqref{LEtest} we can now estimate the error near the trapped set:
\[\begin{split}
& \int_{\M{[\tt_0,\tt_1]}} \left|\bigl(g^{\alpha\beta} (\pa_\alpha f_0) \pa_\beta u)\bigr) (\chi (s_K^w +  e_K^w) \chi u)\right|  \, dV_{\tg}  \lesssim \int_{\M_{ps}{[\tt_0,\tt_1]}}  \kappa(\tt) |\partial u|^2  + \epsilon |\chi (s_K^w +  e_K^w) \chi u|^2 dV_{\tg} \\ &\lesssim \int_{\M_{ps}{[\tt_0,\tt_1]}} \kappa(\tt) |\partial u|^2 \, dV_g + \epsilon \Bigl(\|u\|_{LE_K^1[\tt_0, \tt_1]}^2 +  E[u](\tt_0)+E[u](\tt_1) + B(F,u)_{[\tt_0, \tt_1]}\Bigr).\qedhere
\end{split}\]
\end{proof}

We are thus left with proving that:
\begin{lemma} We have
\begin{equation}\label{overest}
\begin{split}
\Bigl| \int_{\M{[\tt_0,\tt_1]}} Q[P_h, X_{out}, q_{out}] dxdt + IQ[P_h, \chi s_K^w \chi, \chi  e_K^w \chi] dV_{\tg} \Bigr| \lesssim \int_{\M_{ps}{[\tt_0,\tt_1]}} \kappa(\tt)|\partial u|^2 dV_g \\ + \epsilon \Bigl(\|u\|_{LE_K^1[\tt_0, \tt_1]}^2 +  E[u](\tt_0)+E[u](\tt_1) + B(F,u)_{[\tt_0, \tt_1]}\Bigr) .
\end{split}\end{equation}
\end{lemma}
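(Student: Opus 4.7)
The plan is to split the integrand of \eqref{overest} into the non-microlocal quadratic form $Q[P_h,X_{out},q_{out}]$ and the microlocal one $IQ[P_h,\chi s_K^w\chi,\chi e_K^w\chi]$, bound each using the decay hypotheses \eqref{cpt1}--\eqref{hgsmall}, Proposition~\ref{commhest} and Corollary~\ref{corr:Kerrprinc0}, and close the estimate by invoking \eqref{LEtest} to absorb the auxiliary norm $\|D_t u\|_{LE^0}$ that unavoidably appears. Throughout I use that after the $f_0$-reduction $h$ is supported in $\tilde I_{ps}$, satisfies $h^{00}=0$ on $\M_{ps}$, and obeys $|h|+|\pa h|\lesssim \kappa_1(t)$ as well as $\|h\|_{C^{3,\delta}_x(I_{ps})}\lesssim \epsilon$. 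I shall repeatedly exploit the elementary inequality $\kappa_1^2\leq \epsilon\,\kappa$, which holds because $\kappa_1\leq \epsilon$ and $\kappa_1\leq \kappa$.

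For the classical piece, since $S_{out}=X_{out}+q_{out}$ is a first-order differential operator whose coefficients vanish on $I^\prime_{ps}$, the integrand of $Q[P_h,X_{out},q_{out}]$ is supported in the thin annulus $\tilde I_{ps}\setminus I^\prime_{ps}$; expanding \eqref{Phvec} and using $|h|+|\pa h|\lesssim \kappa_1$, Cauchy--Schwarz, and $\|u\|_{L^2_{ps}}\lesssim \|u\|_{LE_K^1}$ yields
\[
\Bigl|\int Q[P_h,X_{out},q_{out}]\,dx dt\Bigr|\lesssim \int_{\M_{ps}[\tt_0,\tt_1]}\!\!\!\!\kappa\,|\pa u|^2 \,dV_g+\epsilon\,\|u\|_{LE_K^1[\tt_0,\tt_1]}^2.
\]

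For the microlocal part, $\tilde Q[P_h,\chi s_K^w\chi]$ in \eqref{Squad} is exactly the sum of commutator forms controlled by Proposition~\ref{commhest} (whose hypothesis $h^{00}=0$ is met here), so it is bounded by
\[
\bigl(\|u\|_{LE_K^1}+\|D_t u\|_{LE^0}\bigr)\bigl(\|h_1 \pa u\|_{L^2_{ps}}+\|h_3 u\|_{L^2_{ps}}+\|h_3 D_t u\|_{L^2[\tt_0,\tt_1]H^{-1}(I_{ps})}\bigr).
\]
Each of the four summands of $\tilde Q[P_h,\chi e_K^w\chi]$ in \eqref{Equad} is schematically of the form $h^{\bullet\bullet}\,D_{\bullet}u\cdot D_j(\chi e_K^w\chi)u$ or $(\pa h^{\bullet\bullet})\,D_{\bullet}u\cdot (\chi e_K^w\chi)u$; a direct Cauchy--Schwarz pairing the $h$-factor with the differentiated $u$ in $L^2_{ps}$, combined with Corollary~\ref{corr:Kerrprinc0} applied to the remaining factor, produces the same type of bound. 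Invoking the estimates $\|h_1\pa u\|_{L^2_{ps}}^2\lesssim \epsilon\int \kappa|\pa u|^2$, $\|h_3 u\|_{L^2_{ps}}\lesssim \epsilon\|u\|_{LE_K^1}$, and $\|h_3 D_t u\|_{L^2 H^{-1}_{ps}}\lesssim \epsilon\|D_t u\|_{LE^0}$, followed by a weighted Young inequality, I obtain
\[
\bigl|IQ[P_h,\chi s_K^w\chi,\chi e_K^w\chi]\bigr|\lesssim \int_{\M_{ps}[\tt_0,\tt_1]}\!\!\!\!\kappa\,|\pa u|^2 \,dV_g+\epsilon\bigl(\|u\|_{LE_K^1}^2+\|D_t u\|_{LE^0}^2\bigr).
\]

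The final step is to eliminate $\|D_t u\|_{LE^0}^2$ via \eqref{LEtest}; since $\|\kappa\pa u\|_{L^2_{ps}}^2=\int \kappa^2|\pa u|^2\lesssim \epsilon\int \kappa|\pa u|^2$, the right-hand side of \eqref{overest} emerges for $\epsilon$ sufficiently small. The main obstacle here is not any individual inequality but the bookkeeping forced on us by the fact that the microlocal multipliers are polynomials in $\tau=D_t$: differentiating $\chi e_K^w\chi u$ or pairing it with $D_0 u$ inevitably brings in factors of $D_t u$ which $LE_K^1$ alone cannot control, so one is obliged to introduce the auxiliary norm $LE^0$ and recover it through the wave equation $\Box_g u=F$ via Lemma~\ref{time-deriv}.
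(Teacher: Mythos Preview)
Your proof is correct and follows essentially the same approach as the paper: split into the differential piece $Q[P_h,X_{out},q_{out}]$ (handled by support considerations and smallness of $h$) and the microlocal piece $IQ[P_h,\chi s_K^w\chi,\chi e_K^w\chi]$ (handled by Proposition~\ref{commhest} for the $s_K$-part and Corollary~\ref{corr:Kerrprinc0} for the $e_K$-part), then close with \eqref{LEtest}. The only cosmetic difference is that for the $X_{out},q_{out}$ term the paper observes that on the annulus $\tilde I_{ps}\setminus I'_{ps}$ the $LE_K^1$ norm is non-degenerate and bounds the whole thing by $\epsilon\|u\|_{LE_K^1}^2$ directly, whereas you route the $|\pa u|^2$ contribution through $\int\kappa|\pa u|^2$; both work. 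One small slip: your inequality $\kappa^2\lesssim\epsilon\kappa$ (i.e.\ $\kappa\lesssim\epsilon$) is not quite right since $\kappa^2=\kappa_1^2+\kappa_0\lesssim\epsilon$ only gives $\kappa\lesssim\epsilon^{1/2}$, but this is harmless because you apply it to $\epsilon\|\kappa\pa u\|_{L^2_{ps}}^2$, and $\epsilon\cdot\epsilon^{1/2}\int\kappa|\pa u|^2\leq\int\kappa|\pa u|^2$ for small $\epsilon$.
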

\begin{proof}
Since $X_{out}$ and $q_{out}$ are supported away from the trapped set, and $h$ is supported in a compact region, it is immediate that
\[
\Bigl| \int_{\M{[\tt_0,\tt_1]}} Q[P_h, X_{out}, q_{out}] dxdt \Bigr| \lesssim \epsilon \|u\|_{LE_K^1[\tt_0, \tt_1]}^2.
\]

It is thus enough to show that
\begin{equation}\label{overest2}
\Bigl| IQ[P_h, \chi s_K^w \chi, \chi  e_K^w \chi] \Bigr| \lesssim \int_{\M_{ps}{[\tt_0,\tt_1]}} \kappa(\tt)|\partial u|^2 dV_g + \epsilon \Bigl(\|u\|_{LE_K^1[\tt_0, \tt_1]}^2 +  E[u](\tt_0)+E[u](\tt_1) + B(F,u)_{[\tt_0, \tt_1]}\Bigr).
\end{equation}

Let us first estimate the error coming from the zero order term $\chi e_K^w\chi$. Using the definition \eqref{Equad}, we need to show that
\beq\label{0err}
\int_{\tt_0}^{\tt_1}\!\!\!\int \Bigl|h^{\alpha j}  D_{\alpha} u\cdot \overline{D_{j} \chi e_K^w \chi u} + h^{0j} D_{0} u\cdot \overline{D_{\!j} \chi e_K^w \chi u} + \big(D_0 h^{0j} D_{\!j} u- D_{\!j} h^{0j} D_0 u\big)\cdot \overline{\chi e_K^w \chi u} \Bigr| dx dt \lesssim RHS\eqref{overest2} .
\eq

Since by \eqref{Kerrprinc0} and \eqref{LEtest} we have
\[
\int_{\tt_0}^{\tt_1}\!\!\!\int |\chi e_K^w\chi D_{j} u|^2 + | D_{j} \chi e_K^w\chi u|^2 + |\chi e_K^w\chi u|^2 dx dt \lesssim \|u\|_{LE_K^1[\tt_0, \tt_1]}^2 + E[u](\tt_0)+E[u](\tt_1)+ B(F,u)_{[\tt_0, \tt_1]} \, ,
\]
\eqref{0err} follows by Cauchy Schwarz and \eqref{cpt1}.

We are left with dealing with the contribution coming from the first order term $\chi s_K^w \chi$. Indeed, using the definition \eqref{Squad} combined with Proposition \ref{commhest}, \eqref{cpt1}, \eqref{hgsmall} and \eqref{LEtest} we obtain

\begin{equation}\label{commprt1}
\begin{split}
& \int_{\tt_0}^{\tt_1} \!\!\!\int \Bigl|h^{\alpha\beta}[D_{\beta}, \chi s_K^w \chi]u \cdot \overline{D_{\alpha} u}\Bigr| + \frac12 \Bigl|[h^{j\beta}, \chi s_K^w \chi] D_j u\cdot \overline{D_{\alpha} u}\Bigr| dx dt \\ & \lesssim \Bigl(\|u\|_{LE_K^1[\tt_0, \tt_1]} + \|D_t u\|_{LE^0[\tt_0, \tt_1]}\Bigr) \Bigl(\| \kappa_1 \partial u\|_{L_{ps}^2[\tt_0, \tt_1]} + \|\epsilon u\|_{L_{ps}^2[\tt_0, \tt_1]}\Bigr) \\ & \lesssim \epsilon \Bigl(\|u\|_{LE_K^1[\tt_0, \tt_1]}^2 + E[u](\tt_0)+E[u](\tt_1) + B(F,u)_{[\tt_0, \tt_1]}\Bigr)  + \int_{\M_{ps}{[\tt_0,\tt_1]}} \kappa(\tt)|\partial u|^2 dV_g .
\end{split}\end{equation}

This finishes the proof of the lemma.
\end{proof}

\newsection{Quasilinear wave equations close to Kerr}\label{sec:quasilinear}

The main application of Theorem~\ref{LE} will be to establish global existence of solutions to quasilinear wave equations close to Kerr metrics with small angular momentum, which extends our previous result from \cite{LT}.

Let us recall the setup from \cite{LT}. We study the equation
\begin{equation}\label{maineeqn}
\Box_{g(u, t, x)} u = 0, \qquad u |_{\tt=0} = u_0, \qquad \tilde T u |_{\tt=0} = u_1
\end{equation}
Here $\Box_g$ denotes the d'Alembertian with respect to a Lorentzian metric $g$, which is equal the Schwarzschild metric when $u\equiv 0$, and
$\tilde T$ is a smooth, everywhere timelike vector field that equals
$\partial_t$ away from the black hole.  The coordinate $\tt$
is chosen so that the slice $\tt=0$ is space-like and so $\tt=t$
away from the black hole.

The metric $g$ is given by
\begin{equation}\label{quasmetric}
g^{\alpha\beta} = g_K^{\alpha\beta} + H^{\alpha\beta}(t,x)u + O(u^2)
\end{equation}
with $g_K$ denoting the Kerr metric and $H^{\alpha\beta}$ being smooth functions.

Let $\tilde r$ denote a smooth strictly increasing function (of $r$) that equals $r$ for $r\leq R_1$ and $\rs$ for $r\geq 2R_1$, where $R_1 >>  6M$. Our favorite sets of vector fields will be
\[
\partial = \{ \partial_{\tt}, \partial_i\}, \qquad \Omega = \{x^i \partial_j -
x^j \partial_i\}, \qquad S = \tt \partial_{\tt} + \tilde r \partial_{\tilde r},
\]
namely the generators of translations, rotations and scaling. We set
$Z = \{ \partial,\Omega,S\}$.

For a triplet $\Lambda = (i,j,k)$ of multi-indices $i$, $j$ and $k$ we
denote $|\Lambda| = |i|+3|j|+3k$ and
\[
u_{\Lambda} = \partial^i \Omega^j S^k u, \qquad u_{\leq m} =
(u_{\Lambda})_{|\Lambda|\le m}.
\]

We will also use the notation
\[
\partial_{\leq m} u = (\partial_{\alpha} u)_{|\alpha|\leq m}.
\]

Let $h^{\alpha\beta}:= g^{\alpha\beta}-g_K^{\alpha\beta}$ be the difference in the metric coefficients. We will allow the metric $g$ to depend on the solution $u$, so that the difference in the metric coefficients $h^{\alpha\beta}(t, x, u)$ are smooth functions satisfying
\begin{equation}\label{metricform}
h^{\alpha\beta}(t, x, u) = H^{\alpha\beta}(t,x)u + O(u^2).
\end{equation}
Since we want $h\approx u$ we want the functions $H^{\alpha\beta}$ to satisfy
\begin{equation}\label{metriccoeff}
H^{\alpha\beta}\in S^Z(1).
\end{equation}

 For the derivatives of $H$ we need to impose extra conditions to make the metric satisfy the conditions in Theorem~\ref{LE}, namely
\begin{equation}\label{derivcoeff}
\partial H^{\alpha\beta}\in S^Z\Bigl(\frac{\tt^{1+\delta}}{r^{1+\delta}\la \tt-\rs\ra^{1/2+\delta}}\Bigr), \qquad  \rs\geq R_1^*,
\end{equation}
where $R_1^*=r^*(R_1)$.  We remark that a natural condition to impose on $\partial H^{\alpha\beta}$ is
\[
 \partial H^{\alpha\beta}\in S^Z\Bigl(\frac{\tt}{r\la \tt-\rs\ra}\Bigr)
\]
as this yields that $\partial h\approx \partial u$. However, we chose to instead work with the weakest possible assumption under which we can prove our result, which is \eqref{derivcoeff}.

In addition we will need two technical conditions near the trapped set and the light cone. Let us define
\begin{equation}\label{W-def}
W^{\beta} =  \frac{1}{\sqrt{|g_S|}}\partial_{\alpha} (g_S^{\alpha\beta}\sqrt{|g_S|}) - \frac{1}{\sqrt{|g|}}\partial_{\alpha} (g^{\alpha\beta}\sqrt{g}).
\end{equation}

A priori, near the trapped set $W$ satisfies the bound
\begin{equation}\label{W-estfar}
|W_{\Lambda}| \lesssim |\partial h_{\Lambda}| + |h_{\leq \Lambda}| \lesssim |H||\partial u_{\Lambda}| + |u_{\leq \Lambda}|, \qquad |\Lambda|\leq N
\end{equation}
which for the highest order term will not suffice to close the estimates under the assumption \eqref{metriccoeff}. We will thus impose the extra assumption that
\begin{equation}\label{W-est}
|W_{\Lambda}| \lesssim \la t\ra^{-1/2} |\partial u_{\Lambda}| + |u_{\leq \Lambda}|, \qquad |\Lambda|\leq N,\qquad \text{when}\quad \tfrac{5M}2 \leq r \leq \tfrac{7M}2.
\end{equation}

One way to make sure this condition is satisfied is to assume, for example, that
\[
|H_{\leq N}| \lesssim r^{1/2}\la t\ra^{-1/2}
\]
near the trapped set. Indeed, the condition above clearly implies \eqref{W-est}. We remark that in the context of Einstein's Equations written down in generalized wave  coordinates $W = 0$, so good behavior of $W$ can be expected.

On the other hand, in the region close to the cone $\rs\sim t$, $\rs>\frac{t}2$ we need to assume
additional decay for the components of $h$ multiplying the worst decaying derivatives.
To formulate this we need to express $h$ in a nullframe with respect to the Schwarzschild metric $g_S$:
\begin{equation}
\uL=\partial_t - \partial_{\rs}, \quad L= \partial_t + \partial_{\rs},
\quad A=A^i(\omega)\partial_i,\quad B=B^i(\omega)\partial_i,\quad
\partial_{\rs} =\big(1-\frac{2M}{r}\big)\omega^i\partial_i.
\label{nullframe}
\end{equation}
Here $\uL$ and $L$ are null vectors
\[
g_S(L,L)=g_S(\uL,\uL)=0,\qquad g_S(L,\uL)=-2\big(1-\frac{2M}{r}\big),
\]
and $A$ and $B$
 are  two orthonormal vectors
 \[
g_S(A,A)=g_S(B,B)=1,\qquad g_S(A,B)=0,
 \]
 tangential to the spheres where $r$ is constant:
 \[
 g_S(L,B)=g_S(L,A)=g_S(\uL,B)=g_S(\uL,A)=0.
 \]

  Expanding $h$ in the nullframe
\begin{multline}
h^{\alpha\beta}=h^{\uL\uL}{\uL}^\alpha{\uL}^\beta
+ \sum_{T\in \mathcal{T}} h^{\uL T} ({\uL}^\alpha T^\beta +T^\alpha {\uL}^\beta)
+\sum_{U,T\in \mathcal{T}} h^{UT} U^\alpha T^\beta\\
=h^{\uL\uL}{\uL}^\alpha{\uL}^\beta
+\sum_{T\in \mathcal{T}} h^{\uL T} T^\alpha {\uL}^\beta
+\sum_{T\in \mathcal{T}} h^{\alpha T} T^\beta,
\qquad\text{where}\quad \mathcal{T}=\{L,A,B\},\label{nullframeexpansion}
\end{multline}
we need to assume additional decay on $h^{\uL\uL}$.
We note that the coefficients in the nullframe expansion can be determined from
$h$ applied to the dual vectors with respect to the Schwarzschild metric
\begin{equation}
U_\alpha=g_{S\alpha\beta} U^\beta,\qquad U^\alpha V_\alpha=g_S(U,V).\label{loowering}
\end{equation}
A calculation shows that $A_i=A^i$, $B_i=B^i$, and
 \begin{equation}
 L_0=-\Big(1-\frac{2M}{r}\Big),\quad  L_i=\omega_i,\qquad
 \uL_0=-\Big(1-\frac{2M}{r}\Big),\quad  \uL_i=-\omega_i.\label{lowernull}.
 \end{equation}
With the notation
\begin{equation}
h_{\alpha\beta} = g_{S\alpha\gamma}g_{S\alpha\delta} h^{\gamma\delta},
\qquad h(U,V)=h_{\alpha\beta}U^\alpha V^\beta=h^{\alpha\beta}U_\alpha V_\beta,
\end{equation}
we have in particular
\begin{equation}
h^{\uL\uL}=h(L,L)/g_S(L,\uL)^2.\label{huLuLcomponent}
\end{equation}
Geometrically the $h^{\uL\uL}$ component controls the bending of the outgoing light cones.
For the reduced Einstein's equations the metric satisfy the wave coordinate condition
which in particular gives control of this component through the wave coordinate condition
that heuristically says that $\partial h^{\uL\uL}=O(\overline{\partial} h)+O(h^2)$.

We need to assume that $h^{\lL\lL}$ decays at a faster rate like in \eqref{badpart}
because it is the coefficient multiplying the  second derivative transversal to the
outgoing light cones that has the least decay.
More explicitly, we assume that it satisfies the decay estimates
\begin{equation}\label{badpart2}
|h^{\lL\lL}_{\Lambda}|\lesssim \la t-\rs\ra^{\delta} \la t \ra^{-\delta}|u_{\leq \Lambda}|, \qquad |\partial h^{\lL\lL}_{\Lambda} | \lesssim \la t-\rs\ra^{\delta}\la t \ra^{-\delta}\Bigl(|\partial u_{\leq \Lambda}| + \la t-\rs\ra^{-1/2} |u_{\leq \Lambda}| \Bigr)
\end{equation}
for some small $\delta>0$ and all $|\Lambda|\leq N$.

 Again, in the context of Einstein's equations in wave  coordinates, we expect \eqref{badpart2} to hold (see \cite{LR10,L17}). Here it follows from the following assumption on
 $H$:
 \begin{equation}\label{nullcond}
|H^{\lL\lL}_{\leq N}|\lesssim \la t-\rs\ra^{\delta}\la t \ra^{-\delta},\qquad |\partial  H^{\lL\lL}_{\leq N} |\lesssim \la t-\rs\ra^{\delta-1/2}\la t\ra^{-\delta} 
 \end{equation}

The metric coefficient $h^{\lL \lL}$ is in front of the derivative with the least decay
$\partial_{\underline{L}}^2 u$. In \cite{LR10,L17}) it was proven that for Einstein's equations in wave coordinates
\begin{equation}
|Z^I h^{\lL \lL}|\leq \frac{C\varepsilon}{1+t+r} \Big(\frac{1+|t-r^*|}{1+t}\Big)^\gamma,\label{einsteinwave}
\end{equation}
if initial data are asymptotically flat, i.e. $h\big|_{t=0}=M/r+O\big(r^{-1-\gamma}\big)$,
$0\!<\!\gamma\!<\!1$. Our method here works for the case corresponding to any small $\gamma>0$; if one assumes more decay of the coefficient
$|H^{\lL\lL}|\leq C\varepsilon \langle t-r^*\rangle^\gamma \langle t\rangle^{-\gamma}$ corresponding to larger $\gamma$ it may be possible to prove some additional decay for the solution in the interior.

 We are now ready to state the our main result. We pick a large enough integer $N$, and define
\[
\mathcal{E}_N(t) = \|\partial  u_{\leq N}\|_{L^{\infty}[0, t] L^2}^2 + \|u_{\leq N}\|_{LE_K^1[0, t]}^2
\]

\begin{theorem}\label{mainthm}

Assume that the metric $g$ is like in \eqref{quasmetric}, and satisfies the extra conditions \eqref{metriccoeff}, \eqref{derivcoeff}, \eqref{W-est} and \eqref{nullcond}. Then there exists a global classical solution to \eqref{maineeqn}, provided that the initial data is smooth, compactly supported and satisfies, for a certain $\epsilon_0 \ll 1$,
\[
\mathcal{E}_N(0) \leq \epsilon_0^2
\]
Moreover, the solution satisfies the estimates \eqref{enapbds} and \eqref{ptwseapbds} below.

\end{theorem}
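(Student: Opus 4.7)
The plan is to run a continuation argument, propagating a hierarchy of weighted energies at various orders of differentiation up to $N$, each controlled by Theorem~\ref{LE} applied to the commuted equation. Fix a large integer $N$ and set
\[
\mathcal{E}_k(t)=\|\pa u_{\leq k}\|_{L^{\infty}[0,t]L^2}^2+\|u_{\leq k}\|_{LE_K^1[0,t]}^2.
\]
The bootstrap assumptions will be the top-order bound $\mathcal{E}_N(t)\leq C\epsilon^2\la t\ra^{C\epsilon}$ together with strictly better bounds $\mathcal{E}_{N-k}(t)\leq C\epsilon^2\la t\ra^{C\epsilon-k\delta_0}$ for intermediate $k\geq 1$, and pointwise bounds of the type $|u_{\leq N/2}|\lesssim \epsilon\la t\ra^{-1/2+C\epsilon}$ derived from Klainerman--Sobolev inequalities adapted to the black hole exterior exactly as in \cite{LT}. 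Combined with the structural hypotheses \eqref{metriccoeff}, \eqref{derivcoeff}, \eqref{W-est} and \eqref{nullcond} and the bootstrap pointwise decay, these assumptions guarantee that the metric $g$ actually satisfies the hypotheses of Theorem~\ref{LE}: \eqref{derivcoeff} together with the pointwise bound on $u_{\leq N/2}$ produces $|\pa h|+|h|\lesssim \epsilon\la t\ra^{-1/2}$ near the trapped set with $|\pa_t h|\lesssim \epsilon\la t\ra^{-1}$, while \eqref{nullcond} provides the improved $\lL\lL$-decay \eqref{badpart} in the wave zone.

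Next I would commute the Minkowski vector fields $Z^I$ with $\Box_g$ to obtain, for each $\Lambda$ with $|\Lambda|\leq N$, an equation of the form $\Box_g u_\Lambda = F_\Lambda$, where $F_\Lambda$ is a sum of quadratic and higher order expressions in $u_{\leq\Lambda}$, $\pa u_{\leq \Lambda}$ and derivatives of $h$ up to order $|\Lambda|$. The top-order commutator naively loses a derivative through a term proportional to $\pa h \cdot \pa^2 u_\Lambda$, and this is precisely the reason for the assumption \eqref{W-est} on the divergence $W^\beta$: it allows the problematic principal part of the commutator to be rewritten as $W^\beta\pa_\beta u_\Lambda$, with an extra $\la t\ra^{-1/2}$ factor near the trapped set that matches the $\kappa$-weight on the right-hand side of \eqref{LEK}. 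Away from the trapped set the analysis follows the scheme of \cite{LT}: the weak null structure of the equation together with the null frame expansion \eqref{nullframeexpansion} and \eqref{nullcond} ensures that the wave-zone contribution to $F_\Lambda$ is controlled by the $B(F_\Lambda,u_\Lambda)$ norm \eqref{bilform}, since only $h^{\lL\lL}$ multiplies the slowest-decaying second derivative $\pa_\lL^2 u$, and tangential derivatives of $u$ decay faster than $\pa_\lL u$.

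Applying Theorem~\ref{LE} to each commuted equation then yields, via the Gr\"onwall consequence \eqref{LEKintro2},
\[
\mathcal{E}_\Lambda(t) \lesssim e^{C\int_0^t \kappa(s)^2\, ds}\bigl(\mathcal{E}_\Lambda(0)+B(F_\Lambda,u_\Lambda)_{[0,t]}\bigr),
\]
and under $\kappa(s)\lesssim \epsilon\la s\ra^{-1/2}$ the exponential factor is $\la t\ra^{C\epsilon}$ at the top order. The main obstacle is precisely this loss: a $t^{C\epsilon}$ bound on $\pa u_{\leq N}$ alone is too weak to re-verify \eqref{cpt1} for $h\sim u$ at the next step of the bootstrap. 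The resolution, indicated in the remarks following Theorem~\ref{LEintro} and carried out in \cite{LT} for Schwarzschild, is to exploit the hierarchy of energies: when estimating $\mathcal{E}_{N-k}$, the bad term $\|\kappa\pa u_{\leq N-k}\|_{L^2_{ps}}^2$ on the right of \eqref{LEK} is majorized by the non-degenerate $L^2L^2$ part of $\|u_{\leq N-k+1}\|_{LE_K^1}$ multiplied by $\sup\kappa^2\lesssim \epsilon^2\la t\ra^{-1}$, which breaks the Gr\"onwall loop and yields $\mathcal{E}_{N-k}(t)\lesssim \epsilon^2\la t\ra^{C\epsilon-k\delta_0}$ for suitable small $\delta_0$. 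From these intermediate energies the sharp pointwise decay $|u|\lesssim \epsilon\la t\ra^{-1/2+C\epsilon}$ is recovered by Klainerman--Sobolev-type inequalities; applying the scaling vector field $S$ to the equation provides the improved $\pa_t$ decay via the $\pa_r u$ bound in $LE_K^1$, closing the bootstrap for sufficiently small $\epsilon_0$. Standard local existence and this a priori estimate then yield global existence with the stated energy and pointwise bounds.
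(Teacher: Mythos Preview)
Your overall scheme is the one the paper uses, and the ingredients you list (commute with $Z^\Lambda$, invoke \eqref{W-est} for the top-order commutator, use the null decomposition and \eqref{nullcond} near the cone, apply Theorem~\ref{LE} and Gr\"onwall) are correct. However there is a genuine gap in the closing of the bootstrap, and your proposal is in fact internally inconsistent on exactly this point.

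You take as bootstrap pointwise bound $|u_{\leq N/2}|\lesssim \epsilon\la t\ra^{-1/2+C\epsilon}$ and then assert that this yields $|\pa h|+|h|\lesssim \epsilon\la t\ra^{-1/2}$ near the trapped set. Since $h\sim u$, that inference is false: you only get $\kappa_1(t)\lesssim\epsilon\la t\ra^{-1/2+C\epsilon}$. With this weaker $\kappa_1$ the Gr\"onwall factor in \eqref{LEKintro2} becomes $\exp\bigl(C\int_0^t\kappa_1(s)^2\,ds\bigr)\sim\exp\bigl(C'\epsilon\, t^{2C\epsilon}\bigr)$, which is not polynomial in $t$ and therefore cannot recover $\mathcal E_N(t)\lesssim\epsilon^2\la t\ra^{C\epsilon}$. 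Any $\epsilon$-loss in the exponent of the pointwise decay, however small, destroys the argument at this step.

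The paper avoids this by proving (Theorem~\ref{lowerLE}) that the intermediate energy $\mathcal E_{\tN}$ with $\tN=N-3$ is \emph{uniformly bounded} in time, not merely $\lesssim\epsilon^2 t^{C\epsilon-k\delta_0}$. From a bounded energy, Theorem~\ref{ptwsedcy} then gives the sharp bounds $|u_{\leq N_1}|\lesssim \epsilon\la t-\rs\ra^{1/2}\la t\ra^{-1}$ and $|\pa u_{\leq N_1}|\lesssim \epsilon\la r\ra^{-1}\la t-\rs\ra^{-1/2}$ with \emph{no} $\epsilon$-loss in the exponents; these are precisely the bootstrap assumptions \eqref{ptwseapbds}, and near $r=3M$ they return exactly $\kappa_1=\epsilon\la t\ra^{-1/2}$, $\kappa_0=\epsilon\la t\ra^{-1}$. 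Your own hierarchy $\mathcal E_{N-k}\lesssim\epsilon^2 t^{C\epsilon-k\delta_0}$ is pointing in the right direction---for $k$ with $k\delta_0\geq C\epsilon$ the exponent is nonpositive---but you then extract pointwise decay carrying a $C\epsilon$ loss, which can only come from pairing Klainerman--Sobolev with a \emph{growing} energy rather than the bounded one. The fix is to push the hierarchy down until the energy is bounded and derive the pointwise bounds from that level.
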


We will now outline the bootstrap argument. There are three main theorems one needs to prove.

\begin{theorem}\label{higherLE} Let $u$ solve \eqref{maineeqn}, where $g$ is a Lorentzian metric satisfying the conditions from Section 3.
 Then for some constant $C_N$ independent of $\tt_0$, $\tt_1$ we have:
\begin{equation}\label{higherLES}
 \|\partial  u_{\leq N}\|_{L^{\infty}[\tt_0, \tt_1] L^2} + \|u_{\leq N}\|_{LE_K^1[\tt_0, \tt_1]} \leq C_N \tt_1^{C_N\epsilon} \|\partial u_{\leq N}(\tt_0)\|_{L^2}
\end{equation}
\end{theorem}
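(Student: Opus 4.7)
The plan is to perform a standard commuting-vector-fields argument with $Z=\{\pa,\Omega,S\}$, apply Theorem~\ref{LE} to each $u_\Lambda$ with $|\Lambda|\le N$, and sum after bootstrapping the forcing terms using the structural assumptions \eqref{metriccoeff}, \eqref{derivcoeff}, \eqref{W-est} and \eqref{nullcond}. First I would establish the commuted equation
\[
\Box_g u_\Lambda = F_\Lambda, \qquad F_\Lambda = \sum_{\Lambda_1+\Lambda_2=\Lambda,\,|\Lambda_1|\geq 1}\!\!\!\! c_{\Lambda_1\Lambda_2}\, (h^{\alpha\beta})_{\Lambda_1}\pa_\alpha\pa_\beta u_{\Lambda_2}+(W^\beta)_{\Lambda_1}\pa_\beta u_{\Lambda_2}+\text{l.o.t.},
\]
obtained by induction from the formula $[Z,\Box_g]=[Z,\Box_{g_K}]+[Z,\Box_h]$. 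The Kerr piece contributes only lower-order terms compatible with the norms $LE_K^1$ since $\pa_{\tt}$ and $\Omega_\phi$ are Killing for $g_K$ near the trapped set and $S$ produces a scaling multiple of $\Box_{g_K}$ plus a harmless commutator supported away from $r=3M$.

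Next I would verify the hypotheses of Theorem~\ref{LE} for $g=g(u,t,x)$ under the bootstrap assumption $\mathcal{E}_N(t)\leq A\epsilon^2 \la t\ra^{C\epsilon}$ for a large constant $A$. The pointwise bounds on $h$ in the trapped region \eqref{cpt1intro}--\eqref{cpt2intro} follow from Klainerman--Sobolev applied to $u_{\leq N/2+k}$ together with \eqref{metricform}--\eqref{metriccoeff}: $|h|+|\pa h|\lesssim |u|+|\pa u|+|H|\,|u|\lesssim \epsilon\la t\ra^{-1/2+C\epsilon}$, and the improved estimate $|\pa_t h|\lesssim \epsilon\la t\ra^{-1+C\epsilon}$ comes from using the scaling vector field $S$ to trade a $\pa_t$ for $\pa_r$ plus $t^{-1}$-weighted factors (this is the reason one must include $S$ in $Z$). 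Similarly \eqref{intrm}--\eqref{goodpart} in the outer region follow from \eqref{derivcoeff} and the null-decomposition bound \eqref{nullcond} together with \eqref{badpart2}, which is crucial for controlling $h^{\uL\uL}$ near the cone. The H\"older bound \eqref{holderintro} is a fixed-$\epsilon$ smallness requirement and follows from $\mathcal{E}_{N}$ via Sobolev embedding once $N$ is taken sufficiently large.

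Given these verifications, I would apply Theorem~\ref{LE} to each $u_\Lambda$ to obtain, via \eqref{LEK0},
\[
\|\pa u_\Lambda\|_{L^\infty[\tt_0,\tt_1]L^2}^2+\|u_\Lambda\|_{LE_K^1[\tt_0,\tt_1]}^2\lesssim e^{C\!\int_{\tt_0}^{\tt_1}\!\kappa(\tt)^2d\tt}\bigtwo(\|\pa u_\Lambda(\tt_0)\|_{L^2}^2+B(F_\Lambda,u_\Lambda)_{[\tt_0,\tt_1]}\bigtwo),
\]
with $\kappa(t)\lesssim \epsilon\la t\ra^{-1/2}$ so that the exponential grows like $\tt_1^{C\epsilon}$. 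The forcing $B(F_\Lambda,u_\Lambda)$ is split following \eqref{bilform}: in the trapped region the top-order term with $|\Lambda_1|=|\Lambda|$ is controlled via the $L^2_{ps}$ piece of the norm on the left, producing the key term $\|\kappa\,\pa u_{\leq \Lambda}\|_{L^2_{ps}}^2$ which is absorbed by Gr\"onwall (this is exactly the derivation of \eqref{LEK1}); terms with $|\Lambda_2|<|\Lambda|$ are handled by induction on $|\Lambda|$. Away from the trapped set, the quadratic terms of type $\pa h\cdot \pa^2 u$ are dealt with by distinguishing the $\pa_{\uL}^2$ direction (multiplied by $h^{\uL\uL}$, which by \eqref{badpart2} gains a factor $\la t\ra^{-\delta}$) from the remaining "good" directions that can be traded for the tangential derivatives $\Omega/r$ and $S/t$ using $\pa=c(L,\uL,\Omega/r)$, following verbatim the computation (4.25) in \cite{LT}. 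The $W^\beta \pa_\beta u_{\Lambda_2}$ piece is handled in the trapped region using \eqref{W-est} which provides exactly the $\la t\ra^{-1/2}$ decay needed to be absorbed by $\|\kappa\,\pa u\|_{L^2_{ps}}$.

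The main obstacle will be the highest-order forcing term, where neither the $h$ coefficient has pointwise control nor can one afford to lose a derivative on $u$. The saving mechanism, as in \cite{LT}, is the identity $h^{\alpha\beta}=H^{\alpha\beta}u+O(u^2)$: when $|\Lambda_1|=N$, the factor $h_{\Lambda_1}$ contains a high-order derivative of $u$ that one pairs in $L^2$ against low-order $\pa^2 u_{\Lambda_2}$ bounded pointwise by Klainerman-Sobolev, while $H$-derivatives are controlled by \eqref{derivcoeff}. The delicate book-keeping is ensuring that in the trapped region the resulting $L^2$ contraction lands on the $L^2_{ps}$ norm with weight $\kappa$ (using \eqref{cpt1intro}) and in the wave zone on the $LE_K^*$ dual norm (using \eqref{goodpart})—only with both replacements does the Gr\"onwall constant remain $O(\epsilon)$ and the final exponent $C_N\epsilon$ stay small and independent of $\tt_1$. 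Once this is established, induction on $|\Lambda|\leq N$ closes the estimate and summing over $\Lambda$ yields \eqref{higherLES}.
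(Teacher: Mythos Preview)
Your proposal has a genuine gap concerning the commutation with spatial derivatives near the trapped set. You assert that ``the Kerr piece contributes only lower-order terms compatible with the norms $LE_K^1$ since $\pa_{\tt}$ and $\Omega_\phi$ are Killing for $g_K$ near the trapped set and $S$ produces a scaling multiple of $\Box_{g_K}$ plus a harmless commutator supported away from $r=3M$.'' But the family $Z$ contains all of $\pa_1,\pa_2,\pa_3$ and all three rotations, not just $\pa_{\tt}$ and $\Omega_\phi$. The spatial translations are \emph{not} Killing for $g_K$ (or even $g_S$), so $[\Box_{g_K},\pa_i]u$ produces terms of the form $(\pa_i g_K^{\alpha\beta})\pa_\alpha\pa_\beta u$ that are $O(1)$ near $r=3M$ and involve $\pa^2 u_{\Lambda'}$ with $|\Lambda'|+2=|\Lambda|+1>|\Lambda|$. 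These are neither small nor lower order, and they cannot be absorbed by $\|u_\Lambda\|_{LE_K^1}$ because that norm degenerates precisely at the trapped set. Your mechanism $h^{\alpha\beta}=H^{\alpha\beta}u+O(u^2)$ only addresses the perturbative part $[\Box_h,Z^\Lambda]$, not the background commutator $[\Box_{g_K},\pa_i]$. Your description of $[S,\Box_{g_K}]$ is likewise inaccurate: it is not a scaling multiple plus something supported away from $r=3M$; rather, the reason it is tractable is the weighting $|\Lambda|=|i|+3|j|+3k$, which makes the resulting $\pa^2 u_{\Lambda'}$ terms genuinely lower order in the hierarchy.

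The paper's proof avoids this obstruction by a different route that you have omitted entirely: one first commutes only with $\pa_t$ (where $[\Box_{g_K}-\Box_{g_S},\pa_t]=0$ and the argument reduces to \cite{LT}) to obtain \eqref{higherLES} for $\pa_t^N u$, and then recovers the spatial derivatives via an \emph{elliptic estimate} on each time slice (the analogue of Lemma~5.3 in \cite{LT}), namely \eqref{ell-est} together with \eqref{spatderivout2}. This step uses the ellipticity of $g^{ij}\pa_i\pa_j$ and a multiplier argument with $Q_1=\chi(\varrho_1(\tau-\tau_2))^w\chi$ to control $\|Q_1 D_x u\|_{L^2L^2}$ in terms of $\|\pa_t u\|_{LE_K^1}$, $\|u\|_{LE_K^1}$, and $\|\Box_g u\|$, thereby gaining the spatial regularity without ever forming the bad commutator $[\Box_{g_K},\pa_i]$. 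Only after \eqref{ell-est} is in hand does one commute with $\Omega,S$, and there the key new Kerr contribution $[\Box_{g_K}-\Box_{g_S},Z^\Lambda]u$ is $O(a/r^2)|\pa u_{<|\Lambda|}|$ and is handled as in the paper via the $LE_K^*$ piece of $B(F,u)$. Without the elliptic step your induction on $|\Lambda|$ cannot close at the level of spatial derivatives.
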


\begin{theorem}\label{ptwsedcy} Let $T$ be a fixed time and $u$ solve \eqref{maineeqn} in the time interval $T\leq t\leq 2T$. Assume that $g(u,t,x)$ satisfies the conditions from Section 3.
Then for any multi index $|\Lambda|\leq N-13$ we have for $T\leq t\leq 2T$:
\begin{equation}\label{ptdecayu}
|u_{\Lambda}| \leq C'_{|\Lambda|} \la t\ra^{-1} \la t-\rs\ra^{1/2} \|u_{\leq |\Lambda|+13}\|_{LE_K^1[T, 2T]}
\end{equation}
\begin{equation}\label{ptdecaydu}
|\partial u_{\Lambda}| \leq C'_{|\Lambda|} \la r\ra^{-1} \la t-\rs\ra^{-1/2} \|u_{\leq |\Lambda|+13}\|_{LE_K^1[T, 2T]}
\end{equation}
\end{theorem}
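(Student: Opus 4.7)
The plan is to commute the wave equation with the Minkowski-type vector fields $Z = \{\partial, \Omega, S\}$, apply Theorem~\ref{LE} to the commuted equation, and convert the resulting $L^2$-based local energy output into pointwise decay via a weighted Klainerman--Sobolev inequality. This is the strategy of \cite{MTT} and Section~5 of \cite{LT}, adapted to the Kerr setting.

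First I would derive the equation satisfied by $u_\Lambda$. Writing $\Box_g = \Box_{g_K} + L_h$ with $L_h$ the operator associated to the perturbation $h$, the commutators $[Z, \Box_{g_K}]$ either vanish (for $\partial_t, \partial_\phi$, and rotations), reduce to a scalar multiple of $\Box_{g_K}$ (for $S$), or produce terms supported near the horizon that are harmless thanks to the red-shift structure of $LE_K^1$. The commutators with $L_h$ produce schematic source terms $(Z^{\Lambda_1} h)(\partial^2 Z^{\Lambda_2} u)$, and the hypotheses \eqref{metriccoeff}, \eqref{derivcoeff}, \eqref{W-est}, and \eqref{nullcond} are tailored precisely so that these fit into the bilinear form $B(F,u)$ of \eqref{bilform}. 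Theorem~\ref{LE}, together with the bootstrap control from Theorem~\ref{higherLE}, then yields
$$
\|\partial u_\Lambda\|_{L^\infty[T,2T]L^2} + \|u_\Lambda\|_{LE_K^1[T,2T]} \lesssim \|u_{\leq |\Lambda|+k_0}\|_{LE_K^1[T, 2T]}
$$
for some fixed $k_0$, absorbing the nonlinear contribution via the a priori smallness.

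Next I would invoke a weighted Klainerman--Sobolev inequality in the exterior of the black hole of the form
$$
\la t+r\ra \la t-\rs\ra^{1/2}\, |v(t,x)| \lesssim \sum_{|\Lambda'|\leq 4}\|Z^{\Lambda'} v(t, \cdot)\|_{L^2(\R^3)},
$$
applied on a fixed time slice $t \in [T,2T]$. Taking $v=u_\Lambda$ and using $L^\infty L^2$ control of $\partial Z^{\leq |\Lambda|+k_0+4} u$ gives \eqref{ptdecayu} after a Hardy step to absorb the undifferentiated term; applying the same inequality to $v = \partial u_\Lambda$ and pigeonholing in a dyadic shell $\{r \sim R\}$ (using that $LE$ contains a $\la r\ra^{-1/2}$ weight) converts the global $\la t\ra^{-1}$ weight into the sharper $\la r\ra^{-1}$ weight in \eqref{ptdecaydu}. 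The total derivative loss of $13$ absorbs the commutator step, the Klainerman--Sobolev step, and a Sobolev embedding near the trapped region, with a small buffer.

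The main obstacle is the trapped set $r\sim 3M$, where $LE_K^1$ is degenerate along the trapped directions and the standard conversion to pointwise bounds via Klainerman--Sobolev fails. Here I would instead use the uniform fixed-time energy $\|\partial Z^{\leq |\Lambda|+k_0} u\|_{L^\infty[T,2T]L^2}$ provided by Theorem~\ref{LE}, combined with an ordinary Sobolev embedding on the compact annulus $\{|r-3M|\lesssim 1\}$; since in that region $|t-\rs|\sim t$, the target rate $\la t\ra^{-1}\la t-\rs\ra^{1/2} \sim \la t\ra^{-1/2}$ is easily achieved from a bounded instantaneous energy. Away from the trapped set, $LE_K^1$ is equivalent to $LE^1$ and the standard Minkowski-type argument from \cite{LT} runs without modification.
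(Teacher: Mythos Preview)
Your approach has a genuine gap in the first step. You claim that Theorem~\ref{LE} together with Theorem~\ref{higherLE} yields
\[
\|\partial u_\Lambda\|_{L^\infty[T,2T]L^2} + \|u_\Lambda\|_{LE_K^1[T,2T]} \lesssim \|u_{\leq |\Lambda|+k_0}\|_{LE_K^1[T, 2T]},
\]
but Theorem~\ref{LE} applied on $[T,2T]$ carries the initial energy $\|\partial u_\Lambda(T)\|_{L^2}^2$ on its right-hand side, and this term is \emph{not} controlled by $\|u_{\leq |\Lambda|+k_0}\|_{LE_K^1[T,2T]}$. The local energy norm carries the weight $\la r\ra^{-1/2}$, so a solution whose energy at time $T$ sits in the wave zone $\{r\sim T\}$ has fixed-time energy of size $1$ but $LE_K^1[T,2T]$ norm of size $T^{-1/2}$. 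Invoking Theorem~\ref{higherLE} does not help either: that theorem bounds the energy by the data at time $0$, not by the local energy on $[T,2T]$, so at best you would prove the estimate with $\mathcal{E}_{|\Lambda|+k_0}$ on the right-hand side rather than the stated $\|u_{\leq |\Lambda|+13}\|_{LE_K^1[T,2T]}$. The same issue recurs in your treatment of the trapped region, where you again appeal to a fixed-time energy that is not available from the hypothesis.

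The paper instead follows Section~6 of \cite{LT} and never passes through the instantaneous energy. One works directly on dyadic spacetime regions $\{r\sim R\}\times[T,2T]$ (and, near the cone, dyadic $\la t-\rs\ra$ regions as well) and applies a Sobolev embedding in all four variables, using $\Omega$ for the angular directions, $r\partial_r$ and $\la t-\rs\ra\partial$ recombined from $S$ and $\partial$ for the remaining ones. The spacetime $L^2$ input is precisely the $LE_K^1[T,2T]$ norm of $u_{\leq |\Lambda|+13}$, and the $\la t\ra^{-1}\la t-\rs\ra^{1/2}$ factor drops out of the volume of the dyadic region together with the $r^{-1/2}$ weight in $LE$. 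Near the trapped set one uses the nondegenerate part of $LE_K^1$ (namely $\|\partial_r u\|_{LE}+\|r^{-1}u\|_{LE}$) together with a spatial derivative of $u_\Lambda$ from the higher-order norm, so no fixed-time energy is needed there either. A fixed-slice Klainerman--Sobolev inequality of the type you wrote down plays no role.
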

\begin{theorem}\label{lowerLE} Let $u$ solve \eqref{maineeqn}, where $g$ is as in Theorem~\ref{higherLE}.
Then for $\tN \leq N-3$, we have:
\begin{equation}\label{lowLES}
 \|\partial  u_{\leq \tN}\|_{L^{\infty}[\tt_0, \tt_1] L^2}^2 + \|u_{\leq \tN}\|_{LE_K^1[\tt_0, \tt_1]}^2 \lesssim \|\partial u_{\leq N}(\tt_0)\|_{L^2}^2
\end{equation}
\end{theorem}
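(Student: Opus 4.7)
The plan is to mirror the three-step bootstrap of \cite{LT}: Theorem \ref{higherLE} lets the top-order energy grow like $\tt_1^{C_N\epsilon}$, Theorem \ref{ptwsedcy} then yields pointwise decay at lower order, and that pointwise decay is fed back into Theorem \ref{LE} to close the lower-order energies at a bounded level. Concretely, I would commute each $Z^\Lambda$ with $\Box_g$ for $|\Lambda|\leq \tN$, writing $\Box_g u_\Lambda = F_\Lambda$, where $F_\Lambda$ is the standard sum of commutators built from $Z^{\leq \Lambda} h^{\alpha\beta}$ acting on derivatives of $u_{\leq \Lambda}$, plus the semilinear $O(u^2)$ correction from \eqref{quasmetric}. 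Applying Theorem \ref{LE} to $u_\Lambda$ and summing over $|\Lambda|\leq \tN$ reduces \eqref{lowLES} to bounding $\|\kappa\,\pa u_{\leq \tN}\|^2_{L^2_{ps}}$ and the bilinear form $B(F_{\leq \tN}, u_{\leq \tN})$ by the right side of \eqref{lowLES}.

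The key obstacle is the $\kappa$-term: using $\|\pa u_{\leq \tN}\|_{L^\infty L^2}$ directly produces a logarithmic time integral and a Gronwall blow-up $\tt^{C\epsilon}$, exactly what we want to avoid. Instead I would invoke the pointwise bound of Theorem \ref{ptwsedcy}. On $I_{ps}$ one has $r\approx 3M$ and $\la \tt-\rs\ra\approx \la \tt\ra$, so for $|\Lambda|\leq N-13$ the estimate \eqref{ptdecaydu} specialises to
\[
|\pa u_\Lambda(\tt,x)|^2 \lesssim \la \tt\ra^{-1}\|u_{\leq |\Lambda|+13}\|_{LE_K^1[T,2T]}^2,\qquad T\leq \tt\leq 2T.
\]
Combined with $\kappa^2 \lesssim \epsilon\la \tt\ra^{-1}$, the higher-order bound \eqref{higherLES}, and a dyadic decomposition in $T$, this gives
\[
\|\kappa\,\pa u_{\leq \tN}\|^2_{L^2_{ps}[\tt_0, \tt_1]} \lesssim \epsilon \|\pa u_{\leq N}(\tt_0)\|_{L^2}^2 \sum_k T_k^{-1+2C_N\epsilon} \lesssim \epsilon \|\pa u_{\leq N}(\tt_0)\|_{L^2}^2,
\]
uniformly in $\tt_1$ for $\epsilon$ small. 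This closes \eqref{lowLES} in the range $\tN \leq N-13$; the stated range $\tN\leq N-3$ is then reached by a further bootstrap, feeding the improved lower-order energies back into Theorem \ref{ptwsedcy} at higher $|\Lambda|$, exactly as in \cite{LT}.

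The bilinear form $B(F_{\leq \tN}, u_{\leq \tN})$ is estimated by partitioning $F_\Lambda$ according to the three zones of Section \ref{sec:metricconditions}. Near $I_{ps}$, the expansion $h = Hu + O(u^2)$ combined with the pointwise decay of $u_{\leq N-13}$ verifies the hypotheses \eqref{cpt1}--\eqref{hgsmall} for the effective metric at each order, so this piece fits into the same framework as the $\kappa$-estimate above. In the intermediate region, \eqref{derivcoeff} together with pointwise decay of $u$ yields the required $r^{-1-\delta}$ decay of $\pa h^\Lambda$; in the cone region the null-frame decomposition \eqref{nullframeexpansion} and the structural assumptions \eqref{W-est}, \eqref{nullcond} guarantee that the worst transversal derivative $\pa_{\lL}^2 u$ always comes paired with the better-decaying coefficient $h^{\lL\lL}$. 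These last two estimates are the structural bounds carried out in Section 4 of \cite{LT}, and they transfer to the Kerr setting unchanged because they are localized away from the trapped set, where the only novelty of the present paper occurs.

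The main technical obstacle I anticipate is the careful bookkeeping of which components of the commutator errors are estimated via pointwise decay of $\pa u_{\leq |\Lambda|+13}$ and which require the higher-order $L^2$-type bound from Theorem \ref{higherLE}, while simultaneously verifying at each order that $Z^\Lambda h$ continues to satisfy the H\"older hypothesis \eqref{hgsmall} near the trapped set; it is this last requirement—control of three spatial derivatives of $u_{\leq |\Lambda|}$ in a $C^\delta$-norm, obtained via Sobolev from higher-order $L^2$ bounds—that ultimately fixes the loss $N-\tN$.
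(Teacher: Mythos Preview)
Your overall strategy is sound and matches \cite{LT}, but the treatment of the $\kappa$--term near the trapped set takes a different route than the paper intends, and your extension from $\tN\leq N-13$ to $\tN\leq N-3$ has a gap.

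The paper's approach, spelled out in the remark following \eqref{eq:lowerboundLEKnormintro}, is to control $\|\kappa\,\pa u_{\leq \tN}\|^2_{L^2_{ps}}$ directly by the non-degenerate part of the \emph{higher-order} local energy norm, rather than by pointwise decay. The point is that on $I_{ps}$ one has $\|\chi\,\pa u_{\leq \tN}\|_{L^2_t L^2_x}\lesssim \|u_{\leq \tN+1}\|_{LE^1_K}$, since each $\pa u_\Lambda$ is a combination of $u_{\Lambda'}$ with $|\Lambda'|=|\Lambda|+1$ and the term $\|r^{-1}u\|_{LE}$ in $LE^1_K$ does not degenerate at the photon sphere. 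Combined with $\kappa^2\lesssim \epsilon T_k^{-1}$ on dyadic intervals and Theorem~\ref{higherLE}, this gives exactly your summable series $\sum_k T_k^{-1+2C_N\epsilon}$, but at the cost of only \emph{one} derivative rather than thirteen. The loss down to $\tN=N-3$ then comes entirely from the commutator analysis in $B(F_{\leq\tN},u_{\leq\tN})$, as in Section~7 of \cite{LT}.

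Your pointwise argument via Theorem~\ref{ptwsedcy} is correct for $\tN\leq N-13$, but the ``further bootstrap'' you invoke to recover the gap does not work as stated: feeding bounded energies at level $N-13$ back into Theorem~\ref{ptwsedcy} only improves pointwise control at level $N-26$ and below, moving in the wrong direction. To reach $\tN=N-3$ you must replace the pointwise step by the $LE^1_K$ argument above.
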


Let us now assume that the initial data is small enough,
\begin{equation}\label{indata}
\mathcal{E}_N(0) \leq \mu_N \epsilon^2
\end{equation}
where $N\geq 36$ and $\mu_N>0$ is a fixed, small $N$-dependent constant to be determined below.

 Let $N_1=\frac{N}2 +2$ and $\tN = N-3$. We will assume that the following a-priori bounds hold for some large constant $\tilde C$ independent of $\epsilon$ and $t$, and a fixed small $\delta>0$
\begin{equation}\label{enapbds}
\mathcal{E}_N(t) \leq \tilde C \mu_N\epsilon^2 \la t\ra^{\delta}, \qquad \mathcal{E}_{\tN}(t) \leq \tilde C \mu_N\epsilon^2,
\end{equation}
\begin{equation}\label{ptwseapbds}
|u_{\leq N_1}| \leq \frac{\epsilon\la t-\rs\ra^{1/2}}{\la t\ra}, \qquad |\partial u_{\leq N_1}| \leq \frac{\epsilon}{\la r\ra\la t-\rs\ra^{1/2}}
\end{equation}

Clearly \eqref{enapbds} and \eqref{ptwseapbds} are true for small enough times by standard local theory existence combined with \eqref{indata} and Sobolev embeddings. We will now assume that \eqref{enapbds} and \eqref{ptwseapbds} hold for all $0\leq t\leq T$, and we will improve the constants on the right hand side by a factor of $1/2$. By a standard continuity argument this will imply the desired result.

Due to the assumptions \eqref{ptwseapbds} we can apply Theorem~\ref{higherLE}. We obtain
\[
\mathcal{E}_N(t) \leq C_N \la t\ra^{C_N \epsilon} \mathcal{E}(0)
\]
If we take $\tilde C = 2C_N$ and $\epsilon < \frac{\delta}{C_N}$ we improve the a-priori bound for $\mathcal{E}_N(t)$ to
\[
\mathcal{E}_N(t) \leq \frac12 \tilde C  \mu_N \epsilon^2 \la t\ra^{\delta}
\]

Similarly due to the assumptions \eqref{ptwseapbds} we can apply Theorem~\ref{lowerLE} to obtain
\[
\mathcal{E}_{\tN}(t) \leq \frac12 \tilde C  \mu_N \epsilon^2
\]

 Finally, since $N_1\leq \tN - 13$, we can apply Theorem~\ref{ptwsedcy} and obtain
\[
|u_{\leq N_1}| \leq C'_{N_1}\frac{\la t-\rs\ra^{1/2}}{\la t\ra} \sqrt{\mathcal{E}_{\tN}(T)} \leq \frac12 \frac{\epsilon\la t-\rs\ra^{1/2}}{\la t\ra}
\]
\[
|\partial u_{\leq N_1}| \leq C'_{N_1}\frac{1}{\la r\ra\la t-\rs\ra^{1/2}} \sqrt{\mathcal{E}_{\tN}(T)} \leq \frac12 \frac{\epsilon}{\la r\ra\la t-\rs\ra^{1/2}}
\]
since we can choose $\mu_N$ so that $C'_{N_1}\tilde{C}^{1/2}\mu_N^{\frac12} \leq 1/2$. This finishes the proof of Theorem~\ref{mainthm}.

The proofs of Theorems~\ref{higherLE} - \ref{lowerLE} in the Schwarzschild case are given in detail in Sections 5, 6, and 7 of \cite{LT}. They are very similar in the Kerr case, replacing $g_S$ by $g_K$ in the argument. There are two minor differences: the use of the microlocal norm $LE_K^1$ instead of $LE_S^1$, and the fact that there is one extra term in the estimates coming from the difference $\Box_{g_K} - \Box_{g_S}$. The starting point is Theorem~\ref{LE}, which replaces Theorem 4.1 from \cite{LT}.

It is easy to check that
\[
\Bigl|(g_K^{\alpha\beta} - g_S^{\alpha\beta})_{\Lambda}\Bigr| \leq C_{|\Lambda|} \frac{a}{r^2}
\]
and thus
\beq\label{K-Sc}
\Bigl| (\Box_{g_K} u- \Box_{g_S} u)_{\Lambda}\Bigr| \lesssim \frac{a}{r^2} (|\pa^2 u_{\Lambda}| + |\pa u_{\leq |\Lambda|}|)
\eq
\beq\label{K-Sc-comm}
\Bigl| [\Box_{g_K} - \Box_{g_S}, Z^{\Lambda}]\Bigr| \lesssim \frac{a}{r^2} (|\pa^2 u_{<|\Lambda|}| + |\pa u_{<|\Lambda|}|)
\eq

We use \eqref{K-Sc} to bound the extra term in Lemma 5.4 and Lemma 6.3 from \cite{LT}, and \eqref{K-Sc-comm} to bound the extra term when commuting with vector fields in Sections 5 and 7.

We now outline the proof of Theorem~\ref{higherLE}. Theorem~\ref{LE} applied to $u_{\Lambda}$ gives for any multiindex $\Lambda$
\[\begin{split}
 \|\partial  u_{\Lambda}\|_{L^{\infty}[\tt_0, \tt_1] L^2} + \|u_{\Lambda}\|_{LE_K^1[\tt_0, \tt_1]} & \lesssim \int_{\M_{ps}{[\tt_0,\tt_1]}} \frac{\epsilon}{t}|\partial u_{\Lambda}|^2 dV_g + \|\partial u_{\Lambda}(\tt_0)\|_{L^2}^2 + B(F_{\Lambda},u_{\Lambda})_{[\tt_0, \tt_1]}
\end{split}\]
where
\[
F_{\Lambda} = [\Box_g, Z^{\Lambda}] u
\]

The first step is to only consider time derivatives. Here the argument is identical to the one in \cite{LT}, since $[\Box_{g_K} - \Box_{g_S}, \pa_t] = 0$. In particular, near the trapped set we control $\|F_{\Lambda}\|_{L^1[\tt_0,\tt_1]L^2}$, using \eqref{W-est} in the process. We obtain \eqref{higherLES} for $\pa_t^N u$:
\[
\|\partial  \partial_t^N u\|_{L^{\infty}[\tt_0, \tt_1] L^2} + \|\partial_t^N u\|_{LE_K^1[\tt_0, \tt_1]} \leq C_N \tt_1^{C_N\epsilon} \|\partial \partial_t^N u(\tt_0)\|_{L^2}
\]

We can now prove \eqref{higherLES} for spatial derivatives also by using elliptic estimates away from the event horizon, and the red-shift effect near the event horizon. The main estimate is the following, corresponding to Lemma 5.3 in \cite{LT}:
\begin{equation}\label{ell-est} \begin{split}
\|\partial^2 u_{\Lambda}(\tt_1)\|_{L^2} + \|\partial u_{\Lambda}\|_{LE_K^1[\tt_0, \tt_1]} & \lesssim \|\partial \partial_{\leq 1}u_{\Lambda}(\tt_0)\|_{L^2} + \|\partial_t \partial_{\leq 1} u_{\leq |\Lambda|}(\tt_1)\|_{L^2} + \|\partial u_{\leq |\Lambda|}(\tt_1)\|_{L^2} \\ & + \|\partial_t u_{\leq |\Lambda|}\|_{LE_K^1[\tt_0, \tt_1]} +  \|u_{\leq |\Lambda|}\|_{LE_K^1[\tt_0, \tt_1]}
\end{split} \end{equation}

The main difference in the proof of \eqref{ell-est} comes in proving the analogue of (5.11) from \cite{LT}, which in this case means proving the following:
\beq\label{spatderivout2}
\|\chi_{out}\partial u\|^2_{LE_K^1[\tt_0, \tt_1]} \lesssim \sum_{i=1}^2 \|\partial \partial_{\leq 1} u(\tt_i)\|^2_{L^2} + \|\partial_t u\|^2_{LE_K^1[\tt_0, \tt_1]} +  \|u\|^2_{LE_K^1[\tt_0, \tt_1]}  + \bigl\|\Box_g u\|^2_{{LE[\tt_0, \tt_1]}}
\eq

Here $\chi_{out}=1$ when $2M+2\varepsilon\leq r$, $\chi_{out}=0$ when $r<2M+\varepsilon$ for some $\varepsilon \ll M$.

We postpone the proof of \eqref{spatderivout2} to the end of the section.

Once \eqref{ell-est} is proved, one starts commuting with vector fields. The proof is identical to the one in section 5 in \cite{LT}, with the exception of the fact that we need to estimate $[\Box_{g_K} - \Box_{g_S}, Z^{\Lambda}] u$. One immediately sees that
\[
\Bigl| [\Box_{g_K} - \Box_{g_S}, Z]\Bigr| \lesssim \frac{a}{r^2} |\pa \pa^{\leq 1} u|, \quad Z\in\{\Omega, S\}
\]
and thus, if either $j>0$ or $k>0$, we have that the commutator is of lower order:
\[
\Bigl| [\Box_{g_K} - \Box_{g_S}, Z^{\Lambda}]\Bigr| \lesssim \frac{a}{r^2} |\pa u_{<|\Lambda|}|
\]

In particular, using the second term in the definition of $B(F,u)_{[\tt_0, \tt_1]}$, we have near the trapped set
\[
\|\partial^{\leq 1}\left(\tchi [\Box_{g_K} - \Box_{g_S}, Z^{\Lambda}]\right)\|_{L^2[\tt_0, \tt_1]L^2}^2 \lesssim a \int_{\M_{ps}{[\tt_0, \tt_1]}} |\pa u_{<|\Lambda|}|^2 dt dx \lesssim a \|u_{\leq N}\|^2_{LE_K^1[\tt_0, \tt_1]}
\]
and away from the trapped set
\[\begin{split}
& \int_{\M{[\tt_0, \tt_1]}} (1-\chi)\Bigl| [\Box_{g_K} - \Box_{g_S}, Z^{\Lambda}]\Bigr| (|\partial u_{\Lambda}| + r^{-1} |u_{\Lambda}|) dt dx \\ & \lesssim \int_{\M{[\tt_0, \tt_1]}} (1-\chi)\frac{a}{r^2} |\pa u_{<|\Lambda|}| (|\partial u_{\Lambda}| + r^{-1} |u_{\Lambda}|) dt dx \lesssim a \|u_{\leq N}\|^2_{LE_K^1[\tt_0, \tt_1]}
\end{split}\]
which suffices.

Let us now prove \eqref{spatderivout2}. The only difficulty is in dealing with the norms near the trapped set, as the rest is identical to the proof of (5.11) in \cite{LT}. Recall that near the trapped set
\[
\| u\|_{LE_K^1} \lesssim \|\varrho_1(D,x) \chi(D_t - \tau_2(D,x))\chi u\|_{L^2L^2}
+ \|\varrho_2(D,x)\chi (D_t - \tau_1(D,x))\chi u\|_{L^2L^2} + \|D_r u\|_{L^2} + \|u\|_{L^2L^2}
\]
We will bound the first term, as the rest follow similarly. We will show that
\[\begin{split}
\|\varrho_1(D,x) \chi(D_t - \tau_2(D,x))\chi D_x u\|_{L^2[\tt_0, \tt_1]L^2}^2 & \lesssim \sum_{i=0}^1 \|\partial \partial_{\leq 1} u(\tt_i)\|^2_{L^2} + \|\partial_t u\|^2_{LE_K^1[\tt_0, \tt_1]} \\ & +  \|\pa^{\leq 1}u\|^2_{L^2[\tt_0, \tt_1]L_{cpt}^2}  + \bigl\|\Box_g u\|^2_{L^2[\tt_0, \tt_1]L_{cpt}^2}
\end{split}\]

We write
\begin{equation}\label{ell}
\mathscr{L}_{0} u = \mathscr{L}_t u + \Box_g u
\end{equation}
where
\begin{equation}\label{ellt}
\mathscr{L}_t u = -\frac{1}{\sqrt{|g|}}\partial_t (\sqrt{|g|} g^{tj}\partial_{j} u) -\frac{1}{\sqrt{|g|}}\partial_{j} (\sqrt{|g|} g^{tj}\partial_t u) -\frac{1}{\sqrt{|g|}}\partial_t (\sqrt{|g|} g^{tt}\partial_{t} u)
\end{equation}
\begin{equation}\label{ell0}
\mathscr{L}_0 u = \frac{1}{\sqrt{|g|}}\partial_i (\sqrt{|g|} g^{ij}\partial_{j} u)
\end{equation}

Let $Q_1 = \chi (\varrho_1 (\tau-\tau_2))^w \chi$. We multiply \eqref{ell0} by $\overline{Q_1^2 u}$ and integrate with respect to $dV_g$. We compute
\[\begin{split}
& \int_{\M_{ps}{[\tt_0,\tt_1]}}\!\!\!\!\mathscr{L}_0 u\, \,Q_1^2 u dV_g = \int_{\M_{ps}{[\tt_0,\tt_1]}}\!\!\!\! \sqrt{|g|} g^{ij}\!D_{j} u\overline{D_i Q_1^2 u} dtdx
=\int_{\M_{ps}{[\tt_0,\tt_1]}} \!\!\!\!\sqrt{|g|} g^{ij}\!D_{j} u \big(\overline{Q_1^2 D_i u \!+\! [D_i, Q_1^2]u}\big) dtdx \\ & =
  \int_{\M_{ps}{[\tt_0,\tt_1]}} Q_1(\sqrt{|g|} g^{ij}D_{j} u)\cdot \overline{Q_1 D_i u} + \sqrt{|g|} g^{ij}D_{j} u\cdot \overline{[D_i, Q_1^2]u} dtdx + \left. BDR\right|_{t = \tt_0}^{t=\tt_1} \\ & =  \! \int_{\M_{ps}{[\tt_0,\tt_1]}}\!\!\!\! g^{ij}Q_1 D_{ \!j} u\, \overline{Q_1 D_i u} dV_{\!g} + \! \!\int_{\M_{ps}{[\tt_0,\tt_1]}}\!\!\!\! [Q_1,  \!\sqrt{ \!|g|} g^{ij}]D_{ \!j} u \cdot \overline{Q_1 D_i u} + \! \sqrt{ \!|g|} g^{ij} \!D_{ \!j} u \cdot \overline{[D_i, Q_1^2]u} dtdx +\! \left. \! BDR\right|_{t = \tt_0}^{t=\tt_1}
\end{split}\]
where the boundary terms are
\[
BDR(\tt_i) = \int_{t=\tt_i} g^{ij} \sqrt{|g|} D_j u \cdot \overline{\chi\varrho_1^w\chi Q_1 D_i u} dx
\]

By Cauchy Schwarz, we can bound
\[
\Bigl| BDR(\tt_i) \Bigr| \lesssim \|\partial \partial_{\leq 1} u(\tt_i)\|_{L^2}^2
\]

By the ellipticity of $g^{ij}$ we have that
\[
\int_{\M_{ps}{[\tt_0,\tt_1]}} g^{ij}(Q_1 D_{j} u) \overline{Q_1 D_i u}  dV_g \gtrsim \|Q_1 D_x u\|_{L^2L^2}^2
\]

On the other hand, since
\[
[D_i, Q_1^2] = 2[D_i, Q_1] Q_1 + [Q_1, [D_i, Q_1]]
\]
and
\[ [D_i, Q_1], \, [Q_1, [D_i, Q_1]]\in OPS^1 + D_t OPS^0 \]
we have that
\[
\| [D_i, Q_1^2] u\|_{L^2L^2} \lesssim \| D_{t,x} Q_1 u\|_{L^2L^2} + \|\pa^{\leq 1} u\|_{L^2L_{cpt}^2} \lesssim  \| Q_1 D_{t,x} u\|_{L^2L^2} + \|\pa^{\leq 1} u\|_{L^2L_{cpt}^2}
\]
and by Cauchy Schwarz
\[
\Bigl|\int_{\M_{ps}{[\tt_0,\tt_1]}} (\sqrt{|g|} g^{ij}D_{j} u) \cdot \overline{[D_i, Q_1^2]u} dtdx \Bigr| \leq \delta_0 \| Q_1 D_{t,x} u\|_{L^2L^2}^2 + C(\delta_0) \|\pa^{\leq 1}u\|_{L^2[\tt_0, t]L_{cpt}^2}^2
\]
Moreover, Corollary~\ref{quasell} implies that
\[
\| [Q_1, \sqrt{|g|} g^{ij}]D_j u\|_{L^2L^2}\lesssim \|\sqrt{|g|} g^{ij}\|_{C^{1,\delta}} (\|D_j u\|_{L^2L_{cpt}^2} + \|D_t u\|_{L^2L_{cpt}^2}) \lesssim \|\pa^{\leq 1} u\|_{L^2[\tt_0, t]L^2}
\]
and by Cauchy Schwarz
\[
\Bigl|\int_{\M_{ps}{[\tt_0,\tt_1]}} [Q_1, \sqrt{|g|} g^{ij}]D_j u \cdot \overline{Q_1 D_i u} dtdx \Bigr| \leq \delta_0 \| Q_1 D_{t,x} u\|_{L^2L^2}^2 + C(\delta_0) \|\pa^{\leq 1}u\|_{L^2[\tt_0, t]L_{cpt}^2}
\]

For small enough $\delta_0$ we thus obtain
\[
\|Q_1 D_x u\|_{L^2L^2}^2 \lesssim \int_{\M_{ps}{[\tt_0,\tt_1]}} (\mathscr{L}_0 u) (Q_1^2 u) dV_g + RHS(\eqref{spatderivout2})
\]

On the other hand, a similar computation yields
\[
\Bigl|\int_{\M_{ps}{[\tt_0,\tt_1]}} (\mathscr{L}_t u) (Q_1^2 u) dV_g\Bigr| \lesssim RHS(\eqref{spatderivout2})
\]

Since
\[
\|Q_1 D_x u\|_{L^2L^2}^2 + \int_{\M_{ps}{[\tt_0,\tt_1]}} |\pa^{\leq 1} u|^2 dV_g \gtrsim \|\varrho_1(D,x) \chi(D_t - \tau_2(D,x))\chi D_x u\|_{L^2L^2}^2
\]
the conclusion \eqref{spatderivout2} follows.

\newsection{Price's law for perturbations of Kerr}\label{sec:ptwse}

As a second application of our estimate, we will prove pointwise decay for solutions to the linear wave equation on perturbations of the Kerr metric.

The main result of the section is the following:

\begin{theorem}\label{Price}
   Let $g$ be a Lorentzian metric close to $g_{\mathbf K}$ ,  in the sense that $h=g-g_{\mathbf K}$ satisfies, for large enough $N$ and small $\epsilon$
 \beq
 |h^{\mu\nu}_{\leq N}| \lesssim \epsilon r^{-2}
 \eq

In addition we assume that the metric decays to
  $g_{\mathbf K}$ near the photon sphere,
  \begin{equation}\label{aproxmetcpt}
     |\partial^{\leq 1} h^{\mu\nu}|\lesssim \kappa_1(\tt),
\qquad  |\partial_t h^{\mu\nu}|\lesssim \kappa_0(\tt), \quad \text{when} \, |r-3M|<\frac{M}4
 \end{equation}
where $\int_{0}^{\infty} \kappa_1^2(\tt) + \kappa_0(\tt) d\tt < \infty$. In particular, we can assume that
\[
\kappa_1(\tt) = \tt^{-1/2-\delta}, \kappa_0(\tt) = \tt^{-1-\delta}
\]

  Let $u$ solve
  \beq\label{box}
  \Box_g u = 0, \quad u(0) = u_0, \quad \pa_t u(0) = u_1
  \eq
  with smooth, compactly supported initial data. Then we have:
\begin{equation}
| u(t,x)| \lesssim  \frac{1}{\la t\ra \la t-\rs\ra^{2}}, \qquad | \nabla u(t,x)|
\lesssim \frac{1}{\la r \ra \la t-\rs\ra^{3}}
\label{pointwiseest1}
\end{equation}
\end{theorem}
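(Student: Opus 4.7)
The plan is to follow the now well developed paradigm of Tataru \cite{Tat} and Metcalfe--Tataru--Tohaneanu \cite{MTT}, which converts a global-in-time local energy decay estimate, together with asymptotic flatness and commutation with vector fields, into sharp pointwise decay. The first step is to observe that the integrability hypothesis $\int_0^\infty (\kappa_1^2+\kappa_0)\,d\tt<\infty$ allows us to apply Theorem~\ref{LE} in the form \eqref{LEKintro2}: the exponential factor $e^{C\int \kappa^2\,d\tt}$ becomes a universal constant, yielding the uniform-in-time bound
\[
\|\pa u\|_{L^\infty_t L^2}^2 + \|u\|_{LE_K^1}^2 \lesssim \|\pa u(0)\|_{L^2}^2 .
\]

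Next I would commute the equation with the Minkowski vector fields $Z=\{\pa,\Omega,S\}$, or rather their Kerr-adapted analogues (replacing the non-Killing generators by smooth modifications supported away from the event horizon so that the commutators have the right structure). The commutator $[\Box_g,Z^I]u$ splits into a Kerr part, whose contribution is handled exactly as in \cite{TT}, and an $h$-part. In the exterior region the decay $|h_{\leq N}^{\mu\nu}|\lesssim \epsilon r^{-2}$ allows the error to be absorbed as an $LE^*_K$ source term, while near the photon sphere the bounds \eqref{aproxmetcpt} on $\kappa_1,\kappa_0$ ensure the $h$-commutators again satisfy the hypotheses of Theorem~\ref{LE}. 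Inductively this produces
\[
\sum_{|I|\leq k} \bigtwo(\|\pa Z^I u\|_{L^\infty_t L^2}^2 + \|Z^I u\|_{LE_K^1}^2\bigtwo) \lesssim \sum_{|I|\leq k}\|\pa Z^I u(0)\|_{L^2}^2,
\]
for any $k\leq N$ with $N$ large.

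With boundedness of these higher vector-field energies in hand, one then runs the Dafermos--Rodnianski $r^p$-hierarchy for $p\in[0,2]$ combined with the conformal-type iteration of \cite{Tat,MTT}. The $r^p$ weighted estimates yield integrated decay of dyadic energies, and using $S=t\pa_t+r\pa_r$ together with the local energy control of $\pa_r u$ extracts the sharp $\la t\ra^{-1}$ interior decay. The fact that $h$ decays like $r^{-2}$ (rather than merely $r^{-1-\delta}$) is precisely what is needed to push the iteration to the Price-law rate $\la t\ra^{-1}\la t-\rs\ra^{-2}$ for $u$, with the extra $\la t-\rs\ra^{-1}$ for $\pa u$ gained from the commutation with a null derivative tangent to outgoing cones.

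The main obstacle is the loss of control of $\pa_t$ near the trapped set: the $LE_K^1$ norm degenerates on $I_{ps}$ for $\pa_t u$, so the usual argument that derives $\la t\ra^{-1}$ decay of $\pa_t u$ from the scaling field $S$ via $t\pa_t = S - r\pa_r$ must be modified. This is where the extra flexibility built into Theorem~\ref{LE} pays off: the term $\|\kappa\pa u\|_{L^2_{ps}}^2$ on the right-hand side can be absorbed by the $L^2$ component of the local energy norm of \emph{higher-order} energies, exactly as described in Remark~1.4. Carrying out this bootstrap at the appropriate level of regularity, following the scheme of \cite{MTT} but with $LE_S^1$ replaced by $LE_K^1$ and with the weaker $\kappa$-decay assumption on $h$ near the photon sphere, yields \eqref{pointwiseest1}.
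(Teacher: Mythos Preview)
Your proposal is essentially the paper's approach: use the integrability of $\kappa_1^2+\kappa_0$ so that Theorem~\ref{LE} (via \eqref{LEKintro2} / \eqref{LEK2}) gives the uniform local energy bound \eqref{lee}, commute to get the higher-order version \eqref{leek}, and then invoke the machinery of \cite{MTT} verbatim. Two small corrections. First, \cite{MTT} does not go through the Dafermos--Rodnianski $r^p$ hierarchy; its iteration is a different one based on local energy decay, weighted $L^2$ bounds for an asymptotically Minkowski normal form, and repeated application of the one-dimensional reduction, so your description of the mechanism is slightly off even though the black-box input you supply is exactly what is needed. Second, your final paragraph is unnecessary here: because $\int_0^\infty(\kappa_1^2+\kappa_0)\,d\tt<\infty$, the Gr\"onwall factor in \eqref{LEKintro2} is a fixed constant and the $\|\kappa\,\pa u\|_{L^2_{ps}}^2$ term is already absorbed, so there is no residual loss at trapping to be recovered by the higher-to-lower bootstrap of Remark~1.4; that mechanism is only needed in the borderline quasilinear setting $\kappa_1\sim t^{-1/2}$ of Section~\ref{sec:quasilinear}.
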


The improvement over Theorem 4.6 in \cite{MTT} comes from the fact that we don't require integrability of $h$ and $\pa_{r,\omega} h$ near the trapped set.

The proof of Theorem~\ref{Price} is identical to the proof of Theorem 4.6 in \cite{MTT}. Clearly Theorem~\ref{LE} implies that
 \begin{equation}\label{lee}
    \|u\|_{H^1(\Sigma_R^+)}^2 +\sup_{\tilde t \geq 0}
    \|\nabla u(\tilde t)\|_{L^2}^2 +  \|u\|_{LE^1_{\mathbf K}}^2 \lesssim \|\nabla u(0)\|_{L^2}^2
 +    \|f\|_{LE^*_{\mathbf K}}^2.
  \end{equation}
  which is the same as (4.23) from \cite{MTT}. Commuting with derivatives as in the proof of Theorem 4.5 in \cite{MTT} yields
  \begin{equation}\label{leek}
   \|u\|_{H^k(\Sigma_R^+)}^2 +\sup_{\tilde t \geq 0}
    \|\nabla u(\tilde t)\|_{H^k}^2 +  \|u\|_{LE^{1,k}_{\mathbf K}}^2 \lesssim \|\nabla u(0)\|_{H^k}^2
 +    \|f\|_{LE^{*,k}_{\mathbf K}}^2,
  \end{equation}
and so Theorem 4.5 in \cite{MTT} holds in this context.

\section*{Acknowledgments}
H.L. was supported in part by NSF grant DMS-1500925 and Simons Collaboration Grant 638955. M.T. was supported in part by the NSF grant DMS--1636435 and Simons collaboration Grant 586051. We would also like to thank the Mittag Leffler Institute for their hospitality during the Fall 2019 program in Geometry and Relativity.

\bigskip


\begin{thebibliography}{MA}
\bibitem{Al} Serge Alinhac: \textit{On the Morawetz-Keel-Smith-Sogge inequality for the wave equation
on a curved background}. Publ. Res. Inst. Math. Sci. {\bf 42(3)}
(2006), 705-720
\bibitem{AB}
L.~Andersson, P.~Blue: \textit{Hidden symmetries and decay for the wave equation on the Kerr
spacetime}, Annals of Mathematics, Volume \textbf{182} (2015), Issue 3, 787--853
\bibitem{Ar} S.~Aretakis: \textit{Decay of axisymmetric solutions of the wave equation on extreme Kerr backgrounds}, Journal of Functional Analysis {\bf 263}, no. 9 (2012), 2770--2831.
\bibitem{AAG1} Y.~Angelopoulos, S.~Aretakis, and D.~Gajic: \textit{Late-time asymptotics for the wave equation on spherically symmetric stationary backgrounds}, Advances in Mathematics {\bf 323} (2018), 529--621.
\bibitem{AAG2} Y.~Angelopoulos, S.~Aretakis, and D.~Gajic: \textit{Late-time asymptotics for the wave equation on extremal Reissner-Nordström backgrounds}, \arXiv{1807.03802}.
\bibitem{BS1}
P.~Blue, A.~Soffer: \textit{Semilinear wave equations on the Schwarzschild manifold I: local
decay estimates}. Advances in Differential Equations \textbf{8}
(2003), 595--614.
\bibitem{BS2} P.~Blue, A.~Soffer: \textit{The wave equation on the
    Schwarzschild metric II: Local decay for the spin-2 Regge-Wheeler
    equation}, J. Math. Phys., {\bf 46} (2005), 9pp.
\bibitem{BSerrata} P. Blue and A. Soffer: \textit{Errata for ``Global existence and scattering for the
nonlinear Schr\"odinger equation on Schwarzschild manifolds'',
``Semilinear wave equations on the Schwarzschild manifold I: Local
decay estimates'', and ``The wave equation on the Schwarzschild
metric II: Local decay for the spin 2 Regge Wheeler equation''},
preprint.
\bibitem{BS3} P. Blue and A. Soffer: \textit{Phase space analysis on some black hole manifolds}, J. of Func. Anal. {\bf 256}, no 1 (2009), 1--90
\bibitem{BS4} P. Blue and A. Soffer: \textit{Improved decay rates with small regularity loss for the wave
equation about a Schwarzschild black hole}, \arXiv{0612168v1}.
\bibitem{BSter} P.~Blue and J.~Sterbenz: \textit{Uniform decay of local energy and the semi-linear wave equation on Schwarzschild space}, Comm. Math. Phys. \textbf{268} (2006), no. 2, 481--504.
\bibitem{BH} J.F.~Bony and D. H\"afner: \textit{The semilinear wave equation on asymptotically Euclidean manifolds}, Comm. Partial Differential Equations \textbf{35} (2010), 23--67.
\bibitem{BH2} J.F.~Bony and D. H\"afner: \textit{Decay and non-decay of the local energy for the wave equation in the De Sitter-Schwarzschild metric}, Comm. Math. Phys. \textbf{282} (2008), 697--719.
\bibitem{BCMP} R.~Booth, H.~Christianson, J.~Metcalfe, and J.~Perry: \textit{Localized Energy for Wave Equations with Degenerate Trapping}. Math. Res. Lett. \textbf{26} (2019), 991--1025.
 \bibitem{CKL19}  T. Candy, C. Kauffman and H. Lindblad:
 \textit{ Asymptotic Behavior of the Maxwell-Klein-Gordon system.}\newblock
Comm. Math. Phys. {\bf } (2019)
\bibitem{Chr} H.~Christianson: \textit{Dispersive estimates for manifolds with one trapped orbit}, Comm. Partial Differential Equations {\bf 33} (2008), 1147--1174.
\bibitem{CVP} Y. Colin de Verdi\`ere and  B. Parisse: {\em Equilibre Instable en Regime Semi-classique: I - Concentration Microlocale}. Comm. PDE. {\bf 19}, (1994), 1535--1563.
\bibitem{DR1} M. Dafermos and I. Rodnianski: \textit{The red-shift effect and radiation decay on black hole spacetimes}.  Comm. Pure Appl. Math. \textbf{62} (2009), no. 7, 859--919.
\bibitem{DR2} M. Dafermos and I. Rodnianski: \textit{A note on energy currents and decay for the wave equation on a Schwarzschild background}.  \arXiv{0710.0171}
\bibitem{DaRoNotes} M.~Dafermos, I.~Rodnianski: \textit{Lectures on
    black holes and linear waves}, \arXiv{0811.0354}
\bibitem{DR09} M.~Dafermos, I.~Rodnianski, \textit{A new physical-space approach to decay for the wave equation with applications to black hole spacetimes}, \arXiv{0910.4957}
\bibitem{DRSR} M.~Dafermos, I.~Rodnianski, and Y.~Shlapentokh-Rothman: \textit{Decay for solutions of the wave equation on Kerr
exterior spacetimes III: The full subextremal case $|a|<M$}, Annals of Math. 183 (2016), no. 3, 787-913.
\bibitem{DSS} R. Donninger, W. Schlag, and A. Soffer: \textit{On pointwise decay of linear waves on a Schwarzschild black hole background}, Comm. Math. Phys., {\bf 309}, (2012), 51--86.
\bibitem{Dy1} S.~Dyatlov: \textit{Spectral gaps for normally hyperbolic trapping}, Annales de l'Institut Fourier {\bf 66} (2016), 55--82.
\bibitem{Dy2} S.~Dyatlov: \textit{Asymptotics of Linear Waves and Resonances with Applications to Black Holes}, Comm. Math. Phys., {\bf 335} no. 3 (2015), 1445--1485.
\bibitem{DZ} S.~Dyatlov and M.~Zworski, \textit{Trapping of waves and null geodesics for rotating black holes}, Physical Review D {\bf 88} (2013), 084037.
\bibitem{GS} C.~G\'erard and J. Sj\"ostrand: \textit{Semiclassical resonances generated by a closed trajectory of hyperbolic type}, Comm. Math. Phys., {\bf108} no. 3 (1987), 391--421.
\bibitem{HE} S. W. Hawking and G. F. R. Ellis: \textit{The large scale structure of space-time}.  Cambridge Monographs on Mathematical Physics, No. 1. London, New York:  Cambridge University Press 1973.
\bibitem{Hin} P. Hintz: \textit{Normally hyperbolic trapping on asymptotically stationary spacetimes.} \arXiv{1811.07843}.
\bibitem{Hin2} P. Hintz: \textit{A sharp version of Price's law for wave decay on asymptotically flat spacetimes.} \arXiv{2004.01664}.
\bibitem{HV} P. Hintz, A. Vasy: \textit{Global analysis of quasilinear wave equations on asymptotically Kerr-de Sitter spaces}, Int. Math. Res. Notices (17), (2016) 5355--–5426.
\bibitem{KSS} M.~Keel, H.~Smith, C.~D.~Sogge: \textit{Almost global existence for
some semilinear wave equations}, Dedicated to the memory of Thomas
H. Wolff. J. Anal. Math. \textbf{87} (2002), 265--279.

\bibitem{K18} J. Keir \textit{The weak null condition and global existence using the
p-weighted energy method} Preprint (2018)

\bibitem{KPV} C.~E.~Kenig, G.~Ponce, L.~Vega: \textit{On the Zakharov
    and Zakharov-Schulman systems}, J. Funct. Anal. {\bf 127} (1995), 204--234.

\bibitem{LS} I. Laba and A. Soffer: \textit{Global existence and scattering for the nonlinear Schr\"odinger
equation on Schwarzschild manifolds}.  Helv. Phys. Acta {\bf
72} (1999), 274--294.

\bibitem{LM} P. Laul and J. Metcalfe: \textit{Localized energy estimates
    for wave equations on high-dimensional Schwarzschild
    space-times}.  Proc. Amer. Math. Soc. {\bf 140} (2012), 3247--3262.

\bibitem{Lin1} H. Lindblad, \textit{Global solutions of quasilinear wave equations}, Amer. J. Math. {\bf 130} (2008), no. 1, 115--157

\bibitem{LR10} H. Lindblad and I. Rodnianski, \textit{The global stability of
the Minkowski space-time in harmonic gauge.}
 Annals of Math {\bf 171} (2010), no. 3, 1401--1477.

\bibitem{L17} H. Lindblad,
\textit{On the asymptotic behavior of solutions to Einstein's vacuum equations in wave coordinates.} Comm. Math. Phys. {\bf 353}, (2017), No 1, 135-184

\bibitem{LS17} H. Lindblad and V. Schlue
\textit{Scattering from infinity for semilinear models of Einstein's equations satisfying the weak null condition.} \arXiv{1711.00822}.

\bibitem{LT} H. Lindblad, M. Tohaneanu: \textit{Global existence for quasilinear wave equations close to Schwarzschild}. Communications in Partial Differential Equations Vol. 43, Issue 6 (2018), pp. 893 --944.

\bibitem{MMTT}
J.~Marzuola, J.~Metcalfe, D.~Tataru, M.~Tohaneanu, \textit{Strichartz estimates on Schwarzschild black hole backgrounds}, Comm. Math. Phys. \textbf{293} (2010), no. 1, 37--83.
\bibitem{MS} J.~Metcalfe and C.~Sogge: \textit{Long-time existence of quasilinear wave equations
exterior to star-shaped obstacles via energy methods}. SIAM J. Math.
Anal. {\bf 38(1)}(2006), 188-209 (electronic)
\bibitem{MST} J. Metcalfe, J. Sterbenz, and D. Tataru: \textit{Local energy decay for scalar fields on time dependent non-trapping backgrounds}, AJM to appear, \arXiv{1703.08064}.
\bibitem{MT} J. Metcalfe and D. Tataru: \textit{Decay estimates for variable coefficient wave equations in exterior domains},  Advances in phase space analysis of partial differential equations, 201--216, Progr. Nonlinear Differential Equations Appl., 78
\bibitem{MTT} J.~Metcalfe, D.~Tataru, M.~Tohaneanu, \textit{Price's law on nonstationary space-times}, Adv. Math. \textbf{230} (2012), no. 3, 995--1028.
\bibitem{M} C. Morawetz: \textit{Time decay for the nonlinear Klein-Gordon equations}.  Proc. Roy. Soc. Ser. A. {\bf 306} (1968), 291--296.
\bibitem{Mor} K. Morgan: \textit{Wave decay in the asymptotically flat stationary setting}. PhD thesis, 2019.
\bibitem{Mos} G. Moschidis: \textit{The $r^{p}$-weighted energy method of Dafermos and Rodnianski in general asymptotically flat spacetimes and applications}, Annals of PDE 2:6 (2016), 1--194.
\bibitem{NZ} S. Nonnenmacher and M. Zworski: \textit{Semiclassical resolvent estimates in chaotic scattering}. Appl. Math. Res.
Express. AMRX, (1) (2009):74--86.
\bibitem{OS} J. Oliver and J. Sterbenz: \textit{A Vector Field Method for Radiating Black Hole Spacetimes}. Analysis and PDE. Vol. 13 (2020) no. 1, 29--92.
\bibitem{Pri} R. Price: \textit{Nonspherical perturbations of relativistic gravitational collapse. I. Scalar and gravitational perturbations}. Phys. Rev. D (3) 5 (1972), 2419--2438.
\bibitem{Ral} J. V. Ralston: \textit{Solutions of the wave equation with localized energy}.  Comm. Pure Appl. Math. {\bf 22} (1969), 807--823.
\bibitem{SZ} A. S\`{a} Barreto and M. Zworski: \textit{Distribution of resonances for spherical black holes}. Math. Res. Lett. {\bf 4} (1997), no. 1, 103--121.
\bibitem{Sb} J. Sbierski: {\em Characterisation of the Energy of Gaussian Beams on Lorentzian Manifolds - with Applications to Black Hole Spacetimes}. Analysis \& PDE, Vol. 8 (2015), No. 6, 1379--1420.
\bibitem{Schlue} V. Schlue: \textit{Decay of linear waves on higher dimensional Schwarzschild black holes}. Analysis and PDE Vol. 6 (2013), no. 3, 515--600.
\bibitem{SmSo} H. F. Smith and C. D. Sogge: \textit{Global Strichartz estimates for nontrapping perturbations
of the Laplacian}.  Comm. Partial Differential Equations {\bf 25}
(2000), 2171--2183.
\bibitem{St} J. Sterbenz: \textit{Angular regularity and Strichartz estimates for the wave equation.} With
an appendix by I. Rodnianski.  Int. Math. Res. Not. {\bf 2005},
187--231.
\bibitem{Strauss} W. Strauss: \textit{Dispersal of waves vanishing on the boundary of an exterior domain}.  Comm.
Pure Appl. Math. {\bf 28} (1975), 265--278.
\bibitem{Tat} D. Tataru: \textit{Local decay of waves on asymptotically flat stationary space-times}. AJM Volume 135, No. 2, 361--401
\bibitem{TT} D. Tataru and M. Tohaneanu: \textit{Local energy estimate on Kerr black hole backgrounds}. Int. Math. Res. Not. (2011), no. 2, 248--292.
\bibitem{Taylor} M. Taylor: \textit{Pseudodifferential Operators and Nonlinear PDE}. Birkha\"user Boston Inc.: Boston, MA, 1991.
\bibitem{Taylor2} M. Taylor: \textit{Tools for PDE: pseudodifferential operators, paradifferential operators, and layer potentials}. American Mathematical Society: Providence, RI, 2000.
\bibitem{WZ} J. Wunsch, and M. Zworski: \textit{Resolvent estimates for normally hyperbolic trapped sets}. Ann. Henri Poincar\'e (2011),
12(7):1349--1385.
\bibitem{Yang1} S.~Yang, \textit{Global stability of solutions to nonlinear wave equations}, Selecta Math. (N.S.) 21 (2015), no. 3, 833--881.
\bibitem{Yang2} S.~Yang, \textit{On the quasilinear wave equations in time dependent inhomogeneous media}, J. Hyper. Differential Equations, 13, 273 (2016).
\end{thebibliography}
\end{document}